\renewcommand{\mathbb}{\mathbf}
\newcommand{\Fpbartimes}{\overline{\F}^\times_p}
\newcommand{\gL}{\mathfrak{L}}
\newcommand{\irred}{\operatorname{irred}}
\newcommand{\red}{\operatorname{red}}
\newcommand{\fbar}{\overline{f}}
\newcommand{\Ann}{\operatorname{Ann}}
\newcommand{\Adist}{{A^{\operatorname{dist}}}}
\newcommand{\Akfree}{{A^{\operatorname{k-free}}}}
\newcommand{\Bdist}{{B^{\operatorname{dist}}}}
\newcommand{\Btwist}{{B^{\operatorname{twist}}}}
\newcommand{\Bkfree}{{B^{\operatorname{k-free}}}}
\newcommand{\Ckfree}{{C^{\operatorname{k-free}}}}
\newcommand{\gEdist}{{\widetilde{\gE}^{\operatorname{dist}}}}
\newcommand{\gEkfree}{{\widetilde{\gE}^{\operatorname{k-free}}}}
\newcommand{\dist}{{\operatorname{dist}}}
\newcommand{\kfree}{{{\operatorname{k-free}}}}
\newcommand{\To}{\longrightarrow}
\newcommand{\isoto}{\stackrel{\sim}{\To}}
\newcommand{\cotimes}{\, \widehat{\otimes}}
\newcommand{\pr}{\operatorname{pr}}
\newcommand{\BT}{\operatorname{BT}}
\newcommand{\ocZ}{\overline{\cZ}}
\newcommand{\cZbar}{\overline{\cZ}}
\newcommand{\Kbar}{\bar{K}}
\newcommand{\gP}{\mathfrak{P}}
\newcommand{\ghat}{\hat{g}}
\newcommand{\JH}{\operatorname{JH}}
 \newcommand{\sigmabar   }{\overline{\sigma}}   
\def\iso{\buildrel \sim \over \longrightarrow}
\newcommand{\xbar}{\bar{x}}
\newcommand{\ybar}{\bar{y}}
\newcommand{\id}{\operatorname{id}}
\newtheorem{thm}[subsubsection]{Theorem}
\newtheorem{lemma}[subsubsection]{Lemma}
\newtheorem{lem}[subsubsection]{Lemma}
\newtheorem{ithm}[subsection]{Theorem}
\newtheorem{cor}[subsubsection]{Corollary}
\newtheorem{prop}[subsubsection]{Proposition}
\newtheorem{alemma}[subsection]{Lemma}
\newtheorem{alem}[subsection]{Lemma}
\newtheorem{adefn}[subsection]{Definition}
\theoremstyle{definition}
\newtheorem{df}[subsubsection]{Definition}
\newtheorem{defn}[subsubsection]{Definition}
\theoremstyle{remark}
\newtheorem{remark}[subsubsection]{Remark}
\newtheorem{iremark}[subsection]{Remark}
\newtheorem{rem}[subsubsection]{Remark}
\newtheorem{aremark}[subsection]{Remark}
\newtheorem{example}[subsubsection]{Example}
\newtheorem{assumption}[subsubsection]{Assumption}
\def\numequation{\addtocounter{subsubsection}{1}\begin{equation}}
\def\nummultline{\addtocounter{subsubsection}{1}\begin{multline}}
\def\anumequation{\addtocounter{subsection}{1}\begin{equation}}
\def\anummultline{\addtocounter{subsection}{1}\begin{multline}}
\renewcommand{\theequation}{\arabic{section}.\arabic{subsection}.\arabic{subsubsection}}
\newif\iffinalrun
  \newcommand{\need}[1]{}
  \newcommand{\mar}[1]{}
  \newcommand{\need}[1]{{\tiny *** #1}}
  \newcommand{\mar}[1]{\marginpar{\raggedright\tiny fixme #1}}
\newcommand{\F}{\FF}
\newcommand{\Q}{\QQ}
\newcommand{\Z}{\ZZ}
\newcommand{\m}{\frakm}
\newcommand{\FF}{{\mathbb F}}
\newcommand{\GG}{{\mathbb G}}
\newcommand{\QQ}{{\mathbb Q}}
\newcommand{\ZZ}{{\mathbb Z}}
\renewcommand{\bf}{\ensuremath{\mathbf{f}}}
\newcommand{\cC}{{\mathcal C}}
\newcommand{\cD}{{\mathcal D}}
\newcommand{\cE}{{\mathcal E}}
\newcommand{\cF}{{\mathcal F}}
\newcommand{\cG}{{\mathcal G}}
\newcommand{\cI}{{\mathcal I}}
\newcommand{\cM}{{\mathcal M}}
\newcommand{\cO}{{\mathcal O}}
\newcommand{\cP}{{\mathcal P}}
\newcommand{\cR}{{\mathcal R}}
\newcommand{\cS}{{\mathcal S}}
\newcommand{\cU}{{\mathcal U}}
\newcommand{\cX}{{\mathcal X}}
\newcommand{\cY}{{\mathcal Y}}
\newcommand{\cZ}{{\mathcal Z}}
\newcommand{\frakm}{\mathfrak{m}}
\newcommand{\gE}{\mathfrak{E}}
\newcommand{\gM}{\mathfrak{M}}
\newcommand{\gN}{\mathfrak{N}}
\newcommand{\gS}{\mathfrak{S}}
\newcommand{\tB}{\mathrm{B}}
\newcommand{\Fbar}{\overline{\F}}
\newcommand{\Qbar}{\overline{\Q}}
\newcommand{\Zbar}{\overline{\Z}}
\newcommand{\Fp}{\F_p}
\newcommand{\Fpbar}{\Fbar_p}
\newcommand{\Fpbarx}{\Fpbar^{\times}}
\newcommand{\Zp}{\Z_p}
\newcommand{\Zpbar}{\Zbar_p}
\newcommand{\Qp}{\Q_p}
\newcommand{\Qpbar}{\Qbar_p}
\DeclareMathOperator{\Aut}{Aut}
\DeclareMathOperator{\coker}{coker}
\DeclareMathOperator{\End}{End}
\DeclareMathOperator{\Ext}{Ext}
\DeclareMathOperator{\kExt}{ker-Ext}
\DeclareMathOperator{\Fil}{Fil}
\DeclareMathOperator{\Gal}{Gal}
\DeclareMathOperator{\GL}{GL}
\DeclareMathOperator{\Hom}{Hom}
\DeclareMathOperator{\Tor}{Tor}
\DeclareMathOperator{\im}{im}
\DeclareMathOperator{\Ind}{Ind}
\DeclareMathOperator{\Spec}{Spec}
\DeclareMathOperator{\Spf}{Spf}
\DeclareMathOperator{\Sym}{Sym}
\DeclareMathOperator{\val}{val}
\newcommand{\ab}{\mathrm{ab}}
\newcommand{\cris}{\mathrm{cris}}
\newcommand{\un}{\mathrm{un}}
\newcommand{\ur}{\mathrm{ur}}
\newcommand{\Bcris}{\tB_{\cris}}
\newcommand{\Dpcris}{\operatorname{D_{pcris}}}
\newcommand{\rhobar}{\overline{\rho}}
\newcommand{\into}{\hookrightarrow}
\newcommand{\toisom}{\buildrel\sim\over\to}
\newcommand{\Ga}{\GG_a}
\newcommand{\Gm}{\GG_m}
\newcommand{\hchar}{h} 
\newcommand{\dd}{\mathrm{dd}}
\newcommand{\Art}{{\operatorname{Art}}}
\newcommand{\col}{\colon}
\newcommand{\varepsilonbar}{\overline{\varepsilon}}
\newcommand{\rbar}{\overline{r}}
\newcommand{\K}[1]{\mathcal{K}(#1)}
\newcommand{\scrC}{\mathscr{C}}
\newcommand{\Czero}{\scrC^0}
\newcommand{\Czerofrac}{\Czero_{u}}
\newcommand{\Cone}{\scrC^1}
\newcommand{\Conefrac}{\Cone_{u}}
\newcommand{\Hzero}{\mathscr{H}}
\newcommand{\mumap}{\partial}
\newcommand{\tgP}{\widetilde{\gP}}
\newcommand{\czero}{c}
\newcommand{\cegsBbasechangelemma}{\cite[Lem.~2.1.4]{cegsB}}
\newcommand{\cegsBtypesareunmixed}{\cite[Prop.~4.2.12]{cegsB}}
\newcommand{\cegsBdeterminantconditionexplicit}{\cite[Lem.~4.2.11]{cegsB}}
\newcommand{\cegsBtresramlemma}{\cite[Lem.~A.4]{cegsB}}
\newcommand{\cegsBdieudonnesection}{\cite[Sec.~4.6]{cegsB}}
\newcommand{\cegsBtauprop}{\cite[Prop.~3.3.5]{cegsB}}
\newcommand{\cegsBctaubt}{\cite[Prop.~4.2.7]{cegsB}}
\newcommand{\cegsBpotBTreps}{\cite[Lem.~4.2.16]{cegsB}}
\newcommand{\cegsBpotBTrepsremark}{\cite[Rem.~4.2.8]{cegsB}}
\newcommand{\cegsBCmainresults}{\cite[Cor.~4.5.3, Prop.~5.2.21]{cegsB}}
\newcommand{\cegsBZmainresults}{\cite[Thm.~5.1.2, Prop.~5.2.20]{cegsB}}
\begin{document}
\selectlanguage{english}
\title[Components of moduli stacks]{Components of moduli stacks of two-dimensional Galois
representations}

\author[A. Caraiani]{Ana Caraiani}\email{a.caraiani@imperial.ac.uk}
\address{Department of Mathematics, Imperial College London, London SW7 2AZ, UK}

\author[M. Emerton]{Matthew Emerton}\email{emerton@math.uchicago.edu}
\address{Department of Mathematics, University of Chicago,
5734 S.\ University Ave., Chicago, IL 60637, USA}

\author[T. Gee]{Toby Gee} \email{toby.gee@imperial.ac.uk} \address{Department of
  Mathematics, Imperial College London,
  London SW7 2AZ, UK}

\author[D. Savitt]{David Savitt} \email{savitt@jhu.edu}
\address{Department of Mathematics, Johns Hopkins University, 3400 N.\ Charles St., Baltimore, MD 21210, USA}

\begin{abstract}

 In the article \cite{cegsB} we introduced various  
 moduli stacks of two-dimensional tamely potentially Barsotti--Tate representations of the
 absolute Galois group of a $p$-adic local field, as well as related moduli stacks of Breuil--Kisin modules with descent data.
 We study the irreducible components of these stacks, establishing in particular that the components of the former 
 are naturally indexed by certain Serre weights.
\end{abstract}
\maketitle

\selectlanguage{english}

\setcounter{tocdepth}{1}
\tableofcontents

\section{Introduction} 

Fix a prime number $p$, and let $K/\Qp$ be a finite extension with residue field
$k$ and absolute Galois group $G_K := \Gal(\overline{K}/K)$. In the paper \cite{cegsB}, inspired by a construction of Kisin~\cite{kis04} in the
setting of formal deformations, we constructed and began to study the geometry of certain moduli stacks $\cZ^{\dd}$. The stacks $\cZ^{\dd}$ can be thought of as moduli of two-dimensional tamely potentially Barsotti--Tate representations of $G_K$;\ they are
in fact moduli stacks of \'etale $\varphi$-modules with descent data, and by construction are 
 equipped with a partial resolution 
\[ \cC^{\dd,\BT} \to \cZ^{\dd} \]
where $\cC^{\dd,\BT}$ is a moduli stack of rank two Breuil--Kisin modules with  tame descent data and height one.

The purpose of this paper is to make an explicit study of the morphism $ \cC^{\dd,\BT} \to \cZ^{\dd}$  at the level of irreducible components. To be precise, for each two-dimensional tame inertial type $\tau$ there are closed substacks $\cC^{\tau,\BT} \subset \cC^{\dd,\BT}$ and $\cZ^{\tau} \subset \cZ^{\dd}$ corresponding to representations having inertial type $\tau$, and a morphism $\cC^{\tau,\BT} \to \cZ^{\tau}$. These are $p$-adic formal algebraic stacks;\ let $\cC^{\tau,\BT,1}$ be the special fibre of $\cC^{\tau,\BT}$, and $\cZ^{\tau,1}$ its scheme-theoretic image in $\cZ^{\tau}$ (in the sense of \cite{EGstacktheoreticimages}). These were proved in \cite{cegsB} to be equidimensional of dimension $[K:\Qp]$. Moreover the finite type points $\Spec(\F) \to \cZ^{\tau,1}$ are in bijection with Galois representations $G_K \to \GL_2(\F)$ admitting a potentially Barsotti--Tate lift of type $\tau$.

(In fact $\cC^{\tau,\BT,1}$ is shown in \cite{cegsB} to be reduced, from which it follows that $\cZ^{\tau,1}$ is also reduced. The special fibre of $\cZ^{\tau}$ need not be reduced, so it need not equal $\cZ^{\tau,1}$, but it will be proved in the sequel \cite{cegsA} that it is \emph{generically reduced}, using the results of this paper as input.)

Much of the work in our study of the irreducible components of $\cZ^{\tau,1}$
involves an explicit construction of families of extensions of
characters.  Intuitively, a natural source of ``families'' of representations
$\rbar:G_K \to\GL_2(\Fpbar)$ is given by the
extensions of two fixed characters. Indeed, given two
characters~$\chi_1,\chi_2: G_K \to\Fpbartimes$, the
$\Fpbar$-vector space $\Ext^1_{G_K}(\chi_2,\chi_1)$
is usually $[K:\Qp]$-dimensional, and a back of the envelope
calculation 
suggests that as a stack the collection of these representations should have dimension $[K:\Qp]-2$:\ the difference between an extension and a
representation counts for a $-1$, as does the~$\Gm$ of endomorphisms. Twisting~$\chi_1,\chi_2$ independently
by unramified characters gives a candidate for a $[K:\Qp]$-dimensional
family;\ if contained in $\cZ^{\tau}$, then since~$\cZ^{\tau}$ is equidimensional of dimension~$[K:\Qp]$, the
closure of such a family should be an irreducible component of~$\cZ^{\tau}$.

Since there are only finitely many possibilities for the restrictions
of the~$\chi_i$ to the inertia subgroup~$I_K$, this gives a finite list of
maximal-dimensional families. On the other hand, there are up to
unramified twist only finitely many irreducible two-dimensional
representations of~$G_K$, which suggests that the
irreducible representations should correspond to $0$-dimensional
substacks. Together these considerations suggest that the irreducible
components of our moduli stack should be given by the closures of the
families of extensions considered in the previous paragraph, and in
particular that the irreducible representations should arise as limits
of reducible representations. This could not literally be the case
for families of Galois representations, rather than families of
\'etale $\varphi$-modules, and may seem surprising at first glance,
but it is indeed what happens.

In the body of the paper we make this analysis 
rigorous, and we show
that the different families that we have constructed exhaust the
irreducible components. 
We can therefore label the irreducible components of~$\cZ^{\tau,1}$
as follows. A component is specified by an ordered pair of characters~$I_K \to\Fpbar^\times$, which via local class field theory corresponds to a pair of characters~$k^\times\to\Fpbartimes$.  Such a pair can be thought of as the  highest weight of a \emph{Serre
  weight}:\ an irreducible $\Fpbar$-representation of $\GL_2(k)$. To each irreducible component we have thus  associated a Serre weight. (In fact, we need to make a
shift in this dictionary, corresponding to half the sum of the
positive roots of~$\GL_2(k)$, but we ignore this for the purposes of
this introduction.)

This might seem artificial, but in fact it is completely natural, for
the following reason. Following the pioneering work of
Serre~\cite{MR885783} and Buzzard--Diamond--Jarvis~\cite{bdj} (as
extended in~\cite{MR2430440} and~\cite{gee061}), we now know how to
associate a set $W(\rbar)$ of Serre weights to each continuous
representation $\rbar:G_K\to\GL_2(\Fpbar)$, with the property
that if $F$ is a totally real field and $\rhobar:G_F\to\GL_2(\Fpbar)$
is an irreducible representation coming from a Hilbert modular form,
then the possible weights of Hilbert modular forms giving rise
to~$\rhobar$ are precisely determined by the sets
$W(\rhobar|_{G_{F_v}})$ for places $v|p$ of~$F$ (see for example
\cite{blggu2,geekisin,gls13}).

Going back to our labelling of irreducible components
above, we have associated a Serre weight~$\sigmabar$ to each
irreducible component of~$\cZ^{\tau,1}$.  The inertial local Langlands
correspondence assigns a finite set of Serre weights
$\JH(\sigmabar(\tau))$ to~$\tau$, the Jordan--H\"older factors of the
reduction mod~$p$ of the representation~$\sigma(\tau)$ of~$\GL_2(\cO_K)$
corresponding to~$\tau$.  One of our main theorems is that the components of $\cZ^{\tau,1}$ are labeled precisely by the Serre weights $\sigmabar \in \JH(\sigmabar(\tau))$. Furthermore the component labeled by $\sigmabar$ has a dense set of finite type points $\rbar$ with $\sigmabar \in W(\rbar)$. In the sequel \cite{cegsA} this will be strengthened to the statement that the representations~$\rbar$ on the
irreducible component labelled by~$\sigmabar$ are precisely the
representations with $\sigmabar\in W(\rbar)$.

We also study the irreducible components of the stack~$\cC^{\tau,\BT,1}$. If $\tau$ is a non-scalar principal series type then the set~$\JH(\sigmabar(\tau))$ can be identified with a subset of the
power set~$\cS$ of the set of embeddings~$k\into\Fpbar$ (hence, after fixing one such embedding, with a subset $\cP_{\tau}$ of $\Z/f\Z$). For generic
choices of~$\tau$, this subset is the whole of~$\cS$. 
We are able to show, using the theory of Dieudonn\'e modules, that for
any non-scalar principal series type~$\tau$ the irreducible components of~$\cC^{\tau,\BT,1}$
can be identified with~$\cS$, and those irreducible components not
corresponding to elements of~$\JH(\sigmabar(\tau))$ have image
in~$\cZ^\tau$ of positive codimension. There is an analogous statement for cuspidal types, while for scalar types,
both~$\cC^{\tau,\BT,1}$ and~$\cZ^{\tau,1}$ are irreducible.

To state our main results precisely we must first introduce a bit more notation. Fix a tame inertial type $\tau$ and a uniformiser $\pi$ of $K$. Let $L$ be the unramified quadratic extension of $K$, and write $f$ for the inertial degree of $K/\Qp$.  We set $K' = K(\pi^{1/p^f-1})$ if  $\tau$ is principal series, and set $K' = L(\pi^{1/(p^{2f}-1)})$ if $\tau$ is cuspidal. Our moduli stacks of $p$-adic Hodge theoretic objects with descent data will have descent data from $K'$ to $K$. Let $f'$ be the inertial degree of $K'/\Qp$, so that $f' = f$ if the type $\tau$ is principal series, while $f' = 2f$ if the type $\tau$ is cuspidal. 

 We say that a subset $J \subset \Z/f'\Z$ is a \emph{profile} if:
\begin{itemize}
  \item $\tau$ is scalar and $J = \varnothing$, 
  \item $\tau$ is a non-scalar principal series type and $J$ is arbitrary, or
  \item $\tau$ is cuspidal and $J$ has the property that $i \in J$ if and only if $i+f \not\in J$.
\end{itemize}
 If $\tau$ is non-scalar then there are exactly $2^f$ profiles. 

 As above, write $\sigma(\tau)$ for the representation of $\GL_2(\cO_K)$ corresponding to $\tau$ under the inertial local Langlands correspondence of Henniart. The Jordan--H\"older factors of the reduction mod $p$ of $\sigma(\tau)$ are parameterized by an explicit set of profiles $\cP_{\tau}$, and we write $\sigmabar(\tau)_J$ for the factor corresponding to $J$. 

 To each profile $J$, we will associate a closed substack $\overline{\mathcal{C}}(J)$ of $\cC^{\tau,\BT,1}$. The stack $\overline{\cZ}(J)$ is then defined to be the scheme-theoretic image of $\overline{\cC}(J)$ under the  map $\cC^{\tau,\BT,1} \to \cZ^{\tau,1}$, in the sense of \cite{EGstacktheoreticimages}. Then the following is our main result, combining Proposition~\ref{prop:C to Z mono}, Theorem~\ref{thm: unique serre weight}, Corollary~\ref{cor: components of Z are exactly the Z(J)}, and 
Theorem~\ref{thm: components of C}.

\begin{ithm}\label{thm:main thm cegsC} The irreducible components of $\cC^{\tau,\BT,1}$ and $\cZ^{\tau,1}$ are as follows. 
\begin{enumerate} 
  \item  The irreducible
    components of~$\cC^{\tau,1}$ are precisely the~$\overline{\cC}(J)$
    for profiles~$J$, and if $J\ne J'$ then~$\overline{\cC}(J)\ne\overline{\cC}(J')$.

 \item The irreducible
    components of~$\cZ^{\tau,1}$ are precisely the~$\overline{\cZ}(J)$
    for profiles~$J\in\cP_\tau$, and if $J\ne J'$ then~$\overline{\cZ}(J)\ne\overline{\cZ}(J')$.

  \item For each $J \in \cP_{\tau}$, there is a dense open substack $\cU$ of 
$\overline{\cC}(J)$ such that the  map $\overline{\cC}(J) 
\to \overline{\cZ}(J)$ restricts to an open immersion on $\cU$.

 \item 
  For each $J\in\cP_\tau$, there is a dense set of finite type
  points of $\overline{\cZ}(J)$ with the property that the corresponding Galois
  representations have $\sigmabar(\tau)_J$ as a Serre weight, and which
  furthermore admit a unique Breuil--Kisin model of type~$\tau$.

\end{enumerate}
\end{ithm}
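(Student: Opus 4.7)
The plan is to proceed in four steps paralleling the four parts of the theorem, exploiting throughout the equidimensionality (of dimension $[K:\Qp]$) of $\cC^{\tau,\BT,1}$ and $\cZ^{\tau,1}$ established in \cite{cegsB}.

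For part (1), I would begin by giving an explicit description of $\overline{\cC}(J)$ as the closed substack of $\cC^{\tau,\BT,1}$ cut out, at each embedding $i \in \Z/f'\Z$, by conditions on the matrix of $\varphi$ of a Breuil--Kisin module dictated by whether $i \in J$ (with the cuspidal shape condition $i \in J \iff i+f \notin J$ forcing a compatibility between the two Frobenius embeddings). With this description in hand, I would show each $\overline{\cC}(J)$ is irreducible of dimension $[K:\Qp]$ by constructing a smooth atlas from an explicit affine space of matrices, fibering over the torus parameterizing unramified twists of the diagonal characters. Distinctness of the $\overline{\cC}(J)$ follows by comparing the generic shape of $\varphi$ on each stratum. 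To see that these exhaust the irreducible components, I would classify, up to change of basis, all possible shapes of $\varphi$ on a rank-two Breuil--Kisin module of height one with tame descent data of type $\tau$, and conclude that every finite type point of $\cC^{\tau,\BT,1}$ lies on some $\overline{\cC}(J)$.

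For part (2), the $\overline{\cZ}(J)$ are closed and irreducible by construction, and the content is to decide which are actually components of $\cZ^{\tau,1}$, i.e.\ which satisfy $\dim\overline{\cZ}(J) = [K:\Qp]$. I would show this happens precisely for $J \in \cP_\tau$. In one direction, for $J \in \cP_\tau$ the construction of families of extensions of characters sketched in the introduction produces an $[K:\Qp]$-dimensional family of Galois representations in the interior of $\overline{\cC}(J)$ whose image in $\overline{\cZ}(J)$ has the expected dimension. In the other direction, for $J \notin \cP_\tau$ I would use Dieudonn\'e-module calculations to exhibit, over an open dense substack of $\overline{\cC}(J)$, a second distinct Breuil--Kisin model of type $\tau$ mapping to the same \'etale $\varphi$-module; this positive-dimensional fiber over $\overline{\cZ}(J)$ forces the dimension drop. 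Distinctness of the surviving $\overline{\cZ}(J)$ for $J \in \cP_\tau$ is then checked by exhibiting a finite type point on each carrying the Serre weight $\sigmabar(\tau)_J$ and no other $\sigmabar(\tau)_{J'}$. Parts (3) and (4) are then handled together: for each $J \in \cP_\tau$ I would produce an open dense $\cU \subset \overline{\cC}(J)$ whose finite type points are explicit extensions $\rbar$ of characters, constructed so that (i) $\sigmabar(\tau)_J \in W(\rbar)$, (ii) the Breuil--Kisin model of type $\tau$ lifting $\rbar$ is unique, and (iii) $\cU \to \overline{\cZ}(J)$ is injective on points and on tangent spaces. Property (ii) is what makes $\cU \to \overline{\cZ}(J)$ an open immersion (giving (3)); density of the image of $\cU$ in $\overline{\cZ}(J)$, combined with (i), gives (4).

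The principal obstacle, in my view, is step (ii) of the preceding paragraph: showing that for $J \in \cP_\tau$ a generic point of $\overline{\cC}(J)$ admits a \emph{unique} Breuil--Kisin model of type $\tau$. Without this, the open immersion statement in (3) fails and the labelling of components by Jordan--H\"older factors in (4) becomes ambiguous; dually, the non-uniqueness for $J \notin \cP_\tau$ that drives the dimension drop in part (2) must be cleanly separated from the shapes in $\cP_\tau$. Handling this requires a careful combination of Dieudonn\'e-theoretic calculations with the explicit description of extensions of characters, and must interact properly with the phenomenon noted in the introduction whereby irreducible Galois representations appear as limits of reducible ones on the $\varphi$-module side.
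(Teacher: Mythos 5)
Your proposal captures the broad strategy — construct the $\overline{\cC}(J)$ from families of extensions, compare their dimensions against the equidimensionality results from \cite{cegsB}, and use Dieudonn\'e modules somewhere to separate components — but it contains a serious gap, misplaces a key tool, and in one place attempts something harder than what the paper actually needs.

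The serious gap is in your argument that the $\overline{\cC}(J)$ \emph{exhaust} the irreducible components. You propose to ``classify, up to change of basis, all possible shapes of $\varphi$'' on a rank-two Breuil--Kisin module of type $\tau$. But a rank-two Breuil--Kisin module need not be reducible (an extension of rank-one pieces), and the irreducible ones cannot be put into a triangular form, so there is no ``shape'' to classify in your sense. You do flag at the very end that irreducible Galois representations ``appear as limits of reducible ones on the $\varphi$-module side,'' but your exhaustion argument makes no provision for them. The paper deals with this head-on in Section~\ref{subsec:irreducible}: since irreducible $\rbar$ becomes reducible over the unramified quadratic extension $L$, one base-changes the whole moduli problem to $L$, identifies $\cC_K^\tau$ with a fixed-point stack $(\cC_L^{\tau|_L})^G$ inside $\cC_L^{\tau|_L}$ (Lemma~\ref{lem: fixed point stack iso}), and then shows by an explicit dimension count on the fibre $Y(\gM,\gN)_1$ (Proposition~\ref{prop:irred-bound}) that the closed substack $\cC_{\irred}$ containing all irreducible points has dimension strictly less than $[K:\Qp]$ (Theorem~\ref{thm: irreducible Kisin modules can be ignored}). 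This is the hardest single step in part (1) and your proposal has no analogue of it.

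You also misplace the Dieudonn\'e-module calculation. You propose to use it to produce, for $J\notin\cP_\tau$, a second Breuil--Kisin model of the same $\varphi$-module, hence the dimension drop in part (2). In the paper this is not a Dieudonn\'e-module fact but a $\kExt^1$ fact: one computes $\kExt^1_{\K{\F}}(\gM,\gN) = \ker\bigl(\Ext^1_{\K{\F}}(\gM,\gN)\to\Ext^1_{\K{\F}}(\gM[1/u],\gN[1/u])\bigr)$ explicitly (Proposition~\ref{prop:ker-ext-maximal}) and shows it vanishes precisely when $J\in\cP_\tau$ (Corollary~\ref{cor:ker-ext-maximal-nonzero}). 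Nonvanishing of $\kExt^1$ means the universal family $\Spec B^{\kfree}\to\cR^{\dd,1}$ has positive-dimensional fibres, and Corollary~\ref{cor: dimension of families of extensions} then gives the dimension drop. The Dieudonn\'e-module computation (Lemma~\ref{lem:Dieudonne modules}, Proposition~\ref{prop:Dieudonne divisors}) is instead used for a different purpose: to distinguish the components $\overline{\cC}(J)$ for distinct $J$ (Theorem~\ref{thm: components of C}), via the induced map to the gauge stack $\cG_\eta$. You do not mention this use at all, so in your argument it is unclear why the $\overline{\cC}(J)$ are pairwise distinct.

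Finally, your proposed route to distinctness of the $\overline{\cZ}(J)$ (``a finite type point on each carrying $\sigmabar(\tau)_J$ and no other $\sigmabar(\tau)_{J'}$'') is strictly harder than what the paper actually proves in this direction; see Remark~\ref{rem: unique Serre weight isn't proved here but could be}, which explicitly defers the uniqueness of the Serre weight to the sequel \cite{cegsA}. The paper's argument for distinctness is more elementary: Proposition~\ref{prop:char-calculation} computes $T(\gN(J))$ in closed form and shows $J\mapsto T(\gN(J))$ is injective on $\cP_\tau$, so the generic Galois representations on distinct components are built from distinct pairs of characters. Showing membership $\sigmabar(\tau)_J\in W(\rbar)$ (not uniqueness) is then the content of Theorem~\ref{thm: unique serre weight} and suffices for part (4).
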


\begin{iremark}\label{rem:phantom-weights}
We emphasize in Theorem~\ref{thm:main thm cegsC} that the components of $\cZ^{\tau,1}$ are indexed by profiles $J \in \cP_{\tau}$, \emph{not} by all profiles. If $J \not\in \cP_{\tau}$, then  the stack $\overline{\cZ}(J)$ has dimension strictly smaller than $[K:\Qp]$, and so is properly contained in some component of $\cZ^{\tau,1}$. We anticipate that the loci $\ocZ(J)$ will nevertheless be of interest when $J \not\in \cP_\tau$:\ we expect that they will correspond to ``phantom'' (partial weight one) Serre weights of relevance to the geometric variant of the weight part of Serre's conjecture proposed by Diamond--Sasaki \cite{DiamondSasaki}. This will be the subject of future work.
\end{iremark}

We assume that~$p>2$ in much of the paper; while we expect that our
results should also hold if~$p=2$, there are several reasons to
exclude this case. We are frequently able to considerably simplify our
arguments by assuming that the extension~$K'/K$ is not just tamely
ramified, but in fact of degree prime to~$p$; this is problematic
when~$p=2$, as the consideration of cuspidal types involves a
quadratic unramified extension. Furthermore, in the sequel \cite{cegsA} we will use results on the
Breuil--M\'ezard conjecture which ultimately depend on automorphy
lifting theorems that are not available in the case $p=2$ at present
(although it is plausible that the methods of~\cite{Thornep=2} could
be used to prove them). 

We conclude this introduction by discussing the relationship between our results and those of \cite{EGmoduli}. Two of us (M.E. and T.G.) have constructed moduli stacks $\cX_{d}$ of rank~$d$ \'etale $(\varphi,\Gamma)$-modules for $K$, as well as substacks $\cX_d^{\lambda,\tau}$ which may be regarded as stacks of potentially crystalline representations of $G_K$ with inertial type $\tau$ and Hodge type~$\lambda$. When $d=2$ and $\lambda$ is the trivial Hodge type, these are stacks $\cX_2^{\tau,\BT}$ of potentially Barsotti--Tate representations of $G_K$ of inertial type $\tau$, and we anticipate that $\cX_2^{\tau,\BT}$ is isomorphic to $\cZ^{\tau,\BT}$ (but since we do not need this fact, we have not proved it).

One of the main results of the book \cite{EGmoduli} is that the irreducible components of the underlying reduced stacks $\cX_{d,\red}$ are in bijection with the irreducible representations of $\GL_d(k)$. This bijection is characterised in essentially exactly the same way as our description of the components of $\cZ^{\tau,1}$ in this paper:\ a Serre weight has a highest weight, which corresponds to a tuple of inertial characters, which gives rise to a family of successive extensions of $1$-dimensionals representations. Then the closure of this family is a component of $\cX_{d,\red}$.

The crucial difference between our setting and that of \cite{EGmoduli}
is that we could prove in \cite{cegsB} that the stacks $\cZ^{\tau,1}$
are reduced. 
  The proof makes use of the resolution $\cC^{\tau,\BT,1} \to \cZ^{\tau,1}$ and the fact that we are able to relate the stack $\cC^{\tau,\BT}$ to a local model at Iwahori level, whose special fibre is known to be reduced. In the sequel \cite{cegsA} we combine the characterisation of the components of $\cZ^{\tau,1}$ from this paper with the reducedness of $\cZ^{\tau,1}$ from \cite{cegsB} to prove that the special fibre of $\cZ^{\tau}$ is \emph{generically} reduced. This will then allow us to  completely characterise \emph{all} of the finite type points on each component of $\cZ^{\tau,1}$ (not just a dense set of points), and to prove geometrisations of the Breuil--M\'ezard conjecture and of the weight part of Serre's conjecture for the stacks $\cZ^{\dd,1}$. Furthermore, by means of a comparison of versal rings, these results can be transported to the stacks $\cX^{\tau,\BT}_2$ of \cite{EGmoduli} as well.

\subsection{Acknowledgements}We would like to thank 
Kalyani Kansal for helpful comments.

\subsection{Notation and conventions}\label{subsec: notation}
\subsubsection*{Topological groups} If~$M$ is an abelian
topological group with a linear topology, then as
in~\cite[\href{https://stacks.math.columbia.edu/tag/07E7}{Tag
  07E7}]{stacks-project} we say that~$M$ is {\em complete} if the
natural morphism $M\to \varinjlim_i M/U_i$ is an isomorphism,
where~$\{U_i\}_{i \in I}$ is some (equivalently any) fundamental
system of neighbourhoods of~$0$ consisting of subgroups. Note that in
some other references this would be referred to as being~{\em complete
  and separated}. In particular, any $p$-adically complete ring~$A$ is
by definition $p$-adically separated.

\subsubsection*{Galois theory and local class field theory} If $M$ is a field, we let $G_M$ denote its
absolute Galois group.
If~$M$ is a global field and $v$ is a place of $M$, let $M_v$ denote
the completion of $M$ at $v$. If~$M$ is a local field, we write~$I_M$
for the inertia subgroup of~$G_M$. 

 Let $p$ be a prime number. 
 Fix a finite extension $K/\Qp$, with
 ring of integers $\cO_K$ and residue field $k$.  Let $e$ and $f$
 be the  ramification and inertial degrees of $K$, respectively, and
 write $\# k=p^f$ for the cardinality of~$k$.   
Let $K'/K$ be a finite
tamely ramified Galois extension. Let $k'$ be the residue field of $K'$, and let $e',f'$ be the
ramification and inertial degrees of $K'$ respectively.

Our representations of $G_K$ will have coefficients in $\Qpbar$,
a fixed algebraic closure of $\Qp$ whose residue field we denote by~$\Fpbar$. Let $E$ be a finite
extension of $\Qp$ contained in $\Qpbar$ and containing the image of every
embedding of $K'$ into $\Qpbar$. Let $\cO$ be the ring of integers in
$E$, with uniformiser $\varpi$ and residue field $\F \subset
\Fpbar$.  

Fix an embedding $\sigma_0:k'\into\F$, and recursively define
$\sigma_i:k'\into\F$ for all $i\in\Z$ so that
$\sigma_{i+1}^p=\sigma_i$; of course, we have $\sigma_{i+f'}=\sigma_i$
for all~$i$. We let $e_i\in k'\otimes_{\Fp} \F$ denote the idempotent
satisfying $(x\otimes 1)e_i=(1\otimes\sigma_i(x))e_i$ for all $x\in
k'$; note that $\varphi(e_i)=e_{i+1}$. We also denote by $e_i$ the
natural lift of $e_i$ to an idempotent in
$W(k')\otimes_{\Zp}\cO$. If $M$ is an
$W(k')\otimes_{\Zp}\cO$-module, then we write $M_i$ for
$e_iM$.

We write $\Art_K \col K^\times\to W_K^{\ab}$ for
the isomorphism of local class field theory, normalised so that
uniformisers correspond to geometric Frobenius elements. 

\begin{lemma}\label{lem:cft} Let  $\pi$ be any uniformiser
of $\cO_K$. 
The composite $I_K \to \cO_K^{\times} \to k^{\times}$, where  the map
$I_K \to \cO_K^\times$ is induced by the restriction of $\Art_K^{-1}$,
sends an element $g \in I_K$ to the image in $k^{\times}$ of
$g(\pi^{1/(p^f-1)})/\pi^{1/(p^f-1)}$. 
\end{lemma}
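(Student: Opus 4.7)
The plan is to match both sides of the claimed identity with the reduction of $g(\lambda)/\lambda$, where $\lambda$ is a non-zero $\pi$-torsion element of the Lubin--Tate formal group associated to $\pi$. With $q = p^f$, take $\mathcal{F}$ over $\cO_K$ with $[\pi]_{\mathcal{F}}(X) = \pi X + X^q$, and let $\lambda \in \overline{K}$ be any solution to $\lambda^{q-1} = -\pi$.

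For the Artin side, Lubin--Tate theory gives an isomorphism $\cO_K^\times/(1+\pi\cO_K) \isomap \Gal(K(\mathcal{F}[\pi])/K)$ identified via $\Art_K^{-1}$; concretely, $\Art_K^{-1}(g) \equiv u \pmod{1+\pi\cO_K}$ if and only if $g(\lambda) = [u]_{\mathcal{F}}(\lambda)$, in the paper's geometric-Frobenius normalization. Since $[u]_{\mathcal{F}}(X) \equiv uX \pmod{X^2}$ and $\lambda$ lies in the maximal ideal of $\cO_{\overline K}$, one obtains $\overline{g(\lambda)/\lambda} = \bar u$ in $k^\times$.

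For the tame side, set $\pi_L = \pi^{1/(q-1)}$ and $\mu = \lambda/\pi_L$; then $\mu^{q-1} = -\pi/\pi = -1$, and since $(q-1,p)=1$ the extension $K(\mu)/K$ is unramified (the polynomial $X^{q-1}+1$ has unit discriminant). Hence $g(\mu) = \mu$ for every $g \in I_K$, so $g(\pi_L)/\pi_L = g(\lambda)/\lambda$ in $\overline{K}^\times$, and reducing modulo the maximal ideal of $\cO_{\overline K}$ yields the claim.

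The main obstacle is getting the Lubin--Tate normalization right in the geometric-Frobenius convention (a well-known source of sign confusion): specifically, verifying that the formula reads $[u]_{\mathcal{F}}$ and not $[u^{-1}]_{\mathcal{F}}$. The classical statement (as in Serre's \emph{Local Fields}) uses the arithmetic-Frobenius convention, and the two conventions are related by $\Art_K^{\mathrm{geom}}(x) = \Art_K^{\mathrm{arith}}(x^{-1})$, which flips the exponent.
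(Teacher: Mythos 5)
Your proof is correct, and it is in spirit the same as the paper's: the paper simply cites Yoshida's Lubin--Tate development of local class field theory, while you have unwound the relevant Lubin--Tate computation explicitly. The two lines of the argument (identifying $\overline{g(\lambda)/\lambda}$ with $\bar u$ via the Lubin--Tate action, and then transferring from the $\pi$-torsion point $\lambda$ to the Kummer generator $\pi^{1/(q-1)}$ using the unramified root of unity $\mu = \lambda/\pi^{1/(q-1)}$) are both sound.

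Concerning the normalization you flagged as the main obstacle: your choice is the right one. With the arithmetic-Frobenius normalization the Lubin--Tate theorem reads $g(\lambda) = [u^{-1}]_{\mathcal F}(\lambda)$ for $u = (\Art_K^{\mathrm{arith}})^{-1}(g)$, and since $\Art_K^{\mathrm{geom}}(x) = \Art_K^{\mathrm{arith}}(x^{-1})$ the inverse disappears in the paper's convention, giving $g(\lambda) = [u]_{\mathcal F}(\lambda)$ for $u = \Art_K^{-1}(g)$ as you wrote. An independent sanity check against the paper's own conventions: taking $K = \Q_p$ and $\mathcal F = \hat{\GG}_m$ (with $\lambda = \zeta - 1$, $\zeta$ a primitive $p$th root of unity), the relation $g(\zeta) = \zeta^{\chi(g)}$ together with $[u]_{\hat\GG_m}(X) = (1+X)^u - 1$ shows that $g(\lambda) = [u]_{\mathcal F}(\lambda)$ is equivalent to $\chi(g) \equiv u \pmod p$; and the paper's identity $\prod_\sigma \omega_\sigma^e = \varepsilonbar$ from the notation section, specialized to $K = \Q_p$, forces exactly $\chi(g) \equiv \Art_K^{-1}(g) \pmod p$ on inertia, with no inverse. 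So the sign is as you asserted, and your proof closes.
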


\begin{proof}
This follows (for example) from the construction in \cite[Prop.~4.4(iii), Prop.~4.7(ii), Cor.~4.9, Def.~4.10]{MR2487860}.
\end{proof}

 For each $\sigma\in \Hom(k,\Fpbar)$ we
define the fundamental character $\omega_{\sigma}$   to~$\sigma$ to be the composite \[\xymatrix{I_K \ar[r] & \cO_{K}^{\times}\ar[r] & k^{\times}\ar[r]^{\sigma} & \Fpbarx,}\]
where the map $I_K \to \cO_K^\times$ is induced by the restriction of $\Art_K^{-1}$.
Let $\varepsilon$ denote the $p$-adic cyclotomic
character and $\varepsilonbar$ the mod~$p$ cyclotomic
character, so that $\prod_{\sigma \in \Hom(k,\Fpbar)}
\omega_{\sigma}^{e} = \varepsilonbar$.   We will often identify
characters of  $I_K$ with characters of $k^{\times}$ via the Artin
map.

\subsubsection*{Inertial local Langlands} A two-dimensional \emph{tame inertial type} is (the isomorphism
class of) a tamely ramified representation
$\tau : I_K \to \GL_2(\Zpbar)$ that extends to a representation of $G_K$ and
whose kernel is open. Such a representation is of the form $\tau
\simeq \eta \oplus \eta'$, and we say that $\tau$ is a \emph{tame principal series
  type} if 
$\eta,\eta'$ both extend to characters of $G_K$. Otherwise,
$\eta'=\eta^q$, and $\eta$ extends to a character
of~$G_L$, where~$L/K$ is a quadratic unramified extension. 
In this case we say
that~$\tau$ is a \emph{tame cuspidal type}.

Henniart's appendix to \cite{breuil-mezard}
associates a finite dimensional irreducible $E$-representation $\sigma(\tau)$ of
$\GL_2(\cO_K)$ to each inertial type $\tau$; we refer to this association as the {\em
  inertial local Langlands correspondence}. Since we are only working
with tame inertial types, this correspondence can be made very
explicit as follows. 

If $\tau
\simeq \eta \oplus \eta'$ is a tame principal series type, then we
also write $\eta,\eta':k^\times\to\cO^\times$ for the 
multiplicative characters determined by
$\eta\circ\Art_K|_{\cO_{K}^\times},\eta'\circ\Art_K|_{\cO_{K}^\times}$
respectively. If $\eta=\eta'$, then we set
$\sigma(\tau)=\eta\circ\det$. Otherwise, we write $I$ for the Iwahori subgroup of $\GL_2(\cO_K)$ consisting of
matrices which are upper triangular modulo a uniformiser~$\varpi_K$
of~$K$, and write $\chi = \eta'\otimes \eta:
I\to\cO^\times$ for the character \[
\begin{pmatrix}
  a&b\\\varpi_K c&d
\end{pmatrix}\mapsto \eta'(\overline{a})\eta(\overline{d}).\] Then $\sigma(\tau) := \Ind_I^{\GL_2(\cO_K)}
\chi$. 

If $\tau=\eta\oplus\eta^q$ is a tame cuspidal type, then as above we
write~$L/K$ for a quadratic unramified extension, and~$l$ for the
residue field of~$\cO_L$. We write
$\eta :l^\times\to\cO^\times$ for the 
multiplicative character determined by
$\eta\circ\Art_L|_{\cO_{L}^\times}$; then $\sigma(\tau)$ is the
inflation to $\GL_2(\cO_K)$ of the cuspidal representation of $\GL_2(k)$
denoted by~$\Theta(\eta)$ in~\cite{MR2392355}.

\subsubsection*{$p$-adic Hodge theory} We normalise Hodge--Tate weights so that all Hodge--Tate weights of
the cyclotomic character are equal to $-1$. We say that a potentially
crystalline representation $\rho:G_K\to\GL_2(\Qpbar)$ has \emph{Hodge
  type} $0$, or is \emph{potentially Barsotti--Tate}, if for each
$\varsigma :K\into \Qpbar$, the Hodge--Tate weights of $\rho$ with
respect to $\varsigma$ are $0$ and $1$.  (Note that this is a more
restrictive definition of potentially Barsotti--Tate than is sometimes
used; however, we will have no reason to deal with representations
with non-regular Hodge-Tate weights, and so we exclude them from
consideration. Note also that it is more usual in the literature to
say that $\rho$ is potentially Barsotti--Tate if it is potentially
crystalline, and $\rho^\vee$ has Hodge type $0$.) 

We say
that a potentially crystalline representation
$\rho:G_K\to\GL_2(\Qpbar)$ has  \emph{inertial type} $\tau$ if the traces of
elements of $I_K$ acting on~$\tau$ and on
\[\Dpcris(\rho)=\varinjlim_{K'/K}(\Bcris\otimes_{\Qp}V_\rho)^{G_{K'}}\] are
equal (here~$V_\rho$ is the underlying vector space
of~$V_\rho$). 
A representation $\rbar:G_K\to\GL_2(\Fpbar)$ \emph{has a potentially
    Barsotti--Tate lift of
    type~$\tau$} if and
  only if $\rbar$ admits a lift to a representation
  $r:G_K\to\GL_2(\Zpbar)$ of Hodge type~$0$ and inertial type~$\tau$.

\subsubsection*{Serre weights}
By definition, a \emph{Serre weight} is an irreducible
$\F$-representation of $\GL_2(k)$. Concretely, such a
representation is of the form
\numequation\label{eq:serreweight} \sigmabar_{\vec{t},\vec{s}}:=\otimes^{f-1}_{j=0}
(\det{\!}^{t_j}\Sym^{s_j}k^2) \otimes_{k,\sigma_{j}} \F,\end{equation}
where $0\le s_j,t_j\le p-1$ and not all $t_j$ are equal to
$p-1$. We say that a Serre weight is \emph{Steinberg} if $s_j=p-1$ for all $j$,
and \emph{non-Steinberg} otherwise.

\subsubsection*{A remark on normalisations}  Given a continuous representation $\rbar:G_K\to\GL_2(\Fpbar)$, there
is an associated (nonempty) set of Serre weights~$W(\rbar)$ whose
precise definition we will recall in Appendix~\ref{sec: appendix on tame types}. There are in fact
several different definitions of~$W(\rbar)$ in the literature; as a
result of the papers~\cite{blggu2,geekisin,gls13}, these definitions
are known to be equivalent up to normalisation. 

However, the normalisations of
Hodge--Tate weights and of inertial local Langlands used in
\cite{geekisin,gls13,emertongeesavitt} are not all the same, and so
for clarity we lay out how they differ, and how they compare to the normalisations of
this paper. 

Our conventions for Hodge--Tate weights and
inertial types agree with those of~\cite{geekisin, emertongeesavitt}, but our
representation~$\sigma(\tau)$ is the
representation~$\sigma(\tau^\vee)$ of~\cite{geekisin, emertongeesavitt}
(where~$\tau^\vee=\eta^{-1}\oplus(\eta')^{-1}$);\ to see this, note the
dual in the definition of~$\sigma(\tau)$ in~\cite[Thm.\
2.1.3]{geekisin} and the discussion in \S 1.9 of
\cite{emertongeesavitt}.\footnote{However, this dual is erroneously
  omitted when the inertial local Langlands correspondence is made
  explicit  at the end of  \cite[\S3.1]{emertongeesavitt}. See
  Remark~\ref{arem: wtf were we thinking in EGS}.}

In all cases one chooses to normalise the set of Serre weights so
that the condition of Lemma~\ref{lem: list of things we need to know about Serre
  weights}(1) holds.  Consequently, our set of weights~$W(\rbar)$ is the
set of duals of the weights~$W(\rbar)$ considered
in~\cite{geekisin}. In turn, the paper~\cite{gls13} has the opposite
convention for the signs of Hodge--Tate weights to our convention (and
to the convention of~\cite{geekisin}), so we find that our set of
weights~$W(\rbar)$ is the set of duals of the weights~$W(\rbar^\vee)$
considered in~\cite{gls13}.

\subsubsection*{Stacks}We follow the terminology of~\cite{stacks-project}; in
particular, we write ``algebraic stack'' rather than ``Artin stack''. More
precisely, an algebraic stack is a stack in groupoids in the \emph{fppf} topology,
whose diagonal is representable by algebraic spaces, which admits a smooth
surjection from a
scheme. See~\cite[\href{http://stacks.math.columbia.edu/tag/026N}{Tag
  026N}]{stacks-project} for a discussion of how this definition relates to
others in the literature, and~\cite[\href{http://stacks.math.columbia.edu/tag/04XB}{Tag
  04XB}]{stacks-project} for key properties of morphisms
representable by algebraic spaces.

For a commutative ring $A$, an \emph{fppf stack over $A$} (or
\emph{fppf} $A$-stack) is a stack fibred in groupoids over the big \emph{fppf}
site of $\Spec A$.

\section{Preliminaries}

We begin by reviewing the various constructions and results that we will need from \cite{cegsB}. Section~\ref{subsec: kisin modules with dd} recalls the definition and a few basic algebraic properties of Breuil--Kisin modules with coefficients and descent data, while Section~\ref{subsec: etale phi modules
  and Galois representations} does the same for \'etale $\varphi$-modules. In Section~\ref{sec:recoll-from-citec} we define the stacks $\cC^{\tau,\BT,1}$ and $\cZ^{\tau,1}$ (as well as various other related stacks) and state the main results of \cite{cegsB}. Finally, in Section~\ref{sec:dieudonne-stacks} we introduce and study stacks of Dieudonn\'e modules that will be used at the end of the paper to determine the irreducible components of $\cC^{\tau,\BT,1}$.

\subsection{Breuil--Kisin modules 
  with descent data}\label{subsec: kisin modules with dd} 
Recall that we have a finite
tamely ramified Galois extension $K'/K$. Suppose further that there exists a uniformiser $\pi'$ of
$\cO_{K'}$ such that $\pi:=(\pi')^{e(K'/K)}$ is an element of~$K$, 
where $e(K'/K)$ is the ramification index of
$K'/K$. 
Recall that $k'$ is the residue field of $K'$, while $e',f'$ are the
ramification and inertial degrees of $K'$ respectively.
 Let $E(u)$ be the minimal polynomial of $\pi'$ over $W(k')[1/p]$. 

Let $\varphi$ denote the arithmetic Frobenius automorphism of $k'$, which lifts uniquely
to an automorphism of $W(k')$ that we also denote by $\varphi$. Define
$\gS:=W(k')[[u]]$, and extend $\varphi$ to $\gS$ by \[\varphi\left(\sum a_iu^i\right)=\sum
\varphi(a_i)u^{pi}.\] By our assumptions that $(\pi')^{e(K'/K)} \in K$ 
and that $K'/K$ is Galois, for each
$g\in\Gal(K'/K)$ we can write $g(\pi')/\pi'=h(g)$ with $h(g)\in
\mu_{e(K'/K)}(K') \subset W(k')$,
and we
let $\Gal(K'/K)$ act on $\gS$ via \[g\left(\sum a_iu^i\right)=\sum g(a_i)h(g)^iu^i.\]

Let $A$ be a $p$-adically complete $\Zp$-algebra, set $\gS_A:=(W(k')\otimes_{\Zp} A)[[u]]$, and extend
the actions of $\varphi$ and $\Gal(K'/K)$ on $\gS$ to actions on $\gS_A$ in the
obvious ($A$-linear) fashion. 

\begin{lemma}\label{lem:projectivity descends}
An $\gS_A$-module is 
projective if and only if it is projective as an 
$A[[u]]$-module. 
\end{lemma}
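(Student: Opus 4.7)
The plan is to exploit the fact that $W(k')/\Zp$ is finite \'etale (since $k'/\F_p$ is separable), and hence so is the base change $\gS_A/A[[u]]$. First, observe that $\gS_A \cong W(k')\otimes_{\Zp} A[[u]]$ as $A[[u]]$-algebras, since $W(k')$ is free of finite rank $f'$ over $\Zp$, so there is no completion issue in identifying $(W(k')\otimes_{\Zp}A)[[u]]$ with $W(k')\otimes_{\Zp} A[[u]]$. In particular, $\gS_A$ is free of rank $f'$ as an $A[[u]]$-module.

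The direction $\gS_A$-projective $\Rightarrow$ $A[[u]]$-projective is then immediate: any free $\gS_A$-module is also free as an $A[[u]]$-module, so a direct summand of the former is a direct summand of the latter.

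For the reverse direction, I would use the \emph{separability idempotent} $e = \sum_i s_i \otimes t_i \in \gS_A \otimes_{A[[u]]} \gS_A$, which exists because $\gS_A/A[[u]]$ is finite \'etale. By definition $\sum_i s_i t_i = 1$ and $(s\otimes 1)\cdot e = (1\otimes s)\cdot e$ in $\gS_A\otimes_{A[[u]]}\gS_A$ for all $s \in \gS_A$. For any $\gS_A$-module $M$, the formula $\sigma(m) := \sum_i s_i m \otimes t_i$ then defines a map $\sigma : M \to M\otimes_{A[[u]]}\gS_A$ that is $\gS_A$-linear with respect to the $\gS_A$-action on the right-hand tensor factor (the centrality property of $e$ is exactly what is needed), and $\sigma$ is a section of the multiplication map $\mu : M\otimes_{A[[u]]}\gS_A \to M$, $m\otimes s \mapsto sm$. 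Thus $M$ is an $\gS_A$-module direct summand of $M\otimes_{A[[u]]}\gS_A$. If $M$ is $A[[u]]$-projective, writing it as a summand of some free module $A[[u]]^I$ shows that $M\otimes_{A[[u]]}\gS_A$ is a summand of $\gS_A^I$ and hence is $\gS_A$-projective; therefore $M$ is $\gS_A$-projective as well.

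I do not expect any serious obstacle: the entire argument is standard commutative algebra. The only nontrivial inputs are the finite \'etaleness of $W(k')/\Zp$ (clear) and the existence of a separability idempotent for finite \'etale extensions of commutative rings (well-known), and the only step requiring a small check is the verification that the formula for $\sigma$ is $\gS_A$-linear, which is a direct consequence of the defining identity of $e$.
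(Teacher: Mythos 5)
Your proof is correct and takes essentially the same approach as the paper: the paper proves the nontrivial direction by observing that $W(k')\otimes_{\Zp}\gM \cong \gS_A\otimes_{A[[u]]}\gM$ is $\gS_A$-projective and then splitting off $\gM$ as an $\gS_A$-module direct summand because $W(k')$ is a direct summand of $W(k')\otimes_{\Zp}W(k')$, which is exactly the separability idempotent you invoke. Your write-up is if anything slightly more careful than the paper's, since you explicitly use the centrality of the idempotent to verify that the section $\sigma$ is $\gS_A$-linear for the left action, which is the point the paper's terse phrasing elides.
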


\begin{proof}
 Suppose that $\gM$ is an $\gS_A$-module that is projective as an
 $A[[u]]$-module. Certainly $W(k') \otimes_{\Zp} \gM$ is projective
 over $\gS_A$, and we claim that it has $\gM$ as an $\gS_A$-module direct summand. 
 Indeed, this  follows by rewriting $\gM$ as $W(k')\otimes_{W(k')}
 \gM$ and noting that $W(k')$ is a $W(k')$-module direct summand of $W(k')
 \otimes_{\Zp} W(k')$.
\end{proof}

The actions of $\varphi$ and $\Gal(K'/K)$ on $\gS_A$
extend to actions on $\gS_A[1/u]=(W(k')\otimes_{\Zp} A)((u))$ in the obvious
way.  It will sometimes be necessary to consider the subring $\gS_A^0
:=(W(k)\otimes_{\Zp} A)[[v]]$ of $\gS_A$
  consisting of power series in 
$v:=u^{e(K'/K)}$, on which  $\Gal(K'/K)$ acts
  trivially. 

\begin{defn}\label{defn: Kisin module with descent data}
Fix a $p$-adically complete $\Zp$-algebra~$A$. A \emph{Breuil--Kisin module with
  $A$-coefficients and descent data from $K'$ to $K$} (or often simply
a \emph{Breuil--Kisin module}) 
  is a triple $(\gM,\varphi_{\gM},\{\hat{g}\}_{g\in\Gal(K'/K)})$ consisting of
  a 
  $\gS_A$-module~$\gM$ and a $\varphi$-semilinear map
  $\varphi_{\gM}:\gM\to\gM$ 
   such that:
  \begin{itemize}
  \item the $\gS_A$-module $\gM$ is finitely generated and projective,
    and 
  \item the  induced
  map $\Phi_{\gM} = 1 \otimes \varphi_{\gM} :\varphi^*\gM\to\gM$ is an isomorphism after
  inverting $E(u)$  (here as usual we write $\varphi^*\gM:=\gS_A \otimes_{\varphi,\gS_A}\gM$), 
  \end{itemize}
together with
  additive bijections  $\hat{g}:\gM\to\gM$, satisfying the further
  properties that 
    the maps $\hat{g}$ commute with $\varphi_\gM$, satisfy 
    $\hat{g_1}\circ\hat{g_2}=\widehat{g_1\circ g_2}$, and have
    $\hat{g}(sm)=g(s)\hat{g}(m)$ for all $s\in\gS_A$, $m\in\gM$. 
   We say that $\gM$ is has \emph{height at most $h$} if the cokernel of 
   $\Phi_{\gM}$ is killed by $E(u)^h$. 

The Breuil--Kisin module $\gM$ is said to be  of rank~$d$ if the underlying
finitely generated projective $\gS_A$-module has constant rank~$d$. It
is said to be free if the underlying $\gS_A$-module is free.
\end{defn}

A morphism of Breuil--Kisin modules with descent data is
a  morphism 
of~$\gS_A$-modules 
that commutes with $\varphi$ and with the~$\hat{g}$. 
In the case that $K'=K$ the data of the $\hat{g}$ is trivial, so
it can be forgotten, giving the category of \emph{Breuil--Kisin modules with
  $A$-coefficients.} In this case it will sometimes be convenient to elide the difference between a
Breuil--Kisin module with trivial descent data, and a Breuil--Kisin module without
descent data, in order to avoid making separate definitions in the
case of Breuil--Kisin modules without descent data.

\begin{rem} 
\label{rem:projectivity for Kisin modules} We refer the reader
to~\cite[\S5.1]{EGstacktheoreticimages} for a
discussion of foundational results concerning finitely generated modules
over the power series ring $A[[u]]$. In particular (using
Lemma~\ref{lem:projectivity descends}) we note the
following.
\begin{enumerate}
\item An $\gS_A$-module $\gM$ is finitely generated and projective if
  and only if it is $u$-torsion free and $u$-adically complete, and $\gM/u\gM$ is a finitely generated projective
  $A$-module (\cite[Prop.~5.1.8]{EGstacktheoreticimages}).

\item  If the $\gS_A$-module $\gM$ is projective of
rank~$d$, then it is Zariski locally free of rank~$d$ in the sense that there is a cover of $\Spec A$
by affine opens $\Spec B_i$ such that each of the base-changed modules
 $\gM\otimes_{\gS_A}\gS_{B_i}$ is free of rank $d$  (\cite[Prop.~5.1.9]{EGstacktheoreticimages}).

\item If $A$ is coherent (so in
    particular, if $A$ is Noetherian), then $A[[u]]$ is faithfully
    flat over~$A$, and so $\gS_A$ is faithfully flat over~$A$, but
    this need not hold if $A$ is not coherent.
\end{enumerate}

\end{rem}

\begin{df}
\label{def:completed tensor}
If $Q$ is any (not necessarily finitely generated) $A$-module,
and $\gM$ is an $A[[u]]$-module,
then  we let $\gM\cotimes_A Q$ denote the $u$-adic completion
of $\gM\otimes_A Q$.
\end{df} 

\begin{lem}
  \label{rem: base change of locally free Kisin module is a
    locally free Kisin module}
If $\gM$ is a Breuil--Kisin module and $B$ is an $A$-algebra, then the base
change $\gM \cotimes_A B$ is a Breuil--Kisin module.
\end{lem}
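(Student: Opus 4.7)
The plan is to verify each condition of Definition~\ref{defn: Kisin module with descent data} for $\gM\cotimes_A B$, reducing to the case in which $\gM$ is free over $\gS_A$.

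First I would show that $\gM\cotimes_A B$ is finitely generated and projective over $\gS_B$. The crucial ingredient is the identity $\gS_A\cotimes_A B = \gS_B$: since $\gS_A/u^n \cong (W(k')\otimes_{\Zp} A)[u]/(u^n)$ is a free $A$-module on the basis $1, u, \ldots, u^{n-1}$, one has $(\gS_A\otimes_A B)/u^n = (W(k')\otimes_{\Zp} B)[u]/(u^n) = \gS_B/u^n$, and passing to the inverse limit gives the claim. Using Remark~\ref{rem:projectivity for Kisin modules}(2), I would cover $\Spec A$ by opens $\Spec A_i$ over which $\gM$ becomes free of rank $d$; writing $B_i := A_i\otimes_A B$, the same argument yields $\gS_{A_i}\cotimes_{A_i} B_i \cong \gS_{B_i}$, whence $\gM\cotimes_A B$ is free of rank $d$ over each $\gS_{B_i}$. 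A second application of Remark~\ref{rem:projectivity for Kisin modules}(2) then gives the desired projectivity.

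Next I would transfer the Frobenius and descent data. The maps $\varphi_\gM$ and $\hat g$ are all $A$-linear (the Galois group $\Gal(K'/K)$ acts trivially on $A$, and $\varphi$ fixes $A\subset\gS_A$), so they base change to $\varphi$- and $g$-semilinear maps on $\gM\cotimes_A B$, with the cocycle and commutation relations preserved automatically. To verify that the linearized Frobenius $\Phi$ remains an isomorphism after inverting $E(u)$, I would work locally on $\Spec B_i$ where both $\varphi^*(\gM\cotimes_A B)$ and $\gM\cotimes_A B$ are free of the same rank: $\Phi_\gM$ is there represented by a matrix whose determinant is a unit in $\gS_{A_i}[1/E(u)]$, and this property is preserved by base change to $\gS_{B_i}[1/E(u)]$.

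The only non-formal input is the identity $\gS_A\cotimes_A B = \gS_B$, handled by the mod-$u^n$ computation above; everything else amounts to bookkeeping. I expect this compatibility of $u$-adic completion with base change to be the main, albeit minor, obstacle.
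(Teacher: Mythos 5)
The paper's own ``proof'' is a citation to \cite[Lem.~2.1.4]{cegsB}, so there is no in-text argument to compare against; your write-up is the kind of argument that reference presumably contains, and it is essentially correct. The key step --- identifying $\gS_A\cotimes_A B$ with $\gS_B$ by working modulo $u^n$ and using that $\gS_A/u^n$ is $A$-free --- is exactly right, and transferring $\varphi_\gM$ and the $\hat g$ is indeed formal once that identification is in hand. The check that $\Phi$ stays an isomorphism after inverting $E(u)$ by passing to a Zariski cover on which $\gM$ is $\gS$-free is also fine.

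Two small remarks. First, when you invoke Remark~\ref{rem:projectivity for Kisin modules}(2) ``a second time'' to pass from Zariski-local freeness of $\gM\cotimes_A B$ back to projectivity over $\gS_B$, you are using the converse of what that remark states; this converse is standard (descent of finite projectivity along the faithfully flat cover $\gS_B\to\prod_i\gS_{B_i}$, together with compatibility of the completed tensor with this localization), but it deserves a word, since localization and $u$-adic completion do not commute for free. Second, there is a cleaner route that avoids descent entirely: apply Remark~\ref{rem:projectivity for Kisin modules}(1) directly to $\gM\cotimes_A B$. It is $u$-adically complete by construction, $u$-torsion free by Remark~\ref{rem:completed tensor}(2), and $(\gM\cotimes_A B)/u\cong(\gM/u\gM)\otimes_A B$ is finitely generated projective over $B$ because $\gM/u\gM$ is so over $A$. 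This gives finite projectivity over $\gS_B$ in one stroke, after which your argument for the remaining conditions (the semilinear structure and the $\Phi$-isomorphism after inverting $E(u)$) goes through unchanged.
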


\begin{proof} This is \cegsBbasechangelemma.
\end{proof}

We make the following two further remarks concerning base change.

\begin{remark}
\label{rem:completed tensor} 
(1) If $A$ is Noetherian, if $Q$ is finitely generated over $A$,
and if $\gN$ is 
finitely generated over $A[[u]]$, then $\gN\otimes_A Q$
is finitely generated over $A[[u]]$, and hence (by the Artin--Rees
lemma) is automatically $u$-adically complete.  Thus 
in this case the natural morphism $\gN\otimes_A Q \to \gN\cotimes_A Q$
is an isomorphism.

\smallskip

(2)
Note that $A[[u]]\cotimes_A Q = Q[[u]]$ (the $A[[u]]$-module
consisting of power series with coefficients in the $A$-module $Q$),
and so if $\gN$ is Zariski locally free on $\Spec A$,
then $\gN\cotimes_A Q$ is Zariski locally isomorphic to a direct sum
of copies of $Q[[u]]$, and hence is $u$-torsion free (as well as
being $u$-adically complete). In particular, by
Remark~\ref{rem:projectivity for Kisin modules}(2), this holds if~$\gN$
is projective.
\end{remark}

Let $A$ be a $\Zp$-algebra. We define a \emph{Dieudonn\'e module of rank $d$ with $A$-coefficients and
  descent data from $K'$ to $K$} to be a finitely generated projective
$W(k')\otimes_{\Zp}A$-module $D$ of constant rank 
$d$ on $\Spec A$, together with:

\begin{itemize}
\item $A$-linear endomorphisms $F,V$ satisfying $FV = VF = p$ such that $F$ is $\varphi$-semilinear and $V$ is
  $\varphi^{-1}$-semilinear for the action of $W(k')$, and
\item a $W(k')\otimes_{\Zp}A$-semilinear action of $\Gal(K'/K)$
which commutes with $F$ and $V$. 
\end{itemize}

\begin{defn}\label{def: Dieudonne module formulas}
If $\gM$ is a Breuil--Kisin module of height at most~$1$ and rank~$d$ with descent data, 
then there is a
corresponding Dieudonn\'e module $D=D(\gM)$ of rank~$d$ defined as follows. We set
$D:=\gM/u\gM$
with the induced action of $\Gal(K'/K)$, and $F$ given by the induced
action of $\varphi$. 
The endomorphism $V$ is determined as follows.  Write $E(0) = \czero p$,
so that we have $p \equiv \czero^{-1}E(u) \pmod{u}$. The
condition that the cokernel of $\varphi^*\gM\to\gM$ is killed by $E(u)$
allows us to factor the multiplication-by-$E(u)$ map on $\gM$ uniquely
as $\mathfrak{V} \circ \varphi$,  and $V$ is
defined to be
$\czero^{-1} \mathfrak{V}$ modulo~$u$. 
\end{defn}

\subsection{\'Etale  \texorpdfstring{$\varphi$}{phi}-modules and
  Galois representations}
\label{subsec: etale phi modules
  and Galois representations}

\begin{defn}\label{defn: etale phi module} 
Let $A$ be a $\Z/p^a\Z$-algebra for some $a\ge 1$.  A \emph{weak \'etale
  $\varphi$-module} with $A$-coefficients
  and descent data from $K'$ to $K$ is a triple
  $(M,\varphi_M,\{\hat{g}\})$ consisting of: 
  \begin{itemize}
\item 
a finitely generated
  $\gS_A[1/u]$-module $M$; 
\item a  $\varphi$-semilinear map $\varphi_M:M\to M$ with the
  property that the induced
  map   \[\Phi_M = 1 \otimes \varphi_M:\varphi^*M:=\gS_A[1/u]\otimes_{\varphi,\gS_A[1/u]}M\to M\]is an
  isomorphism,
  \end{itemize}
together with   additive bijections  $\hat{g}:M\to M$ for $g\in\Gal(K'/K)$, satisfying the further
  properties that the maps $\hat{g}$ commute with $\varphi_M$, satisfy
    $\hat{g_1}\circ\hat{g_2}=\widehat{g_1\circ g_2}$, and have
    $\hat{g}(sm)=g(s)\hat{g}(m)$ for all $s\in\gS_A[1/u]$, $m\in M$.

If $M$ as above is projective as an $\gS_A[1/u]$-module then we say
simply that $M$ is an \'etale $\varphi$-module. The \'etale $\varphi$-module $M$ is said to be  of rank~$d$ if the underlying
finitely generated projective $\gS_A[1/u]$-module has constant rank~$d$.

\end{defn}

\begin{rem} 
  \label{rem: completed version if $p$ not nilpotent}We could also
  consider \'etale $\varphi$-modules for general $p$-adically complete $\Zp$-algebras~$A$, but
  we would need to replace $\gS_A[1/u]$ by its $p$-adic completion. As
  we will not need to consider these modules in this paper, we do not
  do so here, but we refer the interested reader to~\cite{EGmoduli}. 
\end{rem}
A morphism
of weak \'etale
$\varphi$-modules with $A$-coefficients and descent data from $K'$ to
$K$ 
is a morphism of~$\gS_A[1/u]$-modules 
that commutes with $\varphi$ and with the
$\hat{g}$. Again, in the case $K'=K$ the descent data is trivial, and we
obtain the  usual category of \'etale $\varphi$-modules with
$A$-coefficients. 

Note that
if $A$ is a $\Z/p^a\Z$-algebra, and $\gM$ is a  Breuil--Kisin module 
with descent data, then $\gM[1/u]$ naturally has the
structure of an \'etale $\varphi$-module 
 with descent data.

Suppose that $A$ is an $\cO$-algebra (where $\cO$ is as in
Section~\ref{subsec: notation}). In making calculations, it is often
convenient to use the idempotents~$e_i$ (again as in
Section~\ref{subsec: notation}). In particular if $\gM$ is a Breuil--Kisin
module, then writing as usual
$\gM_i:=e_i\gM$, we write $\Phi_{\gM,i}:\varphi^*(\gM_{i-1})\to\gM_{i}$ for
the morphism induced by~$\Phi_{\gM}$.  Similarly if $M$ is an
\'etale $\varphi$-module  then we write
$M_i:=e_iM$, and we write $\Phi_{M,i}:\varphi^*(M_{i-1}) \to M_{i}$ for
the morphism induced by~$\Phi_{M}$.

To connect \'etale $\varphi$-modules to
$G_{K_{\infty}}$-representations we begin by recalling 
 from \cite{kis04} some constructions arising in $p$-adic Hodge theory
and the theory of fields of norms, which go back to~\cite{MR1106901}. 
Following Fontaine,
we write $R:=\varprojlim_{x\mapsto
  x^p}\cO_{\Kbar}/p$. 
Fix a compatible system $(\! \sqrt[p^n]{\pi}\,)_{n\ge 0}$ of
$p^n$th roots of $\pi$ in $\Kbar$ (compatible in the obvious sense that 
$\bigl(\! \sqrt[p^{n+1}]{\pi}\,\bigr)^p = \sqrt[p^n]{\pi}\,$),
and let
$K_{\infty}:=\cup_{n}K(\sqrt[p^n]{\pi})$, and
also $K'_\infty:=\cup_{n}K'(\sqrt[p^n]{\pi})$. Since $(e(K'/K),p)=1$, the compatible system
$(\! \sqrt[p^n]{\pi}\,)_{n\ge 0}$ determines a unique compatible system $(\!
\sqrt[p^n]{\pi'}\,)_{n\ge 0}$ of $p^n$th roots of~$\pi'$ such that $(\!
\sqrt[p^n]{\pi'}\,)^{e(K'/K)} =\sqrt[p^n]{\pi}$.
Write
$\underline{\pi}'=(\sqrt[p^n]{\pi'})_{n\ge 0}\in R$, and $[\underline{\pi}']\in
W(R)$ for its image under the natural multiplicative map $R \to W(R)$. We have a Frobenius-equivariant
inclusion $\gS\into W(R)$ by sending $u\mapsto[\underline{\pi}']$.  We can naturally identify
$\Gal(K'_\infty/K_\infty)$ with $\Gal(K'/K)$, and doing this we see that the
action of $g\in G_{K_\infty}$ on $u$ is via $g(u)=h(g)u$.

We let $\cO_{\cE}$ denote the $p$-adic completion of $\gS[1/u]$, and let $\cE$ be the
field of fractions of~$\cO_\cE$. The inclusion $\gS\into W(R)$ extends to an
inclusion $\cE\into W(\operatorname{Frac}(R))[1/p]$. Let $\cE^{\text{nr}}$ be
the maximal unramified extension of $\cE$ in $ W(\operatorname{Frac}(R))[1/p]$,
and let $\cO_{\cE^{\text{nr}}}\subset W(\operatorname{Frac}(R))$ denote
its ring of
integers. Let $\cO_{\widehat{\cE^{\text{nr}}}}$ be the $p$-adic completion of
$\cO_{\cE^{\text{nr}}}$. Note that $\cO_{\widehat{\cE^{\text{nr}}}}$ is stable
under the action of $G_{K_\infty}$. 

 \begin{defn} 
Suppose that $A$ is a $\Z/p^a\Z$-algebra for some $a \ge 1$.  If $M$
is a weak \'etale $\varphi$-module with $A$-coefficients and descent data, set
  $T_A(M):=\left(\cO_{\widehat{\cE^{\text{nr}}}}\otimes_{\gS[1/u]}M
  \right)^{\varphi=1}$, an $A$-module with a
  $G_{K_\infty}$-action (via the diagonal action on
  $\cO_{\widehat{\cE^{\text{nr}}}}$ and $M$, the latter given by
  the~$\hat{g}$). If $\gM$ is a Breuil--Kisin module with
  $A$-coefficients and descent data,
  set 
  $T_A(\gM):=T_A(\gM[1/u])$. 
\end{defn}

\begin{lem}
  \label{lem: Galois rep is a functor if A is actually finite local} Suppose
  that $A$ is a local $\Zp$-algebra and that $|A|<\infty$. Then~$T_A$ induces an 
  equivalence of categories from the category of weak \'etale
  $\varphi$-modules with $A$-coefficients and descent data to the category of continuous
  representations of $G_{K_\infty}$ on finite $A$-modules. If $A\to A'$
  is finite, then there is a natural isomorphism $T_A(M)\otimes_A A'\iso
  T_{A'}(M\otimes_A A')$. A weak \'etale $\varphi$-module with
  $A$-coefficients and descent
  data~$M$ is free of rank~$d$ if and only if $T_A(M)$ is a free
  $A$-module of rank~$d$. 
\end{lem}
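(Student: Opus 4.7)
The strategy is to bootstrap from the classical (no-descent-data) Fontaine equivalence, and then graft the $\Gal(K'/K) = \Gal(K'_\infty/K_\infty)$-action onto a $G_{K'_\infty}$-representation to produce a $G_{K_\infty}$-representation via the exact sequence $1 \to G_{K'_\infty} \to G_{K_\infty} \to \Gal(K'_\infty/K_\infty) \to 1$. Since $|A| < \infty$, $A$ is killed by $p^a$ for some $a \geq 1$, so that $\gS_A[1/u]$ is unchanged by $p$-adic completion modulo $p^a$, and the theory of weak \'etale $\varphi$-modules over $\gS_A[1/u]$ coincides with that over $\cO_{\cE, A}$.

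First, in the absence of descent data, Fontaine's classical equivalence (as developed for finite $\Zp$-algebra coefficients in, e.g., \cite{kis04} and \cite{EGstacktheoreticimages}) provides, via $M \mapsto (\cO_{\widehat{\cE^{\text{nr}}}} \otimes_{\gS[1/u]} M)^{\varphi=1}$, an equivalence between weak \'etale $\varphi$-modules with $A$-coefficients and continuous $A$-linear representations of $G_{K'_\infty}$ on finite $A$-modules. This equivalence identifies free $\gS_A[1/u]$-modules of rank $d$ with free $A$-modules of rank $d$, and commutes with finite base change $A \to A'$ (the latter because the formation of $\varphi$-invariants commutes with tensoring by a finite $A$-algebra when everything in sight is killed by $p^a$).

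Second, one translates descent data. Given $M$ with descent data, the $\hat{g}$'s together with the natural $G_{K_\infty}$-action on $\cO_{\widehat{\cE^{\text{nr}}}}$ (which, via the quotient $G_{K_\infty} \twoheadrightarrow \Gal(K'_\infty/K_\infty) = \Gal(K'/K)$, restricts to the chosen action on $\gS[1/u]$) equip $T_A(M)$ with a diagonal $G_{K_\infty}$-action extending the $G_{K'_\infty}$-action from the first step. The axioms on the $\hat{g}$'s---semilinearity, commutation with $\varphi_M$, and the cocycle relation---are precisely what is needed for this extended action to be well-defined, $A$-linear, and continuous. In the reverse direction, one restricts a continuous $G_{K_\infty}$-representation $T$ to $G_{K'_\infty}$, applies the quasi-inverse from the first step to obtain a $\varphi$-module $M$, and uses the residual $\Gal(K'_\infty/K_\infty)$-action on $T$ combined with the action on $\cO_{\widehat{\cE^{\text{nr}}}}$ to define the $\hat{g}$'s on $M$.

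The base-change isomorphism $T_A(M) \otimes_A A' \iso T_{A'}(M \otimes_A A')$ reduces to its no-descent-data analogue, since the descent data transports naturally along the functor $-\otimes_A A'$; and the rank statement reduces likewise, since the underlying $A$-module of $T_A(M)$ and the underlying $\gS_A[1/u]$-module of $M$ do not see the descent data (one can moreover reduce the rank claim to the residue field of $A$ by Nakayama, where it becomes the corresponding statement for $\F$-representations of $G_{K'_\infty}$). The main point requiring verification is that the two actions on $T_A(M)$---of $G_{K'_\infty}$ through $\cO_{\widehat{\cE^{\text{nr}}}}$, and of $\Gal(K'_\infty/K_\infty)$ through the $\hat{g}$'s---glue continuously into a single $G_{K_\infty}$-action; this uses crucially that the inclusion $\gS[1/u] \hookrightarrow W(\operatorname{Frac}(R))[1/p]$ is $\Gal(K'/K)$-equivariant, which is built into our conventions via the formula $g(u) = h(g) u$.
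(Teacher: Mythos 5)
Your proof is correct and, modulo level of detail, is exactly what the paper's one-line proof (citing Fontaine and indicating that the argument of \cite[Lem.~1.2.7]{kis04} carries over) intends: establish the equivalence without descent data over $K'_\infty$ and then translate the $\Gal(K'/K)$-semilinear structure into the extension of the $G_{K'_\infty}$-action to $G_{K_\infty}$, with base change and the rank statement reducing to the no-descent-data case. One small phrasing point: in the essential-surjectivity direction there is no ``residual $\Gal(K'_\infty/K_\infty)$-action on $T$'' itself (that group doesn't act on $T$); rather, the diagonal $G_{K_\infty}$-action on $\cO_{\widehat{\cE^{\text{nr}}}}\otimes T$ preserves the $G_{K'_\infty}$-invariants, and it is the induced action on those invariants that factors through $\Gal(K'_\infty/K_\infty)$ and supplies the $\hat g$'s.
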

\begin{proof}
  This is due to Fontaine~\cite[\S1.2]{MR1106901}, and can be proved in exactly the same way as~\cite[Lem.\ 1.2.7]{kis04}.
\end{proof}

We will frequently simply write $T$ for $T_A$. Note that if we let
$M'$ be the \'etale $\varphi$-module obtained from $M$ by forgetting the
descent data, then by definition we have
$T(M')=T(M)|_{G_{K'_\infty}}$.

\begin{rem}\label{rem:ht-1-extend}
Although \'etale $\varphi$-modules naturally give rise to representations 
of~$G_{K_\infty}$, those coming from Breuil--Kisin modules of height at most~$1$
admit canonical extensions to~$G_K$ by~\cite[Prop.\ 1.1.13]{kis04}.   
\end{rem}

\begin{lem}
  \label{lem: restricting to K_infty doesn't lose information about
    rbar}If $\rbar,\rbar':G_K\to\GL_2(\Fpbar)$ are continuous
  representations, both of which arise as the reduction mod~$p$ of
  potentially Barsotti--Tate representations of tame inertial type,
 and there is an isomorphism $\rbar|_{G_{K_\infty}}\cong \rbar'|_{G_{K_\infty}}
  $, then $\rbar\cong\rbar'$.
\end{lem}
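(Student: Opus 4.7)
The plan is to combine Kisin's theorem \cite[Thm.~0.2]{kis04}, which asserts that for $p>2$ the restriction functor from mod-$p$ Galois representations arising from finite flat group schemes over $\cO_F$ to $G_{F_\infty}$-representations is fully faithful, with a tame descent argument.

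First, I would choose potentially Barsotti--Tate lifts $r, r'$ of $\rbar, \rbar'$ of tame inertial types $\tau, \tau'$, and let $F/K$ be a finite tame Galois extension containing both of the tame extensions $K', K''$ trivializing $\tau, \tau'$. Then $r|_{G_F}, r'|_{G_F}$ are genuinely Barsotti--Tate, so $\rbar|_{G_F}, \rbar'|_{G_F}$ arise from finite flat group schemes over $\cO_F$. Since $F/K$ is tamely ramified while $K_\infty/K$ is generated by $p$-power roots of a uniformizer, we have $F \cap K_\infty = K$; setting $F_\infty := F \cdot K_\infty$ we then obtain $G_{F_\infty} = G_F \cap G_{K_\infty}$, and $G_K$ is topologically generated by $G_F$ and $G_{K_\infty}$. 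Restricting the given isomorphism $\rbar|_{G_{K_\infty}} \cong \rbar'|_{G_{K_\infty}}$ to $G_{F_\infty}$ and invoking Kisin's theorem over $F$ then yields $\rbar|_{G_F} \cong \rbar'|_{G_F}$.

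It would remain to assemble an isomorphism $\phi : \rbar|_{G_F} \isoto \rbar'|_{G_F}$ and an isomorphism $\psi : \rbar|_{G_{K_\infty}} \isoto \rbar'|_{G_{K_\infty}}$ into a single $G_K$-equivariant isomorphism. Their discrepancy $\alpha := \psi|_{G_{F_\infty}} \circ \phi|_{G_{F_\infty}}^{-1}$ lies in $\Aut_{G_{F_\infty}}(\rbar'|_{G_{F_\infty}})$, and the gluing succeeds provided $\alpha$ lies in the image of either $\Aut_{G_F}(\rbar'|_{G_F})$ or $\Aut_{G_{K_\infty}}(\rbar'|_{G_{K_\infty}})$ under restriction. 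This gluing step is the main obstacle, since $\rbar'|_{G_{F_\infty}}$ can have strictly more automorphisms than $\rbar'|_{G_F}$ (for instance, an irreducible $\rbar'|_{G_F}$ can become reducible after restricting to the deeply ramified field $F_\infty$). A clean way to bypass it is to work throughout with the Breuil--Kisin modules with descent data from $F$ to $K$ attached to the lifts $r, r'$: their reductions $\gMbar, \gMbar'$ give étale $\varphi$-modules $\gMbar[1/u], \gMbar'[1/u]$ corresponding (via Lemma~\ref{lem: Galois rep is a functor if A is actually finite local}) to $\rbar|_{G_{K_\infty}}, \rbar'|_{G_{K_\infty}}$, and the full $G_K$-action on these is the canonical extension of Remark~\ref{rem:ht-1-extend}. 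The fact that this canonical extension depends only on the étale $\varphi$-module and not on the choice of height-$\leq 1$ Breuil--Kisin model with descent data follows (by restriction to $G_F$) from Kisin's theorem applied to finite flat group schemes over $\cO_F$, and directly yields $\rbar \cong \rbar'$ without the need for an ad hoc gluing argument.
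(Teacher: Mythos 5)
Your approach is correct, but it takes a genuinely different route from the paper's. The paper argues by cases: since irreducible two-dimensional mod-$p$ representations of $G_K$ are induced from tamely ramified characters and $K_\infty/K$ is totally wildly ramified, restriction to $G_{K_\infty}$ preserves irreducibility and the irreducible case is immediate; the reducible case is then handled by an explicit result of Gee--Liu--Savitt on extension classes, combined with the fact that the potentially Barsotti--Tate hypothesis rules out tr\`es ramifi\'ee representations (via Lemma~\ref{lem: list of things we need to know about Serre weights}). Your argument instead invokes Kisin's full-faithfulness theorem for finite flat group schemes (valid for $p>2$) over a common tame Galois extension $F$. This avoids both the case split and the appeal to the explicit two-dimensional structure theory, and in principle works in any rank, at the price of relying on a deeper input.

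One comment on the step you flag as ``the main obstacle'': once full-faithfulness is used correctly the gluing issue does not actually arise, and your reformulation via Breuil--Kisin modules with descent data is a repackaging rather than a genuine bypass. Full-faithfulness over $F$ says precisely that, for $G_F$-representations arising from finite flat group schemes over $\cO_F$, \emph{every} $G_{F_\infty}$-equivariant map between them is already $G_F$-equivariant; in particular the ``extra automorphisms'' of $\rbar'|_{G_{F_\infty}}$ you worry about simply do not occur for this class of representations. Applying this to the given $G_{K_\infty}$-equivariant isomorphism $\psi$ (restricted to $G_{F_\infty}$) shows that $\psi$ is $G_F$-equivariant as well as $G_{K_\infty}$-equivariant; since $F\cap K_\infty = K$ (tame meets totally wildly ramified) forces $G_K$ to be topologically generated by $G_F$ and $G_{K_\infty}$, the map $\psi$ is $G_K$-equivariant outright, and you are done without ever introducing a second isomorphism $\phi$. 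Whichever packaging you choose, the final appeal to $G_K$ being generated by $G_F$ and $G_{K_\infty}$ is what upgrades $G_F$-equivariance to $G_K$-equivariance, and should be made explicit: in your Breuil--Kisin version it is hidden in the clause ``by restriction to $G_F$ \dots\ directly yields $\rbar\cong\rbar'$''.
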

\begin{proof}
  The extension $K_\infty/K$ is totally wildly ramified. Since the
  irreducible $\Fpbar$-representations of~$G_K$ are induced from
  tamely ramified characters, we see that~$\rbar|_{G_{K_\infty}}$ is
  irreducible if and only if~$\rbar$ is irreducible, and if
  $\rbar$ or $\rbar'$ is irreducible then we are done. In the
  reducible case, we see that $\rbar$ and $\rbar'$ are extensions of
  the same characters, and the result then follows from~\cite[Lem.\
  5.4.2]{gls13} and Lemma~\ref{lem: list of things we need to know about Serre weights}~(2). 
\end{proof}

\subsection{Recollections from \texorpdfstring{\cite{cegsB}}{[CEGS20b]}}
\label{sec:recoll-from-citec}

The main objects of study in this paper are certain algebraic stacks
$\cC^{\tau,\BT}$  and $\cZ^{\tau,1}$,  of rank two  Breuil--Kisin modules and 
\'etale $\varphi$-modules respectively, that were introduced and studied in
\cite{cegsB}. We review their definitions  now, and recall the main
properties of these stacks that were established in \cite{cegsB}. 

To define $\cC^{\tau,\BT,1}$ we first introduce  stacks of Breuil--Kisin modules with descent
data;\ then we impose two conditions on them, corresponding (in the
analogy with Galois representations) to fixing an inertial type~$\tau$
and requiring all pairs Hodge--Tate weights to be
$\{0,1\}$.

Take $K'/K$ to be any Galois extension such that $[K':K]$ is prime to
  $p$. 

\begin{defn}
  \label{defn: C^dd,a }For each integer $a\ge 1$, we let $\cC_{d,h}^{\dd,a}$ be
  the {\em fppf} stack over~$\cO/\varpi^a$ which associates to any $\cO/\varpi^a$-algebra $A$
the groupoid $\cC_{d,h}^{\dd,a}(A)$ 
of rank~$d$ Breuil--Kisin modules of height at most~$h$ with $A$-coefficients and descent data from
$K'$ to~$K$. 

By~\cite[\href{http://stacks.math.columbia.edu/tag/04WV}{Tag 04WV}]{stacks-project}, 
we may also regard each of the stacks $\cC_{d,h}^{\dd,a}$ as an {\em fppf}
stack over $\cO$,
and we then write $\cC_{d,h}^{\dd}:=\varinjlim_{a}\cC_{d,h}^{\dd,a}$; this
is again an {\em fppf} stack over~$\cO$.  We will omit the subscripts $d,h$ from this notation 
when doing so will not cause confusion. 
\end{defn} 

\begin{defn}
 Let $\tau$  be a $d$-dimensional $E$-representation of $I(K'/K)$. We say that an
 object  $\gM$ of $\cC^{\dd,a}$ is has \emph{type} $\tau$ if Zariski locally on $\Spec A$ there is an
$I(K'/K)$-equivariant isomorphism $\gM_i/u \gM_i \cong A \otimes_{\cO}
\tau^\circ$ for each $i$. (Here we recall that $\gM_i := e_i \gM$, and
$\tau^{\circ}$ denotes an $\cO$-lattice in $\tau$.)
\end{defn}

\begin{defn}
  Let $\cC^{\tau}$ be the \'etale substack of $\cC^{\dd}$
  consisting of the objects of type  $\tau$. This is an open and
  closed substack of $\cC^{\dd}$ (see \cegsBtauprop).
\end{defn}

For the remainder of this section we fix $d=2$ and $h=1$,
and write $N = K \cdot W(k')[1/p]$ (the maximal subextension of $K'$
which is unramified over~$K$). 

  Suppose that~$A$ is an $\cO/\varpi^a$-algebra and consider a pair~$(\gL,\gL^+)$, where:
  \begin{itemize}
  \item $\gL$ is a rank $2$ projective $\cO_{K'}\otimes_{\Zp} A$-module,
    with a $\Gal(K'/K)$-semilinear, $A$-linear action of~$\Gal(K'/K)$;
  \item $\gL^+$ is an $\cO_{K'}\otimes_{\Zp} A$-submodule of~$\gL$, which is
    locally on~$\Spec A$ a direct summand of~$\gL$ as an $A$-module
    (or equivalently, for which $\gL/\gL^+$ is  projective as an $A$-module),
    and is preserved by~$\Gal(K'/K)$.
  \end{itemize}
For each character $\xi : I(K'/K) \to \cO^{\times}$, let $\gL_{\xi}$
(resp.\ $\gL^+_{\xi}$) be the $\cO_N\otimes_{\Zp} A$-submodule of $\gL$ (resp.\ $\gL^+$) on
which $I(K'/K)$ acts through $\xi$. 
We say that the pair $(\gL,\gL^+)$ \emph{satisfies the strong determinant condition} if Zariski
  locally on $\Spec A$ the
  following condition holds: for all $\alpha\in\cO_N$ and all $\xi$, we
  have \numequation\label{eqn: strong det condn}
  \det{\!}_A(\alpha{}|\gL^+_\xi)
  =\prod_{\psi:N\into E}\psi(\alpha{}) 
  \end{equation}
  as polynomial functions on $\cO_N$ 
  in the sense of~\cite[\S5]{MR1124982}.

  \begin{defn}\label{def:strongdet}
  An object $\gM$ of  $\cC^{\dd,a}$  \emph{satisfies the strong
    determinant condition} if the pair $(\gM/E(u)\gM, \im
  \Phi_{\gM}/E(u)\gM)$ satisfies the strong determinant condition as
  in the previous paragraph.

We define $\cC^{\tau,\BT}$ to be the substack of $\cC^{\tau}$ of
objects satisfying the strong determinant condition. This is a
$\varpi$-adic formal algebraic stack of finite presentation
over~$\cO$ by 
\cegsBctaubt, and so its special fibre $\cC^{\tau,\BT,1}$ is an
algebraic stack, locally of finite type over $\F$.

The motivation for imposing the strong determinant condition is that
the corresponding substack $\cC^{\dd,\BT}$ of $\cC^{\dd}$   is precisely the $\cO$-flat
substack of $\cC^{\dd}$ whose $\Spf(\cO_{E'})$-points, for any finite extension $E'/E$, correspond to
potentially Barsotti--Tate Galois representations $G_K
\to \GL_2(\cO_{E'})$; so $\cC^{\tau,\BT}$ corresponds to those
representations which are furthermore of inertial type~$\tau$. (See \cegsBpotBTreps, as well as the discussion at \cegsBpotBTrepsremark.)

% The motivation for imposing the strong determinant condition is that the $\Spf(\cO_{E'})$-points of $\cC^{\tau,\BT}$, for any finite extension $E'/E$,
% correspond to potentially Barsotti--Tate Galois representations $G_K
% \to \GL_2(\cO_{E'})$ of
% inertial type $\tau$ (see \cegsBpotBTreps, as well as the discussion at \cegsBpotBTrepsremark). 
\end{defn}

The following result combines \cegsBCmainresults.

\begin{thm} \label{cor: Kisin moduli consequences of local models}  We have:
  \begin{enumerate}
  \item $\cC^{\tau,\BT}$ is analytically normal, and Cohen--Macaulay.
  \item The special fibre $\cC^{\tau,\BT,1}$ is reduced and
    equidimensional of dimension equal to $[K:\Qp]$.
  \item  $\cC^{\tau,\BT}$ is flat over~$\cO$.
  \end{enumerate}
  \end{thm}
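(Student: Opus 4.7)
The plan is to reduce the geometric assertions to known properties of an explicit local model at Iwahori level. First, because $[K':K]$ is prime to~$p$ the tame descent data yields an isotypic decomposition $\gM = \bigoplus_i e_i \gM$, so that Zariski-locally on $\Spec A$ each $\gM_i$ can be trivialised and the Frobenius $\Phi_{\gM,i} : \varphi^*\gM_{i-1} \to \gM_i$ recorded as a matrix over $A[[u]]$. The height-one condition bounds the relative position of its image, while the strong determinant condition imposes a Pappas--Rapoport-style minuscule constraint componentwise over the embeddings $N\hookrightarrow E$. After quotienting by the smooth group of changes of basis (preserving the type $\tau$), one presents $\cC^{\tau,\BT}$ locally as a smooth quotient of a product, indexed by these embeddings, of local models for $\GL_2$ at Iwahori level.

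Second, one invokes the known geometry of these Iwahori local models. For $\GL_2$ with minuscule cocharacters, such local models (in the work of G\"ortz and of Pappas--Zhu, say) are $\cO$-flat, Cohen--Macaulay, analytically normal, and have reduced special fibre of the expected dimension, the special fibre being a union of Schubert varieties. All these properties descend along smooth morphisms, so they transfer to $\cC^{\tau,\BT}$; a direct dimension count on the product of local models, combined with the dimension of the change-of-basis group, matches the special-fibre dimension to $[K:\Qp]$.

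The main obstacle is the verification that the strong determinant condition, formulated globally as an equality of polynomial functions on the Weil restriction of~$\cO_N$, corresponds exactly to the expected product of minuscule conditions at the individual embeddings $N\hookrightarrow E$. This condition is a priori a single constraint visible only after restriction of scalars, and one has to decompose it compatibly with both the ramification of $K/\Qp$ and the tame descent from $K'$ down to $K$, checking in particular that the local picture near each point of the special fibre lands in a single Schubert stratum of the expected relative position. Once this matching is set up, the three assertions fall out simultaneously: flatness from flatness of the local model, Cohen--Macaulay-ness and analytic normality from smooth descent, and reducedness and equidimensionality of $\cC^{\tau,\BT,1}$ from the corresponding properties of the Schubert stratification of the special fibre of the local model.
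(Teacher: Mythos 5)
The statement you are trying to prove is not actually proved in this paper at all: the sentence immediately preceding the theorem reads ``The following result combines \cite[Cor.~4.5.3, Prop.~5.2.21]{cegsB},'' and the theorem is simply imported wholesale from the companion paper~\cite{cegsB}. So there is no proof here to compare against line by line.

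That said, your sketch does follow the strategy that the companion paper is described as using. The introduction to the present paper says explicitly that the proof ``makes use of the resolution $\cC^{\tau,\BT,1}\to\cZ^{\tau,1}$ and the fact that we are able to relate the stack $\cC^{\tau,\BT}$ to a local model at Iwahori level, whose special fibre is known to be reduced,'' which is exactly the reduction to an Iwahori local model that you outline: use the idempotent decomposition coming from tame descent data to trivialise $\gM$ locally, identify the strong determinant condition (the condition~\eqref{eqn: strong det condn}) with a Pappas--Rapoport-style minuscule constraint componentwise in the embeddings, and then transfer $\cO$-flatness, Cohen--Macaulayness, normality, and reducedness of the special fibre from the known geometry of $\GL_2$ Iwahori local models along a smooth presentation. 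You have also correctly isolated the nontrivial technical point, namely matching the restriction-of-scalars formulation of the strong determinant condition with the embedding-by-embedding minuscule conditions. One small caveat: since the detailed argument lives in~\cite{cegsB} and is not reproduced here, I cannot certify that every step of your sketch (in particular the precise shape of the local model and the smooth cover) coincides with what is actually carried out there; but your outline is consistent with the description the authors give and with the standard local-models literature they invoke.
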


We now introduce our  stacks of \'etale $\varphi$-modules.

\begin{defn}\label{defn: R^dd}
Let 
 $\cR^{\dd,1}$ be the \emph{fppf} $\F$-stack which
  associates
  to any $\F$-algebra $A$ the groupoid $\cR^{\dd,1}(A)$ of rank $2$ \'etale
  $\varphi$-modules  with $A$-coefficients and  descent data from $K'$ to
  $K$.

\end{defn}

Inverting $u$  gives a proper morphism $\cC^{\dd,1} \to \cR^{\dd,1}$, which
then restricts to a proper morphism $\cC^{\tau,\BT,1} \to \cR^{\dd,1}$ for each
$\tau$.

We now briefly remind the reader of some 
definitions from~\cite[\S3.2]{EGstacktheoreticimages}. 
Let
 $\cX \to \cF$ be a proper morphism of stacks over a locally
Noetherian base-scheme~$S$, 
where $\cX$ is an algebraic stack which is locally of finite presentation over~$S$,
and the diagonal of $\cF$ is representable by algebraic spaces and locally of
finite presentation.

We refer to~\cite[Defn.\ 3.2.8]{EGstacktheoreticimages} for the
definition of the \emph{scheme-theoretic image}~$\cZ$ of the proper morphism $\cX \to
\cF$. By definition, it is a full subcategory in groupoids of~$\cF$, and in fact
by~\cite[Lem.\ 3.2.9]{EGstacktheoreticimages} it is a Zariski substack
of~$\cF$. By~\cite[Lem.\ 3.2.14]{EGstacktheoreticimages}, the finite type points
of~$\cZ$ are precisely the finite type points of~$\cF$ for which the
corresponding fibre of~$\cX$ is nonzero. 

 The results of~\cite[\S3.2]{EGstacktheoreticimages} give criteria
for~$\cZ$ to be an algebraic stack, and prove a number of associated results
(such as universal properties of the morphism $\cZ\to\cF$, and a description of
versal deformation rings for~$\cZ$). This formalism applies in
particular to the proper morphism $\cC^{\tau,\BT,1} \to \cR^{\dd,1}$,
and so we make the following definition.

\begin{defn}
 We define $\cZ^{\tau,1}$ to be the scheme-theoretic image  (in the
 sense of~\cite[Defn.\ 3.2.8]{EGstacktheoreticimages}) of the morphism
 $\cC^{\tau,\BT,1}\to\cR^{\dd,1}$.
\end{defn}

In \cegsBZmainresults\ we established the following properties of this
construction.

\begin{prop}\label{prop:Zproperties} \leavevmode
  \begin{enumerate}
  \item $\cZ^{\tau,1}$ is an algebraic stack of finite presentation
    over $\F$, and is a closed substack of $\cR^{\dd,1}$.
  \item The morphism  $\cC^{\tau,\BT,1}\to\cR^{\dd,1}$ factors through
    a morphism  $\cC^{\tau,\BT,1}\to\cZ^{\dd,1}$ which is
    representable by algebraic spaces, scheme-theoretically dominant,
    and proper.
  \item The $\Fpbar$-points of $\cZ^{\tau,1}$ are naturally in
    bijection with the continuous representations $\rbar : G_K \to
    \GL_2(\Fpbar)$ which have a potentially Barsotti--Tate lift of
    type $\tau$.
  \end{enumerate}
\end{prop}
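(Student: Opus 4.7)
The plan is to deduce all three parts in sequence from the general formalism of scheme-theoretic images developed in \cite[\S3.2]{EGstacktheoreticimages}, together with ingredients already recorded above.

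For parts (1) and (2), I would verify the three inputs required by that formalism: that $\cC^{\tau,\BT,1}$ is an algebraic stack of finite presentation over $\F$ (which follows from \cegsBctaubt\ together with Theorem~\ref{cor: Kisin moduli consequences of local models}); that the diagonal of $\cR^{\dd,1}$ is representable by algebraic spaces and locally of finite presentation (a general feature of moduli stacks of \'etale $\varphi$-modules, cf.\ Definition~\ref{defn: R^dd}); and that the morphism $\cC^{\tau,\BT,1}\to\cR^{\dd,1}$ obtained by inverting $u$ is proper, as recalled in the paragraph following Definition~\ref{defn: R^dd}. With these in hand, \cite[Defn.\ 3.2.8]{EGstacktheoreticimages} produces the scheme-theoretic image $\cZ^{\tau,1}$, and the results of \cite[\S3.2]{EGstacktheoreticimages} then directly yield that $\cZ^{\tau,1}$ is an algebraic stack of finite presentation over $\F$, a closed substack of $\cR^{\dd,1}$, and that the factored morphism $\cC^{\tau,\BT,1}\to\cZ^{\tau,1}$ is representable by algebraic spaces, scheme-theoretically dominant, and proper.

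For part (3), I would invoke \cite[Lem.\ 3.2.14]{EGstacktheoreticimages}, which characterises the finite type points of $\cZ^{\tau,1}$ as exactly those finite type points $x$ of $\cR^{\dd,1}$ for which the fibre $\cC^{\tau,\BT,1}\times_{\cR^{\dd,1}} x$ is nonempty. The $\Fpbar$-points of $\cR^{\dd,1}$ are rank $2$ \'etale $\varphi$-modules with descent data over $\Fpbar$, which by Lemma~\ref{lem: Galois rep is a functor if A is actually finite local} correspond to continuous two-dimensional $\Fpbar$-representations of $G_{K_\infty}$. By Remark~\ref{rem:ht-1-extend}, the \'etale $\varphi$-modules arising from height-one Breuil--Kisin modules acquire canonical extensions of their $G_{K_\infty}$-action to $G_K$, and \cegsBpotBTreps\ identifies Breuil--Kisin models of type $\tau$ satisfying the strong determinant condition with potentially Barsotti--Tate lifts of type $\tau$ of the resulting $G_K$-representation. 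Lemma~\ref{lem: restricting to K_infty doesn't lose information about rbar} then guarantees that two such $G_K$-representations with isomorphic restrictions to $G_{K_\infty}$ are themselves isomorphic, so that the correspondence is a bijection as claimed.

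The substantive work has essentially already been done in \cite{cegsB}, so the hard part here is simply assembling the ingredients correctly. The only point I anticipate requiring genuine care is the dictionary in (3) between $G_{K_\infty}$- and $G_K$-representations, where one must combine the equivalence of Lemma~\ref{lem: Galois rep is a functor if A is actually finite local}, the canonical extension of Remark~\ref{rem:ht-1-extend}, and the descent statement of Lemma~\ref{lem: restricting to K_infty doesn't lose information about rbar}. Everything else is a direct application of the scheme-theoretic image machinery of \cite{EGstacktheoreticimages}.
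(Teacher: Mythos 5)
Your overall route — invoking the scheme-theoretic image formalism of \cite[\S3.2]{EGstacktheoreticimages} for parts (1) and (2), and then unwinding what the fibre-nonemptiness criterion of \cite[Lem.\ 3.2.14]{EGstacktheoreticimages} means Galois-theoretically for part (3) — is indeed the right one; the paper itself simply cites \cegsBZmainresults\ for this proposition, so you are reconstructing the content of that citation, and parts (1) and (2) are handled correctly.

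In part (3), however, there is a step you have not addressed. The criterion of \cite[Lem.\ 3.2.14]{EGstacktheoreticimages} tells you that an $\Fpbar$-point of $\cR^{\dd,1}$ lies on $\cZ^{\tau,1}$ exactly when the fibre of $\cC^{\tau,\BT,1}\to\cR^{\dd,1}$ over it is nonempty, i.e.\ when $\rbar|_{G_{K_\infty}}$ admits a Breuil--Kisin model of type $\tau$ over some finite extension $\F'$ of $\F$. But \cegsBpotBTreps\ is a statement about $\Spf(\cO_{E'})$-points of the formal stack $\cC^{\tau,\BT}$, not about $\F'$-points of the special fibre. One direction is easy: if $\rbar$ has a potentially Barsotti--Tate lift $r$ of type $\tau$, then $r$ furnishes a $\Spf(\cO_{E'})$-point of $\cC^{\tau,\BT}$ whose reduction is an $\F'$-point of $\cC^{\tau,\BT,1}$ lying over $\rbar$. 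For the converse, you must lift a given $\F'$-point of $\cC^{\tau,\BT,1}$ to a $\Spf(\cO_{E'})$-point of $\cC^{\tau,\BT}$ before \cegsBpotBTreps\ applies, and this requires using the $\cO$-flatness of $\cC^{\tau,\BT}$ recorded in Theorem~\ref{cor: Kisin moduli consequences of local models}(3) (together with the finite-type/Noetherian structure, so that every closed point of the special fibre is a specialisation of a characteristic-zero point). Without this ingredient the argument only shows that $\Fpbar$-points of $\cZ^{\tau,1}$ are $\rbar$ whose restriction to $G_{K_\infty}$ arises from a mod-$p$ Breuil--Kisin module of type $\tau$, which is \emph{a priori} a weaker condition than having a potentially Barsotti--Tate lift of type $\tau$.

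A secondary, more cosmetic point: Lemma~\ref{lem: Galois rep is a functor if A is actually finite local} applies to finite local coefficient rings, so your identification of $\Fpbar$-points of $\cR^{\dd,1}$ with $G_{K_\infty}$-representations should pass through a finite subfield of $\Fpbar$ (which is unproblematic since $\cR^{\dd,1}$ is limit-preserving, but worth saying).
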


\begin{thm}
  \label{prop: dimensions of the Z stacks}
The algebraic stacks 
$\cZ^{\tau,1}$ are equidimensional of
dimension equal to~$[K:\Qp]$.
\end{thm}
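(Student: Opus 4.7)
The plan is to deduce the statement from the equidimensionality of $\cC^{\tau,\BT,1}$ recorded in Theorem~\ref{cor: Kisin moduli consequences of local models}(2) by exploiting the proper, representable, scheme-theoretically dominant morphism $\pi \colon \cC^{\tau,\BT,1} \to \cZ^{\tau,1}$ furnished by Proposition~\ref{prop:Zproperties}(2).

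First I would establish the upper bound $\dim \cZ^{\tau,1} \leq [K:\Qp]$. Because $\pi$ is proper and its scheme-theoretic image is the whole of $\cZ^{\tau,1}$, it is set-theoretically surjective; since it is moreover representable by algebraic spaces, its fibres over finite type points are proper algebraic spaces of nonnegative dimension. A standard fibre-dimension argument for proper surjective morphisms of algebraic stacks locally of finite type over $\F$ then yields $\dim \cZ^{\tau,1} \leq \dim \cC^{\tau,\BT,1} = [K:\Qp]$.

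For the matching lower bound together with equidimensionality, I would argue that on each irreducible component $\cC_0$ of $\cC^{\tau,\BT,1}$ the restriction $\pi|_{\cC_0}$ is quasi-finite over a dense open substack. Granting this, $\pi(\cC_0)$ is an irreducible closed substack of $\cZ^{\tau,1}$ of dimension equal to $\dim \cC_0 = [K:\Qp]$. Since $\pi$ is surjective and $\cC^{\tau,\BT,1}$ is equidimensional, the finitely many substacks $\pi(\cC_0)$ cover $\cZ^{\tau,1}$, so each of its irreducible components has dimension exactly $[K:\Qp]$.

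The main obstacle is the generic quasi-finiteness assertion. The fibre of $\pi$ above an $\Fpbar$-point $\rbar$ is the set of Breuil--Kisin lattices of height at most $1$ and descent type $\tau$ inside the \'etale $\varphi$-module attached to $\rbar|_{G_{K_\infty}}$ satisfying the strong determinant condition, and to control it without circular appeal to the component analysis carried out later in this paper one needs an independent geometric input. The natural one is the local-model description of $\cC^{\tau,\BT}$ at Iwahori level developed in \cite{cegsB}: $\cC^{\tau,\BT}$ is smooth-locally modelled on a combinatorially explicit local model whose special fibre is reduced and equidimensional of dimension $[K:\Qp]$. Inspection of that local model identifies, on each irreducible component, a dense open locus on which the underlying \'etale $\varphi$-module determines the Breuil--Kisin lattice up to finitely many choices, which is the desired generic quasi-finiteness of $\pi$.
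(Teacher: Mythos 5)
This theorem is not proved in the paper: it is recalled in Section~2.3 (``Recollections from [CEGS20b]'') from the companion paper \cite{cegsB}, so your proposal is attempting to reconstruct an argument that lives elsewhere. Your upper bound $\dim \cZ^{\tau,1}\le[K:\Qp]$ via properness and surjectivity of $\pi\colon\cC^{\tau,\BT,1}\to\cZ^{\tau,1}$ is fine. But the lower bound rests on a claim that is simply false and is explicitly contradicted by results in this very paper: you assert that $\pi|_{\cC_0}$ is generically quasi-finite on \emph{every} irreducible component $\cC_0$ of $\cC^{\tau,\BT,1}$. By Corollary~\ref{cor: the C(J) are the components} the irreducible components of $\cC^{\tau,\BT,1}$ are exactly the $\overline{\cC}(J)$, each of dimension $[K:\Qp]$; and by Theorem~\ref{thm: identifying the vertical components} the image $\overline{\cZ}(J)$ has dimension strictly less than $[K:\Qp]$ whenever $J\notin\cP_\tau$. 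For such a $J$, the restriction of $\pi$ to $\overline{\cC}(J)$ has positive-dimensional fibres over a dense set, so quasi-finiteness fails. Remark~\ref{rem:phantom-weights} says this outright: for $J\notin\cP_\tau$ the locus $\overline{\cZ}(J)$ is a proper ``phantom'' closed substack of a genuine component of $\cZ^{\tau,1}$.

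The appeal to the local model in your final paragraph does not repair this. The local-model description gives reducedness and equidimensionality of $\cC^{\tau,\BT,1}$ (Theorem~\ref{cor: Kisin moduli consequences of local models}(2)), but it carries no information about the fibres of the forgetful map to the stack $\cR^{\dd,1}$ of \'etale $\varphi$-modules --- that is, about how many Breuil--Kisin lattices of type $\tau$ a given $\varphi$-module admits. That information is precisely what Proposition~\ref{prop:ker-ext-maximal} and Corollary~\ref{cor:ker-ext-maximal-nonzero} compute, and it is not uniform across components. Moreover, since Theorem~\ref{thm: identifying the vertical components} invokes the present theorem as an input, one cannot hope to close this gap by importing the paper's component analysis without running into circularity. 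The proof in \cite{cegsB} must sidestep the generic quasi-finiteness claim entirely (e.g.\ by computing dimensions of versal rings for $\cZ^{\tau}$ directly), and your proposal as written does not do so.
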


\subsection{Dieudonn\'e and gauge stacks}
\label{sec:dieudonne-stacks}

In this subsection we express the passage from Breuil--Kisin modules (with descent data)
to Dieudonn\'e modules in stacky terms.  The consequent geometric results are most
conveniently expressed in
the language of effective Cartier on formal algebraic stacks, 
and so we briefly recall the relevant notions.  We also note that the material of
this subsection is only used in Subsection~\ref{subsec: map to Dieudonne stack} below.

To begin with,
we choose a tame inertial type $\tau=\eta\oplus\eta'$,
and we then specialise the choice of $K'$ in the following way. 
We fix a uniformiser $\pi$ of~$K$, and if $\tau$ is a tame
principal series type, we take $K'=K(\pi^{1/(p^f-1)})$, while
if~$\tau$ is a tame cuspidal type, we let $L$ be an unramified
quadratic extension of~$K$, and set $K'=L(\pi^{1/(p^{2f}-1)})$.
%Let $N$ be the maximal unramified extension of $K$ in $K'$.
In either case $K'/K$ is a Galois extension; in the principal series
case, we have $e'=(p^f-1)e$, $f'=f$, and in the cuspidal case we have
$e'=(p^{2f}-1)e$, $f'=2f$.  Here and below, we refer to this choice of extension as the
\emph{standard choice} (for the fixed type $\tau$ and uniformiser
$\pi$). 

For the rest of this section we assume that
$\eta\ne\eta'$.  (For our intended
application we will not need to consider Dieudonn\'e modules for
scalar types).
Let $\gM$ be an object of $\cC^{\tau,\BT}(A)$, and let $D := \gM/u\gM$ be
its corresponding Dieudonn\'e module as in Definition~\ref{def:
  Dieudonne module formulas}.  The group 
  $I(K'/K)$ is abelian of order prime to $p$, and so we can write
  $D=D_\eta\oplus D_{\eta'}$, where $D_\eta$ is the submodule on which
  $I(K'/K)$ acts via~$\eta$. Setting $D_{\eta,j} := e_j D_{\eta}$, it
  follows from the
  projectivity of $\gM$ that each $D_{\eta,j}$ is an invertible
$A$-module. The maps $F,V$ induce linear
maps $F:D_{\eta,j}\to D_{\eta,j+1}$ and $V: D_{\eta,j+1} \to D_{\eta,j}$ 
such that $FV = VF = p$.

\begin{defn}\label{def:dieudonne-stack}
If $\tau$ is a principal series type we define a stack 
$$\cD_{\eta} :=
\Big[
\bigl( \Spec W(k)[X_0,Y_0,\ldots,X_{f-1},Y_{f-1}]/(X_j Y_j - p)_{j = 0,\ldots, f-1}) \bigr) /
\mathbb G_m^f  \big],$$
where the $f$ copies of $\mathbb G_m$ act as 
$(u_0,\ldots,u_{f-1}) \cdot (X_j,Y_j) \mapsto (u_j u_{j+1}^{-1} X_j, u_{j+1} u_j^{-1} Y_j),$
in which we take $u_f := u_0$.

If instead  $\tau$ is a cuspidal type we define  $$\cD_{\eta} :=
\Big[
\bigl( \Spec W(k)[X_0,Y_0,\ldots,X_{f-1},Y_{f-1}]/(X_j Y_j - p)_{j = 0,\ldots, f-1}) \times\Gm\bigr) /
\mathbb G_m^{f+1}  \big],$$
where  the $f+1$ copies of $\mathbb G_m$ act as $$(u_0,\ldots,u_{f-1},u_f) \cdot ((X_j,Y_j),\alpha) \mapsto ((u_j u_{j+1}^{-1}
X_j, u_{j+1} u_j^{-1} Y_j),\alpha ),$$
in which $\alpha$ and $u_0,\ldots,u_f$ are the coordinates of the $\Gm$'s appearing in the definition of $\cD_{\eta}$.
\end{defn}

In \cegsBdieudonnesection\ we explained how the stack $\cD_{\eta}$
classifies the line bundles $D_{\eta,j}$ together with the maps $F,V$,
so  that in either case
(principal series or cuspidal) there is a  natural map $\cC^{\tau,\BT} \to
\cD_{\eta}$.

It will be helpful to introduce another stack,
the stack $\cG_{\eta}$ of $\eta$-gauges.  This classifies
$f$-tuples of line bundles $\cD_j$ ($j = 0,\ldots,f-1$) equipped
with sections $X_j \in \cD_j$ and $Y_j \in \cD_j^{-1}$.
Explicitly, it can be written as the quotient stack
$$\cG_{\eta} :=
\Big[
\bigl( \Spec W(k)[X_0,Y_0,\ldots,X_{f-1},Y_{f-1}]/(X_j Y_j - p)_{j = 0,\ldots, f-1}) \bigr) /
\mathbb G_m^f  \big],$$
where the $f$ copies of $\mathbb G_m$ act as follows:
$$(v_0,\ldots,v_{f-1}) \cdot (X_j,Y_j) \mapsto (v_j X_j, v_j^{-1}
Y_j).$$

There is a morphism of stacks $\cD_{\eta} \to \cG_{\eta}$ which we can
define explicitly using their  descriptions as quotient stacks. 
Indeed, in  the principal series case 
we have a morphism $\Gm^f \to \Gm^f$ given by
$(u_j)_{j = 0,\ldots,f-1} \mapsto (u_j u_{j+1}^{-1})_{j = 0,\ldots,f-1}$,
which is compatible with the actions of these two groups on
$\Spec W(k)[(X_j,Y_j)_{j=0,\ldots,f-1}]/(X_j Y_j - p)_{j = 0,
\ldots , f-1},$ and we are just considering the map from the quotient
by the first $\Gm^f$ to the quotient by the second~$\Gm^f$. In the
cuspidal case we have a morphism $\Gm^{f+1} \to \Gm^f$ given by 
$(u_j)_{j = 0,\ldots,f} \mapsto (u_j u_{j+1}^{-1})_{j = 0,\ldots,f-1}$, 
and the morphism $\cD_{\eta} \to 
\cG_{\eta}$ is the obvious one which forgets the factor of~$\Gm$
coming from~$\alpha$.

Composing our morphism $\cC^{\tau,\BT} \to \cD_{\eta}$ with the forgetful morphism
$\cD_{\eta} \to \cG_{\eta}$, we obtain a morphism $\cC^{\tau,\BT} \to \cG_{\eta}$.

For our analysis of the irreducible components of the stacks
$\cC^{\tau,\BT,1}$  at the end of Section~\ref{sec: extensions of rank one Kisin
   modules},
it will be useful to have a 
more directly geometric interpretation
of a morphism $S \to \cG_{\eta}$, in the case that
the source is a {\em flat} $W(k)$-scheme, or, more generally,
a flat $p$-adic formal algebraic stack over~$\Spf
W(k)$. In order to do this we will need some basic material on
effective Cartier divisors for (formal) algebraic stacks; while it is
presumably possible to develop this theory in considerable generality,
 we only need a very special case, and we limit ourselves to this
 setting.

The property of a closed subscheme being an effective Cartier divisor is not 
preserved under arbitrary pull-back, but it is preserved under flat
pull-back. More precisely, we have the following result.

\begin{lemma}\label{lem:Cartier divisors are flat local}
       	If $X$ is a scheme,
	and $Z$ is a closed subscheme of $X$,
	then the following are equivalent:
	\begin{enumerate}
		\item $Z$ is an effective Cartier divisor on $X$.
		\item For any flat morphism of schemes $U \to X$,
			the pull-back $Z\times_{X} U$
			is an effective Cartier divisor on $U$.
	 	\item For some fpqc covering $\{X_i \to X\}$ of $X$,
	 		each of the pull-backs $Z\times_{X} X_i$
	 		is an effective Cartier divisor on $X_i$.
	\end{enumerate}
\end{lemma}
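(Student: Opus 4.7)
The plan is to verify the ring of implications (1) $\Rightarrow$ (2) $\Rightarrow$ (3) $\Rightarrow$ (1), recalling that a closed subscheme $Z \subset X$ is an effective Cartier divisor precisely when its ideal sheaf $\cI_Z$ is an invertible $\cO_X$-submodule of $\cO_X$; equivalently, $Z$ is Zariski-locally cut out by a single non-zero-divisor.

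For (1) $\Rightarrow$ (2), I would argue as follows. Suppose $U \to X$ is flat, and work Zariski-locally on $X$ so that $\cI_Z = (f)$ with $f$ a non-zero-divisor. Flatness ensures that the injection $f \cdot : \cO_X \hookrightarrow \cO_X$ remains injective after tensoring with $\cO_U$, so $f$ pulls back to a non-zero-divisor on $U$. Flatness also gives $\cI_Z \otimes_{\cO_X} \cO_U = \cI_{Z \times_X U}$ (the pullback of an ideal of a flat extension remains an ideal of the base change), and this pullback is plainly invertible. So $Z \times_X U$ is an effective Cartier divisor on $U$.

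The implication (2) $\Rightarrow$ (3) is immediate since an fpqc cover consists of flat morphisms. The substantive step is (3) $\Rightarrow$ (1), which uses fpqc descent. Let $f : X' := \bigsqcup_i X_i \to X$ denote the covering, a faithfully flat and fpqc morphism. Because $f$ is flat we again have $f^*\cI_Z = \cI_{Z \times_X X'}$, and by hypothesis this quasi-coherent sheaf is invertible on~$X'$. The property of a quasi-coherent sheaf being an invertible module (equivalently, finite locally free of rank one) descends along faithfully flat quasi-compact morphisms; this is standard fpqc descent for quasi-coherent modules. We conclude that $\cI_Z$ itself is invertible on $X$, so $Z$ is an effective Cartier divisor.

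The proof involves no real obstacles: the only subtlety worth flagging is the identification $f^*\cI_Z = \cI_{Z \times_X U}$ under a flat map $f : U \to X$, which requires flatness in order to preserve the injection $\cI_Z \hookrightarrow \cO_X$ after tensoring, and the appeal to fpqc descent for invertibility, which reduces to descent for finitely presented flat modules of rank one.
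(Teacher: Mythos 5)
Your proof is correct and takes essentially the same route as the paper's, which simply observes that invertibility of $\cI_Z$ is an fpqc-local property; you have usefully spelled out the one detail the paper elides, namely that flatness is what guarantees the identification $f^*\cI_Z \iso \cI_{Z\times_X U}$.
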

\begin{proof}
	Since $Z$ is an effective Cartier divisor if and only if its ideal sheaf
	$\cI_Z$ is an invertible sheaf on $X$, this follows from
	the fact that the invertibility of a quasi-coherent sheaf
	is a local property in the {\em fpqc} topology.
\end{proof}

\begin{lemma}
	\label{lem:comparing closed subsets}
	If $A$ is a Noetherian adic topological ring,
	then pull-back under the natural morphism $\Spf A \to \Spec A$
	induces a bijection between the closed subschemes of 
	$\Spec A$ and the closed subspaces of
	$\Spf A$.
\end{lemma}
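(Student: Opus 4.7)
The plan is to identify both sides with ideals of $A$ and observe that the pull-back map corresponds to the identity on ideals. Let $I$ be an ideal of definition for $A$.

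First, closed subschemes of $\Spec A$ are in bijection with ideals $J \subset A$ via $J \mapsto V(J) = \Spec(A/J)$; this is completely standard. On the other side, by the definition of closed immersions of affine formal schemes, closed subspaces of $\Spf A$ are in bijection with those ideals $J \subset A$ which are closed in the $I$-adic topology (so that $A/J$ inherits a complete linear topology making $\Spf(A/J) \hookrightarrow \Spf A$ a closed immersion of formal schemes). Under the pull-back along $\Spf A \to \Spec A$, the closed subscheme $\Spec(A/J)$ is sent to $\Spf(A/J)$, which, set-theoretically, is $V(J) \cap V(I) \subset \Spf A$ and is cut out by the same ideal $J$; so the pull-back map agrees with the identity on ideals.

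It therefore suffices to prove that when $A$ is Noetherian and $I$-adically complete, every ideal $J \subset A$ is automatically closed in the $I$-adic topology; this is the only non-formal ingredient. The key observation is that in such a ring $I$ lies in the Jacobson radical of $A$: any element of $1+I$ is a unit (its inverse being computed by the geometric series, which converges by completeness), so every maximal ideal of $A$ contains $I$. Passing to the quotient $A/J$, which is again Noetherian, the image of $I$ still lies in the Jacobson radical of $A/J$ (maximal ideals of $A/J$ correspond to maximal ideals of $A$ containing $J$, which contain $I$). Krull's intersection theorem then gives $\bigcap_n I^n (A/J) = 0$, i.e.\ $\bigcap_n (J + I^n) = J$, which says exactly that $J$ is closed in the $I$-adic topology on $A$.

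The main ``obstacle'', such as it is, is just keeping the definitions straight: the equivalence amounts to combining the definition of closed subspace of a formal spectrum with the fact that Noetherianness plus completeness forces every ideal to be topologically closed. Once those pieces are in place, the bijectivity of pull-back is immediate, since on both sides the parameter is literally the ideal $J \subset A$.
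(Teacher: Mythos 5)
Your proof is correct and takes essentially the same approach as the paper: both identify closed subspaces of $\Spf A$ with closed ideals of $A$ and then show that Noetherianness forces every ideal to be $I$-adically closed. The only difference is emphasis — the paper cites Stacks Project tags for the characterization of closed immersions into $\Spf A$ and merely asserts that ideals are closed, whereas you sketch the former and spell out the latter via the Jacobson-radical and Krull-intersection argument.
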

\begin{proof}
	It follows
	from~\cite[\href{http://stacks.math.columbia.edu/tag/0ANQ}{Tag
 0ANQ}]{stacks-project}
that closed immersions $Z \to \Spf A$
are necessarily of the form $\Spf B \to \Spf A$,
and correspond to continuous morphisms $A \to B$, for some complete 
linearly topologized
ring $B$, which are taut (in the sense
	of~\cite[\href{http://stacks.math.columbia.edu/tag/0AMX}{Tag
 0AMX}]{stacks-project}),
have closed kernel, and dense image.
Since $A$ is adic, it admits a countable basis of neighbourhoods of the origin,
and so it follows 
	from~\cite[\href{http://stacks.math.columbia.edu/tag/0APT}{Tag
 0APT}]{stacks-project} (recalling also~\cite[\href{http://stacks.math.columbia.edu/tag/0AMV}{Tag
 0AMV}]{stacks-project})  that $A\to B$ is surjective.  
Because any ideal of definition $I$ of $A$ is finitely generated, it follows 
	from~\cite[\href{http://stacks.math.columbia.edu/tag/0APU}{Tag
 0APU}]{stacks-project} that $B$ is endowed with the $I$-adic topology.
Finally, since $A$ is Noetherian, any ideal in $A$ is $I$-adically closed.
Thus closed immersions $\Spf B \to \Spf A$ are determined by giving
the kernel of the corresponding morphism $A \to B$, which can be arbitrary.
The same is true of closed immersions $\Spec B \to \Spec A$,
and so the lemma follows.
\end{proof}

\begin{df} If $A$ is a Noetherian adic topological ring,
then we say that a closed subspace of $\Spf A$
is an {\em effective Cartier divisor} on $\Spf A$ if the corresponding closed
subscheme of $\Spec A$ is an effective Cartier divisor on $\Spec A$.
\end{df}

\begin{lemma}
	Let $\Spf B \to \Spf A$ be a flat adic morphism of Noetherian
	affine formal algebraic spaces.
	If $Z \hookrightarrow \Spf A$ is a Cartier divisor,
	then $Z \times_{\Spf A} \Spf B \hookrightarrow \Spf B$ 
	is a Cartier divisor.  Conversely, if $\Spf B \to \Spf A$
	is furthermore surjective, and if $Z \hookrightarrow \Spf A$
	is a closed subspace for which the base-change
	$Z \times_{\Spf A} \Spf B \hookrightarrow \Spf B$ 
	is a Cartier divisor,
	then $Z$ is a Cartier divisor on $\Spf A$.
\end{lemma}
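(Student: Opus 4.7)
The plan is to deduce both directions from the corresponding scheme-theoretic statement in Lemma~\ref{lem:Cartier divisors are flat local}, translating back and forth between closed subspaces of formal spectra and closed subschemes of ordinary spectra via Lemma~\ref{lem:comparing closed subsets}. Concretely, let $I$ be an ideal of definition of $A$, so that (since $\Spf B \to \Spf A$ is adic) $IB$ is an ideal of definition of $B$, and the morphism comes from a flat continuous ring map $A \to B$. Let $Z \hookrightarrow \Spf A$ correspond to the closed subscheme $\Spec(A/J) \hookrightarrow \Spec A$ for some ideal $J \subset A$, so that by definition $Z$ is Cartier on $\Spf A$ precisely when $J$ is an invertible ideal of~$A$.

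First I would verify compatibility of the formal fibre product with the bijection of Lemma~\ref{lem:comparing closed subsets}. The fibre product $Z\times_{\Spf A}\Spf B$ is the formal spectrum of the completed tensor product $(A/J)\,\widehat{\otimes}_A B$. Since $A$ is Noetherian and $J$ is therefore finitely generated, $(A/J)\otimes_A B = B/JB$ is finitely generated as a $B$-module, and hence (by the Artin--Rees lemma) is already $IB$-adically complete. Consequently $Z\times_{\Spf A}\Spf B = \Spf(B/JB)$, and under Lemma~\ref{lem:comparing closed subsets} this corresponds to the closed subscheme $\Spec(B/JB) = \Spec(A/J)\times_{\Spec A}\Spec B$ of $\Spec B$.

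For the first assertion, flatness of $\Spf B \to \Spf A$ means that $A \to B$ is flat, and then Lemma~\ref{lem:Cartier divisors are flat local}\,(1)$\Rightarrow$(2) applied to the flat morphism $\Spec B \to \Spec A$ shows that $\Spec(B/JB)$ is Cartier on $\Spec B$ whenever $\Spec(A/J)$ is Cartier on $\Spec A$; unwinding, $Z\times_{\Spf A}\Spf B$ is Cartier on $\Spf B$. For the converse, surjectivity of the flat adic morphism $\Spf B \to \Spf A$ is equivalent to $A \to B$ being faithfully flat, so $\{\Spec B \to \Spec A\}$ is an fpqc cover of $\Spec A$; Lemma~\ref{lem:Cartier divisors are flat local}\,(3)$\Rightarrow$(1) then upgrades Cartier-ness of $\Spec(B/JB)$ on $\Spec B$ to Cartier-ness of $\Spec(A/J)$ on $\Spec A$, as required.

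The main (minor) obstacle is the base-change compatibility in the second paragraph: one must check that the formal fibre product requires no further $I$-adic completion, so that the bijection of Lemma~\ref{lem:comparing closed subsets} is compatible with base change. The Noetherian hypothesis is essential here, both to make $B/JB$ automatically complete and to ensure that the ideals in question are closed; beyond this, the argument is a mechanical transport of Lemma~\ref{lem:Cartier divisors are flat local} across the equivalence of Lemma~\ref{lem:comparing closed subsets}.
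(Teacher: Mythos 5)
Your argument follows essentially the same route as the paper: translate to the affine scheme picture via Lemma~\ref{lem:comparing closed subsets} and the definition of effective Cartier divisor on $\Spf A$, use that a flat (resp.\ surjective flat) adic morphism $\Spf B \to \Spf A$ comes from a flat (resp.\ faithfully flat) ring map $A \to B$, and then invoke Lemma~\ref{lem:Cartier divisors are flat local}. Your extra paragraph verifying that the formal fibre product $Z\times_{\Spf A}\Spf B$ identifies with $\Spf(B/JB)$, and hence corresponds to $\Spec(B/JB)$ under the bijection, is a compatibility the paper passes over silently but is indeed worth recording; the proof is correct.
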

\begin{proof}
	The morphism $\Spf B \to \Spf A$ corresponds to an adic flat morphism
	$A \to B$ 
(\cite[\href{http://stacks.math.columbia.edu/tag/0AN0}{Tag
0AN0}]{stacks-project}
and \cite[Lem.\ 8.18]{Emertonformalstacks})
	and hence is induced by a flat morphism $\Spec B \to \Spec A$,
	which is furthermore faithfully flat if and only if 
	$\Spf B \to \Spf A$ is surjective
(again by \cite[Lem.\ 8.18]{Emertonformalstacks}).
	The present lemma thus follows from Lemma~\ref{lem:Cartier divisors
	are flat local}. 
\end{proof}

The preceding lemma justifies the following
definition.

\begin{df} We say that a closed substack $\cZ$ of a locally Noetherian
	formal algebraic stack
	$\cX$
	is an {\em effective Cartier divisor} on $\cX$ if
		 for any morphism $U \to \cX$ whose source
			is a Noetherian affine formal algebraic space,
			and which is representable by algebraic spaces and
			flat,
			the pull-back $\cZ\times_{\cX} U$
			is an effective Cartier divisor on $U$.
\end{df}

We consider the $W(k)$-scheme $\Spec W(k)[X,Y]/(XY - p)$, which we endow with
a $\Gm$-action via $u\cdot (X,Y) := (uX, u^{-1} Y).$
There is an obvious morphism
$$\Spec W(k)[X,Y]/(XY -p) \to \Spec W(k)[X]=\mathbb A^1$$
given by $(X,Y) \to X$, which is $\Gm$-equivariant (for the action
of~$\Gm$ on~$\mathbb A^1$ given by $u\cdot X:=uX$),
and so induces a morphism
\numequation
\label{eqn:Cartier divisor construction}
[\bigl(\Spec W(k)[X,Y]/(XY -p)\bigr) / \Gm]
\to  [\mathbb A^1/\Gm].
\end{equation}

\begin{lemma}\label{lem: f equals 1 effective Cartier} If $\cX$ is a
  locally Noetherian $p$-adic formal algebraic stack
	which is furthermore flat over $\Spf W(k)$,
        then the groupoid of morphisms
	\[\cX \to [\Spec W(k)[X,Y]/(XY - p) / \Gm]\] is in fact 
	a setoid, and is equivalent to the set of effective Cartier divisors 
	on $\cX$ that are contained in the effective Cartier divisor
	$(\Spec k) \times_{\Spf W(k)} \cX$ on~$\cX$. 
\end{lemma}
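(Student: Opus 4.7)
The plan is to interpret morphisms to the quotient stack in terms of line bundles with sections, then use the flatness hypothesis to show that the ``X'' section is automatically a non-zero-divisor. This single observation simultaneously forces automorphisms to be trivial and produces the desired Cartier divisor.

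First I would spell out the moduli interpretation. By standard descent for $\Gm$-torsors, a morphism from a stack $\cX$ to $[\Spec W(k)[X,Y]/(XY-p)/\Gm]$, with the action $u\cdot(X,Y)=(uX,u^{-1}Y)$, is the data of a line bundle $\cL$ on $\cX$ together with a section $s_X\in\Gamma(\cX,\cL)$ and a section $s_Y\in\Gamma(\cX,\cL^{-1})$, satisfying $s_X\otimes s_Y = p$ in $\Gamma(\cX,\cO_\cX)$. (An isomorphism between two such triples is an isomorphism of line bundles carrying one pair of sections to the other.)

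Next I would show that $s_X$ is necessarily a regular section (i.e.\ multiplication by $s_X$ is injective on $\cO_\cX$). Working fppf-locally so as to trivialize $\cL$, the section becomes a function $x$ with $xy=p$ for some $y$. By hypothesis $\cX$ is flat over $\Spf W(k)$, so $p$ is a non-zero-divisor on $\cO_\cX$; thus if $xa=0$ then $pa=yxa=0$, forcing $a=0$. This regularity is fppf-local and hence holds globally on $\cX$ in the formal-stack sense via the pullback definition of Cartier divisors recalled above. From this, the setoid property is immediate: an automorphism of $(\cL,s_X,s_Y)$ is a unit $u\in\cO_\cX^\times$ with $us_X=s_X$, and regularity of $s_X$ forces $u=1$; the second condition $u^{-1}s_Y=s_Y$ is then automatic.

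For the equivalence with Cartier divisors, I would in one direction attach to $(\cL,s_X,s_Y)$ the vanishing locus $Z:=V(s_X)$. Regularity of $s_X$ makes $Z$ an effective Cartier divisor on $\cX$, and the relation $s_X s_Y=p$ forces $p$ to vanish on $Z$, i.e.\ $Z\subset(\Spec k)\times_{\Spf W(k)}\cX$. Conversely, given such a Cartier divisor $Z\subset\cX$, set $\cL:=\cI_Z^{-1}$, with universal section $s_X$ corresponding to $\cI_Z\hookrightarrow\cO_\cX$. The containment $Z\subset V(p)$ says $p\in\cI_Z$, so locally $p=xy$ for a (unique, by regularity of $x$) element $y$; uniqueness guarantees these local $y$'s descend and glue to a well-defined global $s_Y\in\Gamma(\cX,\cL^{-1})$ with $s_X s_Y=p$. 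The two constructions are visibly inverse.

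The main obstacle, modest as it is, is verifying that all of this goes through for a locally Noetherian $p$-adic formal algebraic stack rather than just an affine scheme: one needs to check that the moduli interpretation of the quotient stack, the notion of Cartier divisor, the construction of $Z$ from $s_X$, and the extraction of $s_Y$ from $s_X$ and the hypothesis $Z\subset V(p)$ all behave well under pullback along flat representable morphisms from Noetherian affine formal algebraic spaces. The preceding lemmas of this subsection are designed precisely to allow this reduction, so once the local (fppf) picture is established the global assertion follows formally.
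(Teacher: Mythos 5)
Your proof follows essentially the same strategy as the paper's: interpret maps to the quotient stack as line bundles with sections, use flatness to force regularity of $s_X$, and deduce both the setoid property and the bijection with Cartier divisors. The heart of the argument — that $p$ regular implies $s_X$ regular implies $s_Y$ is uniquely determined and automorphisms are trivial — is correctly identified and executed.

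However, there is a gap in your reduction to the local picture, and you have slightly misdiagnosed where the difficulty sits. After reducing to $\cX = \Spf B$ (fine, via the definition of formal algebraic stack), the key technical step that the paper must prove — and which you appear to omit — is that the restriction functor
\[[\Spec W(k)[X,Y]/(XY-p)/\Gm](\Spec B) \longrightarrow [\Spec W(k)[X,Y]/(XY-p)/\Gm](\Spf B)\]
is an equivalence of groupoids. This is \emph{not} covered by the preceding lemmas of the subsection: Lemma~\ref{lem:comparing closed subsets} and its neighbours compare closed subschemes of $\Spec A$ with closed subspaces of $\Spf A$, which is the right tool for manipulating Cartier divisors but says nothing about line bundles with sections. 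The paper handles the needed equivalence directly, by observing that a compatible tower of rank-one locally free $B/p^iB$-modules algebraizes to a rank-one locally free $B$-module (essential surjectivity), and that a rank-one locally free $B$-module is $p$-adically complete (full faithfulness). Without this step, your ``work fppf-locally to trivialize $\cL$'' is carried out on a formal stack rather than a scheme, and the subsequent manipulation of sections and vanishing loci needs the formal-to-algebraic comparison to be justified. You should add this algebraization lemma for line bundles before invoking the affine-scheme picture.
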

\begin{proof}
  Essentially by definition (and taking into account \cite[Lem.\
  8.18]{Emertonformalstacks}), it suffices to prove this in the case
  when $\cX = \Spf B$, where $B$ is a flat Noetherian adic
  $W(k)$-algebra admitting $(p)$ as an ideal of definition.  In this
  case, the restriction map
  \[[\Spec W(k)[X,Y]/(XY - p) / \Gm](\Spec B)\to [\Spec W(k)[X,Y]/(XY -
  p) / \Gm](\Spf B)\] is an equivalence of groupoids. 
Indeed, the
  essential surjectivity follows from the (standard and easily
  verified) fact that if $\{M_i\}$ is a compatible family of locally
  free $B/p^iB$-modules of rank one, then $M := \varprojlim M_i$ is a
  locally free $B$-module of rank one, for which each of the natural
  morphisms $M/p^iM \to M_i$ is an isomorphism.  The full
  faithfulness follows from the fact that a locally free $B$-module
  of rank one is $p$-adically complete, and so is recovered as the
  inverse limit of its compatible family of quotients $\{M/p^iM\}.$

We are therefore reduced to the same statement with $\cX = \Spec B$.  The composite morphism $\Spec B \to  [\mathbb A^1/\Gm]$ induced
  by~\eqref{eqn:Cartier divisor construction} corresponds to giving a
  pair~$(\cD,X)$ where~$\cD$ is a line bundle  on~$\Spec B$, and~$X$
  is a global section of~$\cD^{-1}$. Indeed, giving a morphism $\Spec
  B \to  [\mathbb A^1/\Gm]$ is equivalent
	to giving a $\Gm$-torsor $P \to \Spec B$, together with a $\Gm$-equivariant
	morphism $P \to \mathbb A^1$.   Giving a $\Gm$-torsor 
	$P$ over $\Spec B$ is equivalent to giving an invertible sheaf
        $\cD$ on $\Spec B$
	(the associated $\Gm$-torsor is then obtained by deleting the
	zero section from the line bundle $D\to X$ corresponding to $\cD$),
	and giving a $\Gm$-equivariant morphism $P \to \mathbb A^1$
	is equivalent to giving a global section of $\cD^{-1}$.

It follows that giving a morphism
  $\Spec B \to [\Spec W(k)[X,Y]/(XY - p) / \Gm]$ corresponds to giving
  a line bundle $\cD$ and sections $X \in \cD^{-1}$, $Y \in \cD$ satisfying
  $X Y = p$.  To say that $B$ is flat over $W(k)$ is just to say that
  $p$ is a regular element on $B$, and so we see that $X$ 
  (resp.\ $Y$) is a regular section of $\cD^{-1}$ (resp.\ $\cD$).
  Again, since $p$ is a
  regular element on $B$, we see that $Y$ is uniquely determined by
  $X$ and the equation $X Y = p$, and so giving a morphism
  $\Spec B\to [\Spec W(k)[X,Y]/(XY - p) / \Gm]$ is equivalent to giving a line
  bundle $\cD$ and a regular section $X$ of $\cD^{-1}$, such that
  $pB \subset X\otimes_B \cD \subset \cD^{-1}\otimes_B \cD\iso B$;
  this last condition guarantees the existence of the (then uniquely
  determined) $Y$.

Now giving a line bundle $\cD$ on $\Spec B$ and a regular section
$X\in\cD^{-1}$ is the
same as giving the zero locus $D$ of $X$, which is a Cartier divisor
on $\Spec B$.
(There is a canonical isomorphism $(\cD,X) \cong
\bigl(\cI_D,1\bigr)$, where $\cI_D$ denotes the ideal sheaf of $D$.) 
The condition that $pB \subset X\otimes_B \cD$ is equivalent 
to the condition that $p \in \cI_D$,
i.e.\ that $D$ be contained in $\Spec B/pB$, and we are done.
\end{proof}

\begin{lemma}\label{lem: maps to gauge stack as Cartier
    divisors} 
If $\cS$ is a locally Noetherian $p$-adic formal algebraic stack which
is flat over $W(k)$, 
then giving a morphism $\cS \to \cG_{\eta}$
over $W(k)$ is equivalent to giving a collection
of effective Cartier divisors $\cD_j$ on $\cS$ {\em (}$j = 0,
\ldots,f-1${\em )}, with each $\cD_j$ contained in the Cartier divisor
$\overline{\cS}$ cut out by the equation $p = 0$ on $\cS$ {\em (}i.e.\
the {\em special fibre} of $\cS${\em )}.
\end{lemma}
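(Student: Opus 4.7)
The plan is to reduce this statement to the previous Lemma \ref{lem: f equals 1 effective Cartier} by observing that $\cG_{\eta}$ factors as an $f$-fold product of copies of the stack $[\Spec W(k)[X,Y]/(XY-p) / \Gm]$ considered there.

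More precisely, inspection of Definition of $\cG_\eta$ shows that the action of $\Gm^f$ is \emph{coordinatewise}: the $j$-th copy of $\Gm$ acts only on the pair $(X_j,Y_j)$, and does not interact with the other pairs. Consequently there is a canonical isomorphism
\[
\cG_\eta \;\iso\; \prod_{j=0}^{f-1} \bigl[\Spec W(k)[X_j,Y_j]/(X_j Y_j - p) \,/\, \Gm \bigr],
\]
where the product is taken over $\Spec W(k)$. Therefore giving a morphism $\cS \to \cG_\eta$ over $W(k)$ is the same as giving an $f$-tuple of morphisms $\cS \to [\Spec W(k)[X,Y]/(XY-p) / \Gm]$, one for each $j=0,\dots,f-1$.

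Now I would simply apply Lemma \ref{lem: f equals 1 effective Cartier} to each factor: under the flatness hypothesis on $\cS$, that lemma equates such a morphism with an effective Cartier divisor $\cD_j$ on $\cS$ contained in the special fibre $\overline{\cS}$ (cut out by $p=0$). Assembling these identifications across $j=0,\dots,f-1$ yields the desired equivalence of data. The main (and essentially only) content is the observation in the previous paragraph that the quotient presentation of $\cG_\eta$ splits as a product; once this is noted, no further work is required beyond citing the preceding lemma.
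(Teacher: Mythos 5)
Your proposal is correct and is exactly the argument the paper has in mind: the paper's proof is the one-liner ``This follows immediately from Lemma~\ref{lem: f equals 1 effective Cartier}, by the definition of~$\cG_\eta$,'' and your observation that the $\Gm^f$-action is coordinatewise so that $\cG_\eta$ decomposes as an $f$-fold fibre product over $\Spec W(k)$ of copies of $[\Spec W(k)[X,Y]/(XY-p)/\Gm]$ is precisely the content being invoked. (It is worth noting, as a small point you tacitly use, that since the fibre product is taken over the scheme $\Spec W(k)$ rather than a general stack, a morphism to the product carries no extra $2$-isomorphism data and really is just an $f$-tuple of morphisms to the factors.)
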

\begin{proof}
	This follows immediately from Lemma~\ref{lem: f equals 1
          effective Cartier}, by the definition of~$\cG_\eta$.
\end{proof}

\section{Families of extensions of Breuil--Kisin modules}\label{sec: extensions of rank one Kisin modules}

The goal of the next two sections is to construct certain universal families of
extensions of rank one Breuil--Kisin modules over $\F$ with descent data;\ these families will be
used in Section~\ref{sec:Components} to describe the generic behaviour of the various irreducible 
components of the special fibres of~$\cC^{\tau,\BT}$ and~$\cZ^{\tau}$.

In Subsections~\ref{subsec:ext generalities} and~\ref{subsec:families of extensions} we present some generalities
on extensions of Breuil--Kisin modules and on families of these extensions, respectively. In Subsection~\ref{subsec:universal families}
we specialize the discussion of Subsection~\ref{subsec:families of extensions} to the case of extensions of two rank one Breuil--Kisin modules, and thus explain how to construct our desired families of extensions.
In Section~\ref{sec:extensions-of-rank-one} we  recall the fundamental
computations related to extensions of rank one Breuil--Kisin modules
from \cite{DiamondSavitt}, to which the results of
Subsection~\ref{subsec:universal families} will be applied at the end of Subsection~\ref{sec:extensions-profile-J} to construct the components $\overline{\cC}(J)$ and $\overline{\cZ}(J)$ of Theorem~\ref{thm:main thm cegsC}.

We assume throughout this section that $[K':K]$ is not divisible
by~$p$; since we are assuming throughout the paper that $K'/K$ is
tamely ramified, this is equivalent to assuming that $K'$ does not
contain an unramified extension of $K$ of degree~$p$. In our final
applications $K'/K$ will contain unramified extensions of degree at
most~$2$, and $p$ will be odd, so this assumption will be satisfied.
(In fact, we specialize to such a context begining in
Subsection~\ref{subsec:irreducible}.)

\subsection{Extensions of Breuil--Kisin modules with descent data}
\label{subsec:ext generalities}
When discussing the general theory of extensions of Breuil--Kisin
modules, it is convenient to embed the category of Breuil--Kisin modules
in a larger category which is abelian,
contains enough injectives and projectives,
and is closed under passing to arbitrary limits and colimits.
The simplest way to obtain such a category is as the category of modules 
over some ring, and so we briefly recall how a Breuil--Kisin module with
$A$-coefficients and  descent 
data can be interpreted as a module over a certain $A$-algebra.

Let $\gS_A[F]$ denote the twisted polynomial ring over $\gS_A$,
in which the variable $F$ obeys the following commutation relation
with respect to elements $s \in \gS_A$:
$$ F \cdot s = \varphi(s) \cdot F.$$
Let $\gS_A[F, \Gal(K'/K)]$ denote the twisted group ring over $\gS_A[F]$,
in which the elements $g \in \Gal(K'/K)$ commute with $F$,
and obey the following commutation
relations with elements $s  \in \gS_A$:
$$ g \cdot s = g(s) \cdot g.$$
One immediately confirms that giving a left $\gS_A[F, \Gal(K'/K)]$-module $\gM$
is equivalent to equipping the underlying $\gS_A$-module
$\gM$ with a $\varphi$-linear morphism
$\varphi:\gM \to \gM$ and a semi-linear action of $\Gal(K'/K)$
which commutes with $\varphi$.

In particular, if we let $\K{A}$ denote the category of left $\gS_A[F,
\Gal(K'/K)]$-modules, then
a Breuil--Kisin module with descent data from $K'$ to $K$ 
 may naturally be regarded as an object of $\K{A}$.
In the following lemma, we record the fact that extensions of Breuil--Kisin modules
with descent data may be computed as extensions in the category~$\K{A}$.  

\begin{lemma}
\label{lem:ext of a Kisin module is a Kisin module}
If $0 \to \gM' \to \gM \to \gM'' \to 0$ is a short exact sequence
in $\K{A}$, such that $\gM'$ {\em (}resp.\ $\gM''${\em )}
is a Breuil--Kisin module with descent data
of rank $d'$ and height at most $h'$ 
{\em (}resp.\ of rank $d''$ and height at most $h''$\emph{)}, 
then $\gM$ is a Breuil--Kisin module with descent data
of rank $d'+d''$ and height at most $h'+h''$.  

More generally, if
$E(u)^h\in\Ann_{\gS_A}(\coker\Phi_{\gM'})\Ann_{\gS_A}(\coker\Phi_{\gM''})$,
then $\gM$ is a Breuil--Kisin module with descent data of height at most~$h$.
\end{lemma}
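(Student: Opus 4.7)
The plan is to verify the two defining conditions of a Breuil--Kisin module for $\gM$: that it is finitely generated and projective over $\gS_A$ (of the claimed rank), and that $\coker \Phi_\gM$ is annihilated by a suitable power of $E(u)$ (respectively by the specified element). Projectivity will be checked via the characterization recalled in Remark~\ref{rem:projectivity for Kisin modules}(1): an $\gS_A$-module is finitely generated projective if and only if it is $u$-torsion free, $u$-adically complete, and has finitely generated projective quotient by $u$.

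First, $u$-torsion freeness of $\gM$ is immediate from a diagram chase using the $u$-torsion freeness of $\gM'$ and $\gM''$. Next, since $\gM''$ is $\gS_A$-projective and hence flat, tensoring the given sequence with $\gS_A/u^n$ yields a short exact sequence
$$0 \to \gM'/u^n\gM' \to \gM/u^n\gM \to \gM''/u^n\gM'' \to 0$$
for every $n \ge 1$. The $n=1$ case, combined with the $A$-projectivity of $\gM''/u\gM''$, shows that $\gM/u\gM \cong \gM'/u\gM' \oplus \gM''/u\gM''$ as $A$-modules, hence is finitely generated projective of the claimed rank $d'+d''$. Passing to the inverse limit over $n$ (the system $\{\gM'/u^n\gM'\}$ has surjective transition maps, so $\varprojlim$ is exact on it) and comparing to the original sequence via the natural maps to $u$-adic completions, the five lemma shows $\gM \isoto \widehat{\gM}$, so that $\gM$ is $u$-adically complete.

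For the height statement, apply $\varphi^* = \gS_A \otimes_{\varphi,\gS_A}(-)$ to the original sequence; exactness is preserved because $\gM''$ is flat. The snake lemma applied to the resulting commutative diagram with vertical maps $\Phi_{\gM'},\Phi_\gM,\Phi_{\gM''}$ yields the exact sequence
$$\coker \Phi_{\gM'} \to \coker \Phi_\gM \to \coker \Phi_{\gM''} \to 0,$$
realizing $\coker\Phi_\gM$ as an extension of $\coker\Phi_{\gM''}$ by a quotient of $\coker\Phi_{\gM'}$. A direct check on elements then shows $\Ann_{\gS_A}(\coker\Phi_{\gM'})\cdot\Ann_{\gS_A}(\coker\Phi_{\gM''}) \subseteq \Ann_{\gS_A}(\coker\Phi_\gM)$, which gives the ``more generally'' assertion; the basic case $h = h'+h''$ follows from $E(u)^{h'+h''} = E(u)^{h'}\cdot E(u)^{h''}$.

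I do not foresee a serious obstacle: the argument is routine abelian-category bookkeeping, with the flatness afforded by the projectivity of $\gM''$ providing exactness every time we apply a tensor functor. The only point worth highlighting is the use of this flatness in two separate places (for the $\gS_A/u^n$ reductions and for the $\varphi^*$-pullback), both of which would fail without a hypothesis of this sort on one of the outer terms.
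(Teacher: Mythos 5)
Your argument is essentially the paper's, fleshed out: the paper verifies $\Phi_{\gM}[1/E(u)]$ is an isomorphism by applying the snake lemma to the $E(u)$-inverted diagram, records the (short) exact sequence of cokernels, and then asserts the height and rank claims "follow immediately"; your write-up supplies the routine bookkeeping that the paper elides, in particular the verification of projectivity via the criterion of Remark~\ref{rem:projectivity for Kisin modules}(1) and the annihilator computation for the cokernel. The snake-lemma-on-cokernels step and the observation that the annihilator of an extension contains the product of the annihilators are exactly what the paper has in mind.

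One small omission worth flagging: the definition of a Breuil--Kisin module requires that $\Phi_{\gM}$ become an \emph{isomorphism} after inverting $E(u)$, and your proof addresses only the surjectivity of $\Phi_{\gM}[1/E(u)]$ (via the annihilator of the cokernel). The paper makes this its first point, deducing it from the snake lemma applied after inverting $E(u)$. In your framework it still follows, since $\varphi$ is finite flat so $\varphi^*\gM$ and $\gM$ are finitely generated projective $\gS_A$-modules of the same constant rank, and a surjection between such modules (after localizing at $E(u)$) is automatically an isomorphism; but you should say so rather than leave it implicit. You are also more careful than the paper in not claiming short exactness of the cokernel sequence (the paper's short exactness uses the injectivity of $\Phi_{\gM''}$, which holds because $\varphi^*\gM''$ is $E(u)$-torsion free), but since you only need the right-exact sequence for the annihilator estimate, your weaker statement suffices.
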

\begin{proof}
Note that since $\Phi_{\gM'}[1/E(u)]$ and $\Phi_{\gM''}[1/E(u)]$ are both isomorphisms by
assumption, it follows from the snake lemma that~$\Phi_{\gM}[1/E(u)]$ is
isomorphism.  Similarly we have a short exact sequence of
$\gS_A$-modules \[0\to\coker\Phi_{\gM'}\to\coker\Phi_{\gM}\to\coker\Phi_{\gM''}\to
0.\]
The claims about the height and rank of~$\gM$ follow immediately.
\end{proof}

We now turn to giving an explicit description of the functors
$\Ext^i(\gM, \text{--} \, )$ for a Breuil--Kisin module with descent data
$\gM$. 

\begin{df}
\label{def:explicit Ext complex}
Let $\gM$ be a 
Breuil--Kisin module with $A$-coefficients and descent data (of some 
height).
If $\gN$ is any object of $\K{A}$, then 
we let $C^{\bullet}_{\gM}(\gN)$ denote the complex
$$ 
\Hom_{\gS_A[\Gal(K'/K)]}(\gM,\gN) \to
\Hom_{\gS_A[\Gal(K'/K)]}(\varphi^*\gM,\gN), 
$$
with differential being given by 
$$\alpha \mapsto \Phi_{\gN} \circ \varphi^* \alpha - \alpha \circ \Phi_{\gM}.$$
Also let $\Phi_{\gM}^*$ denote the map $C^0_{\gM}(\gN) 
\to C^1_{\gM}(\gN)$ given by $\alpha \mapsto \alpha \circ
\Phi_{\gM}$. When $\gM$ is clear from the context we will usually suppress it from the
notation and write simply $C^{\bullet}(\gN)$. 
\end{df}

Each $C^i(\gN)$ is naturally an $\gS_A^0$-module. 
The formation of $C^{\bullet}(\gN)$ is evidently functorial in $\gN$,
and is also exact in $\gN$, since $\gM$, and hence also $\varphi^*\gM$,
is projective over $\gS_A$, and since $\Gal(K'/K)$ has prime-to-$p$ order.
Thus the cohomology functors
$H^0\bigl(C^{\bullet}(\text{--})\bigr)$
and
$H^1\bigl(C^{\bullet}(\text{--})\bigr)$
form a $\delta$-functor on $\K{A}$.

\begin{lemma}\label{lem: C computes Hom}
There is a natural isomorphism
$$\Hom_{\K{A}}(\gM,\text{--}\, ) \cong
H^0\bigl(C^{\bullet}(\text{--}\,)\bigr).$$
\end{lemma}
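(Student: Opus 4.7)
The plan is essentially to unwind definitions. A morphism in $\K{A}$ from $\gM$ to $\gN$ is, by the discussion preceding Lemma~\ref{lem:ext of a Kisin module is a Kisin module}, an $\gS_A$-linear map $\alpha : \gM \to \gN$ which is $\Gal(K'/K)$-equivariant and which commutes with the $\varphi$-semilinear endomorphisms, i.e.\ satisfies $\varphi_{\gN} \circ \alpha = \alpha \circ \varphi_\gM$. The first two conditions say precisely that $\alpha \in \Hom_{\gS_A[\Gal(K'/K)]}(\gM,\gN) = C^0_\gM(\gN)$, so the task is to check that the remaining condition $\varphi_{\gN} \circ \alpha = \alpha \circ \varphi_\gM$ translates exactly into the vanishing of the differential $d(\alpha) = \Phi_\gN \circ \varphi^*\alpha - \alpha \circ \Phi_\gM$.

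This translation is the only computational step. Given an $\gS_A$-linear $\alpha$, one extends $\varphi$-linearly to $\varphi^*\alpha : \varphi^*\gM \to \varphi^*\gN$, $s\otimes m \mapsto s \otimes \alpha(m)$. Then on a simple tensor one has
\[ \Phi_\gN(\varphi^*\alpha(s\otimes m)) = \Phi_\gN(s\otimes \alpha(m)) = s\cdot \varphi_\gN(\alpha(m)), \]
while
\[ \alpha(\Phi_\gM(s\otimes m)) = \alpha(s\cdot \varphi_\gM(m)) = s\cdot \alpha(\varphi_\gM(m)), \]
by $\gS_A$-linearity of $\alpha$. Hence $d(\alpha) = 0$ if and only if $\varphi_\gN \circ \alpha = \alpha \circ \varphi_\gM$, as required. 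Functoriality and naturality in $\gN$ are immediate from the construction. There is no real obstacle; the point of the lemma is simply to record the identification so that the higher cohomology $H^1(C^\bullet(-))$ can subsequently be compared with $\Ext^1_{\K{A}}(\gM,-)$ using the $\delta$-functor structure already noted in the excerpt.
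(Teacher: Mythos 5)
Your proof is correct and matches the paper's approach: the paper's own proof is simply ``This is immediate,'' and you have spelled out the routine verification that the vanishing of the differential $\Phi_\gN\circ\varphi^*\alpha - \alpha\circ\Phi_\gM$ is equivalent to $\alpha$ commuting with the $\varphi$-semilinear structure maps.
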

\begin{proof}
This is immediate.
\end{proof}

It follows from this lemma and 
a standard dimension shifting argument (or, equivalently, the theory
of $\delta$-functors) that there is an embedding of functors
\numequation
\label{eqn:ext embedding}
\Ext^1_{\K{A}}(\gM, \text{--} \, ) \hookrightarrow
H^1\bigl(C^{\bullet}(\text{--})\bigr).
\end{equation}

\begin{lemma}\label{lem: C computes Ext^1}
The embedding of functors~\eqref{eqn:ext embedding}
is an isomorphism.
\end{lemma}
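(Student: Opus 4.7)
The plan is to prove surjectivity of \eqref{eqn:ext embedding}; combined with the injectivity already asserted, this will give the claim. Since $C^{\bullet}(\gN)$ has only two terms, every element of $C^1(\gN) = \Hom_{\gS_A[\Gal(K'/K)]}(\varphi^*\gM, \gN)$ is automatically a cocycle, so I need to exhibit, for each such $\alpha$, an extension of $\gM$ by $\gN$ in $\K{A}$ whose class maps to $[\alpha]$.

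Given $\alpha$, I will define $\gE_\alpha$ to have $\gN \oplus \gM$ as its underlying $\gS_A[\Gal(K'/K)]$-module (with the diagonal structure), equipped with the Frobenius $\Phi_{\gE_\alpha} \col \varphi^*\gN \oplus \varphi^*\gM \to \gN \oplus \gM$ given by the upper-triangular formula
\[
\Phi_{\gE_\alpha}(x,y) = \bigl(\Phi_\gN(x) + \alpha(y),\ \Phi_\gM(y)\bigr).
\]
The $\gS_A$-linearity of $\Phi_{\gE_\alpha}$ is immediate from the $\gS_A$-linearity of $\Phi_\gN$, $\Phi_\gM$, and $\alpha$, while its commutation with the $\Gal(K'/K)$-action follows from the $\Gal(K'/K)$-equivariance of $\alpha$ (together with that of $\Phi_\gN$ and $\Phi_\gM$). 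Hence $\gE_\alpha$ is an object of $\K{A}$, and the evident short exact sequence $0 \to \gN \to \gE_\alpha \to \gM \to 0$ realises it as an extension; it is automatically a Breuil--Kisin module with descent data by Lemma~\ref{lem:ext of a Kisin module is a Kisin module}.

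It remains to verify that the class of $\gE_\alpha$ in $\Ext^1_{\K{A}}(\gM,\gN)$ maps to $[\alpha]$ under \eqref{eqn:ext embedding}. By the construction of the $\delta$-functor map, this image is computed as follows: lift $\id_\gM \in C^0(\gM) = \Hom_{\gS_A[\Gal(K'/K)]}(\gM,\gM)$ along the surjection $C^0(\gE_\alpha) \twoheadrightarrow C^0(\gM)$ to any $\gS_A[\Gal(K'/K)]$-linear splitting $s \col \gM \to \gE_\alpha$, and then apply the differential to obtain $\Phi_{\gE_\alpha}\circ\varphi^*s - s\circ\Phi_\gM \in C^1(\gE_\alpha)$, which lies in the subspace $C^1(\gN)$ because its composite with the projection $\gE_\alpha \to \gM$ vanishes. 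Taking $s(m) := (0,m)$ — which is manifestly $\gS_A[\Gal(K'/K)]$-equivariant because the structure on $\gE_\alpha$ is diagonal — an elementary computation shows that this element of $C^1(\gN)$ is precisely $\alpha$, proving surjectivity.

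No real obstacle is expected; the argument is a direct matrix computation, and the only point requiring a little care is to track the $\Gal(K'/K)$-equivariance through the definition of $\Phi_{\gE_\alpha}$. (Coherence of the construction, namely that cohomologous $\alpha$ yield isomorphic extensions, is then a consequence of the injectivity of \eqref{eqn:ext embedding}, though it can also be verified directly via the explicit isomorphism $(n,m) \mapsto (n+\gamma(m),m)$ when $\alpha - \alpha' = \Phi_\gN \circ \varphi^*\gamma - \gamma \circ \Phi_\gM$.)
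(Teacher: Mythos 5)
Your proof is correct and is essentially the same argument as the paper's: you construct the extension on $\gN\oplus\gM$ with the upper-triangular Frobenius $\begin{pmatrix}\Phi_\gN & \alpha \\ 0 & \Phi_\gM\end{pmatrix}$ and check via the obvious splitting $s(m)=(0,m)$ that its class maps to $[\alpha]$. The only cosmetic difference is that the paper first spells out the explicit formula for the embedding \eqref{eqn:ext embedding} for an arbitrary extension before exhibiting the inverse construction, whereas you fold that into the verification step.
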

\begin{proof}
We first describe the embedding~\eqref{eqn:ext embedding} explicitly.
Suppose that 
$$0 \to \gN \to \gE \to \gM \to 0$$
is an extension in $\K{A}$.  Since $\gM$ is projective over $\gS_A$,
and since $\Gal(K'/K)$ is of prime-to-$p$ order, 
we split this
short exact sequence
over the twisted group ring $\gS_A[\Gal(K'/K)],$ 
say via some element $\sigma \in \Hom_{\gS_A[\Gal(K'/K)]}(\gM,\gE).$
This splitting is well-defined up to the addition of an
element $\alpha \in \Hom_{\gS_A[\Gal(K'/K)]}(\gM,\gN).$

This splitting is a homomorphism in $\K{A}$ if and only
if the element
$$\Phi_{\gE}\circ \varphi^*\sigma - \sigma \circ \Phi_{\gM}
\in \Hom_{\gS_A[\Gal(K'/K)]}(\varphi^*\gM,\gN)$$ vanishes.
If we replace $\sigma$ by $\sigma +\alpha,$ then this element
is replaced by
$$(\Phi_{\gE}\circ \varphi^*\sigma - \sigma \circ \Phi_{\gM})
+  (\Phi_{\gN} \circ \varphi^* \alpha - \alpha \circ \Phi_{\gM}).$$
Thus the coset of 
$\Phi_{\gE}\circ \varphi^*\sigma - \sigma \circ \Phi_{\gM}$
in $H^1\bigl(C^{\bullet}(\gN)\bigr)$ is well-defined, independent
of the choice of $\sigma,$
and this coset is the image of the class of the extension
$\gE$ under the embedding
\numequation
\label{eqn:explicit embedding}
\Ext^1_{\K{A}}(\gM,\gN) \hookrightarrow 
H^1\bigl(C^{\bullet}(\gN)\bigr)
\end{equation}
(up to a possible overall sign, which we ignore, since it doesn't
affect the claim of the lemma).

Now, given any element  
$\nu \in \Hom_{\gS_A[\Gal(K'/K)]}(\varphi^*\gM,\gN)$,
we may give the $\gS_A[\Gal(K'/K)]$-module $\gE := \gN \oplus \gM$ the
structure of a $\gS_A[F,\Gal(K'/K)]$-module as follows: we need to
define a $\varphi$-linear morphism $\gE\to\gE$, or equivalently a linear
morphism $\Phi_{\gE}:\varphi^*\gE\to\gE$. We do this
by setting $$\Phi_{\gE} := \begin{pmatrix} \Phi_{\gN} & \nu \\ 0 & \Phi_{\gM} 
\end{pmatrix}.$$
Then $\gE$ is an extension of $\gM$ by $\gN$,
and if we let $\sigma$ denote the obvious embedding of $\gM$ into $\gE$,
then one computes that 
$$\nu = \Phi_{\gE}\circ \varphi^*\sigma - \sigma \circ \Phi_{\gM} .$$
This shows that~\eqref{eqn:explicit embedding} is an isomorphism, as claimed.
\end{proof}

Another dimension shifting argument, taking into account the preceding
lemma, shows that $\Ext^2_{\K{A}}(\gM,\text{--} \,)$ embeds into
$H^2\bigl( C^{\bullet}(\text{--}) \bigr).$ Since the target of this
embedding vanishes,
we find that the same is true of the source.  This yields the
following corollary.

\begin{cor}
\label{cor:ext2 vanishes}
If $\gM$ is a Breuil--Kisin module with $A$-coefficients and descent data,
then $\Ext^2_{\K{A}}(\gM, \text{--} \, ) = 0.$
\end{cor}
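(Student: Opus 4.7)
My plan is to follow the hint in the paragraph preceding the statement: construct a natural embedding
\[\Ext^2_{\K{A}}(\gM, \text{--} \, ) \hookrightarrow H^2\bigl(C^{\bullet}(\text{--})\bigr)\]
by dimension shifting, and then observe that the target vanishes identically because $C^\bullet$ has no terms in degrees $\ge 2$.

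To execute this, I would begin by noting that $\K{A}$, being the category of left modules over the (noncommutative) ring $\gS_A[F, \Gal(K'/K)]$, has enough injectives. Given any object $\gN$ of $\K{A}$, embed it in an injective $\mathfrak{I}$ to obtain a short exact sequence
\[0 \to \gN \to \mathfrak{I} \to \mathfrak{Q} \to 0\]
in $\K{A}$. Applying $\Ext^*_{\K{A}}(\gM, \text{--})$ produces a long exact sequence in which $\Ext^i_{\K{A}}(\gM, \mathfrak{I}) = 0$ for all $i \ge 1$ by injectivity. On the other hand, since $C^{\bullet}(\text{--})$ is exact in its argument (as observed after Definition~\ref{def:explicit Ext complex}), the short exact sequence above yields a short exact sequence of complexes and hence a long exact cohomology sequence terminating in $H^2\bigl(C^\bullet(\gN)\bigr)$. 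Lemma~\ref{lem: C computes Ext^1} assembles these two long exact sequences into a commutative ladder with exact rows, from which a standard diagram chase produces the desired injection: any $\xi \in \Ext^2_{\K{A}}(\gM, \gN)$ lifts to some $\eta \in \Ext^1_{\K{A}}(\gM, \mathfrak{Q})$; the vanishing of the image of the corresponding class in $H^2\bigl(C^\bullet(\gN)\bigr)$ forces $\eta$ to come from $\Ext^1_{\K{A}}(\gM, \mathfrak{I})$ (via the isomorphism of Lemma~\ref{lem: C computes Ext^1} applied to $\mathfrak{I}$), whence $\xi = 0$.

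The main subtle point is verifying that this ladder actually commutes, i.e.\ that the isomorphism of Lemma~\ref{lem: C computes Ext^1} intertwines the connecting homomorphisms on the two sides. The cleanest approach is to invoke the universal property of $\Ext^*_{\K{A}}(\gM, \text{--})$ as the universal $\delta$-functor extending $\Hom_{\K{A}}(\gM, \text{--})$: the natural isomorphism in degree $0$ provided by Lemma~\ref{lem: C computes Hom} extends uniquely to a morphism of $\delta$-functors, and by uniqueness this extension must coincide in degree $1$ with the isomorphism of Lemma~\ref{lem: C computes Ext^1}. Alternatively, the compatibility can be verified by hand from the explicit formula in the proof of Lemma~\ref{lem: C computes Ext^1} and the snake-lemma construction of $\partial$. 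Once this compatibility is in place, the remainder of the argument is routine.
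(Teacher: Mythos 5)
Your argument is correct and takes essentially the same dimension-shifting approach that the paper invokes (embedding $\gN$ in an injective $\mathfrak{I}$, applying Lemma~\ref{lem: C computes Ext^1}, and using that $H^2\bigl(C^{\bullet}(\text{--})\bigr)$ vanishes since the complex is concentrated in degrees $0$ and $1$), just with the details spelled out. The ladder-commutativity concern you flag can in fact be bypassed entirely: one has isomorphisms $\Ext^2_{\K{A}}(\gM,\gN) \cong \Ext^1_{\K{A}}(\gM,\mathfrak{Q}) \cong H^1\bigl(C^{\bullet}(\mathfrak{Q})\bigr)$ by Lemma~\ref{lem: C computes Ext^1}, and the last group vanishes because $H^1\bigl(C^{\bullet}(\mathfrak{I})\bigr) \to H^1\bigl(C^{\bullet}(\mathfrak{Q})\bigr)$ is surjective (its cokernel injects into $H^2\bigl(C^{\bullet}(\gN)\bigr) = 0$) while its source is $\Ext^1_{\K{A}}(\gM,\mathfrak{I}) = 0$ by injectivity of $\mathfrak{I}$.
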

We summarise the above discussion in the following corollary.
\begin{cor}
  \label{cor:complex computes Hom and Ext}If $\gM$ is a 
  Breuil--Kisin module with $A$-coefficients and descent data, and $\gN$ is an
  object of~$\K{A}$, then we have a natural short exact
  sequence \[0\to\Hom_{\K{A}}(\gM,\gN)\to C^0(\gN)\to
    C^1(\gN)\to\Ext^1_{\K{A}}(\gM,\gN)\to 0.\]
\end{cor}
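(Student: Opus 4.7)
The plan is simply to assemble the results that were just established. By Definition~\ref{def:explicit Ext complex}, the complex $C^{\bullet}(\gN)$ is concentrated in degrees $0$ and $1$, so tautologically it yields a four-term exact sequence
\[
0 \to H^0\bigl(C^{\bullet}(\gN)\bigr) \to C^0(\gN) \to C^1(\gN) \to H^1\bigl(C^{\bullet}(\gN)\bigr) \to 0.
\]
I would then identify the outer terms: Lemma~\ref{lem: C computes Hom} gives a natural isomorphism $H^0\bigl(C^{\bullet}(\gN)\bigr) \cong \Hom_{\K{A}}(\gM,\gN)$, and Lemma~\ref{lem: C computes Ext^1} upgrades the embedding~\eqref{eqn:ext embedding} to an isomorphism $\Ext^1_{\K{A}}(\gM,\gN) \cong H^1\bigl(C^{\bullet}(\gN)\bigr)$. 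Substituting these identifications into the displayed exact sequence produces precisely the desired four-term exact sequence, and naturality in~$\gN$ is immediate from naturality in each of the inputs.

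There is no obstacle here; the corollary is a direct repackaging of the preceding two lemmas (together with the vanishing statement of Corollary~\ref{cor:ext2 vanishes}, which is implicitly used to ensure that $C^1(\gN) \to \Ext^1$ is surjective, but this already follows from Lemma~\ref{lem: C computes Ext^1}). The only thing one might wish to double-check is the compatibility of the two identifications with the connecting differential of $C^{\bullet}(\gN)$, but this is built into the constructions in the proofs of Lemmas~\ref{lem: C computes Hom} and~\ref{lem: C computes Ext^1}.
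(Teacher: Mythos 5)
Your proof is correct and matches the paper's (implicit) argument: the paper states this corollary as a summary of the preceding discussion, with no separate proof, and your reasoning — tautological four-term exactness for a two-term complex, together with the identifications of $H^0$ and $H^1$ from Lemmas~\ref{lem: C computes Hom} and~\ref{lem: C computes Ext^1} — is exactly what is intended. Your parenthetical observation that Corollary~\ref{cor:ext2 vanishes} is not actually needed here is also accurate.
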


The following lemma records the behaviour of these complexes with
respect to base change.

\begin{lemma}\label{lem:base-change-complexes}
  Suppose that $\gM$, $\gN$ are Breuil--Kisin modules with descent data and
  $A$-coefficients, that $B$ is an $A$-algebra, and that $Q$ is a
  $B$-module. Then the
  complexes $C^{\bullet}_{\gM}(\gN \cotimes_A Q)$ and
  $C^{\bullet}_{\gM \cotimes_A B}(\gN \cotimes_A Q)$ coincide, the
  former complex
  formed with respect to $\K{A}$ and the latter with respect to $\K{B}$.
\end{lemma}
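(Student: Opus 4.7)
The plan is to identify the two complexes term-by-term via tensor-hom adjunction, then to verify that the differentials match under this identification.

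First, I would establish that for any Breuil--Kisin module $\gM$ with $A$-coefficients and descent data there is a canonical $\Gal(K'/K)$-equivariant isomorphism of $\gS_B$-modules
\[
\gM \cotimes_A B \;\cong\; \gM \otimes_{\gS_A} \gS_B,
\]
and analogously a canonical identification $\varphi^*(\gM \cotimes_A B) = (\varphi^*\gM) \cotimes_A B$. Since $\gM$ is finitely generated and projective over $\gS_A$, hence Zariski locally free on $\Spec A$ by Remark~\ref{rem:projectivity for Kisin modules}(2), one reduces to the case where $\gM$ is free, in which case it follows from the fact that $\gS_B$ is the $u$-adic completion of the algebraic tensor product $\gS_A \otimes_A B$, combined with the fact that $(-)^{\wedge}_u$ commutes with finite direct sums.

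Given this, standard tensor-hom adjunction along the map of twisted group rings $\gS_A[\Gal(K'/K)] \to \gS_B[\Gal(K'/K)]$ gives, for every $\gS_B[\Gal(K'/K)]$-module $X$, a natural isomorphism
\[
\Hom_{\gS_A[\Gal(K'/K)]}(\gM,X) \;=\; \Hom_{\gS_B[\Gal(K'/K)]}(\gM \cotimes_A B, X).
\]
Applying this with $X = \gN \cotimes_A Q$, which is naturally a $\gS_B[\Gal(K'/K)]$-module because $Q$ is a $B$-module, identifies the $C^0$ terms of the two complexes; the same adjunction applied to $\varphi^*\gM$, via the identification $\varphi^*(\gM \cotimes_A B) = (\varphi^*\gM) \cotimes_A B$, identifies the $C^1$ terms.

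Finally, to check that the differentials coincide, observe that $\Phi_{\gM \cotimes_A B}$ is by construction the base change of $\Phi_\gM$ under $\gS_A \to \gS_B$, and that the Frobenius on $\gN \cotimes_A Q$ is inherited from that on $\gN$. Consequently, under the adjunction identifications above, the differential $\alpha \mapsto \Phi_{\gN \cotimes_A Q} \circ \varphi^*\alpha - \alpha \circ \Phi_{\gM \cotimes_A B}$ on the right-hand complex pulls back to the differential $\alpha \mapsto \Phi_{\gN \cotimes_A Q} \circ \varphi^*\alpha - \alpha \circ \Phi_\gM$ on the left-hand complex, so the two complexes literally coincide. There is no substantial obstacle here; the result is a formal consistency check reflecting that base change of Breuil--Kisin modules is, under the hood, extension of scalars along $\gS_A \to \gS_B$, and the only point that warrants a bit of care is the identification $\gM \cotimes_A B \cong \gM \otimes_{\gS_A} \gS_B$, which uses the finite generation and projectivity of $\gM$.
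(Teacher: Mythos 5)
Your proposal is correct and takes essentially the same approach as the paper, whose proof simply asserts the adjunction isomorphism $\Hom_{\gS_A[\Gal(K'/K)]}(\gM, \gN\cotimes_A Q) \cong \Hom_{\gS_B[\Gal(K'/K)]}(\gM\cotimes_A B, \gN\cotimes_A Q)$ (and its twist by $\varphi^*$) without further comment. You fill in the justification — the identification $\gM\cotimes_A B \cong \gM\otimes_{\gS_A}\gS_B$ via local freeness and the behaviour of $u$-adic completion on finite direct sums — that the paper leaves implicit.
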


\begin{proof}
Indeed, there is a natural isomorphism  $$\Hom_{\gS_A[\Gal(K'/K)]}(\gM, \gN\cotimes_A Q) \cong
\Hom_{\gS_B[\Gal(K'/K)]}(\gM\cotimes_A B, \gN\cotimes_A Q),$$
and similarly with $\varphi^*\gM$ in place of $\gM$.
\end{proof}

The following slightly technical lemma is
crucial for establishing 
finiteness properties, and also base-change properties,
of Exts of Breuil--Kisin modules.

\begin{lem}
  \label{lem:truncation argument used to prove f.g. of Ext Q version}Let $A$ be
  a $\cO/\varpi^a$-algebra for some $a\ge 1$, suppose that
  $\gM$ 
is a Breuil--Kisin module with descent data and $A$-coefficients,
of height at most~$h$,
and suppose that $\gN$ is a $u$-adically complete, $u$-torsion
free object of $\K{A}$. 

Let $C^\bullet$ be the complex defined
  in \emph{Definition~\ref{def:explicit Ext complex}}, and write~$\delta$ for
  its differential. Suppose that $Q$ is an $A$-module with
the property that $C^i\otimes_A Q$ is $v$-torsion free for $i=0,1$
and $v$-adically  separated for $i=0$.

 Then:
  \begin{enumerate}
  \item For any integer $M\ge (eah+1)/(p-1)$, $\ker (\delta\otimes \id_Q)\cap
    v^MC^0\otimes_AQ=0$.
  \item For any integer $N \ge (peah+1)/(p-1)$, $\delta\otimes\id_Q$ induces an isomorphism
\[(\Phi_{\gM}^*)^{-1}(v^N C^1\otimes_AQ) \isoto v^N (C^1\otimes_AQ).\]
  \end{enumerate}
Consequently, for $N$ as in \emph{(2)} the natural morphism of complexes of $A$-modules
$$[ C^0\otimes_AQ \buildrel \delta\otimes\id_Q \over \longrightarrow C^1\otimes_AQ] \to
[C^0\otimes_AQ/\bigl((\Phi_{\gM}^*)^{-1}(v^N C^1\otimes_AQ) \bigr) \buildrel \delta\otimes\id_Q
\over \longrightarrow C^1\otimes_AQ/v^N C^1\otimes_AQ ]$$
is a quasi-isomorphism.
\end{lem}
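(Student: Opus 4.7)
Two algebraic inputs power the proof. First, since $A$ is an $\cO/\varpi^a$-algebra we have $p^a=0$ in $A$ (because $p=\varpi^{e_\cO}\cdot(\text{unit})$ with $e_\cO\ge 1$), and since $E(u)=v^e+pg(u)$ is Eisenstein of degree $e'=e\cdot e(K'/K)$, expanding $v^{eah}=u^{e'ah}=(E(u)-pg(u))^{ah}$ by the binomial theorem and discarding terms in which $p^i$ appears with $i\ge a$ leaves a sum of terms, each divisible by $E(u)^{ah-(a-1)}=E(u)^{a(h-1)+1}$ and hence by $E(u)^h$; this yields an identity
\[ v^{eah}=E(u)^h\cdot P(u)\qquad\text{in }\gS_A \]
for some $P(u)\in\gS_A$. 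Second, because $\gM$ is projective (hence $u$-torsion-free) of height at most $h$, there is an $\gS_A$-linear map $\Psi\colon\gM\to\varphi^*\gM$ satisfying $\Phi_\gM\circ\Psi=\Psi\circ\Phi_\gM=E(u)^h\cdot\id$; post-composition gives $\Psi^*\colon C^1\to C^0$ with $\Psi^*\Phi_\gM^*=\Phi_\gM^*\Psi^*=E(u)^h$. The geometric input is that $\varphi(v)=v^p$, so the operator $\Phi_\gN^*\varphi^*$ carries $v^m(C^0\otimes_AQ)$ into $v^{pm}(C^1\otimes_AQ)$, whereas $\Phi_\gM^*$ exactly preserves $v$-filtrations; in the decomposition $\delta=\Phi_\gN^*\varphi^*-\Phi_\gM^*$ the first term is therefore a contraction relative to the second.

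For (1), given $\alpha\in v^M(C^0\otimes Q)\cap\ker(\delta\otimes\id_Q)$, write $\alpha=v^M\beta$. The identity $\alpha\Phi_\gM=\Phi_\gN\varphi^*\alpha$ together with $v$-torsion-freeness of $C^1\otimes Q$ gives $\beta\Phi_\gM=v^{(p-1)M}\Phi_\gN\varphi^*\beta$; composing with $\Psi$ on the right and then multiplying by $P$ yields $v^{eah}\beta=v^{(p-1)M}\cdot P\Phi_\gN\varphi^*\beta\Psi$. By $v$-torsion-freeness of $C^0\otimes Q$ and the hypothesis $(p-1)M\ge eah+1$, this forces $\beta\in v^{(p-1)M-eah}(C^0\otimes Q)$, that is $\alpha\in v^{pM-eah}(C^0\otimes Q)\subseteq v^{M+1}(C^0\otimes Q)$. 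Bootstrapping and invoking $v$-adic separatedness of $C^0\otimes Q$ then forces $\alpha=0$.

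Part (2) follows quickly. For injectivity: if $\alpha\Phi_\gM\in v^N(C^1\otimes Q)$ and $\delta(\alpha)=0$, composing with $\Psi$ on the right gives $E(u)^h\alpha\in v^N(C^0\otimes Q)$, whence $v^{eah}\alpha=P\cdot E(u)^h\alpha\in v^N(C^0\otimes Q)$, and $v$-torsion-freeness forces $\alpha\in v^{N-eah}(C^0\otimes Q)$; since $N-eah\ge(eah+1)/(p-1)=M$, part (1) yields $\alpha=0$. For surjectivity, given $\gamma\in v^N(C^1\otimes Q)$, write $\gamma=v^N\gamma'$; the factorization $v^N=E(u)^h(Pv^{N-eah})$ implies $v^NC^1\subseteq\Phi_\gM^*(v^{N-eah}C^0)$ via $\Psi^*$, so we may construct $\alpha=\sum_{k\ge 0}\tilde\alpha_k$ iteratively. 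Starting from $\alpha^{(0)}=0$ and $\epsilon_0:=\gamma\in v^{N_0}(C^1\otimes Q)$ with $N_0:=N$, at step $k$ we pick $\tilde\alpha_k\in v^{N_k-eah}(C^0\otimes Q)$ satisfying $\Phi_\gM^*(\tilde\alpha_k)=-\epsilon_k$ (concretely, $\tilde\alpha_k=-Pv^{N_k-eah}\epsilon_k'\Psi$ with $\epsilon_k=v^{N_k}\epsilon_k'$), set $\alpha^{(k+1)}:=\alpha^{(k)}+\tilde\alpha_k$, and compute the new error $\epsilon_{k+1}:=\gamma-\delta(\alpha^{(k+1)})=-\Phi_\gN\varphi^*\tilde\alpha_k\in v^{N_{k+1}}(C^1\otimes Q)$ with $N_{k+1}:=p(N_k-eah)\ge N_k+1$. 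The increments $\tilde\alpha_k$ have $v$-valuations tending to infinity, so $\sum_k\tilde\alpha_k$ converges $v$-adically to the desired $\alpha$ (with $\alpha\Phi_\gM=-\sum_k\epsilon_k\in v^N(C^1\otimes Q)$ and $\delta(\alpha)=\gamma$). The ``consequently'' assertion is then immediate: (2) exactly says that the kernel subcomplex $[(\Phi_\gM^*)^{-1}(v^NC^1\otimes Q)\to v^NC^1\otimes Q]$ of the projection map of complexes is acyclic, so the projection is a quasi-isomorphism.

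\textbf{Main obstacle.} The delicate point is the divisibility $v^{eah}\in E(u)^h\gS_A$, which is the precise quantitative expression of ``$E(u)$ is Eisenstein modulo $p^a=0$'' and governs every $v$-adic bound in sight; once this is in place, parts (1) and (2) are straightforward $v$-adic contraction arguments.
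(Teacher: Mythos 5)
Your overall strategy coincides with the paper's: exploit $p^a=0$ in $A$ together with the Eisenstein shape of $E(u)$ to produce a map that partially inverts $\Phi_\gM$ up to multiplication by $v^{eah}$, and then run a $v$-adic contraction. The cosmetic difference is that you factor through $E(u)^h$, first building $\Psi$ with $\Phi_\gM\Psi = E(u)^h\cdot\id$ and then multiplying by $P$ where $v^{eah}=E(u)^hP$, whereas the paper constructs in one step a map $\Upsilon:\gM\to\varphi^*\gM$ with $\Phi_\gM\circ\Upsilon = v^{eah}$. The latter is slightly cleaner because $v^{eah}\in\gS_A^0$ is visibly $\Gal(K'/K)$-invariant, so one sees at once that $\Upsilon$ is equivariant and that $\Upsilon^*$ carries $C^1$ into $C^0$. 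Your route implicitly needs $E(u),P\in\gS_A^0$ at the points where you multiply by $P$ and treat $\Psi$ as equivariant; this is in fact true (one can check $E(u)=F(v)$ for $F$ the Eisenstein polynomial of $\pi$ over $W(k)[1/p]$, so both lie in $\gS_A^0$), but it needs to be said, or avoided as the paper does.

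The genuine gap is in the surjectivity half of part~(2). You build $\alpha = \sum_{k\ge 0}\tilde\alpha_k$ by successive approximation, observe that the $v$-adic valuations of the increments $\tilde\alpha_k$ tend to infinity, and conclude that the series converges. But convergence of such a series requires $C^0\otimes_A Q$ to be $v$-adically \emph{complete}, and the hypotheses only grant $v$-adic separatedness (and torsion-freeness); for a general $A$-module $Q$ completeness can fail, so the limit need not exist. The paper circumvents this by first reducing the surjectivity assertion $(\delta\otimes\id_Q)\bigl(v^{N-eah}C^0\otimes_A Q\bigr)\supseteq v^N C^1\otimes_A Q$ to the case $Q=A$: the assertion is equivalent to the vanishing of a map into a cokernel, the formation of cokernels commutes with $-\otimes_A Q$, and so the vanishing statement for $Q=A$ tensors up to give it in general. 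Only for $Q=A$ are the $C^i$ actually $v$-adically complete (Remark~\ref{rem:truncation-remark}(1)), and only then does the successive approximation converge. Without this reduction, or some substitute for it, your proof of surjectivity does not go through.
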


Since we are assuming that the $C^i\otimes_AQ$ are $v$-torsion free, 
the expression $v^r C^i(\gN) \otimes_A Q$ may be interpreted as
denoting either $v^r \bigl( C^i(\gN)\otimes_A Q\bigr)$ or
$\bigl(v^r C^i(\gN) \bigr)\otimes_A Q$, the two being naturally
isomorphic.

\begin{rem}\label{rem:truncation-remark} Before giving the proof of Lemma~\ref{lem:truncation
    argument used to prove f.g. of Ext Q version}, we observe that the
  hypotheses on the $C^i \otimes_A Q$ are satisfied if either $Q=A$, or
  else $\gN$ is a projective $\gS_A$-module and $Q$ is a finitely
  generated $B$-module for some finitely
generated $A$-algebra $B$.  (Indeed $C^1 \otimes_A Q$
  is $v$-adically separated as well in these cases.)

(1)  Since $\gM$ is projective of finite rank over $A[[u]]$,
and since $\gN$ is $u$-adically complete and $u$-torsion free,
each  $C^i$ is $v$-adically 
separated
and $v$-torsion free. In particular the hypothesis on~$Q$ is always satisfied
by~$Q=A$. (In fact since $\gN$ is $u$-adically complete it also follows that 
the $C^i$ are $v$-adically complete. Here we use that $\Gal(K'/K)$ has
order prime to $p$ to see that $C^0$ is an $\gS_A^0$-module direct
summand of $\Hom_{\gS_A}(\gM,\gN)$, and similarly for $C^1$.)

(2) Suppose $\gN$ is a projective $\gS_A$-module. Then the $C^i$ are
projective $\gS_A^0$-modules, again using that $\Gal(K'/K)$ has
order prime to $p$. Since each $C^i(\gN)/v C^i(\gN)$ is $A$-flat, it
follows that $C^i(\gN) \otimes_A Q$ is $v$-torsion free. If furthermore $B$ is a finitely
generated $A$-algebra, and $Q$ is a finitely generated $B$-module,
then the $C^i(\gN)\otimes_A Q$ are
$v$-adically separated (being finitely generated modules over
the ring $A[[v]]\otimes_A B$, which is a finitely generated 
algebra over the Noetherian ring $A[[v]]$, and hence is itself
Noetherian).
\end{rem} 
\begin{proof}[Proof of Lemma~{\ref{lem:truncation argument used to
    prove f.g. of Ext Q version}}] Since $p^a=0$ in $A$, there exists $H(u)\in\gS_A$ with
  $u^{e'ah}=E(u)^hH(u)$ in $\gS_A$. Thus the image of $\Phi_{\gM}$
contains $u^{e'ah} \gM=v^{eah}\gM$, and there exists a map $\Upsilon : \gM \to
\varphi^* \gM$ such that $\Phi_{\gM} \circ \Upsilon$ is multiplication by $v^{eah}$.

We begin with~(1). Suppose that 
  $f\in\ker (\delta\otimes\id_Q)\cap v^MC^0\otimes_AQ$. 
Since $C^0\otimes_AQ$ is $v$-adically separated, 
it is enough, applying induction on $M$, 
to show that $f\in v^{M+1}C^0\otimes_AQ$. Since
  $f\in\ker(\delta\otimes\id_Q)$, we have
  $f\circ\Phi_{\gM}=\Phi_{\gN}\circ\varphi^*f$. Since $f\in v^MC^0\otimes_AQ$,
  we have $f \circ \Phi_{\gM} = \Phi_{\gN} \circ \varphi^*f \in
  v^{pM} C^1 \otimes_A Q$. Precomposing with $\Upsilon$ gives 
 $v^{eah} f \in v^{pM} C^0 \otimes_A Q$. 
 Since~$C^0 \otimes_A Q$ is $v$-torsion free, it follows that $f\in
  v^{pM-eah}C^0\otimes_AQ\subseteq u^{M+1}C^0\otimes_AQ$, as required.
  
We now move on to~(2).  Set $M = N - eah$. 
By precomposing with $\Upsilon$ we see that $\alpha \circ \Phi_{\gM} \in v^N C^1\otimes_A Q$ implies $\alpha \in v^M C^0\otimes_A Q$; from this, together with the inequality
$pM \ge N$, it is
straightforward to check that
\[(\Phi_{\gM}^*)^{-1}(v^N C^1\otimes_AQ) = (\delta \otimes \id_Q)^{-1}(v^N
C^1\otimes_AQ)\cap  v^M C^0\otimes_AQ.\]
Note that $M$ satisfies the condition in~(1). To complete the proof we
will show that for any $M$ as in (1) and any $N \ge M + eah$ the map
$\delta$ induces an isomorphism 
\[ (\delta \otimes \id_Q)^{-1}(v^N C^1\otimes_AQ)\cap  v^M C^0\otimes_AQ \isoto v^N C^1\otimes_AQ.\]
 By~(1), $\delta\otimes\id_Q$ induces an injection $(\delta\otimes\id_Q)^{-1}(v^N C^1\otimes_AQ)\cap
    v^M C^0\otimes_AQ\hookrightarrow v^N C^1\otimes_AQ$, so it is enough to show that
    $(\delta\otimes\id_Q)(v^MC^0\otimes_AQ)\supseteq v^N
    C^1\otimes_AQ$. 
Equivalently, we need to show that 
$$
v^N C^1 \otimes_A Q
\to
(C^1\otimes_A Q) / 
(\delta\otimes\id_Q)\bigl(v^M C^0\otimes_A Q) 
$$
is identically zero. Since the formation of cokernels is compatible with tensor products,
we see that this morphism is obtained by tensoring the corresponding morphism
$$
v^N C^1 
\to
C^1/
\delta\bigl(v^M C^0\bigr)
$$
with $Q$ over $A$, so we are reduced to the case $Q=A$. (Recall from
Remark~\ref{rem:truncation-remark}(1) that the
hypotheses of the Lemma are satisfied in this case, and that $C^1$ is 
$v$-adically separated.)

We claim that for
    any $g\in v^NC^1$, we can find an $f\in v^{N-eah}C^0$ such that
    $\delta(f)-g\in v^{p(N-eah)}C^1$. Admitting the claim, given any
    $g\in v^N C^1$, we may find $h\in v^MC^0$ with $\delta(h)=g$ by
    successive approximation in the following way: 
Set $h_0=f$ for~$f$
    as in the claim; then $h_0\in v^{N-eah}C^0\subseteq v^MC^0$, and
    $\delta(h_0)-g\in v^{p(N-eah)}C^1\subseteq v^{N+1}C^1$. Applying
    the claim again with $N$ replaced by $N+1$, and $g$ replaced by
    $g-\delta(h_0)$, we find $f\in v^{N+1-eah}C^0\subseteq v^{M+1}C^0$
    with $\delta(f)-g+\delta(h_0)\in v^{p(N+1-eah)}C^1\subseteq
    v^{N+1}C^1$. Setting $h_1=h_0+f$, and proceeding inductively, we
    obtain a Cauchy sequence converging (in the $v$-adically
complete $A[[v]]$-module $C^0$) to the required element~$h$.

It remains to prove the claim. Since
$\delta(f)=\Phi_{\gN}\circ\varphi^*f-f\circ\Phi_{\gM} $, and since if
$f\in v^{N-eah}C^0$ then $\Phi_{\gN}\circ\varphi^*f\in v^{p(N-eah)}C^1$,
it is enough to show that we can find an $f\in v^{N-eah}C^0$ with
$f\circ\Phi_{\gM}=-g$. Since $\Phi_{\gM}$ is injective, the map
$\Upsilon \circ \Phi_\gM$ is also multiplication by $v^{eah}$, and so
it suffices to take $f$ with $v^{eah} f = -g \circ \Upsilon \in v^N C^0$. 
\end{proof}

\begin{cor}\label{cor: base change completion for complex in free case}
  Let $A$ be a Noetherian 
  $\cO/\varpi^a$-algebra,
  and let $\gM$, $\gN$ be Breuil--Kisin modules with descent data
  and $A$-coefficients. If $B$ is a finitely generated 
$A$-algebra, and $Q$ is a finitely generated $B$-module,
then the natural morphism of complexes of $B$-modules
$$[ C^0(\gN)\otimes_A Q \buildrel {\delta\otimes \id_Q} \over \longrightarrow
C^1(\gN)\otimes_A Q] \to
[C^0(\gN\cotimes_A Q) \buildrel \delta \over \longrightarrow
C^1(\gN\cotimes_A Q)]$$
is a quasi-isomorphism.
\end{cor}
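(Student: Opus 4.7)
The strategy is to apply Lemma~\ref{lem:truncation argument used to prove f.g. of Ext Q version} to each of the two complexes appearing in the statement, and then observe that the resulting truncated complexes are naturally identified.

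First, I would apply Lemma~\ref{lem:truncation argument used to prove f.g. of Ext Q version} to the complex $C^\bullet_\gM(\gN)$ with the given $A$-module $Q$. The hypotheses hold by Remark~\ref{rem:truncation-remark}(2): $\gN$ is projective over $\gS_A$, and $Q$ is a finitely generated module over the finitely generated $A$-algebra $B$, so the $C^i(\gN)\otimes_A Q$ are $v$-torsion free and $v$-adically separated. Thus for any $N \ge (peah+1)/(p-1)$ we obtain a quasi-isomorphism from $[C^0(\gN)\otimes_A Q \to C^1(\gN) \otimes_A Q]$ onto its truncation modulo $v^N$ in the sense of the lemma.

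Next I would apply the same lemma to the complex $C^\bullet_\gM(\gN \cotimes_A Q)$, taking the auxiliary module there to be $A$ itself. Since $\gN \cotimes_A Q$ is $u$-adically complete and $u$-torsion free by construction (Remark~\ref{rem:completed tensor}(2)), it is a $u$-adically complete, $u$-torsion free object of $\K{A}$, and Remark~\ref{rem:truncation-remark}(1) guarantees the hypotheses. For the same $N$ as above this yields a quasi-isomorphism from $[C^0(\gN \cotimes_A Q) \to C^1(\gN \cotimes_A Q)]$ onto its truncation modulo $v^N$.

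The final step is to check that the natural morphism from the first truncation to the second is an isomorphism of complexes. The crucial observation is that reduction modulo $v^N$ absorbs the $u$-adic completion: since $v^N = u^{e'N}$, the canonical map $(\gN \otimes_A Q)/v^N \to (\gN \cotimes_A Q)/v^N$ is an isomorphism. Using that $\gM$ and $\varphi^*\gM$ are finitely generated and projective over $\gS_A$ (hence Zariski locally free on $\Spec A$) and that $\gN$ and $\gN \cotimes_A Q$ are $u$-torsion free, one identifies
\[
\Hom_{\gS_A[\Gal(K'/K)]}(\star,\gN)/v^N \otimes_A Q \;\isoto\; \Hom_{\gS_A[\Gal(K'/K)]}\bigl(\star,(\gN \cotimes_A Q)/v^N\bigr)
\]
for $\star \in \{\gM, \varphi^*\gM\}$, where the Galois invariants cause no trouble since $\Gal(K'/K)$ has order prime to~$p$. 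The main bookkeeping obstacle is verifying that these identifications intertwine the differentials $\delta \otimes \id_Q$ on the two sides, but this is a direct consequence of naturality, since both differentials are induced by the single map $\alpha \mapsto \Phi_{\gN}\circ\varphi^*\alpha - \alpha\circ\Phi_{\gM}$ applied after reducing mod $v^N$.
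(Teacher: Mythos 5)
Your proposal follows the paper's own proof exactly: both apply Lemma~\ref{lem:truncation argument used to prove f.g. of Ext Q version} (with $Q$ the given module, and then again with $Q=A$ and $\gN\cotimes_A Q$ in place of $\gN$) to reduce to comparing the two truncated complexes, which are then seen to be isomorphic since reduction modulo $v^N$ kills the difference between $\gN\otimes_A Q$ and its $u$-adic completion. The one wrinkle worth flagging is that the $C^0$-term of the truncation in the lemma is the quotient by $(\Phi_{\gM}^*)^{-1}(v^N C^1\otimes_A Q)$, not the quotient by $v^N$ as your displayed identification suggests; however, this does not affect the conclusion, since once the $C^1$-terms are identified compatibly with $\Phi_{\gM}^*$, the induced identification of the correct $C^0$-quotients follows by naturality.
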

\begin{proof} 
By Remarks~\ref{rem:truncation-remark} and~\ref{rem:completed tensor}(2)  we can apply Lemma~\ref{lem:truncation argument used to
  prove f.g. of Ext Q version} to both
$C^i(\gN\cotimes_AQ)$ and  $C^i(\gN)\otimes_AQ$, and we see that it is enough to show that the
natural morphism of complexes 
\[\begin{adjustbox}{max width=\textwidth}
\begin{tikzcd}
{[\bigl(C^0(\gN)\otimes_A Q \bigr)/
(\Phi_{\gM}^*\otimes \id_Q)^{-1}\bigl(v^N C^1(\gN)\otimes_A Q \bigr) 
\buildrel \delta
\over \longrightarrow \bigl(C^1(\gN)\otimes_A Q\bigr)/\bigl(v^N C^1(\gN)
\otimes_A Q\bigr) ]}
\arrow{d}{} \\
{[C^0(\gN\cotimes_A Q)
/\bigl(\Phi_{\gM}^*)^{-1}(v^N C^1(\gN\cotimes_A Q)\bigr) 
\buildrel \delta
\over \rightarrow C^1(\gN\cotimes_A Q)/v^N C^1(\gN\cotimes_A Q) ]}
\end{tikzcd}
\end{adjustbox}\]
is a quasi-isomorphism. In fact, it is even an isomorphism. 
\end{proof}

\begin{prop}
\label{prop:exts are f.g. over A} Let $A$ be a 
$\cO/\varpi^a$-algebra for some $a\ge 1$, and let $\gM$, $\gN$ be
Breuil--Kisin modules with descent data and $A$-coefficients. 
Then  $\Ext^1_{\K{A}}(\gM,\gN)$ and $\Ext^1_{\K{A}}(\gM,\gN/u^i\gN)$
for $i \ge 1$ are finitely presented $A$-modules.

If
furthermore $A$ is Noetherian, then $\Hom_{\K{A}}(\gM,\gN)$ and
$\Hom_{\K{A}}(\gM,\gN/u^i\gN)$ for $i\ge 1$ are also
finitely presented {\em (}equivalently, finitely generated{\em )} $A$-modules.
\end{prop}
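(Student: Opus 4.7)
The plan is to extract $\Hom_{\K{A}}(\gM,\gN)$ and $\Ext^1_{\K{A}}(\gM,\gN)$ as the kernel and cokernel appearing in the four-term exact sequence of Corollary~\ref{cor:complex computes Hom and Ext}, and to reduce to studying finitely presented quotients of the terms of $C^\bullet(\gN)$ via Lemma~\ref{lem:truncation argument used to prove f.g. of Ext Q version}. Specifically, I would apply that lemma with $Q = A$ (whose hypotheses are satisfied by Remark~\ref{rem:truncation-remark}(1)) to obtain, for $N$ sufficiently large, a quasi-isomorphism between $C^\bullet(\gN)$ and the truncated complex
\[
[\,C^0(\gN)/(\Phi_{\gM}^*)^{-1}\bigl(v^N C^1(\gN)\bigr) \longrightarrow C^1(\gN)/v^N C^1(\gN)\,],
\]
with the source a quotient of $C^0(\gN)/v^M C^0(\gN)$ for $M = N - eah$.

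The key technical step is to show that $C^i(\gN)/v^N C^i(\gN)$ is a finitely generated projective (in particular finitely presented) $A$-module. Since $|\Gal(K'/K)|$ is coprime to $p$, it is a unit in the $\cO/\varpi^a$-algebra $A$, and the usual averaging trick shows that $\gM$ (resp.\ $\varphi^*\gM$) is projective not only over $\gS_A$ but also over the twisted group ring $\gS_A[\Gal(K'/K)]$; hence $\gM$ is a direct summand of some $\gS_A[\Gal(K'/K)]^n$. It follows that $C^i(\gN) = \Hom_{\gS_A[\Gal(K'/K)]}(\gM,\gN)$ (or its analogue with $\varphi^*\gM$) is a direct summand of $\gN^n$, so $C^i(\gN)/v^N C^i(\gN)$ is a direct summand of $(\gN/v^N\gN)^n$. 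Since $\gS_A/v^N\gS_A$ is finite free over $A$, and $\gN/v^N\gN$ is projective over $\gS_A/v^N\gS_A$, the module $\gN/v^N\gN$ is finitely generated projective over $A$, and so is $C^i(\gN)/v^N C^i(\gN)$.

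With this in hand, $\Ext^1_{\K{A}}(\gM,\gN)$ is the cokernel of a map whose source $C^0(\gN)/(\Phi_{\gM}^*)^{-1}(v^N C^1(\gN))$ is finitely generated (being a quotient of the finitely presented module $C^0(\gN)/v^M C^0(\gN)$) and whose target is finitely presented; such a cokernel is itself finitely presented, as the quotient of a finitely presented module by a finitely generated submodule. When $A$ is Noetherian, the source of this map also becomes finitely presented, so $\Hom_{\K{A}}(\gM,\gN)$, being the kernel of a map between finitely presented $A$-modules over a Noetherian ring, is finitely generated and hence finitely presented.

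For the case of $\gN/u^i\gN$ no truncation is required: $\gN/u^i\gN$ is already a finitely generated projective $A$-module, because it is projective over $\gS_A/u^i\gS_A$, which is itself free of finite rank over $A$. The same Galois-averaging argument shows that the terms of the untruncated complex $C^\bullet(\gN/u^i\gN)$ are finitely generated projective $A$-modules, and the analogous kernel/cokernel analysis then gives the claim. The main subtlety, which I expect to be the chief point of care, is the asymmetry between $\Ext^1$ and $\Hom$ in the non-Noetherian case: cokernels of maps from a finitely generated to a finitely presented module are always finitely presented, but the analogous statement for kernels requires Noetherian hypotheses, which is precisely why the statement imposes Noetherianity on $\Hom$ but not on $\Ext^1$.
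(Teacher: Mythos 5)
Your argument is correct, and for the case of $\gN$ itself it is essentially the paper's proof: truncate the complex $C^\bullet(\gN)$ via Lemma~\ref{lem:truncation argument used to prove f.g. of Ext Q version}, observe that $C^i/v^NC^i$ is finitely generated projective over $A$, and read off finite presentation of $\Ext^1$ from the cokernel and (in the Noetherian case) finite generation of $\Hom$ from the kernel. One small slip: since $(\Phi_{\gM}^*)^{-1}(v^N C^1)$ sits \emph{between} $v^N C^0$ and $v^M C^0$ (with $M = N-eah < N$), the module $C^0/(\Phi_{\gM}^*)^{-1}(v^NC^1)$ is a quotient of $C^0/v^N C^0$, not of $C^0/v^M C^0$; but this does not affect the conclusion that it is finitely generated. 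The paper in fact sidesteps even this by noting directly that $\Ext^1$ is the cokernel of $C^0/v^N C^0 \to C^1/v^NC^1$.

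Where you genuinely diverge from the paper is in the treatment of $\gN/u^i\gN$. The paper deduces those statements from the case of $\gN$ by applying the long exact sequence of $\Ext$ to $0 \to u^i\gN \to \gN \to \gN/u^i\gN \to 0$ (using $u^i\gN$ as a Breuil--Kisin module of higher height) together with the vanishing $\Ext^2_{\K{A}}(\gM,-)=0$ of Corollary~\ref{cor:ext2 vanishes}; thus $\Ext^1_{\K{A}}(\gM,\gN/u^i\gN)$ is a cokernel of a map between finitely presented modules, and $\Hom_{\K{A}}(\gM,\gN/u^i\gN)$ is squeezed between a quotient of $\Hom(\gM,\gN)$ and a submodule of $\Ext^1(\gM,u^i\gN)$. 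You instead treat $\gN/u^i\gN$ directly: because $\gN/u^i\gN$ is already a finitely generated projective $A$-module, and $\gM$, $\varphi^*\gM$ are projective over $\gS_A[\Gal(K'/K)]$ by averaging, the terms of $C^\bullet(\gN/u^i\gN)$ are finitely generated projective $A$-modules with no truncation required, and the kernel/cokernel analysis applies verbatim. This is perfectly valid and arguably more self-contained (it does not invoke $\Ext^2=0$); the paper's route is shorter given that $\Ext^2=0$ is already in hand and avoids having to redo the projectivity analysis.
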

\begin{proof} 
The statements for $\gN/u^i\gN$
  follow easily from those for $\gN$, by considering the short exact sequence $0 \to u^i\gN \to \gN \to
\gN/u^i\gN \to 0$ in $\K{A}$ and applying Corollary~\ref{cor:ext2
  vanishes}.
	By Corollary~\ref{cor:complex computes Hom and Ext}, it is enough to consider the cohomology of the
  complex~$C^\bullet$. By Lemma~\ref{lem:truncation argument used to
    prove f.g. of Ext Q version} with $Q=A$, 
the cohomology of~$C^\bullet$ agrees with the
  cohomology of the induced complex \[C^0/\bigl((\Phi_{\gM}^*)^{-1}(v^N C^1) )\to C^1/ v^N C^1,\] 
for an appropriately chosen value of $N$. It follows that for an
appropriately chosen value of~$N$, $\Ext^1_{\K{A}}(\gM,\gN)$ can be
computed as the cokernel of the induced morphism $C^0/v^N C^0 \to C^1/ v^N C^1$.

Under our hypothesis on~$\gN$, $C^0/v^N C^0$ and $C^1/v^NC^1$ are finitely
generated projective $A$-modules, and thus finitely presented. It follows  that
$\Ext^1_{\K{A}}(\gM,\gN)$ is finitely presented. 

In the case that $A$ is furthermore assumed to be Noetherian, it is
enough to note that since $v^NC^0\subseteq (\Phi_{\gM}^*)^{-1}(v^N C^1)$,
the quotient $C^0/\bigl((\Phi_{\gM}^*)^{-1}(v^N C^1) \bigr)$ is a finitely generated $A$-module.
\end{proof}

\begin{prop}
  \label{prop:descent for Homs of free Kisin modules}Let $A$ be a 
$\cO/\varpi^a$-algebra for some $a\ge 1$,
and let $\gM$ and~$\gN$ be Breuil--Kisin modules with descent
data and $A$-coefficients. Let $B$ be an $A$-algebra, and let
      $f_B:\gM\cotimes_AB\to\gN\cotimes_AB$ be  a morphism of Breuil--Kisin
      modules with $B$-coefficients.

      Then there is a finite type $A$-subalgebra $B'$ of~$B$ and a morphism
      of Breuil--Kisin modules $f_{B'}:\gM\cotimes_A B'\to\gN\cotimes_A B'$
      such that $f_B$ is the base change of~$f_{B'}$.
\end{prop}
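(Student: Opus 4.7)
The strategy is to truncate the complex $C^\bullet$ so that $\Hom_{\K{B}}(\gM\cotimes_A B,\gN\cotimes_A B)$ becomes the kernel of a morphism of finitely generated $A$-modules tensored with~$B$, then apply a standard spreading-out argument. Fix $N \ge (peah+1)/(p-1)$, let $\overline{C}^0(\gN) := C^0_\gM(\gN)/(\Phi_\gM^*)^{-1}(v^N C^1_\gM(\gN))$ and $\overline{C}^1(\gN) := C^1_\gM(\gN)/v^N C^1_\gM(\gN)$, and let $\overline{\delta}$ denote the differential induced by $\delta$ on these quotients. By Corollary~\ref{cor:complex computes Hom and Ext}, Lemma~\ref{lem:base-change-complexes}, and Lemma~\ref{lem:truncation argument used to prove f.g. of Ext Q version} (applied to the complex for $\gM\cotimes_A B, \gN\cotimes_A B$ with $Q=B$; its hypotheses hold since $\gN\cotimes_A B$ is $u$-adically complete and $u$-torsion free), there is a natural identification $\Hom_{\K{B}}(\gM\cotimes_A B,\gN\cotimes_A B) = \ker\bigl(\overline{\delta}_B : \overline{C}^0(\gN\cotimes_A B) \to \overline{C}^1(\gN\cotimes_A B)\bigr)$, and similarly over $A$.

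Next I would verify the relevant base-change compatibility. Because $\gN/v^N\gN$ and $\gM/v^N\gM$ (and $\varphi^*\gM/v^N\varphi^*\gM$) are Zariski-locally free of finite rank over~$A$, and because $|\Gal(K'/K)|$ is prime to~$p$ (so that the functor of $\Gal(K'/K)$-invariants is exact and preserves finite generation and projectivity), a direct Hom computation shows that $C^i_\gM(\gN)/v^N$ is a finitely generated projective $A$-module for $i = 0, 1$ and that $C^i_\gM(\gN)/v^N \otimes_A B \cong C^i_{\gM\cotimes_A B}(\gN\cotimes_A B)/v^N$. It follows that $\overline{C}^1(\gN) \otimes_A B \cong \overline{C}^1(\gN\cotimes_A B)$ and that $\overline{C}^0(\gN)$ is a finitely generated $A$-module (being a quotient of $C^0_\gM(\gN)/v^N$). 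Moreover, the natural map $\overline{C}^0(\gN) \otimes_A B \twoheadrightarrow \overline{C}^0(\gN\cotimes_A B)$ is surjective (as $\overline{C}^0$ is the image of a map that base-changes cleanly), compatibly with the differentials.

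To finish, I would lift $f_B \in \ker\overline\delta_B \subseteq \overline{C}^0(\gN\cotimes_A B)$ along this surjection to some $\tilde f_B \in \overline{C}^0(\gN) \otimes_A B$; then $\overline\delta(\tilde f_B) = 0$ in $\overline{C}^1(\gN) \otimes_A B$. Since $\overline{C}^0(\gN)$ is finitely generated over~$A$, $\tilde f_B$ descends to some $\tilde f_{B''} \in \overline{C}^0(\gN) \otimes_A B''$ for a finitely generated $A$-subalgebra $B'' \subseteq B$. Writing $B$ as the filtered colimit of its finitely generated $A$-subalgebras $B_\alpha \supseteq B''$ and using the finite generation of $\overline{C}^1(\gN)$, the vanishing of $\overline\delta(\tilde f_{B''})$ in $\overline{C}^1(\gN) \otimes_A B$ already holds in $\overline{C}^1(\gN) \otimes_A B'$ for some $B' = B_\alpha$. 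The image of the resulting $\tilde f_{B'}$ under $\overline{C}^0(\gN) \otimes_A B' \twoheadrightarrow \overline{C}^0(\gN\cotimes_A B')$ then lies in $\ker\overline\delta_{B'} = \Hom_{\K{B'}}(\gM\cotimes_A B', \gN\cotimes_A B')$, and naturality ensures its base change to~$B$ equals~$f_B$. The principal technical nuisance is that $\overline{C}^0(\gN) \otimes_A B \to \overline{C}^0(\gN\cotimes_A B)$ is only surjective and not an isomorphism, so there is no canonical lift of $f_B$; however any lift suffices for the spreading-out argument.
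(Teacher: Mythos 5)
Your proof is correct and takes essentially the same approach as the paper: both truncate $C^\bullet$ modulo $v^N$ via Lemma~\ref{lem:truncation argument used to prove f.g. of Ext Q version}, reduce to finitely generated projective $A$-modules, and spread out over a finitely generated $A$-subalgebra of~$B$. The only (cosmetic) difference is that the paper descends $\fbar_B$ directly in the finite rank projective module $(C^0(\gN)/v^N)\otimes_A B = C^0(\gN\cotimes_A B)/v^N$ --- where $f_B$ has a \emph{canonical} image, so no lifting along a surjection is needed --- and then deduces $\delta(\fbar_{B'})=0$ at once from the injectivity of $C^1(\gN)/v^N\otimes_A B' \hookrightarrow C^1(\gN)/v^N\otimes_A B$ (the flat module $C^1(\gN)/v^N$ tensored with the inclusion $B'\into B$), whereas you first lift into $\overline{C}^0(\gN)\otimes_A B$ and then spread out the vanishing of $\overline\delta$ by a separate enlargement of the descent ring.
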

\begin{proof}

By Lemmas~\ref{lem: C computes Hom}
and~\ref{lem:base-change-complexes} (the latter applied with $Q=B$) we can and do think
  of $f_B$ as being an element of the kernel of
  $\delta:C^0(\gN\cotimes_A B)\to C^1(\gN\cotimes_A
  B)$, the complex $C^\bullet$ here and throughout this proof denoting
  $C^{\bullet}_{\gM}$ as usual.

Fix $N$ as in
  Lemma~\ref{lem:truncation argument used to prove f.g. of Ext Q version}, and
  write~$\fbar_B$ for the corresponding element of
  $C^0(\gN\cotimes_A B)/v^N=(C^0(\gN)/v^N)\otimes_A B$ (this equality
  following easily from the assumption that $\gM$ and $\gN$ are
  projective $\gS_A$-modules of finite rank). Since $C^0(\gN)/v^N$
  is a projective $A$-module of finite
  rank, 
  it follows 
  that for some finite type
  $A$-subalgebra $B'$ of~$B$, there is an element $\fbar_{B'}\in
  (C^0(\gN)/v^N)\otimes_A B'=C^0(\gN\cotimes_A B')/v^N$ such that
  $\fbar_{B'}\otimes_{B'}B=\fbar_B$. Denote also by $\fbar_{B'}  $ the induced element of \[C^0(\gN\cotimes
_A B')/\bigl(\Phi_{\gM}^*)^{-1}(v^N C^1(\gN\cotimes
_A B')).\] 

By Lemma~\ref{lem:truncation argument used to prove
f.g. of Ext Q version} (and Lemma~\ref{lem: C computes Hom}) we have a
commutative diagram with exact rows \[\xymatrix{0 \ar[r] & H^0(C^{\bullet}(\gN\cotimes
_A B')) \ar[d] \ar[r] & C^0(\gN\cotimes
_A B')/\bigl((\Phi_{\gM}^*)^{-1}(v^N C^1(\gN\cotimes
_A B')) 
\bigr) \ar[r]^-{\delta}\ar[d] & C^1(\gN\cotimes
_A B')/v^N\ar[d]\\ 0 \ar[r] & H^0(C^{\bullet}(\gN\cotimes
_A B)) \ar[r] & C^0(\gN\cotimes
_A B)/\bigl((\Phi_{\gM}^*)^{-1}(v^N C^1 (\gN\cotimes
_A B)) 
\bigr) \ar[r]^-{\delta} & C^1(\gN\cotimes
_A B)/v^N }  \]
in which the vertical arrows 
are induced by $\cotimes_{B'}B$.  By a diagram chase we only need to
show that $\delta(\fbar_{B'})=0$. Since $\delta(f_B)=0$, it is
enough to show that the right hand vertical arrow is an
injection. This arrow can be rewritten as the tensor product of the
injection of $A$-algebras $B'\into B$ with the flat (even projective
of finite rank) $A$-module $C^1(\gN)/v^N$, so the result follows.
\end{proof}
We have the following key base-change result for $\Ext^1$'s of 
Breuil--Kisin modules with descent data.

\begin{prop}
\label{prop:base-change for exts}
Suppose that $\gM$ and $\gN$ are Breuil--Kisin modules with
descent data and coefficients in a $\cO/\varpi^a$-algebra $A$.
Then for any $A$-algebra $B$, and for any $B$-module $Q$,
there are natural isomorphisms
$\Ext^1_{\K{A}}(\gM,\gN)\otimes_A Q \iso
\Ext^1_{\K{B}}(\gM\cotimes_A B, \gN\cotimes_A B) \otimes_B Q
\iso 
\Ext^1_{\K{B}}(\gM\cotimes_A B,\gN\cotimes_A Q).$
\end{prop}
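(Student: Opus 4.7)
The plan is to reduce the double isomorphism to a single key statement and compute both sides via the complex $C^\bullet_\gM$ of Definition~\ref{def:explicit Ext complex}. The key statement $(\ast)$ is: for any $A$-module $Q$, the natural morphism
\[ \Ext^1_{\K{A}}(\gM,\gN)\otimes_A Q \to \Ext^1_{\K{A}}(\gM, \gN\cotimes_A Q) \]
induced by $\gN\otimes_A Q\to\gN\cotimes_A Q$ is an isomorphism. Given $(\ast)$, the first isomorphism in the proposition follows by applying $(\ast)$ with $Q$ taken to be~$B$, invoking Lemma~\ref{lem:base-change-complexes} and Corollary~\ref{cor:complex computes Hom and Ext} to identify $\Ext^1_{\K{A}}(\gM,\gN\cotimes_A B)$ with $\Ext^1_{\K{B}}(\gM\cotimes_A B, \gN\cotimes_A B)$, and then tensoring further with $Q$ over~$B$. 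The second isomorphism follows by applying $(\ast)$ with $A$ replaced by $B$ and $\gN$ replaced by $\gN\cotimes_A B$, noting that $(\gN\cotimes_A B)\cotimes_B Q = \gN\cotimes_A Q$ by passage to the inverse limit of the common $u^n$-quotients.

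To prove $(\ast)$, by right-exactness of $-\otimes_A Q$ combined with Corollary~\ref{cor:complex computes Hom and Ext}, the left-hand side equals $H^1(C^\bullet_\gM(\gN)\otimes_A Q)$, while the right-hand side equals $H^1(C^\bullet_\gM(\gN\cotimes_A Q))$. The natural map of complexes $C^\bullet_\gM(\gN)\otimes_A Q \to C^\bullet_\gM(\gN\cotimes_A Q)$ is compatible with $(\ast)$. The main observation is that this map becomes an isomorphism modulo $v^N$ for every~$N$: since $(\gN\cotimes_A Q)/v^N = (\gN/v^N\gN)\otimes_A Q$ (completion being trivial modulo $v^N$), since $\gM/v^N\gM$ is a finitely generated projective $\gS_A/v^N$-module and $\gS_A/v^N$ is free over~$A$, and since $|\Gal(K'/K)|$ is prime to~$p$ so that $\Gal(K'/K)$-invariants commute with $-\otimes_A Q$, one obtains $C^i(\gN\cotimes_A Q)/v^N = (C^i(\gN)/v^N)\otimes_A Q = (C^i(\gN)\otimes_A Q)/v^N$. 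Consequently, for every $N$, the ``truncated complexes'' appearing in Lemma~\ref{lem:truncation argument used to prove f.g. of Ext Q version} coincide on the two sides of $(\ast)$.

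The remaining task is to show that $H^1$ of each side agrees with $H^1$ of this common truncation, for $N$ sufficiently large. On the right-hand side this follows from Lemma~\ref{lem:truncation argument used to prove f.g. of Ext Q version} applied to $C^\bullet_\gM(\gN\cotimes_A Q)$ with coefficient module~$A$: indeed, $\gN\cotimes_A Q$ is $u$-adically complete and $u$-torsion free by Remark~\ref{rem:completed tensor}(2), so the hypotheses of Remark~\ref{rem:truncation-remark}(1) are automatic. On the left-hand side the main obstacle is the $v$-adic separation hypothesis on $C^0(\gN)\otimes_A Q$, which may fail for general $Q$ over a non-Noetherian~$A$. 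However, for the $H^1$-comparison only the \emph{surjectivity} half of Lemma~\ref{lem:truncation argument used to prove f.g. of Ext Q version}(2) is needed, namely the inclusion $v^N C^1(\gN)\otimes_A Q\subseteq (\delta\otimes\id_Q)(C^0(\gN)\otimes_A Q)$; this inclusion is established in the proof of that Lemma by tensoring the corresponding surjection in the case $Q=A$ (which uses only $u$-adic completeness of $C^0(\gN)$) with $Q$ over~$A$, and thus requires no hypothesis on~$Q$. Granted this containment, the quotient map $C^1(\gN)\otimes_A Q\twoheadrightarrow (C^1(\gN)/v^N)\otimes_A Q$ induces the desired isomorphism $H^1(C^\bullet_\gM(\gN)\otimes_A Q)\cong H^1(\text{truncation})$, and $(\ast)$ follows.
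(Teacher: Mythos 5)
Your proof is correct and follows essentially the same approach as the paper: reduce to the key statement $(\ast)$ that $\Ext^1_{\K{A}}(\gM,\gN)\otimes_A Q \cong \Ext^1_{\K{A}}(\gM,\gN\cotimes_A Q)$ for $A$-modules $Q$, then establish $(\ast)$ by comparing both sides with a common truncation of $C^\bullet_{\gM}$ via Lemma~\ref{lem:truncation argument used to prove f.g. of Ext Q version}. The paper's write-up is slightly tighter in two places: it deduces the second displayed isomorphism directly by applying $(\ast)$ to $Q$ viewed as an $A$-module together with Lemma~\ref{lem:base-change-complexes}, then obtains the first by setting $Q=B$ and tensoring further, thereby avoiding the associativity identity $(\gN\cotimes_A B)\cotimes_B Q \cong \gN\cotimes_A Q$ that your deduction requires; and on the left-hand side of $(\ast)$ it simply tensors the $Q=A$ cokernel computation with $Q$ by right-exactness, so the $v$-adic-separatedness concern you handle via the surjectivity half of Lemma~\ref{lem:truncation argument used to prove f.g. of Ext Q version}(2) never arises.
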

\begin{proof}
We first prove the lemma in the case of an $A$-module $Q$.
It follows from Lemmas \ref{lem: C computes Ext^1}
and~\ref{lem:truncation argument used to prove f.g. of Ext Q version}
that we may compute
$\Ext^1_{\K{A}}(\gM,\gN)$
as the cokernel of the morphism
$$C^0(\gN)/v^N C^0(\gN)
\buildrel \delta \over \longrightarrow C^1(\gN)/v^N C^1(\gN),$$for
some sufficiently large value of $N$ (not depending on $\gN$),
and hence that we may compute
$\Ext^1_{\K{A}}(\gM,\gN)\otimes_A Q$
as the cokernel of the morphism
$$\bigl(C^0(\gN)/v^N C^0(\gN)\bigr) \otimes_A Q
\buildrel \delta \over \longrightarrow 
\bigl(C^1(\gN)/v^N C^1(\gN)\bigr) \otimes_A Q.$$
We may similarly compute
$\Ext^1_{\K{A}}(\gM,\gN\cotimes_A Q)$
as the cokernel of the morphism
$$C^0(\gN\cotimes_A Q)/v^N C^0(\gN\cotimes_A Q)
\buildrel \delta \over \longrightarrow
C^1(\gN\cotimes_A Q)/v^N C^1(\gN\cotimes_A Q).$$
(Remark~\ref{rem:completed tensor}~(2) shows
that $\gN\cotimes_A Q$ satisfies the necessary hypotheses
for Lemma~\ref{lem:truncation argument used to prove f.g. of Ext Q version}
to apply.)
Once we note that the natural morphism
$$\bigl( C^i(\gN)/v^N C^i(\gN)\bigr)\otimes_A Q \to C^i(\gN\cotimes_A Q)/v^N
C^i(\gN\cotimes_A Q)$$
is an isomorphism for $i = 0$ and $1$ (because $\gM$ is a finitely
generated projective $\gS_A$-module), 
we obtain the desired isomorphism
$$\Ext^1_{\K{A}}(\gM,\gN)\otimes_A Q \iso \Ext^1_{\K{A}}(\gM,\gN\cotimes_A Q).$$

If $B$ is an $A$-algebra, and $Q$ is a $B$-module,
then by Lemma~\ref{lem:base-change-complexes} 
there is a natural isomorphism
$$\Ext^1_{\K{A}}(\gM, \gN\cotimes_A Q) \iso
\Ext^1_{\K{B}}(\gM\cotimes_A B, \gN\cotimes_A Q);$$
combined with the preceding base-change result, this yields
one of our claimed isomorphisms, namely
\[\Ext^1_{\K{A}}(\gM,\gN)\otimes_A Q \iso 
\Ext^1_{\K{B}}(\gM\cotimes_A B, \gN\cotimes_A Q).\]
Taking $Q$ to be $B$ itself, we then obtain
an isomorphism
\[\Ext^1_{\K{A}}(\gM,\gN)\otimes_A B \iso 
\Ext^1_{\K{B}}(\gM\cotimes_A B, \gN\cotimes_A B).\]
This allows us to identify
$\Ext^1_{\K{A}}(\gM,\gN)\otimes_A Q$,
which is naturally isomorphic to 
$\bigr(\Ext^1_{\K{A}}(\gM,\gN)\otimes_A B\bigr) \otimes_B Q$,
with 
$\Ext^1_{\K{B}}(\gM\cotimes_A B, \gN\cotimes_A B)\otimes_B Q$,
yielding the second claimed isomorphism.
\end{proof}

In contrast to the situation for extensions
(\emph{cf}.\ Proposition~\ref{prop:base-change for exts}), the formation of
homomorphisms between Breuil--Kisin modules is in general
not compatible with arbitrary base-change, as the following example shows.

\begin{example}\label{example:rank one unramified}
Take $A = (\Z/p\Z)[x^{\pm 1}, y^{\pm 1}]$, and let $\gM_x$ be the
free Breuil--Kisin module of rank one and $A$-coefficients with $\varphi(e)
= xe$ for some generator $e$ of $\gM_x$. Similarly define $\gM_y$ with
$\varphi(e') = ye'$ for some generator $e'$ of $\gM_y$. Then
$\Hom_{\K{A}}(\gM_x,\gM_y)=0$.  On the other hand, if $B=A/(x-y)$ then
$\gM_x \cotimes_A B$ and $\gM_y \cotimes_A B$ are isomorphic, so that 
$\Hom_{\K{B}}(\gM_x \cotimes B, \gM_y \cotimes B) \not\cong
\Hom_{\K{A}}(\gM_x,\gM_y) \otimes_A B$. 
\end{example}

However, it is possible
to establish such a compatibility in some settings.
Corollary~\ref{cor:vanishing of homs non Noetherian}, which
gives a criterion for the vanishing of $\Hom_{\K{B}} (\gM\cotimes_A
B, \gN\cotimes_A B)$ for any $A$-algebra $B$,  is a first example of
a result in this direction. Lemma~\ref{lem: flat base change for
  Homs} deals with flat base change, and Lemma~\ref{lem: vanishing of Kisin module homs implies vanishing on dense open}, which will
be important in Section~\ref{subsec:universal
families}, proves that formation of
homomorphisms is  compatible with base-change over a dense open
subscheme of $\Spec A$. 

\begin{prop}
\label{prop:vanishing of homs}
Suppose that $A$ is a Noetherian $\cO/\varpi^a$-algebra,
and that $\gM$ and $\gN$ are objects of $\K{A}$ that are
finitely generated over $\gS_A$ {\em (}or, equivalently,
over $A[[u]]${\em )}. Suppose also that~$\gN$ is a flat $\gS_A$-module.
Consider the following conditions:
\begin{enumerate}
\item
$\Hom_{\K{B}} (\gM\cotimes_A B, \gN\cotimes_A B) = 0$
for any finite type $A$-algebra $B$.
\item
$\Hom_{\K{\kappa(\m)}}\bigl(\gM\otimes_A \kappa(\mathfrak m),
\gN\otimes_A \kappa(\mathfrak m) \bigr) = 0$ 
for each maximal ideal $\mathfrak m$ of $A$.
\item
$\Hom_{\K{A}}(\gM, \gN\otimes_A Q) = 0$ 
for any 
finitely generated $A$-module $Q$.
\end{enumerate}
Then we have (1)$\implies$(2)$\iff$(3).  If $A$ is furthermore
Jacobson, then all three conditions are equivalent.
\end{prop}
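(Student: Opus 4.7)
The plan is to establish the implications $(1)\Rightarrow(2)$, $(3)\Rightarrow(2)$, and $(2)\Rightarrow(3)$, and then to derive $(2)\Rightarrow(1)$ under the additional Jacobson hypothesis. The implications $(1)\Rightarrow(2)$ and $(3)\Rightarrow(2)$ are immediate specializations: for the first, take $B=\kappa(\mathfrak m)=A/\mathfrak m$ (a finite type $A$-algebra) and invoke Remark~\ref{rem:completed tensor}(1) to identify $\gM\cotimes_A\kappa(\mathfrak m)$ with $\gM\otimes_A\kappa(\mathfrak m)$; for the second, take $Q=A/\mathfrak m$, observe that $\gN\otimes_A Q=\gN/\mathfrak m\gN$, and note that $\Hom_{\K{A}}(\gM,\gN/\mathfrak m\gN)$ is canonically $\Hom_{\K{\kappa(\mathfrak m)}}(\gM_{\kappa(\mathfrak m)},\gN_{\kappa(\mathfrak m)})$.

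The substantive content is $(2)\Rightarrow(3)$. Given $f\colon\gM\to\gN\otimes_A Q$ in $\K{A}$, fix a maximal ideal $\mathfrak m\subset A$. Composing $f$ with the projection onto $(\gN\otimes_A Q)\otimes_A\kappa(\mathfrak m)\cong\gN_{\kappa(\mathfrak m)}\otimes_{\kappa(\mathfrak m)}Q_{\kappa(\mathfrak m)}$ produces, after factoring through $\gM_{\kappa(\mathfrak m)}$, a morphism $\fbar$ in $\K{\kappa(\mathfrak m)}$. Since $Q_{\kappa(\mathfrak m)}$ is a finite-dimensional $\kappa(\mathfrak m)$-vector space, the target of $\fbar$ is a finite direct sum of copies of $\gN_{\kappa(\mathfrak m)}$; the vanishing hypothesis $(2)$, combined with the fact that $\Hom$ commutes with finite direct sums, then forces $\fbar=0$. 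Because $\gN$ is flat over $A$ (being projective over $\gS_A=A[[u]]$, which is faithfully flat over $A$), we have $\mathfrak m(\gN\otimes_A Q)=\gN\otimes_A\mathfrak m Q$, so $f$ factors through $\gN\otimes_A\mathfrak m Q\hookrightarrow\gN\otimes_A Q$ in $\K{A}$. Iterating this argument with $\mathfrak m Q,\mathfrak m^2 Q,\ldots$ in place of $Q$ (each still finitely generated over $A$), we conclude $\im f\subseteq\mathfrak m^k(\gN\otimes_A Q)$ for every $k\ge 0$.

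Localizing at $\mathfrak m$, the module $(\gN\otimes_A Q)_{\mathfrak m}$ is finitely generated over the local Noetherian ring $A_{\mathfrak m}[[u]]$, and $\mathfrak m$ lies in its Jacobson radical. The Krull intersection theorem yields $\bigcap_k\mathfrak m^k(\gN\otimes_A Q)_{\mathfrak m}=0$, whence $f_{\mathfrak m}=0$. Thus for each $m\in\gM$ there is some $s\in A\setminus\mathfrak m$ with $s\cdot f(m)=0$ in $\gN\otimes_A Q$. Since this holds for every maximal ideal $\mathfrak m$, the annihilator of $f(m)$ in $A$ is not contained in any maximal ideal and therefore equals $A$, so $f(m)=0$. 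This gives $f=0$, completing $(2)\Rightarrow(3)$.

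Finally, assume $A$ is Jacobson. Any finite type $A$-algebra $B$ is then Jacobson, and each maximal ideal $\mathfrak M\subset B$ contracts to a maximal ideal $\mathfrak m=\mathfrak M\cap A$ with $\kappa(\mathfrak M)/\kappa(\mathfrak m)$ a finite field extension. Tensoring the complex computing $\Hom_{\K{\kappa(\mathfrak m)}}(\gM_{\kappa(\mathfrak m)},\gN_{\kappa(\mathfrak m)})$ with the flat $\kappa(\mathfrak m)$-module $\kappa(\mathfrak M)$, and using Lemma~\ref{lem:base-change-complexes} to identify this with the corresponding complex over $\K{\kappa(\mathfrak M)}$, shows that $(2)$ for $A$ forces the analogous vanishing for $B$. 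Applying $(2)\Rightarrow(3)$ over $B$ with $Q=B$ then yields $(1)$. The main subtlety is the iteration in $(2)\Rightarrow(3)$, which relies on the flatness of $\gN$ over $A$ to convert reduction mod $\mathfrak m$ into tensoring with a submodule, and on the Krull intersection theorem applied to the local rings $A_{\mathfrak m}[[u]]$.
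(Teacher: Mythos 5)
Your overall strategy — iterate the reduction modulo $\mathfrak m$ to show $\im f \subseteq \bigcap_k \mathfrak m^k (\gN\otimes_A Q)$, and then invoke the Krull intersection theorem — is sound in spirit and differs from the paper's proof, which instead embeds $\gN\otimes_A Q$ into the projective limit $\varprojlim_I \gN\otimes_A(Q/IQ)$ over cofinite length ideals $I\subset A$ and then inducts on the length of $A/I$. Both proofs ultimately hinge on the same structural fact (that every maximal ideal of $A[[u]]$ has the form $(\mathfrak m,u)$), but the induction steps in the paper and the Krull intersection in your write-up are genuinely different devices. The trivial implications and the iteration step (using the $\Hom$-commutes-with-finite-sums argument plus flatness of $\gN$) are fine. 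However, there is a real gap in the localization step.

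The problem is the assertion that ``$(\gN\otimes_A Q)_{\mathfrak m}$ is finitely generated over the local Noetherian ring $A_{\mathfrak m}[[u]]$, and $\mathfrak m$ lies in its Jacobson radical.'' The localization $(\gN\otimes_A Q)_{\mathfrak m} := (\gN\otimes_A Q)\otimes_A A_{\mathfrak m}$ is a module over $A[[u]]\otimes_A A_{\mathfrak m} = (A[[u]])_{\mathfrak m}$, which is a \emph{proper} subring of $A_{\mathfrak m}[[u]]$ and is, in general, neither local nor a ring in which $\mathfrak m$ lies in the Jacobson radical. For a concrete example, take $A = \Z$, $\mathfrak m = (p)$ with $p>3$: then $(\Z[[u]])_{(p)}$ has $(u+3)$ as a maximal ideal, with residue field $\Q_3$, and $p\notin (u+3)$. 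So the Krull intersection theorem over $(A[[u]])_{\mathfrak m}$ does not by itself give $\bigcap_k \mathfrak m^k (\gN\otimes_A Q)_{\mathfrak m}=0$, and the conclusion that some $s\in A\setminus\mathfrak m$ kills $f(m)$ does not follow as stated. (Localizing the \emph{module} over $A_{\mathfrak m}[[u]]$ would require the completed base change $\gN\otimes_A Q \to (\gN\otimes_A Q)\otimes_{A[[u]]} A_\mathfrak m[[u]]$, and you would then owe an argument that this detects nonvanishing.)

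The fix is to localize $\gS_A$ (equivalently, $A[[u]]$) at the maximal ideal $(\mathfrak m, u)$ rather than at the multiplicative set $A\setminus\mathfrak m$. The local ring $A[[u]]_{(\mathfrak m,u)}$ is Noetherian with maximal ideal containing $\mathfrak m$, so the Krull intersection theorem applies to the finitely generated module $(\gN\otimes_A Q)\otimes_{A[[u]]} A[[u]]_{(\mathfrak m,u)}$ and gives $\bigcap_k \mathfrak m^k(\cdot) = 0$ there. Hence $f$ becomes zero after localizing at $(\mathfrak m, u)$. Since, as recorded in the paper's own proof, the maximal ideals of $A[[u]]$ are exactly the $(\mathfrak m, u)$ for $\mathfrak m$ maximal in $A$, and since a module map vanishes if and only if it vanishes after localizing at every maximal ideal, you may then conclude $f=0$.
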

\begin{proof}
If $\mathfrak m$ is a maximal ideal of $A$, then $\kappa(\mathfrak m)$
is certainly a finite type $A$-algebra, and so evidently~(1) implies~(2). 
It is even a finitely generated $A$-module, and so also~(2) follows
from~(3). 

We next
prove that~(2) implies~(3).
To this end, recall that if $A$ is any ring, and $M$ is any $A$-module,
then $M$ injects into the product of its localizations at all maximal ideals. 
If $A$ is Noetherian, and $M$ is finitely generated, then, by combining
this fact with the Artin--Rees
Lemma, we see that $M$ embeds into the product of its completions at all 
maximal ideals.   Another way to express this is that, if $I$ runs
over all cofinite length ideals in $A$ (i.e.\ all ideals for which $A/I$
is finite length), then $M$ embeds into the projective limit 
of the quotients $M/IM$  (the point being that
this projective limit is the same as the product
over all $\mathfrak m$-adic completions).
We are going to apply this observation with $A$ replaced by $\gS_A$,
and with $M$ taken to be $\gN\otimes_A Q$ for some finitely generated
$A$-module $Q$.

In $A[[u]]$, one sees that $u$ lies in the Jacobson radical (because 
$1  + fu$ is invertible in $A[[u]]$ for every $f \in A[[u]]$), and thus
in every maximal ideal, and so the maximal ideals of $A[[u]]$ are of
the form $(\mathfrak m, u)$, where $\mathfrak m$ runs over the maximal 
ideals of~$A$.
Thus the ideals of the form $(I,u^n)$, where $I$ is a cofinite length
ideal in $A$, are
cofinal in all cofinite length ideals in $A[[u]]$.
Since $\gS_A$ is finite over $A[[u]]$, we see that the ideals
$(I,u^n)$ in $\gS_A$ are also
cofinal in all cofinite length ideals in $A[[u]]$.
Since $A[[u]]$, and hence $\gS_A$, is furthermore Noetherian when $A$ is,
we see that if $Q$ is a
finitely generated $A$-module, and $\gN$ is a finitely generated
$\gS_A$-module,
then $\gN\otimes_A (Q/IQ)$ is $u$-adically complete,
for any cofinite length ideal $I$ in $A$, and
hence equal to the limit over $n$ of $\gN \otimes_A Q/(I,u^n)$.  
Putting this together with the observation of the preceding paragraph,
we see that the natural morphism
$$\gN\otimes_A Q \to \varprojlim_I \gN\otimes_A (Q/IQ)$$ 
(where $I$ runs over all cofinite length ideals of $A$)
is an embedding.
The induced morphism
$$ \Hom_{\K{A}}(\gM,\gN\otimes_A Q)
\to
\varprojlim_I \Hom_{\K{A}}(\gM,\gN\otimes_A (Q/IQ))$$
is then evidently also an embedding.

Thus, to conclude that 
$ \Hom_{\K{A}}(\gM,\gN\otimes_A Q)$
vanishes,
it suffices to show that
$\Hom_{\K{A}}(\gM,\gN\otimes_A (Q/IQ))$  vanishes for each
cofinite length ideal $I$ in $A$.  An easy induction (using the
flatness of~$\gN$) on the
length of $A/I$ reduces this to showing that
$\Hom_{\K{A}}\bigl(\gM,\gN\otimes_A \kappa(\mathfrak m)\bigr),$
or, equivalently, $\Hom_{\K{\kappa(\mathfrak{m})}}\bigl(\gM\otimes_A \kappa(\mathfrak m),
\gN\otimes_A \kappa(\mathfrak m)\bigr),$
vanishes for each maximal ideal~$\mathfrak m$. 
Since this is 
the hypothesis of~(2), we see that indeed~(2) implies~(3).

It remains to show that~(3) implies~(1) when $A$ is Jacobson. 
Applying the result
``(2) implies~(3)'' (with $A$ replaced by~$B$, and taking $Q$ in~(3) to be $B$ itself as a $B$-module) to $\gM\cotimes_A B$ and $\gN\cotimes_A B$,
we see that it suffices to prove the vanishing of
$$\Hom_{\K{B}}\bigl( (\gM\cotimes_A B)\otimes_B \kappa(\mathfrak n),
(\gN\cotimes_A B)\otimes_B \kappa(\mathfrak n) \bigr)
= \Hom_{\K{A}}\bigl( \gM, \gN\cotimes_A \kappa(\mathfrak n) \bigr)
$$
for each maximal ideal $\mathfrak n$ of $B$.
Since $A$ is Jacobson, the field $\kappa(\mathfrak n)$ is in fact a
finitely generated
$A$-module, hence $\gN\cotimes\kappa(\mathfrak n) = \gN\otimes_A
\kappa(\mathfrak n)$, and so the desired vanishing is a special case of~(3).
\end{proof}

\begin{cor}
\label{cor:vanishing of homs non Noetherian}
If $A$ is a Noetherian and Jacobson $\cO/\varpi^a$-algebra,
and if $\gM$ and $\gN$ are Breuil--Kisin modules with descent
data and $A$-coefficients, 
then the
following three conditions are equivalent:
\begin{enumerate}
\item
$\Hom_{\K{B}} (\gM\cotimes_A B, \gN\cotimes_A B) = 0$
for any $A$-algebra $B$.
\item
$\Hom_{\K{\kappa(\mathfrak{m})}}\bigl(\gM\otimes_A \kappa(\mathfrak m),
\gN\otimes_A \kappa(\mathfrak m) \bigr) = 0$ 
for each maximal ideal $\mathfrak m$ of $A$.
\item
$\Hom_{\K{A}}(\gM, \gN\otimes_A Q) = 0$ 
for any 
finitely generated $A$-module $Q$.
\end{enumerate}
\end{cor}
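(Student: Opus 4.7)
The plan is to deduce this corollary directly from Proposition~\ref{prop:vanishing of homs}, which establishes the same equivalence under more general hypotheses (namely, $\gM$ and $\gN$ are merely assumed to be finitely generated over $\gS_A$, with $\gN$ flat over $\gS_A$). Thus the only task is to verify that Breuil--Kisin modules with descent data and $A$-coefficients satisfy these hypotheses.

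By the definition of a Breuil--Kisin module (Definition~\ref{defn: Kisin module with descent data}), the underlying $\gS_A$-module of $\gM$ and of $\gN$ is finitely generated and projective; in particular each is finitely generated over $\gS_A$, and $\gN$ is flat (even projective) over $\gS_A$. Hence the hypotheses of Proposition~\ref{prop:vanishing of homs} are met.

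Since we are further assuming that $A$ is Jacobson, the proposition gives the equivalence of the three conditions (1), (2), (3) of the proposition, which are precisely conditions (1), (2), (3) of the corollary. This completes the proof. There is no real obstacle here; the corollary is simply the specialization of the more technical Proposition~\ref{prop:vanishing of homs} to the setting of Breuil--Kisin modules, where the required finite generation and flatness come for free from the definition.
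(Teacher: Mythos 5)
There is a genuine gap in your proposal: you have overlooked that condition~(1) in Corollary~\ref{cor:vanishing of homs non Noetherian} quantifies over \emph{all} $A$-algebras $B$, whereas condition~(1) in Proposition~\ref{prop:vanishing of homs} quantifies only over \emph{finite type} $A$-algebras $B$. (This is exactly the point of the corollary's label, ``vanishing of homs non Noetherian.'') Proposition~\ref{prop:vanishing of homs} therefore gives the chain of implications among the three conditions only for finite type $B$ in (1); it does not by itself yield the vanishing of $\Hom_{\K{B}}(\gM\cotimes_A B, \gN\cotimes_A B)$ for a general, possibly non-Noetherian, $A$-algebra $B$.

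To close the gap one must show that vanishing for all finite type $A$-algebras $B$ already forces vanishing for arbitrary $B$. This is where Proposition~\ref{prop:descent for Homs of free Kisin modules} enters: it asserts that any morphism $f_B \colon \gM\cotimes_A B \to \gN\cotimes_A B$ of Breuil--Kisin modules with $B$-coefficients descends to a morphism over some finite type $A$-subalgebra $B' \subset B$, so a nonzero $f_B$ would produce a nonzero $f_{B'}$, contradicting the finite type case. That descent statement is the additional ingredient you need; your observation that $\gM$, $\gN$ are finitely generated and flat over $\gS_A$ only handles the transition from the proposition's hypotheses, not the strengthening of its condition~(1).
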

\begin{proof}By Proposition~\ref{prop:vanishing of homs}, we need only
  prove that if $\Hom_{\K{B}} (\gM\cotimes_A B, \gN\cotimes_A B)$
  vanishes 
for all finitely generated $A$-algebras~$B$, then it vanishes for all
$A$-algebras~$B$. This is immediate from Proposition~\ref{prop:descent for Homs of free Kisin modules}.
\end{proof}

\begin{cor}
\label{cor:freeness for exts}
Suppose that $\gM$ and $\gN$ are Breuil--Kisin modules with
descent data and coefficients in a Noetherian $\cO/\varpi^a$-algebra $A$,
and that furthermore
$\Hom_{\K{A}}\bigl(\gM\otimes_A \kappa(\mathfrak m),
\gN\otimes_A \kappa(\mathfrak m) \bigr)$ vanishes
for each maximal ideal $\mathfrak m$ of $A$.
Then the $A$-module
$\Ext^1_{\K{A}}(\gM,\gN)$
is projective
of finite rank. 
\end{cor}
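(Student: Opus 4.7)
Let $E := \Ext^1_{\K{A}}(\gM,\gN)$. By Proposition \ref{prop:exts are f.g. over A}, $E$ is a finitely presented $A$-module. Since $A$ is Noetherian, a finitely presented $A$-module is projective if and only if it is flat, and it suffices to show $\Tor_1^A(E,A/I) = 0$ for every ideal $I \subseteq A$. My plan is to derive this vanishing directly from the long exact $\Ext$-sequence attached to $0 \to I \to A \to A/I \to 0$.

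First, I would apply Proposition \ref{prop:vanishing of homs} to strengthen the hypothesis: the equivalence (2)$\iff$(3) there yields $\Hom_{\K{A}}(\gM,\gN\otimes_A Q) = 0$ for every finitely generated $A$-module $Q$. Next, note that $\gN$ is $A$-flat: it is Zariski-locally free over $\gS_A$, which (since $A$ is Noetherian) is flat over $A$, as $A[[u]]$ is $A$-flat by Remark \ref{rem:projectivity for Kisin modules}(3) and $\gS_A$ is finite free over $A[[u]]$. Thus tensoring the sequence $0 \to I \to A \to A/I \to 0$ with $\gN$ yields a short exact sequence
\[
0 \to \gN\otimes_A I \to \gN \to \gN/I\gN \to 0
\]
in $\K{A}$; by Remark \ref{rem:completed tensor}(1), the ordinary tensor products coincide with the completed tensor products here, since all modules involved are finitely generated over the Noetherian ring $A$.

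Now I would apply the $\delta$-functor structure given by Corollary \ref{cor:complex computes Hom and Ext} and Lemma \ref{lem: C computes Ext^1}, combined with the vanishing $\Ext^2_{\K{A}}(\gM,-) = 0$ from Corollary \ref{cor:ext2 vanishes}, to obtain
\[
\Hom_{\K{A}}(\gM,\gN/I\gN) \to \Ext^1_{\K{A}}(\gM,\gN\otimes_A I) \to E \to \Ext^1_{\K{A}}(\gM,\gN/I\gN) \to 0.
\]
The leftmost term vanishes by the strengthened hypothesis applied to $Q = A/I$. By Proposition \ref{prop:base-change for exts} (and again using that $\cotimes_A = \otimes_A$ on finitely generated modules), the remaining terms become $E \otimes_A I$ and $E \otimes_A A/I$, giving a short exact sequence
\[
0 \to E \otimes_A I \to E \to E \otimes_A A/I \to 0.
\]
This is precisely the statement that $\Tor_1^A(E,A/I) = 0$. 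Since this holds for every ideal $I$, the module $E$ is flat, and hence projective of finite rank.

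There is no substantial obstacle here: the heavy lifting has already been carried out in Propositions \ref{prop:exts are f.g. over A}, \ref{prop:vanishing of homs}, and \ref{prop:base-change for exts}, and the proof is essentially a matter of assembling these results with the $\delta$-functor formalism. The only point requiring some care is ensuring that completed tensor products agree with ordinary tensor products at each step, which is automatic in the Noetherian setting.
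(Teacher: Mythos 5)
Your proof is correct and follows essentially the same route as the paper's: reduce to flatness via Proposition~\ref{prop:exts are f.g. over A}, convert $\Ext^1\otimes_A Q$ to $\Ext^1(\gM,\gN\otimes_A Q)$ via Proposition~\ref{prop:base-change for exts}, use exactness of $\gN\otimes_A(-)$ together with $\Ext^2=0$ to reduce to vanishing of $\Hom_{\K{A}}(\gM,\gN\otimes_A Q)$, and quote (2)$\implies$(3) of Proposition~\ref{prop:vanishing of homs}. The only cosmetic difference is that you spell out the long exact sequence for the specific short exact sequence $0\to I\to A\to A/I\to 0$ and phrase flatness as $\Tor_1^A(E,A/I)=0$, whereas the paper packages the same computation as exactness of the functor $Q\mapsto\Ext^1(\gM,\gN\otimes_A Q)$ on all finitely generated $Q$.
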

\begin{proof}
By Proposition~\ref{prop:exts are f.g. over A},
in order to prove that $\Ext^1_{\K{A}}(\gM,\gN)$
is projective of finite rank over $A$,
it suffices to prove that it is flat over $A$.
For this, it suffices to show that
$Q \mapsto \Ext^1_{\K{A}}(\gM,\gN)\otimes_A Q$
is exact when applied to finitely generated $A$-modules $Q$.
Proposition~\ref{prop:base-change for exts} (together with 
Remark~\ref{rem:completed tensor}~(1)) allows us to identify
this functor with the functor
$Q \mapsto \Ext^1_{\K{A}}(\gM,\gN\otimes_A Q).$
Note that the functor $Q\mapsto\gN\otimes_A Q$ is an exact functor of $Q$,
since $\gS_A$ is a flat $A$-module (as $A$ is Noetherian; see Remark~\ref{rem:projectivity for Kisin modules}(3)).
Thus, taking into account
Corollary~\ref{cor:ext2 vanishes},
we see that it suffices to show that
$\Hom_{\K{A}}(\gM,\gN\otimes_A Q) = 0$
for each finitely generated $A$-module~$Q$,
under the hypothesis that
$\Hom_{\K{A}}\bigl(\gM\otimes_A \kappa(\mathfrak m),
\gN\otimes_A \kappa(\mathfrak m) \bigr) = 0$ 
for each maximal ideal $\mathfrak m$ of~$A$.
This is the implication (2) $\implies$ (3) of  Proposition~\ref{prop:vanishing of homs}.
\end{proof}

\begin{lemma}\label{lem: flat base change for Homs}
Suppose that $\gM$ is a Breuil--Kisin modules with
descent data and coefficients in a Noetherian $\cO/\varpi^a$-algebra
$A$. Suppose that $\gN$ is either a Breuil--Kisin module with
$A$-coefficients, or that $\gN=\gN'/u^N\gN'$, where $\gN'$  a Breuil--Kisin module with
$A$-coefficients and $N\ge 1$. 
Then,
if $B$ is a finitely generated flat 
$A$-algebra, we have a natural isomorphism
\[\Hom_{\K{B}}(\gM\cotimes_{A} B, 
\gN\cotimes_{A} B) \iso \Hom_{\K{A}}(\gM,\gN)\otimes_{A}B.  \] 
\end{lemma}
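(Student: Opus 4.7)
The argument naturally splits into two cases depending on whether $\gN$ is a Breuil--Kisin module or $\gN = \gN'/u^N\gN'$, with the second reducing to the first by a five-lemma argument. The heart of the proof is the Breuil--Kisin case, where everything follows formally from the preceding base-change result for the complex $C^\bullet$ combined with the flatness of $B$.

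When $\gN$ is a Breuil--Kisin module, I would argue as follows. By Lemma~\ref{lem: C computes Hom} and Lemma~\ref{lem:base-change-complexes} (applied with $Q = B$), the right-hand side $\Hom_{\K{B}}(\gM\cotimes_A B, \gN\cotimes_A B)$ is identified with $H^0\bigl(C^\bullet_\gM(\gN\cotimes_A B)\bigr)$. Corollary~\ref{cor: base change completion for complex in free case} provides a natural quasi-isomorphism
\[ C^\bullet_\gM(\gN)\otimes_A B \isoto C^\bullet_\gM(\gN\cotimes_A B). \]
Since $B$ is flat over $A$, the functor $-\otimes_A B$ is exact and hence commutes with $H^0$, yielding
\[ \Hom_{\K{A}}(\gM,\gN)\otimes_A B \cong H^0\bigl(C^\bullet_\gM(\gN)\bigr)\otimes_A B \cong H^0\bigl(C^\bullet_\gM(\gN\cotimes_A B)\bigr) \cong \Hom_{\K{B}}(\gM\cotimes_A B, \gN\cotimes_A B). \]

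For the case $\gN = \gN'/u^N\gN'$, I would apply $\Hom_{\K{A}}(\gM,-)$ to the short exact sequence $0 \to \gN' \xrightarrow{u^N} \gN' \to \gN \to 0$ in $\K{A}$; since $\Ext^2_{\K{A}}(\gM,-) = 0$ by Corollary~\ref{cor:ext2 vanishes}, this produces a six-term exact sequence, which remains exact after tensoring with the flat $A$-algebra $B$. On the $B$-side, $\gN'\cotimes_A B$ is $u$-torsion free by Remark~\ref{rem:completed tensor}(2), and $\gN\cotimes_A B \cong (\gN'\cotimes_A B)/u^N(\gN'\cotimes_A B)$ (since $u$-adic completion is unchanged modulo $u^N$), giving a parallel short exact sequence in $\K{B}$ and hence a corresponding six-term exact sequence for $\Hom_{\K{B}}(\gM\cotimes_A B,-)$. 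The natural base-change maps induce a morphism from the first sequence to the second, and the four maps at the $\gN'$-terms are isomorphisms (two by the first case, and two by Proposition~\ref{prop:base-change for exts}). The five lemma then gives the desired isomorphism on $\Hom$ for $\gN$. The only mild bookkeeping will be confirming the identification $\gN\cotimes_A B \cong (\gN'\cotimes_A B)/u^N(\gN'\cotimes_A B)$ and the commutativity of the connecting squares, both of which follow directly from the definition of completed tensor product and the naturality of the long exact sequence of $\Ext$'s.
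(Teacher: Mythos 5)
Your handling of the case where $\gN$ is a Breuil--Kisin module is essentially identical to the paper's argument: pass to the complex $C^\bullet$, use flatness of $B$ to commute $\otimes_A B$ with $H^0$, identify $C^\bullet(\gN)\otimes_A B$ with $C^\bullet(\gN\cotimes_A B)$ via Corollary~\ref{cor: base change completion for complex in free case}, and conclude with Lemmas~\ref{lem: C computes Hom} and~\ref{lem:base-change-complexes}.

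For the case $\gN = \gN'/u^N\gN'$, however, there is a real error: the sequence $0 \to \gN' \xrightarrow{u^N} \gN' \to \gN \to 0$ is \emph{not} a short exact sequence in $\K{A}$. Multiplication by $u^N$ is not a morphism of $\gS_A[F,\Gal(K'/K)]$-modules: it fails to commute with $\varphi$ (since $\varphi(u^N m) = u^{pN}\varphi(m)$) and, when $e(K'/K) \nmid N$, fails to commute with the $\hat{g}$ as well (since $\hat{g}(u^N m) = h(g)^N u^N \hat{g}(m)$). The short exact sequence you want is
\[ 0 \to u^N\gN' \to \gN' \to \gN \to 0, \]
with the first map being the \emph{inclusion}. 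Here $u^N\gN'$ is a subobject of $\gN'$ in $\K{A}$ (as it is stable under $\varphi$ and the $\hat{g}$), and it is again a Breuil--Kisin module — but it is not isomorphic to $\gN'$ in $\K{A}$, only as an $\gS_A$-module. With this correction your five-lemma argument does go through: positions 1 and 2 become $\Hom(\gM,u^N\gN')$ and $\Hom(\gM,\gN')$, both handled by the projective case; positions 4 and 5 become $\Ext^1(\gM,u^N\gN')$ and $\Ext^1(\gM,\gN')$, both handled by Proposition~\ref{prop:base-change for exts} since $u^N\gN'$ is a Breuil--Kisin module. (This is the same pattern of SES used by the paper in the proof of Proposition~\ref{prop:exts are f.g. over A}.)

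That said, the paper does not split into cases at all. The same complex-level argument — left exactness of $0 \to \Hom_{\K{A}}(\gM,\gN) \to C^0(\gN) \to C^1(\gN)$, flatness of $B$, and the identification $C^i(\gN)\otimes_A B \cong C^i(\gN\cotimes_A B)$ — works uniformly, with Corollary~\ref{cor: base change completion for complex in free case} invoked only to establish the identification of complexes when $\gN$ is projective; when $\gN = \gN'/u^N\gN'$ the $C^i(\gN)$ are already finitely generated $A$-modules and the identification is immediate. So your reduction via dimension shifting and the five lemma, while valid once corrected, is a detour compared to the paper's uniform treatment.
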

\begin{proof}
   By Corollary~\ref{cor:complex computes Hom and Ext} and the
   flatness of~$B$,
  we have a left exact sequence
  \[0\to \Hom_{\K{A}}(\gM,\gN)\otimes_AB\to C^0(\gN)\otimes_AB\to
    C^1(\gN)\otimes_AB\]  and therefore (applying
Corollary~\ref{cor: base change completion for complex in free case}
to treat the case that $\gN$ is projective)  a left exact sequence
\[0\to \Hom_{\K{A}}(\gM,\gN)\otimes_AB\to C^0(\gN\cotimes_AB)\to
  C^1(\gN\cotimes_AB).\]
The result follows from Corollary~\ref{cor:complex computes Hom and
  Ext} and Lemma~\ref{lem:base-change-complexes}.
\end{proof}

\begin{lemma}\label{lem: vanishing of Kisin module homs implies vanishing on dense open}
Suppose that $\gM$ is a Breuil--Kisin module with
descent data and coefficients in a Noetherian $\cO/\varpi^a$-algebra
$A$ which is furthermore a domain.
Suppose also that $\gN$ is either a Breuil--Kisin module with
$A$-coefficients, or that $\gN=\gN'/u^N\gN'$, where $\gN'$  is a Breuil--Kisin module with
$A$-coefficients and $N\ge 1$. 
Then there is some nonzero $f\in
A$ with the following property: 
writing 
$\gM_{A_f}=\gM\cotimes_A A_f$ and $\gN_{A_f}=\gN\cotimes_A A_f$, then for any
finitely generated $A_f$-algebra $B$, and any finitely 
generated $B$-module $Q$, there are natural isomorphisms
\begin{multline*}
\Hom_{\K{A_f}}(\gM_{A_f},\gN_{A_f})\otimes_{A_f}Q \iso
\Hom_{\K{B}}(\gM_{A_f}\cotimes_{A_f} B, \gN_{A_f}\cotimes_{A_f} B)\otimes_B Q
\\
\iso
\Hom_{\K{B}}(\gM_{A_f}\cotimes_{A_f} B, \gN_{A_f}\cotimes_{A_f} Q).
\end{multline*}
\end{lemma}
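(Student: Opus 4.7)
The plan is to compute $\Hom_{\K{A}}(\gM,\gN)$ as the kernel of a map between finitely generated $A$-modules provided by a truncated version of the complex $C^\bullet_{\gM}(\gN)$, use generic freeness over the Noetherian domain $A$ to make the relevant kernel, image, and cokernel modules flat after localization at some nonzero $f \in A$, and then derive both isomorphisms by combining a formal exactness argument with the base-change compatibility results already proved. Observe that it suffices to establish the composite isomorphism
\[ \Hom_{\K{A_f}}(\gM_{A_f},\gN_{A_f})\otimes_{A_f}Q \isoto \Hom_{\K{B}}(\gM_{A_f}\cotimes_{A_f} B, \gN_{A_f}\cotimes_{A_f} Q), \]
as the intermediate term in the statement is recovered by specialising this to $Q = B$ (to identify $\Hom_{\K{A_f}}(\gM_{A_f},\gN_{A_f})\otimes_{A_f} B$ with $\Hom_{\K{B}}(\gM_{A_f}\cotimes_{A_f} B, \gN_{A_f}\cotimes_{A_f} B)$) and then tensoring back with $Q$ over $B$.

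Treat first the case where $\gN$ is a Breuil--Kisin module. Fix $N$ as in Lemma~\ref{lem:truncation argument used to prove f.g. of Ext Q version} and form the truncated complex $\overline{C}^\bullet = [\overline{C}^0 \to \overline{C}^1]$ with $\overline{C}^0 := C^0/(\Phi_{\gM}^*)^{-1}(v^N C^1)$ and $\overline{C}^1 := C^1/v^N C^1$. By that lemma applied with $Q = A$ and by Corollary~\ref{cor:complex computes Hom and Ext}, these are finitely generated $A$-modules (with $\overline{C}^1$ finitely generated projective), $H := \ker\overline{\delta} = \Hom_{\K{A}}(\gM,\gN)$, and $E := \coker\overline{\delta} = \Ext^1_{\K{A}}(\gM,\gN)$. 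Set $I := \im\overline{\delta}$; each of $H, I, E, \overline{C}^0$ is finitely generated over the Noetherian domain $A$, so by generic freeness we may choose a nonzero $f \in A$ such that after localization each becomes free over $A_f$. Given a finitely generated $A_f$-algebra $B$ and a finitely generated $B$-module $Q$, regarded as an $A_f$-module, flatness of $I_f$ and $E_f$ ensures that the short exact sequences $0 \to H_f \to \overline{C}^0_f \to I_f \to 0$ and $0 \to I_f \to \overline{C}^1_f \to E_f \to 0$ remain exact upon tensoring with $Q$ over $A_f$, and splicing yields a natural isomorphism $H_f \otimes_{A_f} Q \isoto \ker(\overline{\delta}_f \otimes_{A_f} \id_Q)$. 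Applying Lemma~\ref{lem:truncation argument used to prove f.g. of Ext Q version}(2) over $A_f$ with this $Q$ (whose hypotheses hold by Remark~\ref{rem:truncation-remark}(2)), combined with Corollary~\ref{cor: base change completion for complex in free case} and Lemma~\ref{lem:base-change-complexes}, identifies that kernel with $\Hom_{\K{B}}(\gM_{A_f}\cotimes_{A_f} B, \gN_{A_f}\cotimes_{A_f} Q)$, as required.

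When instead $\gN = \gN'/u^N \gN'$, the modules $C^0(\gN)$ and $C^1(\gN)$ are already finitely generated over $A$: since $u^N$ annihilates $\gN$, any $\gS_A$-linear map from $\gM$ (or from $\varphi^*\gM$) factors through the respective quotient by $u^N$, which is a finitely generated $A$-module, and $A$ is Noetherian. The argument then simplifies: no truncation is needed, $\gN \cotimes_{A_f} Q = \gN_{A_f} \otimes_{A_f} Q$ (as $u^N$ kills $\gN$), and the key identification $C^i(\gN_{A_f}) \otimes_{A_f} Q = C^i(\gN_{A_f} \otimes_{A_f} Q)$ holds automatically because $\gM_{A_f}/u^N$ is a finitely generated projective module over $\gS_{A_f}/u^N$. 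One again applies generic freeness to $H$, $I$, $E$, and $C^0(\gN)$ and runs the same exactness argument. The main delicate step in both cases is the comparison, after base change by an arbitrary $Q$, between the kernel of the (truncated) complex and the Hom group $\Hom_{\K{B}}(\gM_{A_f}\cotimes_{A_f} B, \gN_{A_f}\cotimes_{A_f} Q)$; this is precisely where Lemma~\ref{lem:truncation argument used to prove f.g. of Ext Q version}(2), Corollary~\ref{cor: base change completion for complex in free case}, and Lemma~\ref{lem:base-change-complexes} combine to supply the required comparison.
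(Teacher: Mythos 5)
Your proof is correct but takes a somewhat heavier route than the paper. The paper's argument works with the full four-term exact sequence $0 \to \Hom_{\K{A}}(\gM,\gN) \to C^0(\gN) \to C^1(\gN) \to \Ext^1_{\K{A}}(\gM,\gN) \to 0$ of Corollary~\ref{cor:complex computes Hom and Ext} without truncation: it observes that since $A$ is Noetherian, $\gN$ is $A$-flat (Remark~\ref{rem:projectivity for Kisin modules}(3)), hence the $C^i(\gN)$ are $A$-flat, and it applies generic freeness \emph{only} to $\Ext^1_{\K{A}}(\gM,\gN)$. After localizing at $f$, the last three terms of the sequence are flat over $A_f$, so the sequence stays exact after $\otimes_{A_f} Q$, and the rest of the argument applies Corollary~\ref{cor: base change completion for complex in free case} (to pass from $C^i(\gN)_f \otimes_{A_f} Q$ to $C^i(\gN_{A_f} \cotimes_{A_f} Q)$) together with Lemma~\ref{lem:base-change-complexes}, uniformly for both the projective $\gN$ and the $\gN'/u^N\gN'$ case. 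By contrast, you pass to the truncated complex to manufacture a situation of finitely generated $A$-modules and then apply generic freeness to four modules $H, I, E, \overline{C}^0$; this is harmless but more than is needed (flatness of $\overline{C}^1_f$ plus $E_f$ already forces $I_f$ flat, and you never actually use flatness of $\overline{C}^0_f$ or $H_f$). Your argument is sound, but it duplicates much of the truncation bookkeeping already encapsulated in Lemma~\ref{lem:truncation argument used to prove f.g. of Ext Q version} and Corollary~\ref{cor: base change completion for complex in free case}, and it splits the two cases for $\gN$ where the paper's exploitation of $A$-flatness handles both at once. The final step, deducing the middle term by specializing to $Q=B$ and rewriting $\otimes_{A_f} Q$ as $\otimes_{A_f} B \otimes_B Q$, matches the paper exactly.
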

\begin{proof}[Proof of Lemma~{\ref{lem: vanishing of Kisin module homs implies vanishing on dense open}}.]
Note that since $A$ is Noetherian, by Remark~\ref{rem:projectivity for
  Kisin modules}(3) we see that~$\gN$ is $A$-flat. 
 By Corollary~\ref{cor:complex computes Hom and Ext}
  we have an exact sequence
  \[0\to \Hom_{\K{A}}(\gM,\gN)\to C^0(\gN)\to C^1(\gN) \to
    \Ext^1_{\K{A}}(\gM,\gN)\to 0.\]
  Since by assumption $\gM$ is a projective $\gS_A$-module, and 
 $\gN$ is a flat
$A$-module, the $C^i(\gN)$ are also flat $A$-modules.

By Proposition~\ref{prop:exts are f.g. over A},
$\Ext^1_{\K{A}}(\gM,\gN)$ is a finitely generated $A$-module, so
by the generic freeness
theorem~\cite[\href{http://stacks.math.columbia.edu/tag/051R}{Tag
    051R}]{stacks-project} there is some nonzero $f\in A$ such that $\Ext^1_{\K{A}}(\gM,\gN)_f$ is
free over~$A_f$.

Since localisation is exact, we obtain an exact
sequence
\[0\to \Hom_{\K{A_f}}(\gM,\gN)_f \to C^0(\gN)_f \to C^1(\gN)_f \to
  \Ext^1_{\K{A}}(\gM,\gN)_f\to 0\]and therefore (applying
Corollary~\ref{cor: base change completion for complex in free case}
to treat the case that $\gN$ is a Breuil--Kisin module) an exact sequence
\[0\to \Hom_{\K{A_f}}(\gM_{A_f},\gN_{A_f})\to C^0(\gN_{A_f})\to C^1(\gN_{A_f}) \to
  \Ext^1_{\K{A}}(\gM,\gN)_f\to 0.\] 

Since the last three terms are flat over~$A_f$, this sequence remains
exact upon tensoring over $A_f$
with $Q$.
Applying  Corollary~\ref{cor: base change completion for complex in free case}
again to treat the case that $\gN$ is a Breuil--Kisin module, we see that in particular we
have a left exact sequence
\[0\to
\Hom_{\K{A_f}}(\gM_{A_f},\gN_{A_f})\otimes_{A_f}Q
\to
  C^0(\gN_{A_f}\cotimes_{A_f}Q)\to C^1(\gN_{A_f}\cotimes_{A_f}Q),\]
and Corollary~\ref{cor:complex computes Hom and Ext} together with Lemma~\ref{lem:base-change-complexes}
yield one of the desired isomorphisms, namely
$$\Hom_{\K{A_f}}(\gM_{A_f},\gN_{A_f})\otimes_{A_f}Q \iso 
\Hom_{\K{B}}(\gM_{A_f}\cotimes_{A_f}B ,\gN_{A_f}\cotimes_{A_f} Q).$$
If we consider the case when $Q = B$, we  obtain an isomorphism
$$\Hom_{\K{A_f}}(\gM_{A_f},\gN_{A_f})\otimes_{A_f}B \iso 
\Hom_{\K{B}}(\gM_{A_f}\cotimes_{A_f}B ,\gN_{A_f}\cotimes_{A_f} B).$$
Rewriting the tensor product $\text{--}\otimes_{A_f} Q $ as
$\text{--}\otimes_{A_f} B \otimes_B Q,$
we then find that 
$$
\Hom_{\K{B}}(\gM_{A_f}\cotimes_{A_f}B ,\gN_{A_f}\cotimes_{A_f} B)\otimes_B Q
\iso
\Hom_{\K{B}}(\gM_{A_f}\cotimes_{A_f}B ,\gN_{A_f}\cotimes_{A_f} Q),$$
which gives the second desired isomorphism.
\end{proof}

Variants on the preceding result may be proved using other
versions of the generic freeness theorem.

\begin{example}\label{example:rank one unramified redux} Returning to
  the setting of
  Example~\ref{example:rank one unramified},
 one can check using Corollary~\ref{cor:vanishing of homs non
  Noetherian} that the conclusion of Lemma~\ref{lem: vanishing of
  Kisin module homs implies vanishing on dense open} (for $\gM =
\gM_x$ and $\gN = \gM_y)$  holds with $f =
x-y$. In this case all of the resulting $\Hom$ groups
vanish (\emph{cf}.\ also the proof of Lemma~\ref{lem: generically no Homs}).
It then follows from
Corollary~\ref{cor:freeness for exts} that
$\Ext^1_{\K{A}}(\gM,\gN)_{f}$ is projective over $A_f$, so that the proof
of Lemma~\ref{lem: vanishing of Kisin module homs implies vanishing on
  dense open} even goes through with this choice of $f$.
\end{example}

As well as considering homomorphisms and extensions of Breuil--Kisin modules, we need to
consider the homomorphisms and extensions of their associated \'etale $\varphi$-modules;
recall that the passage to associated \'etale $\varphi$-modules amounts
to inverting $u$, and so we briefly discuss this process in the general
context of the category $\K{A}$.

We let $\K{A}[1/u]$ denote the full subcategory of $\K{A}$
consisting of objects on which multiplication by $u$ is invertible.
We may equally well regard it as the category of left
$\gS_A[1/u][F,\Gal(K'/K)]$-modules (this notation being interpreted in
the evident manner).   
There are natural isomorphisms (of bi-modules)
\numequation
\label{eqn:left tensor iso}
\gS_A[1/u]\otimes_{\gS_A} \gS_A[F,\Gal(K'/K)] 
\iso \gS_A[1/u][F,\Gal(K'/K)]
\end{equation}
and
\numequation
\label{eqn:right tensor iso}
\gS_A[F,\Gal(K'/K)] \otimes_{\gS_A} \gS_A[1/u]
\iso \gS_A[1/u][F,\Gal(K'/K)].
\end{equation}
Thus (since $\gS_A \to \gS_A[1/u]$ is a flat morphism of commutative rings)
the morphism of rings $\gS_A[F,\Gal(K'/K)] \to \gS_A[1/u][F,\Gal(K'/K)]$
is both left and right flat.

If $\gM$ is an object of $\K{A}$, then we see from~(\ref{eqn:left tensor
iso}) that 
$\gM[1/u] := \gS_A[1/u]\otimes_{\gS_A} \gM \iso \gS_A[1/u][F,\Gal(K'/K)]
\otimes_{\gS_A[F,\Gal(K'/K)]} \gM$ is naturally an object
of $\K{A}[1/u]$.   Our preceding remarks about flatness show
that $\gM \mapsto \gM[1/u]$ is an exact functor $\K{A}\to \K{A}[1/u]$.

\begin{lemma}\label{lem:ext-i-invert-u}\leavevmode
\begin{enumerate}
\item If $M$ and $N$ are objects
of $\K{A}[1/u]$, then 
there is a natural isomorphism
$$\Ext^i_{\K{A}[1/u]}(M,N) \iso \Ext^i_{\K{A}}(M,N).$$
\item
If $\gM$ is an object of $\K{A}$ and $N$ is an object of $\K{A}[1/u]$,
then there is a natural isomorphism
$$\Ext^i_{\K{A}}(\gM,N) \iso \Ext^i_{\K{A}}(\gM[1/u],N),$$
for all $i\geq 0$.
\end{enumerate}
\end{lemma}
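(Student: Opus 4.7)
The plan is to deduce both parts from the tensor--hom adjunction for the ring map $\gS_A[F,\Gal(K'/K)]\to\gS_A[1/u][F,\Gal(K'/K)]$, exploiting the fact, noted just above the statement, that this map is flat on both sides. The overall strategy is that $(-)[1/u]$ is the extension-of-scalars functor for this (two-sided flat) ring map, and the lemma just derives the standard degree-zero adjunction.

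First I would record the key degree-zero identification: for any $\gM\in\K{A}$ and $N\in\K{A}[1/u]$, the adjunction between restriction and extension of scalars gives a natural isomorphism
\[\Hom_{\K{A}}(\gM,N)\cong\Hom_{\K{A}[1/u]}(\gM[1/u],N),\]
simply because any $\gS_A[F,\Gal(K'/K)]$-linear map $\gM\to N$ extends uniquely to $\gM[1/u]$ once $u$ is already invertible on~$N$. In particular, taking $\gM=M\in\K{A}[1/u]$, we get $\Hom_{\K{A}}(M,N)=\Hom_{\K{A}[1/u]}(M,N)$.

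Next I would observe that $\K{A}$ is a module category, hence has enough projectives, and that the functor $\gM\mapsto\gM[1/u]$ is exact (by the two-sided flatness recalled in~\eqref{eqn:left tensor iso}--\eqref{eqn:right tensor iso}) and sends projectives to projectives (by the adjunction of the previous paragraph, $\Hom_{\K{A}[1/u]}(P[1/u],-)$ agrees on $\K{A}[1/u]$ with $\Hom_{\K{A}}(P,-)$, which is exact when $P$ is projective). Consequently, if $P_\bullet\to\gM$ is any projective resolution in $\K{A}$, then $P_\bullet[1/u]\to\gM[1/u]$ is a projective resolution in $\K{A}[1/u]$.

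For part~(1), choose a projective resolution $P_\bullet\to M$ in $\K{A}$. Applying the adjunction term-by-term identifies the cochain complex $\Hom_{\K{A}}(P_\bullet,N)$ with $\Hom_{\K{A}[1/u]}(P_\bullet[1/u],N)$, and taking cohomology gives the desired isomorphism $\Ext^i_{\K{A}}(M,N)\cong\Ext^i_{\K{A}[1/u]}(M,N)$. For part~(2), the same computation applied to a projective resolution $P_\bullet\to\gM$ in $\K{A}$ identifies $\Ext^i_{\K{A}}(\gM,N)$ with $\Ext^i_{\K{A}[1/u]}(\gM[1/u],N)$, which by part~(1) agrees with $\Ext^i_{\K{A}}(\gM[1/u],N)$. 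There is no genuine obstacle: the whole argument is a standard derived tensor--hom adjunction, whose only non-formal ingredient is the two-sided flatness already noted in the excerpt.
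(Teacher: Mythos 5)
Your proof is correct and follows essentially the same route as the paper: both run through projective resolutions in $\K{A}$, use two-sided flatness to see that $\gM\mapsto\gM[1/u]$ is exact and sends projective resolutions to projective resolutions, and invoke the tensor--hom adjunction term by term; part~(2) is then the same computation together with part~(1). Indeed the paper's remark immediately following its proof makes exactly your point that the whole lemma is a formal consequence of $\gM\mapsto\gM[1/u]$ being left adjoint to the inclusion $\K{A}[1/u]\subset\K{A}$ and restricting to the identity on $\K{A}[1/u]$.
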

\begin{proof}
The morphism of~(1) can be understood in various ways; for example,
by thinking in terms of Yoneda Exts, and recalling that $\K{A}[1/u]$
is a full subcategory of $\K{A}.$   If instead we think in terms
of projective resolutions, we can begin with a projective resolution
$\gP^{\bullet} \to M$ in $\K{A}$, and then consider the induced
projective resolution $\gP^{\bullet}[1/u]$ of $M[1/u]$.  Noting 
that $M[1/u] \iso M$ for any object $M$ of $\K{A}[1/u]$,
we then find (via tensor adjunction) that $\Hom_{\K{A}}(\gP^{\bullet},
N) \iso \Hom_{\K{A}[1/u]}(\gP^{\bullet}[1/u], N)$,
which induces the desired isomorphism of $\Ext$'s by passing to 
cohomology.

Taking into account the isomorphism of~(1), the claim of~(2) is a general
fact about tensoring over a flat ring map (as can again be seen by
considering projective resolutions). 
\end{proof}

\begin{remark}
The preceding lemma is fact an automatic consequence of the abstract 
categorical properties of our situation:\ the functor $\gM \mapsto \gM[1/u]$
is left adjoint to the inclusion $\K{A}[1/u] \subset\K{A},$
and restricts to (a functor naturally equivalent to) the identity functor
on $\K{A}[1/u]$.
\end{remark}
The following lemma expresses the Hom between \'etale $\varphi$-modules
arising from Breuil--Kisin modules in terms
of a certain direct limit.

\begin{lem} 
  \label{lem:computing Hom as direct limit}Suppose that  $\gM$ is a
   Breuil--Kisin module with descent data in a Noetherian $\cO/\varpi^a$-algebra~$A$, and that~$\gN$ is an object of $\K{A}$ which is
   finitely generated and $u$-torsion free as an
   $\gS_A$-module. 
   Then there is a natural isomorphism
\[ \varinjlim_i\Hom_{\K{A}}(u^i\gM,\gN) \iso
	\Hom_{\K{A}[1/u]}(\gM[1/u],\gN[1/u]),\]
where the transition maps are induced by the inclusions $u^{i+1} \gM 
\subset u^i \gM$.
\end{lem}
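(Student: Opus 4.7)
I would prove the isomorphism by writing down explicit mutually inverse maps. The forward direction is formal: any morphism $f \colon u^i\gM \to \gN$ in $\K{A}$ induces, after inverting $u$, a morphism $f[1/u] \colon (u^i\gM)[1/u] = \gM[1/u] \to \gN[1/u]$ in $\K{A}[1/u]$. If $f' \colon u^{i+1}\gM \to \gN$ denotes the restriction of $f$ along $u^{i+1}\gM \subset u^i \gM$, then $f'[1/u] = f[1/u]$; hence the individual maps assemble to a well-defined morphism from the colimit.

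For the inverse, suppose $\phi \colon \gM[1/u] \to \gN[1/u]$ is given. Pick a finite set of $\gS_A$-generators $m_1,\ldots,m_r$ of $\gM$. Since $\gN$ is $u$-torsion free, we have $\gN[1/u] = \bigcup_{i \geq 0} u^{-i}\gN$, so there is some $i$ with $\phi(m_j) \in u^{-i}\gN$ for each $j$. By $\gS_A$-linearity this forces $\phi(u^i \gM) \subseteq \gN$. Next I would observe that $u^i \gM$ is a sub-object of $\gM$ in $\K{A}$: it is preserved by $\varphi$ because $\varphi(u^i m) = u^{pi}\varphi(m) \in u^i \gM$, and by the descent data because $\hat{g}(u^i m) = h(g)^i u^i \hat{g}(m)$ with $h(g)\in W(k')^\times$. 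Therefore restriction of $\phi$ defines a morphism $\phi_i \colon u^i\gM \to \gN$ in $\K{A}$.

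The class of $\phi_i$ in the colimit does not depend on the choice of $i$: for $i' \geq i$, the morphism $\phi_{i'}$ obtained from the same $\phi$ is simply the image of $\phi_i$ under the transition map $\Hom_{\K{A}}(u^i\gM, \gN) \to \Hom_{\K{A}}(u^{i'}\gM, \gN)$. Finally, I would check that the two constructions are mutually inverse. In one direction, starting with $f \colon u^i\gM \to \gN$, for any $m \in \gM$ we have $f[1/u](m) = u^{-i} f(u^i m) \in u^{-i}\gN$, and restricting $f[1/u]$ back to $u^i\gM$ recovers $f$. In the other direction, for any $m \in \gM$, $\phi_i[1/u](m) = u^{-i}\phi_i(u^i m) = u^{-i}\phi(u^i m) = \phi(m)$, recovering $\phi$. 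There is no genuine obstacle: the argument uses only the finite generation of $\gM$ over $\gS_A$ and the $u$-torsion freeness of $\gN$, both of which are in the hypotheses.
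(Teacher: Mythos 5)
Your proof is correct and follows essentially the same approach as the paper's (construct the forward map by inverting $u$, and for the inverse use finite generation of $\gM$ together with $u$-torsion freeness of $\gN$ to find $i$ with $\phi(\gM)\subset u^{-i}\gN$). You supply more detail than the paper — notably the explicit check that $u^i\gM$ is a subobject in $\K{A}$ and the verification that the maps are mutually inverse — but the underlying idea is identical.
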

\begin{rem}
  \label{rem: maps in direct limit are injections}Note that since
  $\gN$ is $u$-torsion free, the transition maps in the colimit are
  injections, so the colimit is just an increasing union.
\end{rem}
\begin{proof}There are compatible injections $\Hom_{\K{A}}(u^i\gM,\gN) \to
	\Hom_{\K{A}[1/u]}(\gM[1/u],\gN[1/u])$, taking $f'\in
        \Hom_{\K{A}}(u^i\gM,\gN)$ to $f\in\Hom_{\K{A}}(\gM,\gN[1/u])$
        where $f(m)=u^{-i}f'(u^im)$. Conversely, given
        $f\in\Hom_{\K{A}}(\gM,\gN[1/u])$, there is some~$i$ such that
        $f(\gM)\subset u^{-i}\gN$, as required.
\end{proof}
We have the following analogue of Proposition~\ref{prop:vanishing of homs}.
\begin{cor}
  \label{cor:vanishing homs with u inverted}Suppose that  $\gM$ and
  $\gN$ are Breuil--Kisin modules with descent data in a Noetherian $\cO/\varpi^a$-algebra~$A$. 
Consider the following conditions:
\begin{enumerate}
\item
$\Hom_{\K{B}[1/u]} \bigl((\gM\cotimes_A B)[1/u], (\gN\cotimes_A B)[1/u]\bigr) = 0$
for any finite type $A$-algebra $B$.
\item
$\Hom_{\K{\kappa(\m)}[1/u]}\bigl((\gM\otimes_A \kappa(\mathfrak m))[1/u],
(\gN\otimes_A \kappa(\mathfrak m))[1/u] \bigr) = 0$ 
for each maximal ideal $\mathfrak m$ of $A$.
\item
$\Hom_{\K{A}[1/u]}\bigl(\gM[1/u], (\gN\otimes_A Q)[1/u]\bigr) = 0$ 
for any 
finitely generated $A$-module $Q$.
\end{enumerate}
Then we have (1)$\implies$(2)$\iff$(3).  If $A$ is furthermore
Jacobson, then all three conditions are equivalent.
\end{cor}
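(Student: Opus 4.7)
The plan is to reduce each of the three conditions to assertions about the $\gS_A[F,\Gal(K'/K)]$-submodules $u^i\gM\subseteq\gM$ for varying $i\geq 0$, and then invoke Proposition~\ref{prop:vanishing of homs}. The key observation is that each $u^i\gM$ is isomorphic as an $\gS_A$-module to $\gM$ (via multiplication by $u^i$, using that $u$ is regular in $\gS_A$), hence is still finitely generated and projective over $\gS_A$; so Proposition~\ref{prop:vanishing of homs} applies with $\gM$ replaced by $u^i\gM$ and $\gN$ unchanged.

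By Lemma~\ref{lem:computing Hom as direct limit} together with Remark~\ref{rem: maps in direct limit are injections}, each of the Hom groups appearing in (1), (2), (3) is the increasing union over $i$ of the corresponding Hom group with $u^i\gM$ (or its appropriate base change) in place of $\gM[1/u]$, so each condition is equivalent to vanishing of every term in its direct limit. Thus (1), (2), (3) become, respectively, vanishing of $\Hom_{\K{B}}(u^i(\gM\cotimes_A B),\gN\cotimes_A B)$, of $\Hom_{\K{\kappa(\mathfrak m)}}(u^i(\gM\otimes_A\kappa(\mathfrak m)), \gN\otimes_A\kappa(\mathfrak m))$, and of $\Hom_{\K{A}}(u^i\gM,\gN\otimes_A Q)$, for all $i\geq 0$ and all relevant $B$, $\mathfrak m$, $Q$. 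En route I would verify the compatibilities $u^i\gM\otimes_A Q = u^i(\gM\otimes_A Q)$ and $(u^i\gM)\cotimes_A B = u^i(\gM\cotimes_A B)$, which follow from the projectivity of $\gM/u^i\gM$ as an $A$-module (an easy consequence of Remark~\ref{rem:projectivity for Kisin modules}(1) and induction on $i$) together with the Zariski-local freeness of $\gM$ over $\gS_A$.

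Given these reformulations, (1)$\Rightarrow$(2) and (2)$\iff$(3) are immediate from the corresponding implications of Proposition~\ref{prop:vanishing of homs}, applied with $u^i\gM$ in place of $\gM$ (and noting that $\kappa(\mathfrak m)$ is itself a finite-type $A$-algebra, so covers condition~(1) applied over $B = \kappa(\mathfrak m)$). For (3)$\Rightarrow$(1) when $A$ is Jacobson, I would take a finite-type $A$-algebra $B$, which is then Noetherian and Jacobson, and apply the Jacobson case of Proposition~\ref{prop:vanishing of homs} over $B$ to reduce vanishing of $\Hom_{\K{B}}(u^i(\gM\cotimes_A B),\gN\cotimes_A B)$ to vanishing at each residue field $\kappa(\mathfrak n)$ for $\mathfrak n\subseteq B$ maximal. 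By the Nullstellensatz such $\kappa(\mathfrak n)$ is finitely generated as an $A$-module, so taking $Q=\kappa(\mathfrak n)$ in condition (3) and using tensor--hom adjunction $\Hom_{\K{A}}(u^i\gM,\gN\otimes_A\kappa(\mathfrak n)) \cong \Hom_{\K{\kappa(\mathfrak n)}}(u^i\gM\otimes_A\kappa(\mathfrak n),\gN\otimes_A\kappa(\mathfrak n))$ supplies the required vanishing.

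The main obstacle is the bookkeeping to confirm that base change, $u$-adic completion, and the filtration by the submodules $u^i\gM$ all commute in the senses required; once these compatibilities are in place the proof is a formal unwinding of Proposition~\ref{prop:vanishing of homs}, propagated through the description of Hom between étale $\varphi$-modules provided by Lemma~\ref{lem:computing Hom as direct limit}.
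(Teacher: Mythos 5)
Your proof follows exactly the same route as the paper's: use Lemma~\ref{lem:computing Hom as direct limit} (the direct-limit description of $\Hom$ between the associated \'etale $\varphi$-modules) to reduce each condition to the family of conditions $\Hom_{\K{\bullet}}(u^i\gM\ \text{base-changed}, \gN\ \text{base-changed}) = 0$ for all $i \geq 0$, identify $u^i(\gM\cotimes_A B)$ with $(u^i\gM)\cotimes_A B$ using projectivity of $\gM$, and then invoke Proposition~\ref{prop:vanishing of homs} with $u^i\gM$ in place of $\gM$. The additional bookkeeping you flag (commutation of the filtration by $u^i\gM$ with base change and completion, the Jacobson case via the Nullstellensatz) is exactly the content that the paper's one-line appeals to projectivity and to Proposition~\ref{prop:vanishing of homs} are implicitly packaging, so the argument is sound and matches the paper's.
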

\begin{proof}
By Lemma~\ref{lem:computing Hom as direct limit}, the three conditions
are respectively equivalent to the following conditions. 
\begin{enumerate}[label=(\arabic*$'$)]
\item
$\Hom_{\K{B}} \bigl(u^i(\gM\cotimes_A B), \gN\cotimes_A B\bigr) = 0$
for any finite type $A$-algebra $B$ and all $i\ge 0$.
\item
$\Hom_{\K{\kappa(\m)}}\bigl(u^i(\gM\otimes_A \kappa(\mathfrak m)),
\gN\otimes_A \kappa(\mathfrak m) \bigr) = 0$ 
for each maximal ideal $\mathfrak m$ of $A$ and all $i\ge 0$.
\item
$\Hom_{\K{A}}\bigl(u^i\gM, \gN\otimes_A Q\bigr) = 0$ 
for any 
finitely generated $A$-module $Q$ and all $i\ge 0$.
\end{enumerate}
Since $\gM$ is projective, the first two conditions are in turn
equivalent to
\begin{enumerate}[label=(\arabic*$''$)]
\item
$\Hom_{\K{B}} \bigl((u^i\gM)\cotimes_A B, \gN\cotimes_A B\bigr) = 0$
for any finite type $A$-algebra $B$ and all $i\ge 0$.
\item
$\Hom_{\K{\kappa(\m)}}\bigl((u^i\gM)\otimes_A \kappa(\mathfrak m),
\gN\otimes_A \kappa(\mathfrak m) \bigr) = 0$ 
for each maximal ideal $\mathfrak m$ of $A$ and all $i\ge 0$.
\end{enumerate}
The result then follows from Proposition~\ref{prop:vanishing of homs}.
\end{proof}

\begin{df}\label{def:kext}
If $\gM$ and $\gN$ are objects of $\K{A}$, then we define
$$\kExt^1_{\K{A}}(\gM,\gN)
:=
\ker\bigl(\Ext^1_{\K{A}}(\gM,\gN)\to\Ext^1_{\K{A}}(\gM[1/u],\gN[1/u])\bigr).$$  
The point of this definition is to capture, in the setting of
Lemma~\ref{lem: Galois rep is a functor if A is actually finite local}, the non-split extensions
of Breuil--Kisin modules whose underlying extension of Galois
representations is split. 
\end{df}

Suppose now that $\gM$ is a Breuil--Kisin module. 
The exact sequence in~$\K{A}$ \[0\to\gN\to \gN[1/u]\to\gN[1/u]/\gN\to 0\]
gives an exact sequence of complexes \[\xymatrix{0\ar[r]&
  C^0(\gN)\ar[d]\ar[r]&
  C^0(\gN[1/u])\ar[d]\ar[r]&C^0(\gN[1/u]/\gN)\ar[d]\ar[r]&0\\ 0\ar[r]&
  C^1(\gN)\ar[r]&
  C^1(\gN[1/u])\ar[r]&C^1(\gN[1/u]/\gN)\ar[r]&0. } \] It follows from
Corollary~\ref{cor:complex computes Hom and Ext},
Lemma~\ref{lem:ext-i-invert-u}(2), and 
the snake lemma that we have an exact
sequence  \numequation\label{eqn: computing kernel of Ext groups}\begin{split}0\to\Hom_{\K{A}}(\gM,\gN)\to\Hom_{\K{A}}(\gM,\gN[1/u])
\qquad \qquad \\
\to\Hom_{\K{A}}(\gM,\gN[1/u]/\gN) \to \kExt^1_{\K{A}}(\gM,\gN)\to
0.\end{split}\end{equation}

\begin{lem}\label{lem: bound on torsion in kernel of Exts}If $\gM$, $\gN$ are Breuil--Kisin modules with descent data and
  coefficients in a Noetherian $\cO/\varpi^a$-algebra~$A$, and $\gN$ has
  height at most~$h$, then $f(\gM)$ 
is killed by $u^i$ for any $f \in \Hom_{\K{A}}(\gM,\gN[1/u]/\gN)$ and
any $i \ge \lfloor e'ah/(p-1) \rfloor$. 
  \end{lem}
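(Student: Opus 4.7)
The plan is to argue by contradiction. For $x\in\gN[1/u]/\gN$, let $\ord(x)$ denote the least $j\ge 0$ with $u^jx=0$, and set $j_{\max} := \max_{m\in\gM}\ord(f(m))$. This maximum is a well-defined finite integer: since $\gM$ is finitely generated over $\gS_A$ and $f$ is $\gS_A$-linear, $f(\gM)$ is a finitely generated $\gS_A$-submodule of $\gN[1/u]/\gN$, and any such submodule lies in $u^{-N}\gN/\gN$ for some $N$. I will suppose $j_{\max}>\lfloor e'ah/(p-1)\rfloor$ and derive a contradiction.

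The key technical claim is: if $n\in\gN$ with $n\notin u\gN$, then $\varphi(n)\notin u^{e'ah+1}\gN$. To prove this, first note that, as in the proof of Lemma~\ref{lem:truncation argument used to prove f.g. of Ext Q version}, the relation $p^a=0$ in $A$ gives $u^{e'ah}\in E(u)^h\gS_A$; combined with the height-$\le h$ assumption on $\gN$ this yields $u^{e'ah}\gN\subseteq \Phi_\gN(\varphi^*\gN)$. Since $\Phi_\gN$ is injective (it becomes an isomorphism after inverting the non-zerodivisor $E(u)$), there is a uniquely determined $\gS_A$-linear $\Psi:\gN\to\varphi^*\gN$ with $\Phi_\gN\circ\Psi=u^{e'ah}\cdot\id_\gN$; by the same injectivity, $\Psi\circ\Phi_\gN=u^{e'ah}\cdot\id_{\varphi^*\gN}$ as well. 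Now if $\varphi(n)\in u^{e'ah+1}\gN$, applying $\Psi$ gives $u^{e'ah}(1\otimes n)=\Psi\Phi_\gN(1\otimes n)=\Psi(\varphi(n))\in u^{e'ah+1}\varphi^*\gN$; the $u$-torsion-freeness of the projective $\gS_A$-module $\varphi^*\gN$ then forces $1\otimes n\in u\varphi^*\gN$. Finally, reduce modulo $u$: one identifies $\varphi^*\gN/u\varphi^*\gN$ with the Frobenius twist of $\gN/u\gN$ along the induced map $\bar\varphi:\gS_A/u=W(k')\otimes_{\Zp}A\to\gS_A/u$, and since $k'$ is perfect $\bar\varphi$ is bijective, so the canonical map $\gN/u\gN\to\varphi^*\gN/u\varphi^*\gN$, $\bar n\mapsto\overline{1\otimes n}$, is a bijection. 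Hence $n\in u\gN$, contrary to assumption.

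With the claim in hand the rest is straightforward. Pick $m\in\gM$ with $\ord(f(m))=j_{\max}$ and write $f(m)\equiv n_m/u^{j_{\max}}\pmod\gN$ with $n_m\in\gN\setminus u\gN$. The $\varphi$-equivariance of $f$ yields $f(\varphi_\gM(m))\equiv\varphi(n_m)/u^{pj_{\max}}\pmod\gN$, so $\ord(f(\varphi_\gM(m)))\ge pj_{\max}-v_u(\varphi(n_m))\ge pj_{\max}-e'ah$, where $v_u$ denotes $u$-adic valuation in $\gN$ and the last inequality uses the claim. The inequality $j_{\max}\ge\lfloor e'ah/(p-1)\rfloor+1$ implies $(p-1)j_{\max}>e'ah$ (write $e'ah=(p-1)q+r$ with $0\le r<p-1$), whence $pj_{\max}-e'ah>j_{\max}$, contradicting the maximality of $j_{\max}$.

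The principal difficulty is the key claim of the second paragraph, which simultaneously uses the height bound on $\gN$, the injectivity of $\Phi_\gN$, the $u$-torsion-freeness of $\varphi^*\gN$, and the perfectness of $k'$ (to identify $\varphi^*\gN/u$ with a Frobenius twist of $\gN/u\gN$ and conclude via bijectivity). The Galois equivariance of $f$ plays no role in the argument, since Galois elements act on $u$ by a unit in $\gS_A$ and hence preserve $\ord$.
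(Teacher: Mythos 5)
Your proof is correct, and it matches the overall structure of the paper's argument while implementing the key technical step differently. The paper considers the image $(\varphi^*f)(\varphi^*\gM)\subseteq\varphi^*\gN[1/u]/\varphi^*\gN$ and compares two bounds on its exponent: it is exactly $pi$ (using the flatness of $\varphi:\gS_A\to\gS_A$) and at most $i+e'ah$ (because, via a snake lemma, the kernel of $\Phi_\gN$ on $\varphi^*\gN[1/u]/\varphi^*\gN$ is identified with $\coker\Phi_\gN$, which the height bound kills by $E(u)^h$ and hence by $u^{e'ah}$). You instead fix $m$ realizing the maximal order $j_{\max}$, lift $f(m)=n_m/u^{j_{\max}}$ with $n_m\notin u\gN$, and push through $\varphi$; the needed input then becomes the bound that $\Phi_\gN(1\otimes n_m)\notin u^{e'ah+1}\gN$, which you prove via the partial inverse $\Psi$ of $\Phi_\gN$ (the same $\Upsilon$-type map as in the proof of Lemma~\ref{lem:truncation argument used to prove f.g. of Ext Q version}), the $u$-torsion-freeness of $\varphi^*\gN$, and the perfectness of $k'$ (so that the $\bar\varphi$-semilinear map $\gN/u\gN\to\varphi^*\gN/u\varphi^*\gN$ is a bijection). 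Both arguments rest on the same two facts --- the height bound together with $u^{e'ah}\in E(u)^h\gS_A$, and the behaviour of $\varphi^*$ modulo $u$ --- but yours works with explicit elements and a contradiction, while the paper argues directly and more homologically with exponents of modules; your version also surfaces the use of perfectness of $k'$ explicitly, whereas the paper invokes it implicitly via the flatness of $\varphi^*$. Your closing remark that $\Gal(K'/K)$-equivariance plays no role is correct and applies equally to the paper's proof.
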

  \begin{proof}
    Suppose that $f$ is an  element of
    $\Hom_{\K{A}}(\gM,\gN[1/u]/\gN)$. Then $f(\gM)$ is a finitely
    generated submodule of $\gN[1/u]/\gN$, and it therefore killed
    by~$u^i$ for some $i\ge 0$. Choosing~$i$ to be the exponent of
    $f(\gM)$ (that is, choosing $i$ to be minimal), it follows
    that
    $(\varphi^*f)(\varphi^*\gM)$ has exponent 
    precisely~$ip$. (From the choice of $i$, we see that $u^{i-1}
    f(\gM)$ is nonzero but killed by $u$, i.e., it is just a $W(k')
    \otimes A$-module, and so its pullback by~$\varphi:\gS_A\to\gS_A$
    has exponent precisely $p$.  Then by the flatness  
   of~$\varphi:\gS_A\to\gS_A$  we have
    $u^{ip-1}(\varphi^*f)(\varphi^*\gM)=u^{p-1}\varphi^*(u^{i-1}
    f(\gM)) \neq 0$.)

We claim that $u^{i+e'ah}(\varphi^*f)(\varphi^*\gM)=0$; admitting this, we
deduce that $i+e'ah\ge ip$, as required. To see the claim, take
$x\in\varphi^*\gM$, so that $\Phi_{\gN}((u^i\varphi^*f)(x))=
u^if(\Phi_\gM(x))=0$. It is therefore enough to show that the kernel
of \[\Phi_{\gN}:\varphi^*\gN[1/u]/\varphi^*\gN\to \gN[1/u]/\gN\] is killed
by $u^{e'ah}$; but this follows immediately from an application of the
snake lemma to the commutative diagram \[\xymatrix{0\ar[r]&
  \varphi^*\gN\ar[r]\ar[d]_{\Phi_\gN}&\varphi^*\gN[1/u]\ar[r]\ar[d]_{\Phi_\gN}
&\varphi^*\gN[1/u]/\varphi^*\gN\ar[r]\ar[d]_{\Phi_\gN}&0
\\ 0\ar[r]&
  \gN\ar[r]&\gN[1/u]\ar[r]
&\gN[1/u]/\gN\ar[r]&0}
\]together with the assumption that $\gN$ has height at most~$h$ and
an argument as in the first line of the proof of
Lemma~\ref{lem:truncation argument used to prove f.g. of Ext Q version}.
  \end{proof}

\begin{lem}\label{lem:computing kernel of Ext groups finite level}If $\gM$, $\gN$ are Breuil--Kisin modules with descent data and
  coefficients in a Noetherian $\cO/\varpi^a$-algebra~$A$, and $\gN$ has
  height at most~$h$, then for any $i \ge \lfloor e'ah/(p-1) \rfloor$  we have an exact
  sequence 
 \[\begin{split}0\to\Hom_{\K{A}}(u^i\gM,u^i\gN)\to\Hom_{\K{A}}(u^i\gM,\gN) \qquad \qquad \\
 \to\Hom_{\K{A}}(u^i\gM,\gN/u^i\gN) \to \kExt^1_{\K{A}}(\gM,\gN)\to
0.\end{split}\]
  \end{lem}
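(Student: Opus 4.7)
The plan is to derive the stated four-term exact sequence by combining the sequence~\eqref{eqn: computing kernel of Ext groups}
\[0\to\Hom_{\K{A}}(\gM,\gN)\to\Hom_{\K{A}}(\gM,\gN[1/u])\to\Hom_{\K{A}}(\gM,\gN[1/u]/\gN)\to\kExt^1_{\K{A}}(\gM,\gN)\to 0\]
with three natural identifications of the first three terms.  For $i \ge \lfloor e'ah/(p-1) \rfloor$, I will establish the following.  First, restriction along $u^i\gM\hookrightarrow\gM$ gives a canonical isomorphism $\Hom_{\K{A}}(\gM,\gN)\iso\Hom_{\K{A}}(u^i\gM,u^i\gN)$, valid for all $i\ge 0$: since $\gN$ is $u$-torsion free, any $\K{A}$-morphism $g\col u^i\gM\to u^i\gN$ extends uniquely to a $\K{A}$-morphism $h\col\gM\to\gN$ by the formula $h(m):=u^{-i}g(u^im)$, and one checks directly that the $\gS_A[\Gal(K'/K)]$-linearity and $\varphi$-compatibility transfer cleanly through division by $u^i$.

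Second, for $i\ge\lfloor e'ah/(p-1)\rfloor$ restriction also gives $\Hom_{\K{A}}(\gM,\gN[1/u])\iso\Hom_{\K{A}}(u^i\gM,\gN)$.  Indeed, for any $f\in\Hom_{\K{A}}(\gM,\gN[1/u])$, the composite $\gM\xrightarrow{f}\gN[1/u]\twoheadrightarrow\gN[1/u]/\gN$ is a $\K{A}$-morphism, and by Lemma~\ref{lem: bound on torsion in kernel of Exts} its image is killed by $u^i$; hence $f(\gM)\subseteq u^{-i}\gN$, and $f|_{u^i\gM}$ lands in $\gN$.  The inverse is again $g\mapsto[m\mapsto u^{-i}g(u^im)]$, now viewed as taking values in $\gN[1/u]$.

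Third, one constructs an isomorphism $\Hom_{\K{A}}(\gM,\gN[1/u]/\gN)\iso\Hom_{\K{A}}(u^i\gM,\gN/u^i\gN)$ in the same range of $i$.  By Lemma~\ref{lem: bound on torsion in kernel of Exts}, any such $g$ has image in the $u^i$-torsion of $\gN[1/u]/\gN$, which as an $\gS_A[\Gal(K'/K)]$-module coincides with $u^{-i}\gN/\gN$; multiplication by $u^i$ provides an $\gS_A[\Gal(K'/K)]$-linear bijection $u^{-i}\gN/\gN\iso\gN/u^i\gN$.  Composing $g$ with this bijection and restricting along $u^i\gM\hookrightarrow\gM$ produces the desired $\K{A}$-morphism $u^i\gM\to\gN/u^i\gN$, and an inverse is constructed by lifting, dividing by $u^i$, and reducing modulo $\gN$.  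Installing these three identifications as vertical isomorphisms above \eqref{eqn: computing kernel of Ext groups} produces a commutative diagram whose bottom row is precisely the sequence claimed by the lemma, so exactness is inherited.

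The main obstacle is the verification of $\varphi$-equivariance in the third identification.  The bijection $u^{-i}\gN/\gN\iso\gN/u^i\gN$ is not itself $\varphi$-equivariant (Frobenius on $\gN[1/u]$ scales $u^{-i}$ to $u^{-ip}$, producing a twist by $u^{i(p-1)}$), but this twist is exactly absorbed by the discrepancy between the Frobenius on $u^i\gM$ and that on $\gM$: because $\varphi_{u^i\gM}(u^im)=u^{ip}\varphi_\gM(m)=u^{ip-i}\cdot u^i\varphi_\gM(m)$, the extra factor $u^{i(p-1)}$ cancels precisely when we rewrite everything in terms of morphisms from $u^i\gM$.  Making this cancellation rigorous — and confirming that the lifts needed to invert the third identification can be chosen so that the resulting map is $\varphi$-equivariant mod $\gN$ — is the main technical point, and is where the bound $i\ge\lfloor e'ah/(p-1)\rfloor$ from Lemma~\ref{lem: bound on torsion in kernel of Exts} is used in an essential way.
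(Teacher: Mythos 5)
Your overall strategy matches the paper's: establish three identifications by $u^i$-conjugation and transport the exact sequence~\eqref{eqn: computing kernel of Ext groups}. However, your description of the forward map in the third identification --- ``Composing $g$ with this bijection and restricting along $u^i\gM\hookrightarrow\gM$'' --- produces the zero map, not an isomorphism. Indeed, since $g(\gM)$ is killed by $u^i$ by Lemma~\ref{lem: bound on torsion in kernel of Exts}, we have $g(u^im)=u^ig(m)=0$ in $\gN[1/u]/\gN$ for every $m\in\gM$, so the restriction $g|_{u^i\gM}$ vanishes identically, and post-composing with the bijection $u^{-i}\gN/\gN\iso\gN/u^i\gN$ cannot repair that. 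The map you actually want (and which your stated inverse correctly inverts) is the conjugation $g'(u^im):=u^ig(m)$, with ``$u^ig(m)$'' interpreted via the bijection rather than via the $\gS_A$-module structure of $\gN[1/u]/\gN$; equivalently, one \emph{precomposes with the bijection} $\gM\iso u^i\gM$, $m\mapsto u^im$, rather than with the inclusion $u^i\gM\hookrightarrow\gM$. For the first two identifications these two operations coincide, because the targets $\gN$ and $\gN[1/u]$ are $u$-torsion-free, but for the third they do not, and conflating them gives the wrong map.

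Your instinct that $\varphi$-equivariance is the delicate part of the third identification is sound, but the claim that this is ``where the bound $i\ge\lfloor e'ah/(p-1)\rfloor$ is used in an essential way'' mislocates the bound's role for the forward direction. For the conjugation $g\mapsto g'$, the two factors of $u^{i(p-1)}$ arising from the Frobenii on $u^i\gM$ and on the target cancel exactly, for every $i\ge 0$; the bound is instead what guarantees the image containments $f(\gM)\subseteq u^{-i}\gN$ (so the second identification lands in $\gN$) and $g(\gM)\subseteq u^{-i}\gN/\gN$ (so the third lands in $\gN/u^i\gN$ rather than merely $\gN[1/u]/u^i\gN$). This is exactly what the paper uses: it constructs all three maps by ``the same formula'' $f'(u^im)=u^if(m)$ with targets $u^i\gN$, $\gN$, and $\gN[1/u]/u^i\gN$, and then invokes the bound once to cut the third target down to $\gN/u^i\gN$. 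Where care is genuinely needed is the $\varphi$-equivariance of the \emph{inverse} of the third identification, which --- unlike the inverses of the first two --- is not automatic because $\gN[1/u]/u^i\gN$ has $u$-torsion; so your instinct to worry about the inverse is the right one, even if the role of the bound as you've stated it is not quite right.
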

\begin{proof} Comparing Lemma~\ref{lem: bound on torsion in kernel of
    Exts} with the proof of Lemma~\ref{lem:computing Hom as direct
    limit}, we see that the direct limit in that proof has stabilised
  at $i$, and we obtain an isomorphism $\Hom_{\K{A}}(\gM,\gN[1/u])
  \toisom \Hom_{\K{A}}(u^i \gM,\gN)$ sending a map $f$ to $f' : u^i m
  \mapsto u^i f(m)$.  The same formula evidently identifies
  $\Hom_{\K{A}}( \gM,\gN)$ with $\Hom_{\K{A}}(u^i\gM,u^i\gN)$ and
  $\Hom_{\K{A}}(\gM,\gN[1/u]/\gN)$ with $\Hom_{\K{A}}(u^i
  \gM,\gN[1/u]/u^i \gN)$. But any map in the latter group has image
 contained  in $\gN/u^i \gN$ (by Lemma~\ref{lem: bound on torsion in kernel of
    Exts} applied to  $\Hom_{\K{A}}(\gM,\gN[1/u]/\gN)$, together with
  the identification in the previous sentence), so that $\Hom_{\K{A}}(u^i
  \gM,\gN[1/u]/u^i \gN) = \Hom_{\K{A}}(u^i
  \gM,\gN/u^i \gN)$.
  \end{proof}

\begin{prop}
  \label{prop: base change for kernel of map to etale Ext}
Let $\gM$ and $\gN$ be Breuil--Kisin modules with descent data and
coefficients in a Noetherian $\cO/\varpi^a$-domain~$A$. 
Then there is some nonzero $f\in
A$ with the following property: if we write 
$\gM_{A_f}=\gM\cotimes_A A_f$ and $\gN_{A_f}=\gN\cotimes_A A_f$, then if~$B$
is any finitely generated $A_f$-algebra, and if $Q$ is any finitely
generated $B$-module, we have natural isomorphisms
\begin{multline*}
\kExt^1_{\K{A_f}}(\gM,\gN)\otimes_{A_f}Q \iso
\kExt^1_{\K{A_f}}(\gM_{A_f}\cotimes_{A_f} B, 
\gN\cotimes_{A_f} B)\otimes_B Q
\\
\iso
\kExt^1_{\K{A_f}}(\gM_{A_f}\cotimes_{A_f} B, 
\gN\cotimes_{A_f} Q).
\end{multline*}
\end{prop}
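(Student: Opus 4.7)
The plan is to combine the cokernel description of $\kExt^1$ from Lemma~\ref{lem:computing kernel of Ext groups finite level} with the generic base-change isomorphisms for $\Hom$ groups provided by Lemma~\ref{lem: vanishing of Kisin module homs implies vanishing on dense open}. Fix any integer $i \geq \lfloor e'ah/(p-1) \rfloor$, where $h$ is a height bound for $\gN$; crucially this choice depends only on the input data $(\gM,\gN)$ and not on the eventual algebra $B$ or module $Q$. Lemma~\ref{lem:computing kernel of Ext groups finite level} then presents $\kExt^1_{\K{A}}(\gM,\gN)$ as the cokernel of the natural map
$$\Hom_{\K{A}}(u^i\gM,\gN) \to \Hom_{\K{A}}(u^i\gM,\gN/u^i\gN).$$

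Since $u^i\gM$ is itself a Breuil--Kisin module with descent data, Lemma~\ref{lem: vanishing of Kisin module homs implies vanishing on dense open} applies to both pairs $(u^i\gM,\gN)$ and $(u^i\gM,\gN/u^i\gN)$. Taking the product of the resulting two denominators, I obtain a single nonzero $f\in A$ with the property that, after inverting $f$, both pairs of $\Hom$ groups satisfy the base-change isomorphisms of that lemma for every finitely generated $A_f$-algebra $B$ and every finitely generated $B$-module $Q$. Passing to cokernels and using the right-exactness of tensor products (both over $A_f$ and, after factoring, over $B$) yields the two claimed chains of isomorphisms, modulo identifying the cokernel on the right with $\kExt^1_{\K{B}}(\gM_{A_f}\cotimes_{A_f} B,\gN_{A_f}\cotimes_{A_f} Q)$.

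This last identification requires extending the cokernel description of Lemma~\ref{lem:computing kernel of Ext groups finite level} to the setting in which the second argument $\gN_{A_f}\cotimes_{A_f} Q$ need not literally be a Breuil--Kisin module (since $Q$ need not be $B$-flat). Inspection of the proofs of Lemmas~\ref{lem: bound on torsion in kernel of Exts} and~\ref{lem:computing kernel of Ext groups finite level} shows that they rely only on the cokernel of $\Phi$ being killed by $E(u)^h$ and on the second argument being finitely generated, $u$-torsion free, and $u$-adically complete; all of these properties are preserved under $\cotimes_{A_f} Q$ by Remark~\ref{rem:completed tensor}, so the same integer $i$ continues to work and the analogous four-term exact sequence holds over $\K{B}$.

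The main obstacle is precisely this extension, namely verifying that the proofs of the input lemmas adapt cleanly to the base-changed setting and yield a single height-type integer $i$ that works uniformly across all $B$ and $Q$. Once this is in hand, the rest is formal bookkeeping: the claimed isomorphisms follow by comparing the two cokernel presentations via the base-change isomorphisms for $\Hom$, and the intermediate group in the chain arises from factoring the tensor product $\otimes_{A_f} Q$ as $\otimes_{A_f} B \otimes_B Q$.
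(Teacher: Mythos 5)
Your proof follows the same route as the paper's: apply the generic base-change isomorphisms of Lemma~\ref{lem: vanishing of Kisin module homs implies vanishing on dense open} to the two $\Hom$ terms in the four-term exact sequence of Lemma~\ref{lem:computing kernel of Ext groups finite level}, taking a common denominator $f$, and then pass to cokernels. The paper's proof is essentially the one-line version of exactly this, so there is no substantive divergence in strategy.

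The one genuinely useful thing you add is making explicit the point that the cokernel presentation of $\kExt^1$ must be re-derived over $\K{B}$ with the second argument $\gN_{A_f}\cotimes_{A_f}Q$, which need not be a Breuil--Kisin module when $Q$ is not $B$-flat. The paper's proof elides this. Your verification is correct: the exact sequence~\eqref{eqn: computing kernel of Ext groups} only needs the second argument to be $u$-torsion free (which holds by Remark~\ref{rem:completed tensor}(2) since $\gN_{A_f}$ is projective), and the truncation argument of Lemma~\ref{lem: bound on torsion in kernel of Exts} only uses that $\coker\Phi$ is killed by $E(u)^h$, a property that passes to $\gN_{A_f}\cotimes_{A_f}Q$ by right-exactness of the tensor product. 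Since the bound $i \ge \lfloor e'ah/(p-1)\rfloor$ depends only on $e'$, $a$, $h$, $p$, the same truncation level works uniformly over all $B$ and $Q$, as you observe. So the proposal is correct and amounts to the paper's proof with one implicit step spelled out.
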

\begin{proof}In view of Lemma~\ref{lem:computing kernel of Ext groups
    finite level}, this follows from  Lemma~\ref{lem: vanishing of
    Kisin module homs implies vanishing on dense open}, 
  with $\gM$ there being our $u^i\gM$, and $\gN$ being
  each of $\gN$,  $\gN/u^i\gN$ in turn.
\end{proof}

The following result will be crucial in our investigation of the
decomposition of $\cC^{\dd,1}$ and $\cR^{\dd,1}$ into
irreducible components.

\begin{prop}
  \label{prop: we have vector bundles}
Suppose that $\gM$ and $\gN$ are Breuil--Kisin modules with
descent data and coefficients in a Noetherian $\cO/\varpi^a$-algebra $A$
which is furthermore a domain,
and suppose that
$\Hom_{\K{A}}\bigl(\gM\otimes_A \kappa(\mathfrak m),
\gN\otimes_A \kappa(\mathfrak m) \bigr)$ vanishes
for each maximal ideal $\mathfrak m$ of $A$.
Then there is some nonzero $f\in
A$ with the following property: if we write 
$\gM_{A_f}=\gM\cotimes_A A_f$ and $\gN_{A_f}=\gN\cotimes_A A_f$, then 
for any finitely generated $A_f$-algebra $B$,
each of $\kExt^1_{\K{B}}(\gM_{A_f}\cotimes_{A_f} B,\gN_{A_f}\cotimes_{A_f}B)$,
$\Ext^1_{\K{B}}(\gM_{A_f}\cotimes_{A_f} B,\gN_{A_f}\cotimes_{A_f}B)$,
and
$$\Ext^1_{\K{B}}(\gM_{A_f}\cotimes_{A_f}B ,\gN_{A_f}\cotimes_{A_f}B)/
\kExt^1_{\K{A_f}}(\gM_{A_f}\cotimes_{A_f} B,\gN_{A_f}\cotimes_{A_f} B)$$
is a finitely generated projective $B$-module.
\end{prop}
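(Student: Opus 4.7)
The strategy is to establish projectivity of $\Ext^1_{\K{A}}(\gM,\gN)$ over $A$, then find an $f\in A$ such that, after localizing to $A_f$, both the submodule $\kExt^1$ and the quotient $\Ext^1/\kExt^1$ become projective. One then combines this with the base-change results Propositions~\ref{prop:base-change for exts} and~\ref{prop: base change for kernel of map to etale Ext} to obtain the proposition.

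Since the vanishing-of-$\Hom$ hypothesis holds over the residue fields at all maximal ideals of $A$, Corollary~\ref{cor:freeness for exts} yields that $E := \Ext^1_{\K{A}}(\gM,\gN)$ is a projective $A$-module of finite rank. Proposition~\ref{prop:base-change for exts} then produces a natural isomorphism $\Ext^1_{\K{B}}(\gM\cotimes_A B,\gN\cotimes_A B)\cong E\otimes_A B$ for every $A$-algebra $B$, whose target is manifestly projective over $B$;\ this already settles the $\Ext^1$ assertion for any $f$ that we eventually choose.

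Apply Proposition~\ref{prop: base change for kernel of map to etale Ext} to obtain a nonzero $f_0\in A$ that controls the base change of $\kExt^1$, and write $E' := E\otimes_A A_{f_0}$, $K' := \kExt^1_{\K{A_{f_0}}}(\gM_{A_{f_0}},\gN_{A_{f_0}})\subseteq E'$, and $C' := E'/K'$. By generic freeness (\cite[Tag 051R]{stacks-project}) applied to the finitely generated $A_{f_0}$-module $C'$, there is some nonzero $f_1$, which we may assume lies in $A$, so that $C'_{f_1}$ is free over $A_f$, where $f := f_0 f_1$. The short exact sequence $0\to K'_{f_1}\to E'_{f_1}\to C'_{f_1}\to 0$ splits since $C'_{f_1}$ is projective, exhibiting $K'_{f_1}$ as a direct summand of the projective $A_f$-module $E'_{f_1}$, and hence itself projective of finite rank over $A_f$.

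For any finitely generated $A_f$-algebra $B$ (which is also finitely generated over $A_{f_0}$), Proposition~\ref{prop: base change for kernel of map to etale Ext} with $Q=B$ identifies $\kExt^1_{\K{B}}(\gM\cotimes_A B,\gN\cotimes_A B)$ with $K'\otimes_{A_{f_0}}B = K'_{f_1}\otimes_{A_f}B$, which is projective over $B$. Tensoring the split sequence above with $B$ over $A_f$ preserves exactness and exhibits the quotient $\Ext^1_{\K{B}}/\kExt^1_{\K{B}}$ as $C'_{f_1}\otimes_{A_f}B$, also projective over $B$. The main point of the argument is the use of generic freeness to force $K'_{f_1}$ to be a direct summand of the projective module $E'_{f_1}$;\ once that is in place, the desired base-change compatibilities are automatic, and the apparent subtlety of comparing $\kExt^1$ over $A$ with $\kExt^1$ after localization is bypassed by carrying out the freeness argument directly over $A_{f_0}$.
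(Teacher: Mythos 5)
Your proof is correct, and it does take a genuinely different route from the paper's. The paper fixes $f$ once and for all from Proposition~\ref{prop: base change for kernel of map to etale Ext}, and then argues directly over each finitely generated $A_f$-algebra $B$: it shows that tensoring the inclusion $\kExt^1_{\K{B}} \hookrightarrow \Ext^1_{\K{B}}$ with any finitely generated $B$-module $Q$ remains injective (by identifying the result with the inclusion with $Q$-coefficients), from which it follows that $\Tor^1_B(Q, \Ext^1_{\K{B}}/\kExt^1_{\K{B}})$ vanishes for all such $Q$, so the quotient is flat, hence projective, hence the sequence splits over $B$. You instead apply generic freeness a second time (beyond its use inside Lemma~\ref{lem: vanishing of Kisin module homs implies vanishing on dense open}), to the quotient $C'$ over $A_{f_0}$, shrinking the open set once more so that $C'_{f_1}$ is free over $A_f$; the short exact sequence then splits already at the $A_f$ level, and its base change to $B$ gives the result. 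Both arguments implicitly rely on the same naturality, namely that the base-change isomorphisms of Propositions~\ref{prop:base-change for exts} and~\ref{prop: base change for kernel of map to etale Ext} are compatible with the inclusions $\kExt^1 \subseteq \Ext^1$ (this is the content of the paper's opening sentence ``...the morphism $\kExt^1\otimes Q\to\Ext^1\otimes Q$ is naturally identified with...''). The paper's argument is slightly more economical, requiring no further localization beyond the $f$ of Proposition~\ref{prop: base change for kernel of map to etale Ext}, while yours is arguably more transparent in that it exhibits the splitting over the base $A_f$ and lets base change do the rest. One small point worth being explicit about: when you ``identify $\kExt^1_{\K{B}}(\gM\cotimes_A B,\gN\cotimes_A B)$ with $K'_{f_1}\otimes_{A_f}B$'' and then read off the quotient from the split sequence, you are using that this identification is the restriction of the corresponding identification of $\Ext^1$'s; spelling that compatibility out (as the paper does) would make the deduction that $C'_{f_1}\otimes_{A_f}B \cong \Ext^1_{\K{B}}/\kExt^1_{\K{B}}$ fully watertight.
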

\begin{proof}
Choose $f$ as in 
  Proposition~\ref{prop: base change for kernel of map to etale Ext},
let $B$ be a finitely generated $A_f$-algebra,
and let $Q$ be a finitely generated $B$-module.
By Propositions~\ref{prop:base-change for exts} and~\ref{prop: base
    change for kernel of map to etale Ext}, the morphism
  \[\kExt^1_{\K{B}}(\gM_{A_f}\cotimes_{A_f} B,\gN_{A_f}\cotimes_{A_f} B)\otimes_B Q\to
\Ext^1_{\K{B}}(\gM_{A_f}\cotimes_{A_f} B,\gN_{A_f}\cotimes_{A_f}B)\otimes_B Q\]
  is naturally identified with the morphism
  \[\kExt^1_{\K{B}}(\gM_{A_f}\cotimes_{A_f} B,\gN_{A_f}\cotimes_{A_f} Q)\to
\Ext^1_{\K{B}}(\gM_{A_f}\cotimes_{A_f} B,\gN_{A_f}\cotimes_{A_f} Q);\]
  in particular, it is injective. 
By Proposition~\ref{prop:base-change for exts} and
Corollary~\ref{cor:freeness for exts} we see that
  $\Ext^1_{\K{B}}(\gM_{A_f}\cotimes_{A_f} B,\gN_{A_f}\cotimes_{A_f} B)$ is a
finitely generated projective $B$-module; hence it is also flat.
Combining this with the injectivity just proved, we find that
  \[\Tor^1_B\bigl(Q, \Ext^1_{\K{B}}(\gM\cotimes_{A_f}B,\gN_{A_f}\cotimes_{A_f} B)/
\kExt^1_{\K{B}}(\gM_{A_f}\cotimes_{A_f} B,\gN_{A_f}\cotimes_{A_f} B)\bigr)=0\]
  for every finitely generated $B$-module $Q$, and thus that
  $$\Ext^1_{\K{B}}(\gM_{A_f}\cotimes_{A_f}B,\gN_{A_f}\cotimes_{A_f}B)/
\kExt^1_{\K{B}}(\gM_{A_f}\cotimes_{A_f}B,\gN_{A_f}\cotimes_{A_f}B)$$
 is a finitely generated flat,
and therefore finitely generated projective, $B$-module.
Thus $\kExt^1_{\K{B}}(\gM_{A_f}\cotimes_{A_f}B,\gN_{A_f}\cotimes_{A_f}B)$
is a direct summand of
  the finitely generated projective $B$-module
$\Ext^1_{\K{B}}(\gM_{A_f}\cotimes_{A_f} B,\gN_{A_f}\cotimes_{A_f}B)$, and so is
  itself a finitely generated projective $B$-module.
\end{proof}

\subsection{Families of extensions}
\label{subsec:families of extensions}
Let $\gM$ and $\gN$ be Breuil--Kisin modules with descent data
and $A$-coefficients, so that
$\Ext^1_{\K{A}}(\gM,\gN)$ is an $A$-module.
Suppose that $\psi: V \to \Ext^1_{\K{A}}(\gM,\gN)$ is
a homomorphism of $A$-modules whose source is a projective $A$-module of 
finite rank.
Then we may regard $\psi$ as an element of
$$\Ext^1_{\K{A}}(\gM,\gN)\otimes_A V^{\vee} =
\Ext^1_{\K{A}}(\gM, \gN\otimes_A V^{\vee} ),$$ 
and in this way $\psi$ 
corresponds to an extension
\numequation
\label{eqn:universal extension}
0 \to \gN\otimes_A V^{\vee} \to \gE \to \gM \to 0,
\end{equation}
which we refer to as the {\em family of extensions} of $\gM$ by $\gN$
parametrised by $V$ (or by $\psi$, if we want to emphasise our
choice of homomorphism).  We let $\gE_v$ denote the pushforward of $\gE$ under
the morphism $\gN\otimes_A V^{\vee} \to \gN$
given by evaluation on $v\in V$.
In the special case that 
$\Ext^1_{\K{A}}(\gM,\gN)$ itself is a projective $A$-module of finite rank,
we can let $V$ be $\Ext^1_{\K{A}}(\gM,\gN)$ and take $\psi$ be the
identity map;
in this case we refer to~(\ref{eqn:universal extension}) as
the {\em universal extension} of $\gM$ by $\gN$.
The reason for this terminology is as follows:
if $v \in \Ext^1_{\K{A}}(\gM,\gN)$, 
then $\gE_v$ is the extension of $\gM$ by $\gN$ corresponding to the element
$v$. 

Let $B := A[V^{\vee}]$ denote the symmetric algebra over $A$ generated by
$V^{\vee}$.  
The short exact sequence~(\ref{eqn:universal extension}) is a
short exact sequence of Breuil--Kisin modules with descent data,
and so forming its $u$-adically completed tensor product with $B$ over $A$,
we obtain a short exact sequence 
$$
0 \to (\gN\otimes_A V^{\vee}) \cotimes_A B \to \gE\cotimes_A B \to
\gM\cotimes_A B
\to 0$$
of Breuil--Kisin modules with descent data over $B$ (see Lemma~\ref{rem: base change of locally free Kisin module is a
    locally free Kisin module}).
Pushing this short exact sequence forward under the natural map
$$V^{\vee} \cotimes_A B = V^{\vee} \otimes_A B \to B$$
induced by the inclusion of $V^{\vee}$ in $B$
and the multiplication map $B\otimes_A B \to B$,
we obtain a short exact sequence
\numequation
\label{eqn:geometric universal extension}
0 \to \gN\cotimes_A B \to \widetilde{\gE} \to \gM\cotimes_A B \to 0
\end{equation}
of Breuil--Kisin modules with descent data over $B$,
which we call the {\em family of extensions} of $\gM$  by $\gN$
parametrised by $\Spec B$ (which we note is (the total space of) the
vector bundle over $\Spec A$ corresponding to the projective
$A$-module~$V$).
 
If $\alpha_v: B \to A$ is the morphism induced
by the evaluation map
$V^{\vee} \to A$ given by some element $v \in V$,
then base-changing~(\ref{eqn:geometric universal extension}) by $\alpha_v$,
we recover the short exact sequence
$$0 \to \gN \to \gE_v \to \gM \to 0.$$
More generally, suppose that $A$ is a $\cO/\varpi^a$-algebra for some
$a\ge 1$, and let  $C$ be any $A$-algebra. Suppose that $\alpha_{\tilde{v}}:
B \to C$ is the morphism induced 
by the evaluation map
$V^{\vee} \to C$ corresponding to some element $\tilde{v} \in C\otimes_A V$.
Then base-changing~(\ref{eqn:geometric universal extension}) by
$\alpha_{\tilde{v}}$
yields a short exact sequence
$$0 \to \gN\cotimes_A C \to \widetilde{\gE}\cotimes_B C
\to \gM\cotimes_A C \to 0,$$
whose associated extension class corresponds
to the image of $\tilde{v}$ 
under the natural morphism
$C\otimes_A V \to C\otimes_A \Ext^1_{\K{A}}(\gM,
\gN) \cong \Ext^1_{\K{C}}(\gM\cotimes_A C, \gN\otimes_A C),$
the first arrow being induced by $\psi$
and the second arrow being the isomorphism of Proposition~\ref{prop:base-change
for exts}. 

\subsubsection{The functor represented by a universal family}
We now suppose that the ring~$A$ and the Breuil--Kisin modules $\gM$ and $\gN$ have the following 
properties:

\begin{assumption}
\label{assumption:vanishing}Let $A$ be a  Noetherian and Jacobson
$\cO/\varpi^a$-algebra for some $a\ge 1$, and assume that for each maximal ideal $\mathfrak m$ of $A$, we have that
$$\Hom_{\K{\kappa(\mathfrak{\m})}}\bigl(\gM\otimes_A \kappa(\mathfrak m) , \gN\otimes_A
\kappa(\mathfrak m)\bigr) = \Hom_{\K{\kappa(\mathfrak{\m})}}\bigl(\gN\otimes_A \kappa(\mathfrak m) , \gM\otimes_A
\kappa(\mathfrak m)\bigr) = 0.$$
\end{assumption}

By Corollary~\ref{cor:freeness for exts}, this assumption implies in particular
that $V:= \Ext^1_{\K{A}}(\gM,\gN)$ is projective of finite rank,
and so we may form $\Spec B := \Spec A[V^{\vee}]$,
which parametrised the universal family of 
extensions. 
We are then able to give the following precise description
of the functor represented by $\Spec B$.

\begin{prop}\label{prop: the functor that Spec B represents}
The scheme $\Spec B$ represents the functor which,
to any $\cO/\varpi^a$-algebra $C$, associates the set of isomorphism
classes of tuples $(\alpha, \gE, \iota, \pi)$, where $\alpha$ is a morphism
$\alpha: \Spec C \to \Spec A$, $\gE$ is a Breuil--Kisin module
with descent data and coefficients in $C$, and $\iota$ and $\pi$ are morphisms
$\alpha^* \gN \to \gE$ and $\gE \to \alpha^* \gM$ respectively,
with the property that $0 \to \alpha^*\gN \buildrel \iota
\over \to \gE \buildrel \pi  \over \to \alpha^* \gM \to 0$
is short exact.
\end{prop}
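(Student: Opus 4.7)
The plan is to construct mutually inverse natural transformations between $\Hom(\Spec C, \Spec B)$ and the functor in question, using Proposition~\ref{prop:base-change for exts} to translate between extension classes and $C$-points of $\Spec B$.

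First I would observe that, because $V := \Ext^1_{\K{A}}(\gM,\gN)$ is a finitely generated projective $A$-module (by Assumption~\ref{assumption:vanishing} and Corollary~\ref{cor:freeness for exts}), an $\cO/\varpi^a$-algebra map $f: B = A[V^{\vee}] \to C$ is the same as the data of an $\cO/\varpi^a$-algebra map $\alpha: A \to C$ together with an $A$-linear map $V^{\vee} \to C$, which by projectivity of $V$ is in turn the same as an element $\tilde v$ of $V \otimes_A C$. By Proposition~\ref{prop:base-change for exts}, we have a natural isomorphism $V \otimes_A C \cong \Ext^1_{\K{C}}(\alpha^*\gM, \alpha^*\gN)$, so the $C$-points of $\Spec B$ are canonically in bijection with pairs $(\alpha, \tilde v)$ where $\tilde v$ is an extension class.

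The forward natural transformation will send $f: B \to C$ to the base-change of the universal extension $\widetilde{\gE}$ of~\eqref{eqn:geometric universal extension} along $f$; by Lemma~\ref{rem: base change of locally free Kisin module is a locally free Kisin module} this base-change is a short exact sequence of Breuil--Kisin modules over $C$ of the required form $0 \to \alpha^*\gN \to \widetilde{\gE}\cotimes_B C \to \alpha^*\gM \to 0$. The inverse will send a tuple $(\alpha, \gE, \iota, \pi)$ to the $A$-algebra map $B \to C$ determined, as in the previous paragraph, by the Yoneda extension class of $\gE$ in $\Ext^1_{\K{C}}(\alpha^*\gM, \alpha^*\gN) \cong V \otimes_A C$.

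The crucial observation needed to see that these are mutually inverse is that the universal extension $\widetilde{\gE}$ corresponds, under the identification $\Ext^1_{\K{B}}(\gM \cotimes_A B, \gN \cotimes_A B) \cong V \otimes_A B$ of Proposition~\ref{prop:base-change for exts}, to the tautological element $\epsilon \in V \otimes_A V^{\vee} \subset V \otimes_A B$ representing $\id_V \in \End_A(V)$; this is immediate from the construction of $\widetilde{\gE}$ as the push-forward of $\gE$ from~\eqref{eqn:universal extension} along $V^{\vee} \cotimes_A B \to B$. Writing $\id_V = \sum v_i \otimes \ell_i$ with $\ell_i \in V^{\vee}$, the image of $\epsilon$ under $V \otimes f : V \otimes_A B \to V \otimes_A C$ is $\sum v_i \otimes f(\ell_i)$, which is precisely $\tilde v$ viewed as an element of $V \otimes_A C$ via $\Hom_A(V^{\vee}, C) \cong V \otimes_A C$. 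Combined with the naturality of the base change isomorphism for $\Ext^1$, this shows that pulling back the universal extension along $f$ gives an extension of class $\tilde v$, establishing that the backward-then-forward composition is the identity. The forward-then-backward direction then reduces to the standard fact that in the abelian category $\K{C}$ two short exact sequences $0 \to \alpha^*\gN \to \gE \to \alpha^*\gM \to 0$ with equal class in $\Ext^1$ are isomorphic as extensions (i.e., via an isomorphism of the middle terms commuting with $\iota$ and $\pi$); this is Yoneda's description of $\Ext^1$, which is available here since we computed $\Ext^1$ as a derived functor via $C^{\bullet}$ in Corollary~\ref{cor:complex computes Hom and Ext}. The main obstacle is purely notational bookkeeping: ensuring that the base change isomorphism of Proposition~\ref{prop:base-change for exts} is compatible with the tautological element $\epsilon$ so that the two descriptions genuinely match.
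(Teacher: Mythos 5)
Your proof is correct and follows essentially the same route as the paper. The ingredients you use — $C$-points of $\Spec B$ are pairs $(\alpha,\tilde v)$ with $\tilde v\in V\otimes_A C\cong\Ext^1_{\K C}(\alpha^*\gM,\alpha^*\gN)$ via Proposition~\ref{prop:base-change for exts}; the tautological element computation showing that base-change of $\widetilde{\gE}$ along $\alpha_{\tilde v}$ has class $\tilde v$; and the identification of $\Ext^1$ with Yoneda equivalence classes of short exact sequences — are precisely the contents of the discussion culminating at the end of Subsection~\ref{subsec:families of extensions}. The paper's own proof is therefore very terse: it simply cites that discussion as ``we have already seen\ldots,'' and then spends its one sentence on the observation that any automorphism of $\gE$ restricting to the identity on $\alpha^*\gN$ and inducing the identity on $\alpha^*\gM$ is itself the identity, which it deduces from Corollary~\ref{cor:vanishing of homs non Noetherian} and Assumption~\ref{assumption:vanishing} (because such an automorphism differs from the identity by an element of $\Hom_{\K C}(\alpha^*\gM,\alpha^*\gN)=0$). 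You do not make this check explicit — you implicitly absorb it into the Yoneda description of $\Ext^1$, which already packages ``same class $\Rightarrow$ isomorphic tuple'' — and that is logically adequate for the statement as written, though the explicit automorphism triviality is exactly what gets reused in Corollary~\ref{cor: monomorphism to Spec A times C}, so the paper's emphasis is not gratuitous. One small bookkeeping slip: the fact that two short exact sequences with equal $\Ext^1$ class are isomorphic as extensions is needed in your \emph{backward-then-forward} direction (iso class $\to$ map $\to$ iso class), whereas the \emph{forward-then-backward} direction (map $\to$ iso class $\to$ map) follows from the tautological element computation alone; you have the two labels interchanged.
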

\begin{proof}
We have already seen that giving a morphism $\Spec C \to \Spec B$
is equivalent to giving the composite morphism $\alpha:\Spec C \to \Spec B 
\to \Spec A$, together with an extension class
$[\gE] \in \Ext^1_{\K{C}}(\alpha^*\gM,\alpha^*\gN).$
Thus to prove the proposition, we just have to show that
any automorphism of $\gE$ which restricts to the identity on $\alpha^*\gN$
and induces the identity on $\alpha^*\gM$ is itself the identity on
$\gE$.   This follows from Corollary~\ref{cor:vanishing of homs non
  Noetherian}, 
together with Assumption~\ref{assumption:vanishing}. 
\end{proof}
Fix an integer $h\ge 0$ so that $E(u)^h\in
\Ann_{\gS_A}(\coker\Phi_{\gM})\Ann_{\gS_A}(\coker\Phi_{\gN})$, so
that by Lemma~\ref{lem:ext of a Kisin module is a Kisin module}, every
Breuil--Kisin module parametrised by $\Spec B$ has height at most~$h$.
There is a natural action of $\Gm\times_{\cO} \Gm$ on $\Spec B$,
given by rescaling each of $\iota$ and $\pi$.
There is also an evident forgetful morphism
$\Spec B \to \Spec A \times_{\cO} \cC^{\dd,a}$, 
given by forgetting $\iota$ and $\pi$, which is evidently invariant
under the $\Gm\times_{\cO} \Gm$-action. (Here and below,
$\cC^{\dd,a}$ denotes the moduli stack defined in Subsection~\ref{sec:recoll-from-citec} 
for our fixed choice of~$h$ and for
$d$ equal to the sum of the ranks of $\gM$ and $\gN$.)  We thus obtain a morphism
\numequation
\label{eqn:ext relation}
\Spec B \times_{\cO} \Gm\times_{\cO} \Gm \to
\Spec B \times_{\Spec A \times_{\cO} \cC^{\dd,a}} \Spec B.
\end{equation}

\begin{cor}\label{cor: monomorphism to Spec A times C}
Suppose that
$\Aut_{\K{C}}(\alpha^*\gM) = \Aut_{\K{C}}(\alpha^*\gN) = C^{\times}$
for any morphism $\alpha: \Spec C \to \Spec A$.
Then the morphism~{\em (\ref{eqn:ext relation})} is an isomorphism,
and consequently the induced morphism
$$[\Spec B/ \Gm\times_{\cO} \Gm] \to \Spec A \times_{\cO} \cC^{\dd,a}$$
is a finite type monomorphism.
\end{cor}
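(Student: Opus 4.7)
My plan is to verify assertion (1) by directly identifying $C$-points on both sides via Proposition~\ref{prop: the functor that Spec B represents}, and then to deduce the monomorphism statement by the standard principle that $[\Spec B/\Gm \times_\cO \Gm] \to \Spec A \times_\cO \cC^{\dd,a}$ is a monomorphism if and only if the action map $\Spec B \times_\cO \Gm \times_\cO \Gm \to \Spec B \times_{\Spec A \times_\cO \cC^{\dd,a}} \Spec B$ is an isomorphism (unwinding this identifies the diagonal of the quotient stack over $\Spec A \times_\cO \cC^{\dd,a}$ with the action morphism). The finite-type assertion will come for free, since $B = A[V^\vee]$ is a polynomial $A$-algebra in finitely many variables (as $V$ has finite rank), so the atlas $\Spec B \to [\Spec B/\Gm^2] \to \Spec A \times_\cO \cC^{\dd,a}$ is of finite type, and this property descends along the smooth cover.

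By Proposition~\ref{prop: the functor that Spec B represents}, a $C$-point of $\Spec B \times_{\Spec A \times_\cO \cC^{\dd,a}} \Spec B$ consists of a morphism $\alpha: \Spec C \to \Spec A$, a single Breuil--Kisin module $\gE$ with descent data and $C$-coefficients, and two pairs $(\iota_j, \pi_j)$ ($j = 1, 2$) each exhibiting $\gE$ as an extension of $\alpha^*\gM$ by $\alpha^*\gN$ (where I absorb the chosen identification of the two underlying Breuil--Kisin modules from $\cC^{\dd,a}$ into the structure maps). The morphism~(\ref{eqn:ext relation}) sends $((\alpha, \gE, \iota, \pi), (a, b))$ to $((\alpha, \gE, \iota, \pi), (\alpha, \gE, a\iota, b\pi))$, and injectivity on $C$-points is immediate from the injectivity of $\iota$ and the surjectivity of $\pi$.

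The crux is surjectivity. Given two extension structures $(\iota_1, \pi_1), (\iota_2, \pi_2)$ on the same $\gE$, the composite $\pi_2 \circ \iota_1$ defines an element of $\Hom_{\K{C}}(\alpha^*\gN, \alpha^*\gM)$, which vanishes by Corollary~\ref{cor:vanishing of homs non Noetherian}: Assumption~\ref{assumption:vanishing} provides the Hom-vanishing at each maximal ideal of $A$, and the Jacobson hypothesis promotes this to vanishing after arbitrary $A$-algebra base change. Hence $\iota_1(\alpha^*\gN) \subseteq \ker(\pi_2) = \iota_2(\alpha^*\gN)$, and the induced surjection $\alpha^*\gM \twoheadrightarrow \alpha^*\gM$ between the two quotients is an isomorphism, since a surjective endomorphism of a finitely generated $\gS_C$-module is automatically injective. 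Thus in fact $\iota_1(\alpha^*\gN) = \iota_2(\alpha^*\gN)$. There then exist unique $\phi \in \Aut_{\K{C}}(\alpha^*\gN)$ with $\iota_1 = \iota_2 \circ \phi$ and unique $\psi \in \Aut_{\K{C}}(\alpha^*\gM)$ with $\pi_1 = \psi \circ \pi_2$; by the hypothesis on automorphism groups, both $\phi$ and $\psi$ are scalars, producing the required $(a, b) \in (C^\times)^2$. The main obstacle is precisely this Hom-vanishing argument, which is really what forces the quotient presentation to be faithful; once it is in hand, (1), the monomorphism conclusion, and the finite-type claim all follow formally.
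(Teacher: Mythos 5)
Your proof is correct and follows essentially the same route as the paper's: both rely on Corollary~\ref{cor:vanishing of homs non Noetherian} (via Assumption~\ref{assumption:vanishing}) to show the composite $\alpha^*\gN \to \gE \to \alpha^*\gM$ vanishes, deduce that the two embedded copies of $\alpha^*\gN$ coincide, and then use the hypothesis $\Aut = C^\times$ to produce the required scalars. The only cosmetic difference is that you conclude the equality of the two sub-objects via the fact that a surjective endomorphism of a finitely generated module over the commutative ring $\gS_C$ is injective, where the paper instead argues by "reversing the roles of $\gE$ and $\gE'$" to exhibit a two-sided inverse — both are routine and equivalent.
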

\begin{proof}
By Proposition~\ref{prop: the functor that Spec B represents}, a morphism  \[\Spec C\to \Spec B \times_{\Spec A
  \times_{\cO} \cC^{\dd,a}} \Spec B\]  corresponds to an isomorphism class
of tuples $(\alpha,\beta:\gE\to\gE',\iota,\iota',\pi,\pi')$, where
\begin{itemize}
\item $\alpha$ is a morphism
$\alpha:\Spec C\to\Spec A$,
\item  $\beta:\gE\to\gE'$ is an isomorphism of Breuil--Kisin modules
with descent data and coefficients in $C$, 
\item $\iota:\alpha^* \gN \to \gE$ and $\pi :\gE \to
  \alpha^* \gM$ are morphisms
with the property that $$0 \to \alpha^*\gN \buildrel \iota
\over \to \gE \buildrel \pi  \over \to \alpha^* \gM \to 0$$ is short
exact, 
\item $\iota':\alpha^* \gN \to \gE'$ and $\pi' :\gE' \to
  \alpha^* \gM$ are morphisms
with the property that 
  $$0 \to \alpha^*\gN \buildrel \iota'
\over \to \gE' \buildrel \pi'  \over \to \alpha^* \gM \to 0$$ is
short exact.
\end{itemize}
Assumption~\ref{assumption:vanishing} and Corollary~\ref{cor:vanishing of homs non Noetherian}  together show that 
$\Hom_{\K{C}}(\alpha^*\gN, \alpha^*\gM)~=~0$. It follows that the
composite
$\alpha^*\gN\stackrel{\iota}{\to}\gE\stackrel{\beta}{\to}\gE' $
factors through $\iota'$, and the induced endomorphism of
$\alpha^*\gN$ is injective. Reversing the roles of $\gE$ and $\gE'$,
we see that it is in fact an automorphism of $\alpha^*\gN$, and it
follows easily that $\beta$ also induces an automorphism of
$\alpha^*\gM$. Again, Assumption~\ref{assumption:vanishing} and Proposition~\ref{cor:vanishing of homs non Noetherian} together show that 
$\Hom_{\K{C}}(\alpha^*\gM, \alpha^*\gN) = 0$, from which it follows
easily that $\beta$ is determined by the automorphisms of
$\alpha^*\gM$ and $\alpha^*\gN$ that it induces.

 Since $\Aut_{\K{C}}(\alpha^*\gM) =
\Aut_{\K{C}}(\alpha^*\gN) = C^{\times}$ by assumption, we see that
$\beta \circ \iota, \iota'$ and $\pi,\pi'\circ\beta$ differ only by the action of
$\Gm\times_{\cO}\Gm$, so  the first claim of the corollary
follows. 
The claim regarding the monomorphism is immediate from
Lemma~\ref{lem: morphism from quotient stack is a monomorphism}
below. Finally, note that $[\Spec B/\Gm\times_{\cO} \Gm]$ is 
of finite type over $\Spec A$, while $\cC^{\dd,a}$ has finite type diagonal.
It follows that the morphism 
$[\Spec B / \Gm \times_{\cO} \Gm ] \rightarrow \Spec A\times_{\cO}\cC^{\dd,a}$
is of finite type, as required.
\end{proof} 

\begin{lem}
  \label{lem: morphism from quotient stack is a monomorphism}Let $X$
  be a scheme over a base scheme~$S$, let $G$ be a smooth affine group
  scheme over~$S$, and let $\rho:X\times_S G\to X$ be a {\em
    (}right{\em )} action of $G$
  on~$X$. Let $X\to\cY$ be a $G$-equivariant morphism, whose target is
  an algebraic stack over~$S$ on which $G$ acts trivially. 
Then the induced
  morphism \[[X/G]\to\cY\] is a monomorphism if and only if the
  natural morphism \[X\times_S G\to X\times_{\cY} X\] {\em (}induced by the
  morphisms $\pr_1,\rho:X\times_S G\to X${\em )} is an isomorphism.
\end{lem}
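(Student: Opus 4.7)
The plan is to use the standard fact that a morphism of algebraic stacks $f:\cZ\to\cY$ is a monomorphism if and only if its diagonal $\Delta_f:\cZ\to\cZ\times_{\cY}\cZ$ is an isomorphism, and to reduce this to the statement about $X\times_S G\to X\times_\cY X$ by base-change along the smooth cover $X\times_\cY X \to [X/G]\times_\cY [X/G]$.

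First, I would recall that $X\to [X/G]$ is an fppf (indeed, smooth) surjection, and that the standard groupoid presentation of the quotient stack yields a canonical isomorphism
\[
X\times_{[X/G]} X \;\cong\; X\times_S G,
\]
where under this identification the two projections $X\times_{[X/G]} X \rightrightarrows X$ correspond to $\pr_1$ and the action morphism $\rho$. Consequently $X\times_\cY X \to [X/G]\times_\cY [X/G]$ is a smooth (in particular fppf) surjection, obtained as the base change of $X\times X \to [X/G]\times[X/G]$ along $[X/G]\times_\cY [X/G]\hookrightarrow [X/G]\times[X/G]$.

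Next, I would compute the base change of $\Delta_f$ along this cover. A standard fiber-product manipulation gives
\[
[X/G]\times_{[X/G]\times_\cY [X/G]}(X\times_\cY X) \;\cong\; X\times_{[X/G]} X \;\cong\; X\times_S G,
\]
and under these identifications the base-changed diagonal becomes precisely the morphism $X\times_S G \to X\times_\cY X$ appearing in the lemma; the fact that this morphism is well-defined (i.e.\ that $\pr_1$ and $\rho$ have the same composite to $\cY$) is exactly the $G$-equivariance hypothesis on $X\to\cY$, using that $G$ acts trivially on $\cY$.

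Finally, since $\cZ = [X/G]$ has representable diagonal (because $G$ is a smooth affine group scheme over $S$), the morphism $\Delta_f$ is representable by algebraic spaces, so being an isomorphism is an fppf-local property on the target $[X/G]\times_\cY[X/G]$. Combining the three steps: $[X/G]\to\cY$ is a monomorphism $\iff$ $\Delta_f$ is an isomorphism $\iff$ its pullback $X\times_S G \to X\times_\cY X$ is an isomorphism. The only real content is the bookkeeping in identifying the pulled-back diagonal with the natural morphism; there is no significant obstacle, as every ingredient is formal once the groupoid presentation of $[X/G]$ and the fppf-local nature of being an isomorphism are in hand.
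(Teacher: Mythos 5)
Your proof is correct and follows essentially the same route as the paper's: both identify $X\times_S G\to X\times_\cY X$ as the base change of the diagonal $[X/G]\to[X/G]\times_\cY[X/G]$ along the smooth surjection $X\times_\cY X\to[X/G]\times_\cY[X/G]$ arising from the presentation of $[X/G]$, and conclude by smooth (fppf) descent of the property of being an isomorphism. Your write-up merely spells out the Cartesian square and the identification $X\times_{[X/G]}X\cong X\times_S G$ a bit more explicitly than the paper does.
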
 
\begin{proof}
  We have a Cartesian diagram as follows. \[\xymatrix{X\times_S G\ar[r]\ar[d]&
    X\times_\cY X\ar[d]\\ [X/G]\ar[r]& [X/G]\times_\cY[X/G]}\]The
  morphism $[X/G]\to\cY$ is a monomorphism if and only if the bottom horizontal
  morphism of this square is an isomorphism; since the right hand
  vertical arrow is a smooth surjection, this is the case if and only
  if the top horizontal morphism is an isomorphism, as required.
\end{proof}
\subsection{Families of extensions of rank one Breuil--Kisin modules}
\label{subsec:universal families}
In this section we construct universal families of extensions of rank
one Breuil--Kisin modules. We will use these rank two families to study our
moduli spaces of Breuil--Kisin modules, and the corresponding spaces of
\'etale $\varphi$-modules. We show how to compute the dimensions of
these universal families; in the subsequent sections, we will combine
these results with explicit calculations to determine the irreducible
components of our moduli spaces. In particular, we will show that each
irreducible component has a dense open substack given by a family of
extensions.

\subsubsection{Universal unramified twists}
Fix a free Breuil--Kisin module with descent data 
$\gM$ over $\F$, and write $\Phi_i$   for
$\Phi_{\gM,i}:\varphi^*(\gM_{i-1}) \to\gM_{i}$. (Here we are using the
notation of Section~\ref{subsec: kisin modules with dd}, so
that~$\gM_i=e_i\gM$ is cut out by the idempotent~$e_i$ of Section~\ref{subsec: notation}.) 
We will construct the ``universal unramified twist'' of $\gM$.

\begin{df}
\label{def:unramified twist}
If $\Lambda$ is an $\F$-algebra, and if $\lambda \in \Lambda^\times$,
then we define $\gM_{\Lambda,\lambda}$ to be the free Breuil--Kisin
module with descent data and $\Lambda$-coefficients whose underlying
$\gS_\Lambda[\Gal(K'/K)]$-module 
is equal to $\gM\cotimes_\F\Lambda$
(so the usual base change of $\gM$ to $\Lambda$),
and for which $\Phi_{\gM_{\Lambda,\lambda}}: \varphi^*\gM_{\Lambda,\lambda} \to \gM_{\Lambda,\lambda}$ is defined via the $f'$-tuple
$(\lambda \Phi_0,\Phi_1,\ldots,\Phi_{f'-1}).$
We refer to $\gM_{\Lambda,\lambda}$ as the \emph{unramified twist} of $\gM$ by $\lambda$ over $\Lambda$.

If $M$ is a free \'etale $\varphi$-module with descent data, then we
define $M_{\Lambda,\lambda}$ in the analogous fashion. If we write
$X=\Spec \Lambda$, then we will sometimes write $\gM_{X,\lambda}$
(resp.\ $M_{X,\lambda}$) for $\gM_{\Lambda,\lambda}$ (resp.\
$M_{\Lambda,\lambda}$).
\end{df}

As usual, we write $\Gm := \Spec \F[x^{\pm 1}].$ 
We may then 
form the rank one Breuil--Kisin module with descent data
$\gM_{\Gm,x},$ which is the universal instance of an unramified twist:
given $\lambda\in\Lambda^\times$, 
there is a corresponding morphism $\Spec\Lambda
\to \Gm$ determined by the requirement that
$x \in \Gamma(\Gm, \cO_{\Gm}^{\times})$ pulls-back to $\lambda$,
and $\gM_{X,\lambda}$ is obtained by pulling back
$\gM_{\Gm,x}$ under this morphism (that is, by base changing under the
corresponding ring homomorphism $\F[x^{\pm 1}]\to\Lambda$).

\begin{lemma}
\label{lem:rank one endos}
If $\gM_\Lambda$ is a Breuil--Kisin module of rank one with $\Lambda$-coefficients, then $\End_{\K{\Lambda}}(\gM) = \Lambda.$
Similarly, if $M_\Lambda$ is a \'etale $\varphi$-module of rank
one with $\Lambda$-coefficients, then
$\End_{\K{\Lambda}}(M_\Lambda) = \Lambda.$ 
\end{lemma}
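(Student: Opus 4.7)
The plan is to identify $\End_{\K{\Lambda}}(\gM_\Lambda)$ with the ring of $\varphi$- and $\Gal(K'/K)$-invariants in $\gS_\Lambda$, and then compute these invariants directly using the idempotent decomposition of $\gS_\Lambda$.

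First, I would observe that since $\gM_\Lambda$ is locally free of rank one as a $\gS_\Lambda$-module, the evaluation map $\gS_\Lambda \to \End_{\gS_\Lambda}(\gM_\Lambda)$ is an isomorphism (this is the standard fact that endomorphisms of an invertible module are scalars, valid globally because it is true Zariski-locally and the identification is canonical via $\gM_\Lambda\otimes_{\gS_\Lambda}\gM_\Lambda^\vee\iso\gS_\Lambda$). Under this identification, an endomorphism corresponding to $s\in\gS_\Lambda$ commutes with $\varphi_{\gM_\Lambda}$ precisely when $\varphi(s)=s$ in $\gS_\Lambda$, and commutes with each $\hat{g}$ precisely when $g(s)=s$, in view of the semi-linearity relations $\varphi(sm)=\varphi(s)\varphi(m)$ and $\hat g(sm)=g(s)\hat g(m)$. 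Thus $\End_{\K{\Lambda}}(\gM_\Lambda)=\gS_\Lambda^{\varphi=1,\,\Gal(K'/K)=1}$.

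Next I would compute this invariant subring. Writing $\gS_\Lambda=\prod_{i\in\Z/f'\Z}\Lambda[[u]]\cdot e_i$ via the idempotents $e_i$ (which makes sense because $\Lambda$ is an $\F$-algebra and $\F\supseteq k'$), an arbitrary element takes the form $s=\sum_i e_i\tilde s_i$ with $\tilde s_i\in\Lambda[[u]]$. Since $\varphi(e_i)=e_{i+1}$ and $\varphi(u)=u^p$ (while $\varphi$ acts trivially on $\Lambda$), the condition $\varphi(s)=s$ becomes $\tilde s_{i+1}(u)=\tilde s_i(u^p)$ for all $i$, and iterating $f'$ times gives $\tilde s_0(u)=\tilde s_0(u^{p^{f'}})$. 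Writing $\tilde s_0=\sum_n a_n u^n$, comparing coefficients forces $a_n=0$ for $n\neq 0$, so $\tilde s_0\in\Lambda$, and then $\tilde s_i=\tilde s_0$ for all $i$. Hence $s\in\Lambda\cdot 1\subset\gS_\Lambda$. Since $\Gal(K'/K)$ acts trivially on the scalar subring $\Lambda\subset\gS_\Lambda$, these elements are automatically Galois-invariant, and conversely $\Lambda\subseteq\End_{\K{\Lambda}}(\gM_\Lambda)$ obviously. This yields the claimed equality.

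For the étale $\varphi$-module case, the argument is essentially identical: $\End_{\gS_\Lambda[1/u]}(M_\Lambda)=\gS_\Lambda[1/u]$ for the same invertibility reason, and the commutation conditions again reduce the question to computing $(\gS_\Lambda[1/u])^{\varphi=1,\,\Gal(K'/K)=1}$. Decomposing via the $e_i$ and writing $\tilde s_0\in\Lambda((u))$ as a Laurent series $\sum_n a_n u^n$, the relation $\tilde s_0(u)=\tilde s_0(u^{p^{f'}})$ again forces $a_n=0$ for $n\neq 0$, giving $\Lambda$ as the answer. The main (minor) subtlety to be careful about is the local-to-global step in identifying $\End_{\gS_\Lambda}$ with $\gS_\Lambda$ for a not-necessarily-free rank-one module, but this is standard and does not require anything beyond the invertibility of $\gM_\Lambda$.
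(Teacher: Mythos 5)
Your argument is correct and is at heart the same as the paper's: identify $\gS_\Lambda$-endomorphisms of the rank-one module with elements of $\gS_\Lambda$ (resp.\ $\gS_\Lambda[1/u]$) and compute the $\varphi$-fixed elements. Your use of $\End_R(P)\cong R$ for an invertible module in place of the paper's reduction to the free case, and your idempotent decomposition of $\gS_\Lambda$ together with the iterated relation $\tilde s_{i+1}(u)=\tilde s_i(u^p)$ in place of the paper's argument with lowest and highest degree $u$-terms followed by an analysis of the $\varphi$-invariants of $W(k')\otimes_{\Zp}\Lambda$, are inessential variations that land in the same place. The one step you (and the paper, which only spells out the \'etale case) elide is the necessity direction of ``commuting with $\varphi_{\gM_\Lambda}$ forces $\varphi(s)=s$'' for an honest Breuil--Kisin module: one must check that the image of $\Phi_{\gM_\Lambda}$ is not annihilated by any nonzero element of $\gS_\Lambda$. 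This holds since that image contains $E(u)^h\gM_\Lambda$ for some $h$ and $E(u)$ is a nonzerodivisor; alternatively, one can simply pass to $\gM_\Lambda[1/u]$, where $\Phi$ is an isomorphism and the point is immediate, and use that $\End_{\K{\Lambda}}(\gM_\Lambda)$ injects into $\End_{\K{\Lambda}}(\gM_\Lambda[1/u])$.
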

\begin{proof}
We give the proof for~$M_\Lambda$, the argument for~$\gM_\Lambda$ being essentially
identical. One reduces easily to the case where $M_\Lambda$ is free. Since an endomorphism~$\psi$ of~$M_\Lambda$ is in particular an
endomorphism of the underlying $\gS_\Lambda[1/u]$-module, we see that there
is some $\lambda\in \gS_\Lambda[1/u]$ such that $\psi$ is given by
multiplication by~$\lambda$. The commutation relation with $\Phi_{M_\Lambda}$
means that we must have $\varphi(\lambda)=\lambda$, so that
certainly 
(considering the powers of~$u$ in~$\lambda$ of lowest negative and positive degrees)
$\lambda\in W(k')\otimes_{\Zp}\Lambda$, and in fact $\lambda\in
\Lambda$. Conversely, multiplication by any element of~$\Lambda$ is evidently an
endomorphism of~$M_\Lambda$, as required.
\end{proof}

\begin{lem}
  \label{lem:rank one isomorphism over a field}Let $\kappa$ be a field
  of characteristic~$p$, and let $M_\kappa, N_\kappa$ be
  \'etale $\varphi$-modules of rank one with $\kappa$-coefficients and
  descent data. Then any nonzero element of $\Hom_{\K{\kappa}}(M_\kappa, N_\kappa)$
  is an isomorphism. 
\end{lem}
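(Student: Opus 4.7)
The plan is to reduce the assertion to a short calculation with generators and then exploit the fact that Frobenius acts transitively on the ``factors'' of $\gS_\kappa[1/u]$.

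First, since $\kappa$ has characteristic $p$, the ring $W(k')\otimes_{\Zp}\kappa = k'\otimes_{\F_p}\kappa$ is an \'etale $\kappa$-algebra, and so decomposes as a finite product $\prod_{j\in J}F_j$ of finite separable extensions of $\kappa$. Consequently $\gS_\kappa[1/u] = \prod_{j\in J}F_j((u))$ is a finite product of fields, and any rank-one projective module over it is free of rank one. Choosing generators $m$ of $M_\kappa$ and $n$ of $N_\kappa$, I will write $\Phi_{M_\kappa}(1\otimes m) = \mu m$ and $\Phi_{N_\kappa}(1\otimes n) = \nu n$ for units $\mu,\nu \in \gS_\kappa[1/u]^\times$ (units because the $\Phi$'s are isomorphisms), and $f(m) = \lambda n$ with $\lambda \in \gS_\kappa[1/u]$; since $f\neq 0$, $\lambda\neq 0$.

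Next, the requirement that $f$ commute with $\Phi$ translates into the identity $\mu\lambda = \varphi(\lambda)\,\nu$ in $\gS_\kappa[1/u]$. Because $\mu,\nu$ are units, this identity shows that, component-wise in the decomposition $\gS_\kappa[1/u]=\prod_{j\in J}F_j((u))$, the component $\lambda_j$ vanishes if and only if $\varphi(\lambda)_j$ does. Now $\varphi$ permutes the finite set $J$ via some permutation $\sigma$ (because $\varphi|_{k'\otimes\kappa}$ is a ring automorphism and permutes the corresponding idempotents), and on each factor restricts to an injective ring homomorphism $F_{\sigma^{-1}(j)}((u))\to F_j((u))$; hence $\varphi(\lambda)_j=0$ if and only if $\lambda_{\sigma^{-1}(j)}=0$. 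The support $S\subseteq J$ of $\lambda$ is therefore stable under $\sigma$.

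The key observation, which will be the only real content, is that $\sigma$ acts transitively on $J$. This holds because $J$ may be identified with the set of $\Gal(\kappa^{\sep}/\kappa)$-orbits on $\Hom_{\F_p}(k',\kappa^{\sep})$, on which $\Frob_{k'}$ acts transitively (generating $\Gal(k'/\F_p)$) and commutes with the Galois action, and therefore induces a transitive action on orbits. Since $S$ is a nonempty $\sigma$-stable subset of $J$, it follows that $S=J$, so $\lambda$ is a unit in each $F_j((u))$ and thus in $\gS_\kappa[1/u]$. Hence $f$ is an isomorphism of the underlying $\gS_\kappa[1/u]$-modules, and as $f$ is already compatible with $\Phi$ and $\Gal(K'/K)$ by hypothesis, it is an isomorphism in $\K{\kappa}$. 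I do not anticipate any real obstacle here.
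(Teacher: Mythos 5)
Your proof is correct, and the underlying mechanism is the same as the paper's: decompose $\gS_\kappa[1/u]$ into a product of fields, observe that a morphism of rank-one modules has a nonvanishing/vanishing "support" on each factor, and use the commutation with $\varphi$ together with the transitivity of the $\varphi$-action on the factors to force the support to be everything. The paper's one-line proof works with the idempotents $e_i$ of Section~\ref{subsec: notation}, which presupposes that $\kappa$ is an $\F$-algebra (so that $k'\otimes_{\F_p}\kappa$ is a product of $f'$ copies of $\kappa$ and $\varphi$ cyclically permutes them). Your version handles the literal hypothesis of the lemma, namely an arbitrary field $\kappa$ of characteristic $p$, by taking the decomposition of $k'\otimes_{\F_p}\kappa$ into a general product of finite separable extensions and replacing the obvious transitivity of the cyclic permutation by the (correct) observation that $\mathrm{Frob}_{k'}$ acts transitively on $\Gal(\kappa^{\sep}/\kappa)$-orbits in $\Hom_{\F_p}(k',\kappa^{\sep})$. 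That extra step is genuine content if one reads the lemma in full generality, though in the applications within the paper $\kappa$ is always an extension of $\F$, so both proofs apply. Everything else — the identity $\mu\lambda=\varphi(\lambda)\nu$, the deduction that the support is $\varphi$-stable, and the final compatibility with descent data — is sound.
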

\begin{proof}
  Since $\kappa((u))$ is a field, it is enough to show that if one of
  the induced maps $M_{\kappa,i}\to N_{\kappa,i}$ is nonzero, then
  they all are; but this follows from the commutation relation with~$\varphi$.
\end{proof}

\begin{lemma}\label{lem: isomorphic twists are the same twist} If $\lambda,\lambda' \in \Lambda^{\times}$ and $\gM_{\Lambda,\lambda} \cong \gM_{\Lambda,\lambda'}$
{\em (as Breuil--Kisin modules with descent data over $\Lambda$)}, then $\lambda=\lambda'$. Similarly, if $M_{\Lambda,\lambda}\cong M_{\Lambda,\lambda'}$, then $\lambda=\lambda'$.
\end{lemma}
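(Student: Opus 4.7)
The plan is to exploit that $\gM_{\Lambda,\lambda}$ and $\gM_{\Lambda,\lambda'}$ share a common underlying $\gS_\Lambda[\Gal(K'/K)]$-module, differing only in their Frobenius structures. Since $\F$ is a field and $\gS_\F$ decomposes via the idempotents $e_i$ as a product of $f'$ copies of $\F[[u]]$, which is a product of local rings, the rank-one projective $\gS_\F$-module $\gM$ is in fact free. Fixing a basis, the underlying module of both $\gM_{\Lambda,\lambda}$ and $\gM_{\Lambda,\lambda'}$ becomes the free $\gS_\Lambda$-module of rank one, and any isomorphism $\alpha$ between them is multiplication by a unit $s = (s_0,\ldots,s_{f'-1}) \in \gS_\Lambda^\times \cong \prod_i \Lambda[[u]]^\times$.

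Next, I would translate the condition that $\alpha$ commutes with the two Frobenius maps $\Phi_{\gM_{\Lambda,\lambda}}$ and $\Phi_{\gM_{\Lambda,\lambda'}}$. Recalling that $\varphi$ on $\gS_\Lambda$ sends $e_i \mapsto e_{i+1}$ and $u \mapsto u^p$, and that the two Frobenius structures agree on all components except that the map $\varphi^*\gM_{f'-1} \to \gM_0$ is scaled by $\lambda$ versus $\lambda'$, one obtains the relations $s_i = \varphi(s_{i-1})$ for $i \neq 0$ together with $\lambda s_0 = \lambda' \varphi(s_{f'-1})$. The necessary cancellations are legitimate because each $\Phi_i$ is represented by an element of $\Lambda[[u]]$ that divides a power of $E(u)$, hence is a non-zero-divisor, using that $E(u) \equiv u^{e'}$ up to a unit modulo $p$. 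Iterating collapses these relations to the single identity
\[
  \lambda\, s_0(u) \;=\; \lambda'\, s_0(u^{p^{f'}})
\]
in $\Lambda[[u]]$.

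The remaining step is a direct coefficient comparison. Writing $s_0 = \sum_{n \geq 0} a_n u^n$, matching coefficients of $u^N$ yields $\lambda a_N = \lambda' a_{N/p^{f'}}$ when $p^{f'} \mid N$ and $\lambda a_N = 0$ otherwise. Since $\lambda \in \Lambda^\times$, iterating these relations forces $a_N = 0$ for every $N \geq 1$ (as $N/p^{f'k}$ eventually leaves the positive integers), so $s_0 = a_0 \in \Lambda$; because $s_0$ is a unit in $\Lambda[[u]]$, its constant term $a_0$ lies in $\Lambda^\times$, and the relation $\lambda a_0 = \lambda' a_0$ gives $\lambda = \lambda'$. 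The \'etale $\varphi$-module assertion follows from the same argument performed in $\Lambda((u))$ in place of $\Lambda[[u]]$: the identical coefficient analysis forces $s_0$ to be a constant, and a constant that is a unit in $\Lambda((u))$ is necessarily a unit in $\Lambda$. The only real work is notational bookkeeping; no substantive obstacle is anticipated.
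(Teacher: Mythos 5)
Your proof is correct and follows essentially the same route as the paper's: decompose along the idempotents $e_i$, read off the commutation relation $\lambda_i s_i = \lambda'_i\varphi(s_{i-1})$, iterate to $\lambda s_0 = \lambda'\varphi^{f'}(s_0)$, and then force $s_0$ to be a (unit) constant by comparing degrees. The only cosmetic differences are the order of the two cases (the paper treats the \'etale case first) and that the paper phrases the final step in terms of the lowest negative and lowest positive degree of $\mu_0$, rather than your coefficient-by-coefficient induction; these are interchangeable.
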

\begin{proof}
Again, we give the proof for~$M$, the argument for~$\gM$ being
essentially identical. Write $M_i=\F((u))m_i$, and write
$\Phi_i(1\otimes m_{i-1})=\theta_{i}m_{i}$, where $\theta_{i}\ne 0$. There
are $\mu_i\in \Lambda[[u]][1/u]$ such that the given isomorphism
$M_{\Lambda,\lambda}\cong M_{\Lambda,\lambda'}$  takes~$m_i$ to $\mu_im_i$. The
commutation relation between the given isomorphism and~$\Phi_{M}$ imposes the
condition \[\lambda_{i}\mu_{i}\theta_{i}m_{i}=\lambda'_{i}\varphi(\mu_{i-1})\theta_{i}m_{i}\]where
$\lambda_{i}$ (resp.\ $\lambda'_i$) equals~$1$ unless $i=0$, when it equals~$\lambda$
(resp.\ $\lambda'$). 

Thus we have $\mu_{i}=(\lambda'_i/\lambda_i)\varphi(\mu_{i-1})$, so that in
particular $\mu_0=(\lambda'/\lambda)\varphi^{f'}(\mu_0)$. Considering the powers
of~$u$ in~$\mu_0$ of lowest negative and positive degrees we
conclude that $\mu_0\in W(k') \otimes \Lambda$; but then
$\mu_0=\varphi^{f'}(\mu_0)$, so that~$\lambda'=\lambda$, as required.
\end{proof}

\begin{remark} 
If $\gM$ has height at most~$h$, and we let $\cC$ (temporarily) denote the
moduli stack of rank one Breuil--Kisin modules of height at most~$h$ with $\F$-coefficients and
descent data then Lemma~\ref{lem: isomorphic twists are the same twist} can be interpreted as saying that 
the morphism $\Gm \to \cC$ 
that classifies $\gM_{\Gm,x}$ 
is a monomorphism, i.e.\ the diagonal morphism $\Gm \to \Gm
\times_{\cC}
\Gm$ is an isomorphism. Similarly, the morphism $\Gm\to\cR$
(where we temporarily let $\cR$ denote the moduli stack
of rank one \'etale $\varphi$-modules with $\F$-coefficients and descent data)
that classifies $M_{\Gm,x}$ is a monomorphism.
\end{remark}

Now choose another rank one Breuil--Kisin module with descent data $\gN$ over $\F$.
Let $(x,y)$ denote the standard coordinates on $\Gm\times_{\F} \Gm$,
and consider the rank one Breuil--Kisin modules with descent data $\gM_{\Gm\times_{\F} \Gm,x}$ 
and $\gN_{\Gm\times_{\F} \Gm,y}$ 
over $\Gm\times_{\F} \Gm$.

\begin{lemma}\label{lem: generically no Homs}
There is a non-empty irreducible affine open subset $\Spec \Adist$ of $\Gm\times_{\F} \Gm$
whose finite type points 
are exactly the maximal ideals $\mathfrak m$ of $\Gm\times_{\F} \Gm$
such that
 \[\Hom_{\K{\kappa(\mathfrak m)}}\bigl( \gM_{\kappa(\mathfrak m),\xbar}[1/u],
\gN_{\kappa(\mathfrak m),\ybar}[1/u]\bigr)=0\]
{\em (}where we have written $\xbar$ and $\ybar$ to
denote the images of $x$ and $y$ in $\kappa(\mathfrak m)^{\times}${\em )}.

Furthermore, if $R$ is any finite-type $\Adist$-algebra,
and if $\mathfrak m$ is any maximal ideal of~$R$,
then 
\[\Hom_{\K{\kappa(\mathfrak m)}}\bigl( \gM_{\kappa(\mathfrak m),\xbar},
\gN_{\kappa(\mathfrak m),\ybar}\bigr)
= \Hom_{\K{\kappa(\mathfrak m)}}\bigl( \gM_{\kappa(\mathfrak m),\xbar}[1/u],
\gN_{\kappa(\mathfrak m),\ybar}[1/u]\bigr)
= 0,\]
and also
\[
\Hom_{\K{\kappa(\mathfrak m)}}\bigl( \gN_{\kappa(\mathfrak m),\ybar},
\gM_{\kappa(\mathfrak m),\xbar}\bigr)
=
 \Hom_{\K{\kappa(\mathfrak m)}}\bigl( \gN_{\kappa(\mathfrak m),\ybar}[1/u],
\gM_{\kappa(\mathfrak m),\xbar}[1/u]\bigr)=0.\]
In particular, Assumption~{\em \ref{assumption:vanishing}}
is satisfied by $\gM_{\Adist,x}$ and $\gN_{\Adist,y}$.
\end{lemma}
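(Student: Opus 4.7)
\emph{Proof plan.} The plan is to reduce all four Hom-vanishings to a single non-isomorphism condition on rank one \'etale $\varphi$-modules, and then to cut out the ``bad locus'' in $\Gm \times_\F \Gm$ by a single linear equation.

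First I would use that $\gM_{\kappa,\xbar}$ and $\gN_{\kappa,\ybar}$ are $u$-torsion free, so that the natural maps
\[\Hom_{\K{\kappa}}(\gM_{\kappa,\xbar}, \gN_{\kappa,\ybar}) \hookrightarrow \Hom_{\K{\kappa}}(\gM_{\kappa,\xbar}[1/u], \gN_{\kappa,\ybar}[1/u])\]
and its analogue with $\gM$ and $\gN$ swapped are injective; it then suffices to identify the locus where the \'etale Hom vanishes. By Lemma~\ref{lem:rank one isomorphism over a field} (and its $\gN \to \gM$ variant), this \'etale Hom is nonzero if and only if $\gM_{\kappa,\xbar}[1/u] \cong \gN_{\kappa,\ybar}[1/u]$.

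Next I would set $M := \gM[1/u]$ and $N := \gN[1/u]$, so that $\gM_{\kappa,\xbar}[1/u] = M_{\kappa,\xbar}$ and likewise for $N$. Since twisting by $\xbar^{-1}$ is an auto-equivalence of the category of \'etale $\varphi$-modules with descent data and $\kappa$-coefficients, one has $M_{\kappa,\xbar} \cong N_{\kappa,\ybar}$ if and only if $M \otimes_\F \kappa \cong N_{\kappa,\ybar/\xbar}$. By Lemma~\ref{lem: isomorphic twists are the same twist} there is at most one $\mu \in \Fbar^\times$ with $M \otimes_\F \Fbar \cong N_{\Fbar,\mu}$; I would then apply any $\sigma \in \Gal(\Fbar/\F)$ to such an isomorphism, using that $M,N$ are defined over $\F$ (so $\sigma^* M = M$, $\sigma^* N = N$, while $\sigma^* N_{\Fbar,\mu} = N_{\Fbar,\sigma(\mu)}$) to produce another isomorphism $M \otimes_\F \Fbar \cong N_{\Fbar,\sigma(\mu)}$; by the uniqueness clause, $\sigma(\mu) = \mu$, so $\mu \in \F^\times$. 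I would take $\Adist := \F[x^{\pm 1},y^{\pm 1},(y - \mu x)^{-1}]$ when such a $\mu$ exists, and $\Adist := \F[x^{\pm 1},y^{\pm 1}]$ otherwise; in either case $\Spec \Adist$ is a nonempty irreducible affine open of $\Gm \times_\F \Gm$ whose finite type points are precisely those where the \'etale Hom vanishes.

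For the ``furthermore'' clause, any finite-type $\Adist$-algebra $R$ is a finitely generated $\F$-algebra, hence Noetherian and Jacobson; the residue field at any maximal ideal $\mathfrak m$ of $R$ is then finite over $\F$, and the images $\xbar,\ybar$ of $x,y$ in $\kappa(\mathfrak m)^\times$ satisfy $\ybar \neq \mu\xbar$, so the twist argument gives the \'etale Hom vanishings in both directions and the Breuil--Kisin Hom vanishings follow from the injectivity at the start. Assumption~\ref{assumption:vanishing} is then immediate from the Noetherian Jacobson property of $\Adist$ together with these vanishings. I expect the main obstacle to be the Galois-descent step producing $\mu \in \F^\times$: it crucially combines the uniqueness statement of Lemma~\ref{lem: isomorphic twists are the same twist} with the hypothesis that $\gM$ and $\gN$ are defined over $\F$, and without uniqueness one could only conclude that $\mu$ lies in some $\Gal(\Fbar/\F)$-stable finite set.
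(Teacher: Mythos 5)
Your proof is correct and follows essentially the same strategy as the paper: reduce via Lemma~\ref{lem:rank one isomorphism over a field} to a non-isomorphism condition on rank one \'etale $\varphi$-modules, identify a single forbidden ratio $\mu\in\F^\times$, and take $\Adist$ to be the open where $y\neq\mu x$. Two small differences, both to your credit as exposition though neither changes the mathematical content: (i)~the paper simply asserts that ``the ratio $a'/a$ lies in $\F$,'' whereas you give the Galois-descent argument, correctly combining the uniqueness clause of Lemma~\ref{lem: isomorphic twists are the same twist} with the $\F$-rationality of $\gM$ and $\gN$ to force $\sigma(\mu)=\mu$ for all $\sigma\in\Gal(\Fbar/\F)$; (ii)~for the ``furthermore'' clause, the paper reduces to maximal ideals of $\Adist$ via the Jacobson property and Lemma~\ref{lem: flat base change for Homs}, while you instead rerun the twist criterion directly at the residue field $\kappa(\mathfrak m)$ of $R$ (which is legitimate because Nullstellensatz makes $\kappa(\mathfrak m)$ finite over $\F$, so it embeds into $\Fbar$ and the uniqueness of $\mu$ applies). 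Your route avoids invoking the flat base change lemma at the cost of re-deriving the criterion, so it is a mild trade-off rather than a genuinely different method.
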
 
\begin{proof} 
If  $\Hom\bigl(\gM_{\kappa(\mathfrak m),\xbar}[1/u],\gN_{\kappa(\mathfrak m),\ybar}[1/u])=0$ 
for all maximal ideals~$\m$ of $\F[x^{\pm 1},y^{\pm 1}]$, then we are
done: $\Spec A^{\dist} = \Gm\times\Gm$. Otherwise, we see that
for some finite extension $\F'/\F$ and some $a,a'\in\F'$, we have a
non-zero morphism $\gM_{\F',a}[1/u]\to\gN_{\F',a'}[1/u]$. By
Lemma~\ref{lem:rank one isomorphism over a field}, this morphism must
in fact be an isomorphism. 
Since $\gM$ and $\gN$ are both defined over $\F$, we furthermore see
that the ratio $a'/a$ lies in $\F$.
We then let $\Spec\Adist$ be the affine open subset of $\Gm\times_{\F}
\Gm$ where $a'x\ne ay$; the claimed property of $\Spec A^{\dist}$ 
then follows easily from 
Lemma~\ref{lem: isomorphic twists are the same twist}.

For the remaining statements of the lemma,
note that if $\mathfrak m$ is a maximal
ideal in a finite type $A^{\dist}$-algebra, then its pull-back to $A^{\dist}$
is again a maximal ideal $\mathfrak m'$ 
of $A^{\dist}$ (since $A^{\dist}$ is Jacobson),
and the vanishing of
$$
\Hom_{\K{\kappa(\mathfrak m)}}\bigl( \gM_{\kappa(\mathfrak m),\xbar}[1/u],
\gN_{\kappa(\mathfrak m),\ybar}[1/u]\bigr)
$$
follows from the corresponding statement for $\kappa(\mathfrak m')$,
together with  Lemma~\ref{lem: flat base change for Homs}.

Inverting $u$ induces an embedding
\[\Hom_{\K{\kappa(\mathfrak m)}}\bigl( \gM_{\kappa(\mathfrak m),\xbar},
\gN_{\kappa(\mathfrak m),\ybar}\bigr)
\hookrightarrow
\Hom_{\K{\kappa(\mathfrak m)}}\bigl( \gM_{\kappa(\mathfrak m),\xbar}[1/u],
\gN_{\kappa(\mathfrak m),\ybar}[1/u]\bigr),\]
and so certainly the vanishing of the target implies the vanishing of
the source.

The statements in which the roles of $\gM$ and $\gN$
are reversed follow from 
Lemma~\ref{lem:rank one isomorphism over a field}. 
\end{proof}

Define
$T := \Ext^1_{\K{\Gm\times_{\F}\Gm}}\bigl(\gM_{\Gm\times_{\F} \Gm,x},\gM_{\Gm\times_{\F}\Gm,y})$;
it follows from 
Proposition~\ref{prop:exts are f.g. over A} that
$T$ is finitely generated over $\F[x^{\pm 1},y^{\pm 1}],$  
while
Proposition~\ref{prop:base-change for exts}
shows that
$T_\Adist := T\otimes_{\F[x^{\pm 1}, y^{\pm 1}]} \Adist$ is naturally isomorphic to
$\Ext^1_{\K{\Adist}}\bigl(\gM_{\Adist,x}, \gN_{\Adist,y}\bigr)$. (Here and elsewhere
we abuse notation by writing $x$, $y$ for $x|_{\Adist}$, $y|_{\Adist}$.)
Corollary~\ref{cor:freeness for exts} and Lemma~\ref{lem: generically no Homs}
show that $T_\Adist$ is in fact a 
finitely generated projective $\Adist$-module.
If, for any $\Adist$-algebra $B$, we write $T_B := T_\Adist \otimes_\Adist B \iso
 T\otimes_{\F[x^{\pm 1}, y^{\pm 1}]} B$,
then 
Proposition~\ref{prop:base-change for exts} again shows that
$T_B \iso \Ext^1_{\K{B}}\bigl(\gM_{B,x}, \gN_{B,y}\bigr)$.

By Propositions~\ref{prop: base change for kernel of map to etale Ext}
and~\ref{prop: we have vector bundles}, together with
Lemma~\ref{lem: generically no Homs},
there is a nonempty (so dense) affine open subset ~$\Spec\Akfree$ of
  $\Spec \Adist$ with the properties that \[U_\Akfree :=
  \kExt^1_{\K{\Akfree}}(\gM_{\Akfree,x},\gN_{\Akfree,y})\] and
\begin{multline*}
	T_\Akfree/U_\Akfree \\
 \iso
  \Ext^1_{\K{\Akfree}}(\gM_{\Akfree,x},\gN_{\Akfree,y})/\kExt^1_{\K{\Akfree}}(\gM_{\Akfree,x},\gN_{\Akfree,y})
  \end{multline*}
  are finitely generated and projective over $\Akfree$, and furthermore so
  that for all finitely generated $\Akfree$-algebras $B$, the formation of
$\kExt^1_{\K{B}}(\gM_{B,x},\gN_{B,y})$ and 
$\Ext^1_{\K{B}}(\gM_{B,x},\gN_{B,y})/\kExt^1_{\K{B}}(\gM_{B,x},\gN_{B,y})$
is compatible with base change from $U_\Akfree$ and $T_\Akfree/U_\Akfree$ respectively.

We choose a finite rank projective module $V$ over $\F[x^{\pm 1},y^{\pm 1}]$
admitting a surjection $V \to T$.
Thus, if we write $V_\Adist := V\otimes_{\F[x^{\pm 1}, y^{\pm 1}]} \Adist$,
then the induced morphism $V_\Adist \to T_\Adist$ is a (split) surjection of
$\Adist$-modules.

Following the prescription 
of Subsection~\ref{subsec:families of extensions},
we form the symmetric algebra $\Btwist := \F[x^{\pm 1}, y^{\pm
  1}][V^{\vee}],$ 
and construct the  family of extensions $\widetilde{\gE}$
over $\Spec \Btwist$.    We may similarly form the symmetric algebras
$\Bdist := \Adist[T_{\Adist}^{\vee}]$ and $\Bkfree := \Akfree[T_{\Akfree}^{\vee}]$, and construct the families 
of extensions  $\gEdist$ and $\gEkfree$ over $\Spec
\Bdist$ and $\Spec\Bkfree$ respectively. 
Since $T_\Akfree/U_\Akfree$ is projective, the natural morphism
$T_{\Akfree}^{\vee} \to U_{\Akfree}^{\vee}$ is surjective, and hence 
$\Ckfree := A[U_{\Akfree}^{\vee}]$ is a quotient of $\Bkfree$; geometrically,
$\Spec \Ckfree $ is a subbundle of the vector bundle $\Spec \Bkfree$
over $\Spec \Akfree$. 

We write $X := \Spec \Bkfree \setminus \Spec \Ckfree$; it is an open subscheme
of the vector bundle $\Spec \Bkfree$.  
The restriction of 
$\widetilde{\gE}'$ to~$X$ is the universal family of extensions over~$\Akfree$ which
do not split after inverting~$u$.

\begin{remark}
	\label{rem:Zariski density}
	Since $\Spec A^{\dist}$ and $\Spec A^{\kfree}$ are irreducible,
	each of the vector bundles $\Spec B^{\dist}$ and $\Spec B^{\kfree}$
 	is also irreducible.  In particular, $\Spec B^{\kfree}$ is Zariski dense
	in $\Spec B^{\dist}$, and if $X$ is non-empty, then it is Zariski dense
	in each of $\Spec B^{\kfree}$ and $\Spec B^{\dist}$.   Similarly,
	$\Spec \Btwist \times_{\Gm\times_{\F}\Gm} \Spec A^{\dist}$ is Zariski dense in
	$\Spec \Btwist$.
\end{remark}

The surjection $V_\Adist \to T_\Adist$ induces a surjection of vector bundles
$\pi: \Spec \Btwist\times_{\Gm\times_{\F}\Gm} \Spec \Adist \to \Spec \Bdist$ over $\Spec \Adist$, 
and there is a natural isomorphism
\numequation
\label{eqn:pull-back iso}
 \pi^*\gEdist \iso
\widetilde{\gE}\cotimes_{\F[x^{\pm 1},y^{\pm 1}]} \Adist.
\end{equation}

The rank two Breuil--Kisin module with descent data
$\widetilde{\gE}$ is classified by a morphism
$\xi:\Spec \Btwist \to \cC^{\dd,1}$; 
similarly,
the rank two Breuil--Kisin module with descent data
$\gEdist$ is classified by a morphism
$\xi^\dist: \Spec \Bdist \to \cC^{\dd,1}.$
If we write $\xi_\Adist$ for the restriction of $\xi$ to the open
subset $\Spec \Btwist \times_{\Gm\times_{\F}\Gm} \Spec \Adist$ of $\Spec \Btwist$,
then the isomorphism~(\ref{eqn:pull-back iso}) shows that
$\xi^\dist\circ \pi = \xi_\Adist$.
We also write $\xi^{\kfree}$ for the restriction of $\xi^{\dist}$ to 
$\Spec B^{\kfree}$, and $\xi_X$ for the restriction of $\xi^{\kfree}$
to $X$.

\begin{lemma}\label{lem: scheme theoretic images coincide}
The scheme-theoretic images {\em (}in the sense
  of~{\em \cite[Def.\ 3.1.4]{EGstacktheoreticimages})} of
  $\xi:\Spec \Btwist\to \cC^{\dd,1}$,
$\xi^\dist:\Spec \Bdist\to \cC^{\dd,1}$,
and $\xi^{\kfree}: \Spec B^{\kfree}\to \cC^{\dd,1}$
all coincide; in particular, the
scheme-theoretic image of~$\xi$ is independent of the choice of
surjection $V\to T$, and the scheme-theoretic image of~$\xi^{\kfree}$ is
independent of the choice of~$\Akfree$. 
  If $X$ is non-empty, then the scheme-theoretic image
  of $\xi_X: X \to \cC^{\dd,1}$ also coincides with these other scheme-theoretic images, and is independent of the choice of $\Akfree$.
\end{lemma}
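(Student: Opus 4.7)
The plan is to prove the lemma by a chain of pairwise comparisons of scheme-theoretic images, each an instance of the following general principle (used freely throughout \cite{EGstacktheoreticimages}): if $f : W \to Z$ is a scheme-theoretically dominant morphism of schemes, and $g : Z \to \cC^{\dd,1}$ is any morphism, then the scheme-theoretic images of $g$ and $g \circ f$ coincide. I will apply this in two guises, namely when $f$ is either a faithfully flat surjection or the inclusion of a dense open subscheme of an integral scheme. Crucially, by Remark~\ref{rem:Zariski density} each of $\Spec \Btwist$, $\Spec \Bdist$, $\Spec \Bkfree$, and (when nonempty) $X$ is integral, so density is enough to yield scheme-theoretic dominance in each case.

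First I would compare $\xi$ with $\xi_\Adist$. The open subscheme $\Spec \Btwist \times_{\Gm \times_{\F} \Gm} \Spec \Adist$ is dense in $\Spec \Btwist$ by Remark~\ref{rem:Zariski density}, so the two scheme-theoretic images agree. Next I would compare $\xi_\Adist$ with $\xi^{\dist}$ using the identity $\xi^{\dist} \circ \pi = \xi_\Adist$: because $T_\Adist$ is projective, the surjection $V_\Adist \to T_\Adist$ is split, hence $\pi$ is a projection of vector bundles over $\Spec \Adist$, which is an affine faithfully flat surjection, hence scheme-theoretically dominant. Third, since $\Bkfree = \Bdist \otimes_{\Adist} \Akfree$ and $\Spec \Akfree$ is a dense open of the integral $\Spec \Adist$, the pullback $\Spec \Bkfree$ is a dense open of the integral $\Spec \Bdist$, so the images of $\xi^{\dist}$ and $\xi^{\kfree}$ agree. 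Finally, when $X$ is nonempty, Remark~\ref{rem:Zariski density} gives that $X$ is dense in $\Spec \Bkfree$, and the images of $\xi^{\kfree}$ and $\xi_X$ agree.

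The independence statements are then essentially automatic: the scheme-theoretic image of $\xi^{\dist}$ depends only on the canonical data $T_\Adist$ over $\Spec \Adist$, and not on the auxiliary choice of projective module $V$ surjecting onto $T$; so the common image is independent of this choice. Likewise, having shown that the image of $\xi^{\kfree}$ (and of $\xi_X$) equals that of $\xi^{\dist}$, we deduce that it does not depend on the choice of $\Spec \Akfree \subset \Spec \Adist$.

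I do not anticipate any serious obstacle here, since the argument is purely formal once one has the right bookkeeping. The one point requiring a small verification is that $\pi$ is genuinely scheme-theoretically dominant, which is immediate from its being a surjective affine morphism of integral schemes whose target is a quotient vector bundle of its source. All other steps reduce to recognising a dense open inclusion in an integral scheme and invoking the standard compatibility of scheme-theoretic image with such restriction from \cite[\S3.1]{EGstacktheoreticimages}.
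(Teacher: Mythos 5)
Your proof is correct and takes essentially the same approach as the paper's, which just invokes the Zariski density observations of Remark~\ref{rem:Zariski density}. You are a bit more explicit than the paper about the comparison of $\xi_\Adist$ with $\xi^\dist$ via the dominant projection $\pi$ of vector bundles (a step the paper's one-line proof leaves implicit), but there is no difference in substance.
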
 
\begin{proof} 
	This follows from the various observations about Zariski density 
	made in Remark~\ref{rem:Zariski density}.
\end{proof}

\begin{defn}
	\label{def:scheme-theoretic images}
  We let $\overline{\cC}(\gM,\gN)$ denote the scheme-theoretic image  of
  $\xi^\dist:\Spec \Bdist\to \cC^{\dd,1}$, and
  we let $\overline{\cZ}(\gM,\gN)$ denote the
  scheme-theoretic image of the composite
  $\xi^\dist:\Spec \Bdist\to \cC^{\dd,1}\to \cZ^{\dd,1}$.
Equivalently,
$\overline{\cZ}(\gM,\gN)$ is the scheme-theoretic image of the
composite $\Spec \Bdist\to \cC^{\dd,1}\to \cR^{\dd,1}$ (\emph{cf}.\
\cite[Prop.\ 3.2.31]{EGstacktheoreticimages}), and the scheme-theoretic
image of $\overline{\cC}(\gM,\gN)$
under the morphism $\cC^{\dd,1} \to \cZ^{\dd,1}$.
(Note that Lemma~\ref{lem: scheme theoretic images coincide}
provides various other alternative descriptions of $\overline{\cC}(\gM,\gN)$
(and therefore also~$\overline{\cZ}(\gM,\gN$)) as 
a scheme-theoretic image.)
\end{defn}
\begin{rem}
  \label{rem: C(M,N) is reduced}Note that $\overline{\cC}
		(\gM,\gN)$
and $\overline{\cZ}(\gM,\gN)$
are both reduced (because they are each defined as a scheme-theoretic
		image of~$\Spec\Bdist$, which is reduced by definition).
\end{rem}

As well as scheme-theoretic images, as in the preceding Lemma and Definition,
we will need to consider images of underlying topological spaces. If
$\mathcal{X}$ is an algebraic stack we let $|\mathcal{X}|$ be its
underlying topological space, as defined in 
 \cite[\href{https://stacks.math.columbia.edu/tag/04Y8}{Tag 04Y8}]{stacks-project}.

\begin{lem}
	\label{lem:ext images}
	The image of the morphism on underlying topological spaces
	$| \Spec \Btwist | \to | \cC^{\dd,1}|$ induced by $\xi$ is
	a constructible subset of $| \cC^{\dd,1}|$, and is
	independent of the choice of $V$.
\end{lem}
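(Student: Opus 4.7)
The plan is to treat the two assertions---constructibility and independence of $V$---separately, both reduced to an explicit description of the image on topological points.

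For the constructibility, I observe that $\xi:\Spec\Btwist\to\cC^{\dd,1}$ is a finite-type morphism, since $\Spec\Btwist$ is of finite type over $\F$ and $\cC^{\dd,1}$ is locally of finite type over $\F$. Since $\Spec\Btwist$ is quasi-compact, its image lies in some quasi-compact open substack $\cU\subset\cC^{\dd,1}$. I would then choose a smooth surjection $U\to\cU$ with $U$ a finite-type $\F$-scheme; the base change $\Spec\Btwist\times_{\cU}U\to U$ is a finite-type morphism of Noetherian schemes, whose image in $|U|$ is constructible by the classical Chevalley theorem. Since $|U|\to|\cU|$ is open and surjective (smooth morphisms being open), the image of $|\xi|$ in $|\cU|$, and hence in $|\cC^{\dd,1}|$, is constructible.

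For the independence of $V$, I would describe the image intrinsically, without reference to $V$. A morphism $\Spec\kappa\to\Spec\Btwist$ from the spectrum of a field corresponds to a triple consisting of scalars $a,b\in\kappa^\times$ and an element $v\in V\otimes_{\F[x^{\pm 1},y^{\pm 1}]}\kappa$, and its composition with $\xi$ classifies the Breuil--Kisin module $\gE_v$, namely the extension of $\gM_{\kappa,a}$ by $\gN_{\kappa,b}$ whose class equals the image of $v$ under the natural map $V\otimes\kappa\to T\otimes\kappa$. The key input is that $V\to T$ is surjective as a map of $\F[x^{\pm 1},y^{\pm 1}]$-modules, so by right exactness of tensor product $V\otimes\kappa\to T\otimes\kappa$ is surjective for every field $\kappa$. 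Consequently the image of $|\xi|$ consists precisely of the equivalence classes of pairs $(\kappa,\gE)$ where $\gE$ is a rank two Breuil--Kisin module with descent data over $\kappa$ that is an extension of $\gM_{\kappa,a}$ by $\gN_{\kappa,b}$ for some $(a,b)\in(\kappa^\times)^2$. This description makes no reference to the choice of $V$, so the image is independent of that choice.

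The main obstacle is the careful invocation of Chevalley's theorem in the stacky setting---specifically, the reduction to a scheme via a smooth atlas and the verification that openness of the smooth cover lets constructibility descend to $|\cC^{\dd,1}|$. Once that is in place, the $V$-independence is a formal consequence of the surjectivity of $V\otimes\kappa\to T\otimes\kappa$ and the parametric description of the family $\widetilde{\gE}$.
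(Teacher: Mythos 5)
Your constructibility argument is essentially the standard reduction to Chevalley via a smooth atlas; the paper simply cites a reference (Rydh, App.~D of the cited paper) for the same fact, so there is no substantive difference there (beyond the minor point that the fibre product $\Spec\Btwist\times_{\cU}U$ is a priori only an algebraic space rather than a scheme, which does not affect the Chevalley argument).

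Your treatment of independence of $V$ is correct but genuinely different from the paper's. The paper proceeds by first reducing, via a common refinement, to the case where $V'$ admits a surjection onto $V$ compatible with the maps to $T$; then $\Spec B' \to \Spec\Btwist$ is itself surjective, so the two images coincide for the trivial reason that $\xi'$ factors as a surjection followed by $\xi$. You instead extract an intrinsic, $V$-free description of the image: a point of $|\cC^{\dd,1}|$ lies in the image of $|\xi|$ if and only if some representative $\Spec\kappa\to\cC^{\dd,1}$ classifies a Breuil--Kisin module that is an extension of $\gM_{\kappa,a}$ by $\gN_{\kappa,b}$ for some $a,b\in\kappa^\times$; the crucial input is the surjectivity of $V\otimes\kappa\to T\otimes\kappa$ together with the compatibility of $\Ext^1$ with base change (Proposition~\ref{prop:base-change for exts}), which identifies $T\otimes\kappa$ with $\Ext^1_{\K{\kappa}}(\gM_{\kappa,a},\gN_{\kappa,b})$. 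Your version is slightly less economical but arguably more illuminating, since it isolates exactly what the image is, rather than only showing that two competing images agree; it also makes transparent that the image coincides with the constructible set later denoted $|\cC(\gM,\gN)|$.
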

\begin{proof}
	The fact that the image of $|\Spec \Btwist|$ is a constructible 
	subset of $|\cC^{\dd,1}|$ follows from the fact that
	$\xi$ is a morphism of finite presentation between Noetherian
        stacks; see~\cite[App.\ D]{MR2818725}. 
	Suppose now that $V'$ is another choice of finite rank projective
	$\F[x^{\pm 1},y^{\pm 1}]$-module surjecting onto~$T$.  Since it
        is possible to choose
	a finite rank projective module surjecting onto the pullback of $V,V'$ with respect to
        their maps to $T$, 
        we see that it suffices to prove the independence claim of the lemma
	in the case when $V'$ admits a surjection onto $V$ (compatible
	with the maps of each of $V$ and $V'$ onto $T$).
	If we write $B' := \F[x^{\pm 1}, y^{\pm 1}][(V')^{\vee}],$
	then the natural morphism $\Spec B' \to \Spec \Btwist$ is a surjection,
	and the morphism $\xi': \Spec B' \to \cC^{\dd,1}$ is the composite
	of this surjection with the morphism $\xi$.  Thus indeed
	the images of $|\Spec B'|$ and of $|\Spec \Btwist|$ coincide as
	subsets of $|\cC^{\dd,1}|$.
\end{proof}

\begin{df}
	\label{df:constructible images}
	We write $|\cC(\gM,\gN)|$ to denote the constructible subset
	of $|\cC^{\dd,1}|$ described in Lemma~\ref{lem:ext images}.
\end{df}

\begin{remark}
	We caution the reader that we don't define a substack $\cC(\gM,\gN)$
	of $\cC^{\dd,1}$. Rather, we have defined a closed substack
	$\overline{\cC}(\gM,\gN)$ of $\cC^{\dd,1}$, and a constructible subset
	$|\cC(\gM,\gN)|$ of $|\cC^{\dd,1}|$.  It follows from
	the constructions that $|\overline{\cC}(\gM,\gN)|$ is the
	closure in $|\cC^{\dd,1}|$ of $|\cC(\gM,\gN)|$.
\end{remark}

As in Subsection~\ref{subsec:families of extensions},
there is a natural action of $\Gm\times_{\F}\Gm$ on $T$,
and hence on each of $\Spec \Bdist$, $\Spec \Bkfree$ and~$X$,
given by the action of $\Gm$ as automorphisms on each of $\gM_{\Gm\times_{\F} \Gm,x}$ 
and $\gN_{\Gm\times_{\F} \Gm,y}$ (which induces a corresponding action on $T$,
hence on $T_\Adist$ and $T_\Akfree$, and hence on $\Spec \Bdist$ and
$\Spec \Bkfree$).   
Thus we may form the corresponding quotient stacks $[\Spec \Bdist / \Gm\times_{\F} \Gm]$ and  
$[X / \Gm\times_{\F} \Gm],$ each of which admits a natural morphism to~$\cC^{\dd,1}$. 

\begin{rem}
  \label{rem: potential confusion of two lots of Gm times Gm}Note that
  we are making use of two independent copies of $\Gm\times_{\F}\Gm$; one
  parameterises the different unramified twists of $\gM$ and $\gN$, and the other the
  automorphisms of (the pullbacks of) $\gM$ and $\gN$. 
\end{rem}
\begin{defn}
  \label{defn: strict situations}We say that the pair $(\gM,\gN)$ is
  \emph{strict} if $\Spec\Adist=\Gm\times_\F\Gm$.
\end{defn}

Before stating and proving the main result of this subsection,
we prove some lemmas (the first two of which amount to recollections
of standard --- and simple --- facts).

\begin{lem}
  \label{lem: fibre products of finite type}If $\cX\to\cY$ is a
  morphism of stacks over~$S$, with $\cX$ algebraic and of
  finite type over~$S$, and $\cY$ having diagonal which is
  representable by algebraic spaces and of finite type, then
  $\cX\times_{\cY}\cX$ is an algebraic stack of finite type
  over~$S$. 
\end{lem}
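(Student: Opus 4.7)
The plan is to realize $\cX \times_\cY \cX$ as a base change of the diagonal of $\cY$, and then chain together standard permanence properties of algebraicity and of ``finite type'' to conclude.

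First I would write down the standard 2-Cartesian square
\[
\begin{CD}
\cX \times_\cY \cX @>>> \cY \\
@VVV @VV\Delta_\cY V \\
\cX \times_S \cX @>>> \cY \times_S \cY,
\end{CD}
\]
where the lower horizontal arrow is the product (over $S$) of the given morphism $\cX \to \cY$ with itself, and the right vertical arrow is the diagonal of $\cY$ over $S$. By hypothesis, $\Delta_\cY$ is representable by algebraic spaces and of finite type, so the same is true of its base change, namely the left vertical arrow $\cX \times_\cY \cX \to \cX \times_S \cX$.

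Next I would observe that $\cX \times_S \cX$ is algebraic and of finite type over $S$, being the fibre product over $S$ of two algebraic stacks each of finite type over $S$ (see \cite[\href{https://stacks.math.columbia.edu/tag/04TF}{Tag 04TF}]{stacks-project} for algebraicity and the standard fact that fibre products of finite-type morphisms are of finite type).

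Finally I would invoke the general fact that if $\cW \to \cZ$ is representable by algebraic spaces (and of finite type) and $\cZ$ is an algebraic stack (and of finite type over $S$), then $\cW$ is itself algebraic (and of finite type over $S$); compare \cite[\href{https://stacks.math.columbia.edu/tag/05UM}{Tag 05UM}]{stacks-project}. Applied to the left vertical arrow above, this yields that $\cX \times_\cY \cX$ is algebraic and of finite type over $\cX \times_S \cX$, hence of finite type over $S$ by composition. None of the steps pose a real obstacle; the argument is a formal application of the permanence properties of algebraic stacks, and the key input is simply the hypothesis on $\Delta_\cY$.
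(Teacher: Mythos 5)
Your proof is correct and follows essentially the same route as the paper's: realize $\cX\times_\cY\cX\to\cX\times_S\cX$ as a base change of $\Delta_\cY$ and combine with the fact that $\cX\times_S\cX$ is algebraic of finite type over $S$. If anything you are slightly more careful than the paper on the algebraicity step: the paper simply cites Tag 04TF, which as stated requires $\cY$ itself to be algebraic, whereas your decomposition via the Cartesian square together with Tag 05UM handles cleanly the fact that $\cY$ is only assumed to have diagonal representable by algebraic spaces.
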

\begin{proof}
	The fact that $\cX\times_{\cY} \cX$ is an algebraic 
	stack follows
	from~\cite[\href{http://stacks.math.columbia.edu/tag/04TF}{Tag 04TF}]{stacks-project}.
  Since composites of morphisms of finite type are of finite type, in
  order to show that $\cX\times_{\cY} \cX$ is of finite type over $S$,
  it suffices to show that the natural morphism $\cX\times_{\cY} \cX
  \to \cX\times_S \cX$ is of finite type.  Since this
  morphism is the base-change of the diagonal morphism $\cY \to \cY\times_S \cY,$
  this follows by assumption.
\end{proof}

\begin{lem}\label{lem:fibres of kExt}
The following conditions are equivalent:\\
(1) $\kExt^1_{\K{\kappa(\mathfrak m)}}\bigl( \gM_{\kappa(\mathfrak m),\xbar},
\gN_{\kappa(\mathfrak m),\ybar}\bigr)= 0$
for all maximal ideals $\mathfrak m$ of $\Akfree$.\\
(2) $U_\Akfree = 0$.\\
(3) $\Spec C^{\kfree}$ is the trivial
vector bundle over $\Spec A^{\kfree}$. 
\end{lem}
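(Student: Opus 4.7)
\medskip

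The plan is to establish the three equivalences by combining the base-change compatibility of $\kExt^1$ with the fact that $U_\Akfree$ is a finitely generated projective $\Akfree$-module (both properties that were built into the choice of $\Akfree$ earlier in the subsection).

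For the equivalence (2) $\iff$ (3), I would simply unwind the definition $C^{\kfree} = \Akfree[U_\Akfree^{\vee}]$ as the symmetric algebra of the dual of $U_\Akfree$ over $\Akfree$. The vector bundle $\Spec C^{\kfree}$ is trivial (i.e.\ equal to $\Spec \Akfree$) if and only if $U_\Akfree^{\vee} = 0$; since $U_\Akfree$ is finitely generated projective, it vanishes if and only if its dual does, so this is equivalent to (2).

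The implication (2) $\Rightarrow$ (1) is immediate from the base-change property of $\kExt^1$ recorded in Proposition~\ref{prop: base change for kernel of map to etale Ext}: for each maximal ideal $\mathfrak{m}$ of $\Akfree$ one has $U_\Akfree \otimes_\Akfree \kappa(\mathfrak{m}) \cong \kExt^1_{\K{\kappa(\mathfrak m)}}\bigl(\gM_{\kappa(\mathfrak m),\xbar}, \gN_{\kappa(\mathfrak m),\ybar}\bigr)$, and if $U_\Akfree = 0$ all these fibres vanish.

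The remaining implication (1) $\Rightarrow$ (2) is the substantive one, but is still straightforward. Using the base-change isomorphism above, hypothesis (1) asserts that the finitely generated projective $\Akfree$-module $U_\Akfree$ has trivial fibre at every maximal ideal of $\Akfree$. By Nakayama's lemma applied to the finitely generated $\Akfree_\mathfrak{m}$-module $(U_\Akfree)_\mathfrak{m}$, this forces $(U_\Akfree)_\mathfrak{m} = 0$ for every maximal ideal $\mathfrak{m}$, and a standard annihilator argument then gives $U_\Akfree = 0$. I don't anticipate any real obstacle here; the only delicate point worth flagging is that the base-change result we invoke was only established after shrinking to $\Spec \Akfree$, which is precisely why the statement of the lemma is phrased over $\Akfree$ rather than over $\Adist$.
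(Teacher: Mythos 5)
Your proof is correct and follows essentially the same route as the paper: (2) $\iff$ (3) is definitional, and (1) $\iff$ (2) comes from the base-change compatibility of $\kExt^1$ over $\Akfree$ together with the elementary fact that a finitely generated module vanishes once all its fibres at maximal ideals do. The paper's version is terser (it asserts the equivalence directly, citing that $\Akfree$ is Jacobson), whereas you spell out the Nakayama/localisation step explicitly; your closing remark about why the lemma must be phrased over $\Akfree$ rather than $\Adist$ matches the paper's reason for the earlier shrinking.
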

\begin{proof}
  Conditions~(2) and~(3) are equivalent by definition. Since the formation of
$\kExt^1_{\K{\Akfree}}(\gM_{\Akfree,x},\gN_{\Akfree,y})$ is compatible with base change,
and since $\Akfree$ is Jacobson, (1) is equivalent to
the assumption that
$$\kExt^1_{\K{\Akfree}}(\gM_{\Akfree,x},\gN_{\Akfree,y})=0,$$
i.e.\ that~$U_\Akfree=0$, as required.
\end{proof}

\begin{lemma}
	\label{lem:C to R, with maps to Akfree}
If the equivalent conditions of Lemma~{\em \ref{lem:fibres of kExt}} hold,
	then the natural morphism
	\begin{multline*}
	\Spec B^{\kfree} \times_{\Spec A^{\kfree} \times_\F \cC^{\dd,1}}
		\Spec B^{\kfree}
		\\
		\to
	\Spec B^{\kfree} \times_{\Spec A^{\kfree} \times_\F \cR^{\dd,1}}
		\Spec B^{\kfree}
		\end{multline*}
		is an isomorphism.
\end{lemma}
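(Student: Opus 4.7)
We verify that the morphism is an isomorphism by comparing $C$-valued points, for $C$ an arbitrary $\F$-algebra. By Proposition~\ref{prop: the functor that Spec B represents}, a $C$-point of the source fiber product is the data of a common morphism $\alpha : \Spec C \to \Spec A^{\kfree}$, a pair of extension presentations $(\gE_i, \iota_i, \pi_i)_{i=1,2}$ of $\alpha^*\gM_x$ by $\alpha^*\gN_y$, and a Breuil--Kisin isomorphism $\beta_C : \gE_1 \iso \gE_2$ (not required to respect the $\iota_i, \pi_i$); a $C$-point of the target replaces $\beta_C$ by an \'etale $\varphi$-module isomorphism $\beta : \gE_1[1/u] \iso \gE_2[1/u]$. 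The morphism of the lemma sends $\beta_C \mapsto \beta_C[1/u]$, which is injective by $u$-torsion freeness of $\gE_2$, so the content of the statement is surjectivity.

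Given such a $\beta$, the composite $\alpha^*\gN \hookrightarrow \gE_1 \to \gE_1[1/u] \stackrel{\beta}{\to} \gE_2[1/u] \twoheadrightarrow \alpha^*\gM[1/u]$ defines, via Lemma~\ref{lem:ext-i-invert-u}(2), an element of $\Hom_{\K{C}[1/u]}(\alpha^*\gN[1/u], \alpha^*\gM[1/u])$. Lemma~\ref{lem: generically no Homs} gives the vanishing of this Hom group at every closed point of $\Spec A^{\kfree}$; Corollary~\ref{cor:vanishing homs with u inverted}, together with the fact that $A^{\kfree}$ is Jacobson, upgrades this to vanishing over any finite-type $A^{\kfree}$-algebra, and Proposition~\ref{prop:descent for Homs of free Kisin modules} combined with Lemma~\ref{lem:computing Hom as direct limit} extends it to arbitrary $C$. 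Hence $\beta$ preserves the $\alpha^*\gN[1/u]$-filtration, and its induced isomorphisms on the graded pieces are multiplication by scalars $\lambda_N, \lambda_M \in C^\times$ by Lemma~\ref{lem:rank one endos}. Letting $c_i \in \Ext^1_{\K{C}}(\alpha^*\gM, \alpha^*\gN)$ denote the extension class of $\gE_i$, the Yoneda description of $\Ext^1$ then translates the existence of $\beta$ (compatible with $\lambda_N, \lambda_M$) into the equality $\lambda_N c_1 = \lambda_M c_2$ in $\Ext^1_{\K{C}[1/u]}(\alpha^*\gM[1/u], \alpha^*\gN[1/u])$.

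The hypothesis $U_{A^{\kfree}} = 0$ is precisely the assertion that $\Ext^1_{\K{A^{\kfree}}}(\gM_{A^{\kfree},x}, \gN_{A^{\kfree},y}) \to \Ext^1_{\K{A^{\kfree}}[1/u]}(\gM_{A^{\kfree},x}[1/u], \gN_{A^{\kfree},y}[1/u])$ is injective; Propositions~\ref{prop: base change for kernel of map to etale Ext} and~\ref{prop:base-change for exts} propagate this injectivity after base change along $\alpha$ to $C$, so $\lambda_N c_1 = \lambda_M c_2$ already holds in $\Ext^1_{\K{C}}(\alpha^*\gM, \alpha^*\gN)$. Yoneda again produces a Breuil--Kisin isomorphism $\beta_C : \gE_1 \iso \gE_2$ inducing $\lambda_N$ on $\alpha^*\gN$ and $\lambda_M$ on $\alpha^*\gM$. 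Both $\beta$ and $\beta_C[1/u]$ are \'etale $\varphi$-module isomorphisms $\gE_1[1/u] \to \gE_2[1/u]$ inducing the same scalars on the graded pieces; their difference thus factors through $\Hom_{\K{C}[1/u]}(\alpha^*\gM[1/u], \alpha^*\gN[1/u])$, which vanishes by the symmetric version of the Hom vanishing used above. Hence $\beta = \beta_C[1/u]$, as required. The main technical effort lies in propagating the Hom- and $\kExt$-vanishings from fibres over $\Spec A^{\kfree}$ to arbitrary $A^{\kfree}$-algebras $C$, which is handled by the descent results already cited.
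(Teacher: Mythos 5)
Your argument reproduces the same key ingredients as the paper's proof and follows essentially the same route: using the Hom-vanishing of Lemma~\ref{lem: generically no Homs} to show that $\beta$ preserves the $\gN$-filtration and acts by scalars on the graded pieces, and using the $\kExt^1$-vanishing coming from the hypothesis to descend $\beta$ to a Breuil--Kisin isomorphism. One cosmetic difference is that you argue injectivity directly from faithfulness of $\gM \mapsto \gM[1/u]$ on $u$-torsion-free modules, while the paper observes that the morphism is automatically a closed immersion because $\cC^{\dd,1}\to\cR^{\dd,1}$ is proper (hence separated) and representable; both are fine.

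There is, however, a genuine gap in your treatment of the $\kExt$-vanishing for arbitrary $\F$-algebras $C$. You assert that Propositions~\ref{prop: base change for kernel of map to etale Ext} and~\ref{prop:base-change for exts} ``propagate this injectivity after base change along $\alpha$ to $C$,'' but Proposition~\ref{prop: base change for kernel of map to etale Ext} is stated only for finitely generated $A^{\kfree}$-algebras $B$ and finitely generated $B$-modules $Q$, so it does not directly yield $\kExt^1_{\K{C}}(\alpha^*\gM,\alpha^*\gN)=0$ for general $C$. (Your descent of the Hom-vanishing to arbitrary $C$ via Proposition~\ref{prop:descent for Homs of free Kisin modules} and Lemma~\ref{lem:computing Hom as direct limit} works, but no analogous result for $\kExt$ is cited or proved.) The paper avoids this issue by invoking Lemma~\ref{lem: fibre products of finite type} at the outset: since both fiber products are of finite type over $\F$, it suffices to check surjectivity on $C$-points for finite type $\F$-algebras $C$, in which case the cited propositions apply verbatim. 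With that reduction inserted, your argument goes through.
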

\begin{proof}
  Since $\cC^{\dd,1} \to \cR^{\dd,1}$ is separated (being
  proper) and representable, the diagonal morphism
  $\cC^{\dd,1} \to \cC^{\dd,1}\times_{\cR^{\dd,1}} \cC^{\dd,1}$
  is a closed immersion, and hence the morphism in the statement
  of the lemma is
  a closed immersion.   Thus, in order to show that it is an
  isomorphism, it suffices to show that it induces a surjection
  on $R$-valued points, for any $\F$-algebra $R$.  Since 
the source and target are of finite type
over $\F$, by Lemma~\ref{lem: fibre products of finite type}, 
we may in fact restrict attention to finite type $\F$-algebras.

A morphism $\Spec R\to \Spec\Bkfree
  \times_{\Spec A^{\kfree} \times_{\F} \cC^{\dd,1}}\Spec\Bkfree$
  corresponds to an isomorphism class
of tuples $(\alpha,\beta:\gE\to\gE',\iota,\iota',\pi,\pi')$, where
\begin{itemize}
\item $\alpha$ is a  morphism
$\alpha:\Spec R\to\Spec \Akfree$,
\item  $\beta:\gE\to\gE'$ is an isomorphism of Breuil--Kisin modules
with descent data and coefficients in $R$, 
\item $\iota:\alpha^* \gN \to \gE$, $\iota':\alpha^* \gN \to \gE'$, $\pi:\gE \to
  \alpha^* \gM$ and $\pi':\gE' \to
  \alpha^* \gM$ are morphisms
with the properties that $0 \to \alpha^*\gN \buildrel \iota
\over \to \gE \buildrel \pi  \over \to \alpha^* \gM \to 0$ and  $0 \to \alpha^*\gN \buildrel \iota'
\over \to \gE' \buildrel \pi'  \over \to \alpha^* \gM \to 0$ are both
short exact. 
\end{itemize}

Similarly,
a morphism $\Spec R\to \Spec\Bkfree
  \times_{\Spec A^{\kfree} \times_{\F} \cR^{\dd,1}}\Spec\Bkfree$
  corresponds to an isomorphism class
of tuples $(\alpha,\gE,\gE',\beta,\iota,\iota',\pi,\pi')$, where
\begin{itemize}
\item $\alpha$ is a  morphism
$\alpha:\Spec R\to\Spec \Akfree$,
\item  $\gE$ and $\gE'$ are Breuil--Kisin modules
with descent data and coefficients in $R$, 
and $\beta$ is an isomorphism
 $\beta:\gE[1/u]\to\gE'[1/u]$ of etale $\varphi$-modules with
 descent data and coefficients in $R$,
\item $\iota:\alpha^* \gN \to \gE$, $\iota':\alpha^* \gN \to \gE'$, $\pi:\gE \to
  \alpha^* \gM$ and $\pi':\gE' \to
  \alpha^* \gM$ are morphisms
with the properties that $0 \to \alpha^*\gN \buildrel \iota
\over \to \gE \buildrel \pi  \over \to \alpha^* \gM \to 0$ and  $0 \to \alpha^*\gN \buildrel \iota'
\over \to \gE' \buildrel \pi'  \over \to \alpha^* \gM \to 0$ are both
short exact. 
\end{itemize}

Thus to prove the claimed surjectivity, we have to show that, given
a tuple $(\alpha,\gE,\gE',\beta,\iota,\iota',\pi,\pi')$ associated
to a morphism $\Spec R\to \Spec\Bkfree
  \times_{\Spec A^{\kfree} \times_{\F} \cR^{\dd,1}}\Spec\Bkfree$,
  the isomorphism $\beta$ restricts to an isomorphism
  $\gE \to \gE'.$ 

By 
Lemma~\ref{lem:fibres of kExt}, the
  natural map
  $\Ext^1(\alpha^*\gM,\alpha^*\gN)\to\Ext^1_{\K{R}}(\alpha^*\gM[1/u],\alpha^*\gN[1/u])$
  is injective; so the Breuil--Kisin modules $\gE$ and $\gE'$ are isomorphic. Arguing as in the proof of Corollary~\ref{cor:
    monomorphism to Spec A times C}, we see that~$\beta$ is equivalent
  to the data of an $R$-point of~$\Gm\times_{\cO}\Gm$,
  corresponding to the automorphisms of $\alpha^*\gM[1/u]$ and
  $\alpha^*\gN[1/u]$ that it induces. These restrict to
  automorphisms of $\alpha^*\gM$ and $\alpha^*\gN$, so that
   (again by the proof of Corollary~\ref{cor:
    monomorphism to Spec A times C}) 
  $\beta$ indeed restricts to an
  isomorphism $\gE\to\gE'$, as required.
\end{proof}
	
We now present the main result of this subsection.

\begin{prop}\label{prop: construction of family monomorphing to C and R}
{\em (1)} The morphism
			$\xi^{\dist}$ induces a morphism 
\numequation
\label{eqn:unramified morphism}
[\Spec \Bdist / \Gm \times_{\F} \Gm ] \to
\cC^{\dd,1},
\end{equation}which is representable by algebraic spaces, of finite type,
			and unramified,
whose fibres over finite type points  are of degree $\leq 2$. 
In the strict case, this induced morphism is in fact a monomorphism,
while in general, the restriction $\xi_X$ of $\xi^{\dist}$
induces a finite type monomorphism 
  \numequation
  \label{eqn:monomorphism}
[X / \Gm \times_{\F} \Gm ] \hookrightarrow
\cC^{\dd,1}.
\end{equation}

{\em (2)}
If
$\kExt^1_{\K{\kappa(\mathfrak m)}}\bigl( \gM_{\kappa(\mathfrak m),\xbar},
\gN_{\kappa(\mathfrak m),\ybar}\bigr)=0$
for all maximal ideals $\mathfrak m$ of $\Akfree$, 
then the composite morphism
\numequation
\label{eqn:second unramified morphism}
[\Spec B^{\kfree}/\Gm\times_\F\Gm]\to \cC^{\dd,1}\to\cR^{\dd,1}
\end{equation}
is a representable by algebraic spaces, of finite type,
and unramified,
with fibres of degree $\leq 2.$
In the strict case, this induced morphism is in fact a monomorphism,
while in general,
the composite morphism
\numequation
\label{eqn:second monomorphism}
[X/\Gm\times_\F\Gm]\hookrightarrow \cC^{\dd,1}\to\cR^{\dd,1}
\end{equation}
is a finite type monomorphism. 
\end{prop}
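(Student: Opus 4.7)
The plan is to factor the morphism and examine it piece-by-piece, using Corollary~\ref{cor: monomorphism to Spec A times C} to reduce the unramified and fiber-degree assertions to a concrete sub-module analysis.

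For part (1), I would first apply Corollary~\ref{cor: monomorphism to Spec A times C} to the pair $(\gM_{A^{\dist},x}, \gN_{A^{\dist},y})$. The required Assumption~\ref{assumption:vanishing} is verified by Lemma~\ref{lem: generically no Homs}, and the automorphism condition follows from Lemma~\ref{lem:rank one endos}. This would yield a finite-type monomorphism
\[ [\Spec \Bdist / \Gm \times_\F \Gm ] \hookrightarrow \Spec A^{\dist} \times_\cO \cC^{\dd,1},\]
and composing with the second projection (representable by schemes, smooth of finite type) would immediately give that $[\Spec \Bdist / \Gm\times_\F\Gm] \to \cC^{\dd,1}$ is representable by algebraic spaces and of finite type.

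Next I would analyse the geometric fibers over a finite-type point $\gE:\Spec \kappa \to \cC^{\dd,1}$. A geometric fiber point corresponds to a saturated rank-one sub-Breuil--Kisin-module $F \hookrightarrow \gE_{\bar\kappa}$ isomorphic to some $\gN_{\bar\kappa,y}$ with quotient isomorphic to some $\gM_{\bar\kappa,x}$ for $(x,y)\in A^{\dist}(\bar\kappa)$, since the scalings of $\iota$ and $\pi$ have been quotiented out and $(x,y)$ is then determined by the isomorphism classes of $F$ and $\gE_{\bar\kappa}/F$ via Lemma~\ref{lem: isomorphic twists are the same twist}. Two such subs of $\gE_{\bar\kappa}$ either coincide or intersect trivially; in the latter case $\gE_{\bar\kappa}$ splits as the direct sum of the two subs, and the identification of one summand with the quotient by the other forces $\gM \cong \gN$ as Breuil--Kisin modules over $\F$. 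Hence the fiber has at most two geometric points. Reducedness of the fiber at each point follows from the vanishing of the obstruction to uniqueness of the sub in infinitesimal deformations, namely $\Hom_{\K{\bar\kappa}}(\gN_{\bar\kappa,y}, \gM_{\bar\kappa,x}) = 0$ on $A^{\dist}$. Hence the morphism is unramified with fibers of degree at most two.

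In the strict case, $A^{\dist} = \Gm \times_\F \Gm$ forces $\gM \not\cong \gN$ over $\F$ (otherwise the point $(1,1)$ would not lie in $A^{\dist}$), so the split-swap scenario in the fiber analysis cannot occur; the fiber above any geometric point has at most one element, and uniqueness up to the $\Gm\times_\F\Gm$-action yields a true monomorphism. For general $(\gM,\gN)$, the open subscheme $X \subset \Spec \Bkfree$ excludes extensions split after inverting $u$, and hence excludes all BK-split extensions; the same fiber analysis then proves that~\eqref{eqn:monomorphism} is a finite-type monomorphism.

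For part (2), the additional input is Lemma~\ref{lem:C to R, with maps to Akfree}, whose hypothesis is precisely the assumed $\kExt^1$-vanishing. That lemma identifies the fiber product of $\Spec B^{\kfree}$ with itself over $\Spec A^{\kfree} \times_\F \cC^{\dd,1}$ with the analogous fiber product over $\Spec A^{\kfree} \times_\F \cR^{\dd,1}$; combined with part~(1), this upgrades to a monomorphism $[\Spec B^{\kfree}/\Gm\times_\F\Gm] \hookrightarrow \Spec A^{\kfree} \times_\cO \cR^{\dd,1}$. Composing with the projection and rerunning the fiber analysis for \'etale $\varphi$-modules (noting that under the vanishing hypothesis the \'etale $\varphi$-module $\gE[1/u]$ splits precisely when $\gE$ itself is BK-split, so both are excluded on $X$) yields the stated representability, finite type, unramified, and fiber-degree bound. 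The main bookkeeping obstacle throughout is merely keeping the two independent copies of $\Gm \times \Gm$ distinct (\emph{cf.}\ Remark~\ref{rem: potential confusion of two lots of Gm times Gm}): one parametrises unramified twists of $\gM$ and $\gN$, the other rescales $\iota$ and $\pi$.
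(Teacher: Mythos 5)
Your proposal follows the same basic strategy as the paper's proof --- reduce to the monomorphism $[\Spec\Bdist/\Gm\times_\F\Gm]\hookrightarrow\Spec\Adist\times_{\F}\cC^{\dd,1}$ via Corollary~\ref{cor: monomorphism to Spec A times C}, then analyse when two fibre points coincide and show that distinctness forces a non-zero map $\gN\to\gM$ --- but you execute it via geometric fibres rather than via $R$-points of the fibre product $\Spec\Bdist\times_{\cC^{\dd,1}}\Spec\Bdist$, which is what the paper does. The paper's route is to show the diagonal of $[\Spec\Bdist/\Gm\times_\F\Gm]\to\cC^{\dd,1}$ is a closed immersion (as a base change of the diagonal of $\Spec\Adist$) and then formally smooth, hence \'etale, hence an open immersion; the essential technical point is the vanishing of the key composite $\alpha^*\gN\to\gE\to\gE'\to(\alpha')^*\gM$, which is detected at a maximal ideal of $R$ via Proposition~\ref{prop:vanishing of homs} and Lemma~\ref{lem:rank one isomorphism over a field}. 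Your approach of checking unramifiedness fibre-by-fibre is valid in principle for a finite type morphism, but the step ``reducedness of the fibre \ldots\ follows from the vanishing of the obstruction to uniqueness of the sub'' is exactly the point that requires the careful square-zero thickening argument of the paper; as written it is an assertion, not a proof. You should also be aware that identifying the tangent space to the fibre with a single $\Hom$ group requires unwinding the $\Gm\times_\F\Gm$-quotient and verifying the identification of deformations of the pair $(\alpha, \iota, \pi)$ with that $\Hom$, which is non-trivial.

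Two further imprecisions. First, when two distinct rank-one saturated sub-Breuil--Kisin-modules $F_1, F_2$ of $\gE_{\bar\kappa}$ intersect trivially, it is \emph{not} true that $\gE_{\bar\kappa}=F_1\oplus F_2$ as Breuil--Kisin modules; the sum $F_1\oplus F_2$ is a finite-index submodule, and only after inverting $u$ does one obtain a splitting $\gE_{\bar\kappa}[1/u]=F_1[1/u]\oplus F_2[1/u]$. This distinction matters precisely because the hypotheses on $X$ and on $\kExt^1$ are formulated in terms of splitting after inverting~$u$. Second, the conclusion of the splitting analysis is not that $\gM\cong\gN$ over $\F$, but rather that $\gN_{\bar\kappa,y_2}[1/u]\cong\gM_{\bar\kappa,x_1}[1/u]$ for the relevant parameters, i.e.\ that $\gM[1/u]$ and $\gN[1/u]$ are unramified twists of one another. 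In the strict case this is directly excluded by the definition of $\Adist$ (no point of $\Gm\times_\F\Gm$ witnesses such an isomorphism), which is stronger and more to the point than the observation about $(1,1)$. Your argument does reach the correct conclusions, but these intermediate claims should be corrected before the proof is written out carefully.
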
 

\begin{remark}\label{rem:explain-hypotheses}
The failure of~\eqref{eqn:unramified morphism} to be a monomorphism in
general is due, effectively,  to the possibility that an extension $\gE$ of some 
$\gM_{R,x}$ by $\gN_{R,y}$ and an extension $\gE'$ of some
$\gM_{R,x'}$ by $\gN_{R,y'}$ might be isomorphic as
Breuil--Kisin modules while nevertheless $(x,y)\neq (x',y')$. As we
will see in the proof,
whenever this happens the map $\gN_{\Lambda,y} \to \gE\to \gE' 
\to \gM_{\Lambda,x'}$ is nonzero, and then
$\gE' \otimes_R \kappa(\frakm)[1/u]$ is split for some maximal ideal $\frakm$
of $R$. This explains why, to obtain a monomorphism,
we can restrict either to the strict case or to the substack of extensions
that are non-split after inverting $u$.
\end{remark}

\begin{remark}
	We have stated this proposition in the strongest form that we
	are able to prove, but in fact its full strength is not required
	in the subsequent applications. 
              In particular, we don't need the precise bounds on the
	degrees of the fibres. 
\end{remark}
\begin{proof}[Proof of Proposition~{\ref{prop:
		construction of family monomorphing to C and R}}]
	By Corollary~\ref{cor: monomorphism to Spec A times C}
  (which we can apply because Assumption~\ref{assumption:vanishing} is
  satisfied, by Lemma~\ref{lem: generically no Homs})
  the natural morphism $[\Spec \Bdist/ \Gm \times_{\F} \Gm ] \to
  \Spec \Adist\times_{\F} \cC^{\dd,1}$ is a finite type monomorphism, 
  and hence so is its restriction to the open substack
  $[X/\Gm\times_{\F} \Gm]$ of its source.

Let us momentarily write $\cX$ to denote either $[\Spec B^{\dist}/
\Gm\times_{\F} \Gm]$  or $[X/\Gm\times_{\F} \Gm]$.  To show that
the finite type morphism
$\cX \to \cC^{\dd,1}$ is representable by algebraic spaces,
resp.\ unramified, resp.\ a
monomorphism,
it suffices to show that the corresponding diagonal morphism
$\cX \to \cX \times_{\cC^{\dd,1}} \cX$ is a monomorphism, resp.\ \'etale, resp.\ 
an isomorphism.

Now since $\cX \to \Spec A^{\dist} \times_{\F} \cC^{\dd,1}$ is a monomorphism,
the diagonal morphism $\cX \to \cX \times_{\Spec A^{\dist}\times_{\F} \cC^{\dd,1}}
\cX$ {\em is} an isomorphism, 
and so it is equivalent to show that the morphism of products
$$\cX \times_{\Spec A^{\dist}\times_{\F} \cC^{\dd,1}} \cX \to
\cX\times_{\cC^{\dd,1}} \cX$$
is a monomorphism, resp.\ \'etale, resp.\ an isomorphism.
This is in turn equivalent to showing the corresponding properties
for the morphisms
\numequation
\label{eqn:first closed immersion}
  \Spec\Bdist\times_{\Spec \Adist\times \cC^{\dd,1}}\Spec\Bdist \to
 \Spec\Bdist \times_{\cC^{\dd,1}}\Spec\Bdist 
 \end{equation}
 or
\numequation
\label{eqn:second closed immersion}
  X \times_{\Spec \Adist\times \cC^{\dd,1}}X \to
 X \times_{\cC^{\dd,1}} X.
 \end{equation}
 Now each of these morphisms is a base-change of the diagonal
 $\Spec A^{\dist}\to \Spec A^{\dist} \times_{\F} \Spec A^{\dist},$
 which is a closed immersion (affine schemes being separated),
 and so is itself a closed immersion.   In particular,
 it is a monomorphism, and so we have proved the representability
 by algebraic spaces
 of each of~(\ref{eqn:unramified morphism}) and~(\ref{eqn:monomorphism}).
Since the source and target of each of these monomorphisms
is of finite type
over~$\F$, by Lemma~\ref{lem: fibre products of finite type}, 
in order to show that either of these monomorphisms is
an isomorphism,
 it suffices to show that it induces a surjection on 
$R$-valued points, for arbitrary finite type $\F$-algebras $R$.
Similarly, to check that the
closed immersion~(\ref{eqn:first closed immersion}) is \'etale,
it suffices to verify that it is formally smooth,
and for this it suffices to verify that it satisfies the
infinitesimal lifting property 
with respect to square zero thickenings of finite type
$\F$-algebras.

A morphism $\Spec R\to \Spec\Bdist
  \times_{\cC^{\dd,1}}\Spec\Bdist$ corresponds to an isomorphism class
of tuples $(\alpha,\alpha',\beta:\gE\to\gE',\iota,\iota',\pi,\pi')$, where
\begin{itemize}
\item $\alpha,\alpha'$ are morphisms
$\alpha,\alpha':\Spec R\to\Spec \Adist$,
\item  $\beta:\gE\to\gE'$ is an isomorphism of Breuil--Kisin modules
with descent data and coefficients in $R$, 
\item $\iota:\alpha^* \gN \to \gE$, $\iota':(\alpha')^* \gN \to \gE'$, $\pi:\gE \to
  \alpha^* \gM$ and $\pi':\gE' \to
  (\alpha')^* \gM$ are morphisms
with the properties that $0 \to \alpha^*\gN \buildrel \iota
\over \to \gE \buildrel \pi  \over \to \alpha^* \gM \to 0$ and  $0 \to (\alpha')^*\gN \buildrel \iota'
\over \to \gE' \buildrel \pi'  \over \to (\alpha')^* \gM \to 0$ are both
short exact. 
\end{itemize}

We begin by proving that~(\ref{eqn:first closed immersion}) satisfies
the infinitesimal lifting criterion (when $R$ is a finite type $\F$-algebra).
Thus we assume given a square-zero ideal $I \subset R$, 
such that the induced morphism
$$\Spec R/I \to \Spec B^{\dist} \times_{\cC^{\dd,1}} \Spec B^{\dist}$$
factors through $\Spec B^{\dist}\times_{\Spec A^{\dist} \times_{\F} \cC^{\dd,1}} 
\Spec B^{\dist}$. In terms of the data
$(\alpha,\alpha',\beta:\gE\to\gE',\iota,\iota',\pi,\pi')$, 
we are assuming that $\alpha$ and $\alpha'$ coincide when restricted
to~$\Spec R/I$, and 
we must show that $\alpha$ and $\alpha'$ themselves coincide.

To this end, we consider the composite
\numequation
\label{eqn:key composite}
\alpha^*\gN\stackrel{\iota}{\to}\gE\stackrel{\beta}{\to}\gE'\stackrel{\pi'}{\to}(\alpha')^*\gM. 
\end{equation}
If we can show the vanishing of this morphism,
then by reversing the roles of $\gE$ and $\gE'$,
we will similarly deduce the vanishing of
$\pi \circ \beta^{-1} \circ \iota'$,  
from which we can conclude that $\beta$ induces an isomorphism between
$\alpha^*\gN$ and $(\alpha')^*\gN$. Consequently, it also induces an
isomorphism between~$\alpha^*\gM$ and~$(\alpha')^*\gM$, so it follows from
Lemma~\ref{lem: isomorphic twists are the same twist} that  $\alpha=\alpha'$,
as required. 

We show the vanishing of~(\ref{eqn:key composite}).
Suppose to the contrary that it doesn't vanish,
so that we have a non-zero morphism
$\alpha^*\gN\to (\alpha')^*\gM.$
It follows from Proposition~\ref{prop:vanishing of homs} that,
for some maximal ideal $\m$ of $R$, there exists a non-zero morphism
\[\alpha^*(\gN)\otimes_R\kappa(\mathfrak m) 
  {\to}(\alpha')^*(\gM)\otimes_R\kappa(\mathfrak
  m).\] 
By assumption $\alpha$ and $\alpha'$ coincide modulo $I$.  Since $I^2 = 0$,
there is an inclusion $I \subset \mathfrak m$,
and so in particular we find that
$$(\alpha')^*(\gM) \otimes_R \kappa(\mathfrak m)
\iso \alpha^*(\gM)\otimes_R \kappa(\mathfrak m).$$
Thus there exists a non-zero morphism
\[\alpha^*(\gN)\otimes_R\kappa(\mathfrak m) 
  {\to}\alpha^*(\gM)\otimes_R\kappa(\mathfrak m).\] 
Then, by Lemma~\ref{lem:rank one isomorphism over a field},
  after inverting~$u$ we obtain an isomorphism
\[\alpha^*(\gN)\otimes_R\kappa(\mathfrak m) [1/u]
  {\iso}\alpha^*(\gM)\otimes_R\kappa(\mathfrak m)[1/u],\] 
contradicting the assumption that $\alpha$ maps $\Spec R$
into $\Spec A^{\dist}$.
This completes the proof that~(\ref{eqn:first closed immersion})
is formally smooth, and hence that~(\ref{eqn:unramified morphism})
is unramified.

We next show that, in the strict case,
the closed immersion~(\ref{eqn:first closed immersion}) 
is an isomorphism, and thus that~(\ref{eqn:unramified morphism})
is actually a monomorphism. As noted above, 
it suffices to show that~(\ref{eqn:first closed immersion})
induces a surjection on $R$-valued points for finite type $\F$-algebras $R$,
which in terms of the data
$(\alpha,\alpha',\beta:\gE\to\gE',\iota,\iota',\pi,\pi')$, 
amounts to showing that necessarily $\alpha = \alpha'$.
Arguing just as we did above,
it suffices show the vanishing of~(\ref{eqn:key composite}).

Again, we suppose for the sake of contradiction that~(\ref{eqn:key composite})
does not vanish. It then follows
from Proposition~\ref{prop:vanishing of homs} that
for some maximal ideal $\m$ of $R$ there exists a non-zero
morphism \[\alpha^*(\gN)\otimes_R\kappa(\mathfrak m) 
  {\to}(\alpha')^*(\gM)\otimes_R\kappa(\mathfrak
  m).\] 
Then, by Lemma~\ref{lem:rank one isomorphism over a field},
  after inverting~$u$ we obtain an isomorphism
  \numequation
  \label{eqn:key isomorphism}
  \alpha^*(\gN)\otimes_R\kappa(\mathfrak
  m)[1/u]
  \iso (\alpha')^*(\gM)\otimes_R\kappa(\mathfrak m)[1/u].
  \end{equation}
In the strict case, such an isomorphism cannot exist by assumption,
and thus~(\ref{eqn:key composite}) must vanish.

We now turn to proving that~(\ref{eqn:second closed immersion}) 
is an isomorphism. Just as in the preceding arguments,
it suffices to show that~(\ref{eqn:key composite}) vanishes, and
if not 
then we obtain an isomorphism~(\ref{eqn:key isomorphism}).
Since 
we are considering points of $X\times X$,
we are given that the
induced extension $\gE'\otimes_R\kappa(\mathfrak m)[1/u]$ is non-split,
so that the base change of the morphism~(\ref{eqn:key composite})
from $R[[u]]$ to $\kappa(\mathfrak m)((u))$ must vanish.  Consequently
the composite $\beta\circ \iota$ induces a non-zero morphism
$\alpha^*(\gN)\otimes_R\kappa(\mathfrak m)[1/u] \to (\alpha')^*(\gN)
\otimes_R\kappa(\mathfrak m)[1/u],$
which, by Lemma~\ref{lem:rank one isomorphism over a field},
must in fact be an isomorphism.  Comparing this isomorphism
with the isomorphism~(\ref{eqn:key isomorphism}),
we find that
$(\alpha')^*(\gN)\otimes_R\kappa(\mathfrak m)[1/u]$
and
$(\alpha')^*(\gM)\otimes_R\kappa(\mathfrak m)[1/u]$
are isomorphic, contradicting the fact that $\alpha'$ maps 
$\Spec R$ to $\Spec A^{\dist}$.
Thus in fact the composite~(\ref{eqn:key composite}) must vanish,
and we have completed the proof that~(\ref{eqn:monomorphism})
is a monomorphism.

To complete the proof of part~(1) of the proposition,
we have to show that the fibres of~(\ref{eqn:unramified morphism})
are of degree at most $2$. We have already observed that $[\Spec \Bdist/ \Gm \times_{\F} \Gm ] \to
  \Spec \Adist\times_{\F} \cC^{\dd,1}$ is a monomorphism, so it is
  enough to check that given a finite extension~$\F'/\F$ and an
  isomorphism class of tuples $(\alpha,\alpha',\beta:\gE\to\gE',\iota,\iota',\pi,\pi')$, where
\begin{itemize}
\item $\alpha,\alpha'$ are distinct morphisms
$\alpha,\alpha':\Spec \F'\to\Spec \Adist$,
\item  $\beta:\gE\to\gE'$ is an isomorphism of Breuil--Kisin modules
with descent data and coefficients in $\F'$, 
\item $\iota:\alpha^* \gN \to \gE$, $\iota':(\alpha')^* \gN \to \gE'$, $\pi:\gE \to
  \alpha^* \gM$ and $\pi':\gE' \to
  (\alpha')^* \gM$ are morphisms
with the properties that $0 \to \alpha^*\gN \buildrel \iota
\over \to \gE \buildrel \pi  \over \to \alpha^* \gM \to 0$ and  $0 \to (\alpha')^*\gN \buildrel \iota'
\over \to \gE' \buildrel \pi'  \over \to (\alpha')^* \gM \to 0$ are both
short exact. 
\end{itemize}
then~$\alpha'$ is determined by the data of~$\alpha$ and~$\gE$. To see
this, note that since we are assuming that~$\alpha'\ne\alpha$, the
arguments above show that~(\ref{eqn:key composite}) does not vanish,
so that (since~$\F'$ is a field), we have an
isomorphism~$\alpha^*\gN[1/u]\isoto(\alpha')^*\gM[1/u]$. Since we are
over~$\Adist$, it follows that~$\gE[1/u]\cong\gE'[1/u]$ is split, and
that we also have an
isomorphism~$\alpha^*\gM[1/u]\isoto(\alpha')^*\gN[1/u]$. Thus
if~$\alpha''$ is another possible choice for~$\alpha'$, we have
$(\alpha'')^*\gM[1/u]\isoto(\alpha')^*\gM[1/u]$ and
$(\alpha'')^*\gN[1/u]\isoto(\alpha')^*\gN[1/u]$,
whence~$\alpha''=\alpha'$ by Lemma~\ref{lem: isomorphic twists are the
  same twist}, as required.

We turn to proving~(2), and thus
assume that
$$\kExt^1_{\K{\kappa(\mathfrak m)}}\bigl( \gM_{\kappa(\mathfrak m),\xbar},
\gN_{\kappa(\mathfrak m),\ybar}\bigr)=0$$
for all maximal ideals $\mathfrak m$ of $\Akfree$.

Lemma~\ref{lem:C to R, with maps to Akfree} shows that
$$\Spec B^{\kfree} \times_{\Spec A^{\kfree}\times_{\F}
	\cC^{\dd,1}}\Spec B^{\kfree} \to
\Spec B^{\kfree} \times_{\Spec A^{\kfree}\times_{\F}
	\cR^{\dd,1}}\Spec B^{\kfree}$$
is an isomorphism, from which we deduce
that
$$[\Spec B^{\kfree}/\Gm\times_{\F} \Gm] \to \Spec A^{\kfree}\times_{\F}
\cR^{\dd,1}$$
is a monomorphism.
Using this as input, the claims of~(2) may be proved in an essentially identical
fashion to those of~(1). 
\end{proof}

 \begin{cor} 
  \label{cor: dimension of families of extensions}
The dimension of $\overline{\cC}(\gM,\gN)$ 
is equal to 
the rank of $T_\Adist$ as a projective $\Adist$-module.  
If 
$$\kExt^1_{\K{\kappa(\mathfrak m)}}\bigl( \gM_{\kappa(\mathfrak m),\xbar},
\gN_{\kappa(\mathfrak m),\ybar}\bigr)=0$$
for all maximal ideals $\mathfrak m$ of $A^{\kfree}$, 
then the dimension of $\overline{\cZ}(\gM,\gN)$ is also equal to this rank,
while 
if
$$\kExt^1_{\K{\kappa(\mathfrak m)}}\bigl( \gM_{\kappa(\mathfrak m),\xbar},
\gN_{\kappa(\mathfrak m),\ybar}\bigr) \neq 0$$
for all maximal ideals $\mathfrak m$ of $A^{\kfree}$, 
then the dimension of $\overline{\cZ}(\gM,\gN)$ is strictly less than this rank.
\end{cor}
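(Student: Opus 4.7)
The plan is to reduce everything to dimension-counting on the quotient stacks appearing in Proposition~\ref{prop: construction of family monomorphing to C and R}, and then transfer dimensions across the scheme-theoretic images via the representability and finite-fibre properties supplied by that proposition.

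First I would compute the dimension of $[\Spec \Bdist / \Gm \times_{\F} \Gm]$. Since $\Spec \Adist$ is a nonempty open subscheme of the two-dimensional $\Gm \times_{\F} \Gm$, and $\Spec \Bdist$ is the total space over $\Spec \Adist$ of the vector bundle associated to the finitely generated projective $\Adist$-module $T_\Adist$, we have $\dim \Spec \Bdist = 2 + \rank T_\Adist$; quotienting by the two-dimensional group $\Gm \times_{\F} \Gm$ then yields $\dim [\Spec \Bdist / \Gm \times_{\F} \Gm] = \rank T_\Adist$. By Proposition~\ref{prop: construction of family monomorphing to C and R}(1), the induced morphism $[\Spec \Bdist / \Gm \times_{\F} \Gm] \to \cC^{\dd,1}$ is representable, of finite type, and has fibres of degree at most $2$, hence is quasi-finite; consequently the image has dimension equal to that of its source. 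Since this image is by construction dense in $|\overline{\cC}(\gM,\gN)|$, I would conclude $\dim \overline{\cC}(\gM,\gN) = \rank T_\Adist$. For $\overline{\cZ}(\gM,\gN)$ under the first hypothesis ($\kExt^1$ vanishing at all maximal ideals of $\Akfree$), the same strategy works \emph{mutatis mutandis}, invoking Proposition~\ref{prop: construction of family monomorphing to C and R}(2) in place of part~(1); as $\Spec \Bkfree$ is dense open in $\Spec \Bdist$ and $T_\Akfree$ has the same rank as $T_\Adist$, this yields $\dim \overline{\cZ}(\gM,\gN) = \rank T_\Adist$.

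Under the second hypothesis, the plan is to exploit that the morphism $\Spec \Bkfree \to \cR^{\dd,1}$ factors through the smaller vector bundle $\operatorname{Tot}(T_\Akfree/U_\Akfree)$ over $\Spec \Akfree$. Indeed, by the very definition of $U = \kExt^1$ as the kernel of the natural map from $\Ext^1$ of Breuil--Kisin modules to $\Ext^1$ of \'etale $\varphi$-modules, two extension classes in $T_\Akfree$ differing by an element of $U_\Akfree$ give rise to isomorphic \'etale $\varphi$-modules after inverting $u$. The hypothesis that $\kExt^1$ is nonzero at every maximal ideal of the Jacobson ring $\Akfree$, combined with the projectivity of $U_\Akfree$ supplied by Proposition~\ref{prop: we have vector bundles}, forces $\rank U_\Akfree \ge 1$ and hence $\rank(T_\Akfree/U_\Akfree) \le \rank T_\Adist - 1$. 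Dimension-counting as above then gives
\[ \dim \overline{\cZ}(\gM,\gN) \le \dim [\operatorname{Tot}(T_\Akfree/U_\Akfree) / \Gm \times_{\F} \Gm] = \rank(T_\Akfree/U_\Akfree) < \rank T_\Adist, \]
yielding the required strict inequality.

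The main point needing some care is the passage from the dimension of the quotient source stack to the dimension of the scheme-theoretic image in $\cC^{\dd,1}$ (or $\cR^{\dd,1}$): I would need to invoke the standard fact that a representable, quasi-finite, finite-type morphism between algebraic stacks transfers dimension to its image, which reduces to the scheme-theoretic statement via a smooth cover. The factorization through $\operatorname{Tot}(T_\Akfree/U_\Akfree)$ in the final case is essentially tautological from the definition of $\kExt^1$, so the real content of the proof lies in the dimension count for the quotient stacks and in the verification that dimension is preserved under passage to the image.
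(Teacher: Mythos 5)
Your computation of $\dim[\Spec B^{\dist}/\Gm\times_\F\Gm]$ and your use of Proposition~\ref{prop: construction of family monomorphing to C and R}(1),(2) (together with local quasi-finiteness of unramified morphisms and the density observations of Remark~\ref{rem:Zariski density}/Lemma~\ref{lem: scheme theoretic images coincide}) to deduce the first two dimension equalities match the paper's proof exactly, which cites \cite[\href{https://stacks.math.columbia.edu/tag/0DS6}{Tag 0DS6}]{stacks-project} for the transfer of dimension across the image.

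For the strict inequality, your route genuinely differs. The paper's argument is shorter: it simply observes that, under the hypothesis, every fibre of the composite $[\Spec B^{\kfree}/\Gm\times_\F\Gm] \to \cR^{\dd,1}$ over the image of a point is positive-dimensional (being a translate of $\kExt^1$ modulo automorphisms), and invokes Tag 0DS6 once more to conclude. Your argument instead builds an explicit factorization through the sub-vector-bundle $\operatorname{Tot}(T_{\Akfree}/U_{\Akfree})$. This does work, but the factorization is not quite ``tautological'': knowing that two extension classes differing by an element of $U$ yield \emph{isomorphic} \'etale $\varphi$-modules gives a $2$-categorical invariance of the map $\Spec B^{\kfree}\to\cR^{\dd,1}$ under translation by the vector group $\operatorname{Tot}(U_{\Akfree})$, and descending an actual morphism of stacks through the free quotient requires that the isomorphisms be chosen coherently. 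This coherence is available here --- uniqueness of the isomorphism of extensions follows from the vanishing $\Hom_{\K{\kappa}}(\gM_{\kappa,\bar x}[1/u],\gN_{\kappa,\bar y}[1/u])=0$ over $\Akfree$ supplied by Lemma~\ref{lem: generically no Homs} --- but you should invoke it explicitly. Alternatively, since you only need a dimension bound, you can bypass descent by arguing purely at the level of underlying topological spaces: the continuous map $|\Spec B^{\kfree}|\to|\cR^{\dd,1}|$ is constant on $\operatorname{Tot}(U_{\Akfree})$-orbits and hence factors through $|\operatorname{Tot}(T_{\Akfree}/U_{\Akfree})|$, which is all that the dimension count needs. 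Either repair makes your argument complete; the paper's fibre-dimension approach avoids this issue entirely.
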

\begin{proof} The dimension of $[\Spec\Bdist/\Gm\times_{\F} \Gm]$ is equal to
	the rank of $T_{A^{\dist}}$ (it is the quotient
	by a two-dimensional group of a vector bundle over a two-dimensional base of rank
	equal to the rank of $T_{A^{\dist}}$). By Lemma~\ref{lem: scheme theoretic images coincide},
	$\overline{\cC}(\gM,\gN)$ is the scheme-theoretic image
	of the morphism
	$[\Spec\Bdist/\Gm\times_{\F}\Gm] \to \cC^{\dd,1}$
	provided by
	Proposition~\ref{prop: construction of family monomorphing to
          C and R}(1), which (by that proposition) is representable 
  by algebraic spaces and unramified.
  Since such a morphism is locally quasi-finite
  (in fact, in this particular case, 
  we have shown that the fibres of this morphism have degree at
        most~$2$), \cite[\href{https://stacks.math.columbia.edu/tag/0DS6}{Tag 0DS6}]{stacks-project}
 ensures
	that $\overline{\cC}(\gM,\gN)$ has the claimed dimension.

	If 
$\kExt^1_{\K{\kappa(\mathfrak m)}}\bigl( \gM_{\kappa(\mathfrak m),\xbar},
\gN_{\kappa(\mathfrak m),\ybar}\bigr) = 0$
for all maximal ideals $\mathfrak m$ of $A^{\kfree}$, 
then an identical argument using Proposition~\ref{prop: construction of family monomorphing to
          C and R}(2) implies the claim regarding the dimension
of $\overline{\cZ}(\gM,\gN)$.

Finally, suppose that 
$$\kExt^1_{\K{\kappa(\mathfrak m)}}\bigl( \gM_{\kappa(\mathfrak m),\xbar},
\gN_{\kappa(\mathfrak m),\ybar}\bigr) \neq 0$$
for all maximal ideals $\mathfrak m$ of $A^{\kfree}$. 
Then the composite $[\Spec\Bkfree/\Gm\times_{\F} \Gm] \to \cC^{\dd,1} \to \cR^{\dd,1}$
has the property that for every point $t$ in the source, the fibre over the
image of $t$ has a positive dimensional fibre.  \cite[\href{https://stacks.math.columbia.edu/tag/0DS6}{Tag 0DS6}]{stacks-project} then implies the remaining
	claim of the present lemma. 
\end{proof}

\section{Extensions of rank one Breuil--Kisin modules}
\label{sec:extensions-of-rank-one}

\subsection{Rank one modules over finite fields, and their extensions}
\label{subsec: Diamond--Savitt}

We now wish to apply the results of the previous section to study
the geometry of our various moduli stacks. In order to do this, it
will be convenient for us to have an explicit description of the
rank one Breuil--Kisin modules of height at most one with descent data over a
finite field of characteristic $p$, and of their possible extensions. 
Many of the results in this section are proved (for $p>2$)  in~\cite[\S 1]{DiamondSavitt} in the context of
Breuil modules, and in those cases it
is possible simply to translate the relevant statements to the Breuil--Kisin module context.

Assume from now on that $e(K'/K)$ is divisible by $p^{f}-1$, so
  that we are in the setting of~\cite[Remark 1.7]{DiamondSavitt}.
  (Note that the parallel in \cite{DiamondSavitt} of our field
  extension $K'/K$, with ramification and inertial indices $e',f'$ and
  $e,f$ respectively, is the extension $K/L$ with indices $e,f$ and
  $e',f'$ respectively.) 

Let $\F$ be a finite subfield of $\Fpbar$ containing the image of some (so all)
embedding(s) $k'\into\Fpbar$. 
Recall that for each
$g\in\Gal(K'/K)$ we write $g(\pi')/\pi'=h(g)$ with $h(g)\in
\mu_{e(K'/K)}(K') \subset W(k')$. We abuse notation and
denote the image of $h(g)$ in $k'$ again by $h(g)$, so that we obtain
a map 
$\hchar \colon \Gal(K'/K) \to (k')^{\times}$. 
Note that~$\hchar$ restricts to a character on the inertia
subgroup $I(K'/K)$, and is itself a character when $e(K'/K) = p^f-1$.

\begin{lem}
  \label{lem:rank one Kisin modules with descent data}Every rank one
  Breuil--Kisin module of height at most one with descent data and $\F$-coefficients is isomorphic
  to one of the modules $\gM(r,a,c)$ defined by: 
  \begin{itemize}
  \item $\gM(r,a,c)_i=\F[[u]]\cdot m_i$,
  \item $\Phi_{\gM(r,a,c),i}(1\otimes m_{i-1})=a_{i} u^{r_{i}} m_{i}$,
  \item $\ghat(\sum_i m_i)=\sum_i h(g)^{c_i} m_i$ for all $g\in\Gal(K'/K)$, 
  \end{itemize}
where  $a_i\in\F^\times$,  $r_i\in\{0,\dots,e'\}$ and $c_i\in\Z/e(K'/K)$ are sequences
satisfying 
$pc_{i-1}\equiv c_{i}+r_{i}\pmod{e(K'/K)}$, the sums in the third
bullet point run from $0$ to $f'-1$, and the $r_i,c_i,a_i$ are periodic with
period dividing $f$. 

Furthermore, two such modules $\gM(r,a,c)$ and $\gM(s,b,d)$ are
isomorphic if and only if $r_i=s_i$ and $c_i=d_i$ for all $i$, and $\prod_{i=0}^{f-1}a_i=\prod_{i=0}^{f-1}b_i$.
\end{lem}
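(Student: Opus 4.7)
The plan is to construct the classification by first choosing a suitable basis, then extracting the invariants $(r,a,c)$, and finally verifying the isomorphism criterion.

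First, I would decompose $\gM$ via the idempotents $e_i \in W(k')\otimes_{\Zp}\F$, which split $\gS_\F$ as a product of copies of $\F[[u]]$ indexed by $i \in \Z/f'\Z$. Since $\gM$ is projective of rank one over $\gS_\F$, each $\gM_i := e_i \gM$ is free of rank one over $\F[[u]]$, and I would choose arbitrary generators $m_i$ to begin with. Because $\varphi(e_i) = e_{i+1}$, the Frobenius restricts to morphisms $\Phi_{\gM,i} \colon \varphi^*\gM_{i-1} \to \gM_i$, and writing $\Phi_{\gM,i}(1\otimes m_{i-1}) = \beta_i m_i$ with $\beta_i \in \F[[u]]$, the height-one hypothesis (together with $E(u) = u^{e'} \cdot (\text{unit})$ in $\F[[u]]$) forces $\beta_i = \alpha_i u^{r_i}$ with $\alpha_i \in \F[[u]]^\times$ and $0 \le r_i \le e'$.

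Next I would reduce to the case $\alpha_i \in \F^\times$ by rescaling. A change of generator $m_i \mapsto \mu_i m_i$ with $\mu_i \in \F[[u]]^\times$ replaces $\alpha_i$ by $\varphi(\mu_{i-1})\mu_i^{-1}\alpha_i$. Because $\varphi$ on $\F[[u]]$ is $u \mapsto u^p$ (trivial on $\F$), iterating the recursion $\mu_i = \varphi(\mu_{i-1})\alpha_i/a_i$ around the cycle of length $f'$ (or, using the tame descent data, length $f$), one obtains a solvability condition that is satisfied precisely when the $a_i$ are chosen so that $\prod a_i$ (over a period) matches $\prod \alpha_i$; this identifies the $\F^\times$-worth of residual choices with the invariant $\prod a_i$ that appears at the end of the lemma.

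Then I would analyze the descent data. The group $I(K'/K)$ has order prime to $p$ and fixes the idempotents $e_i$, so each $\hat g$ preserves the decomposition $\gM=\oplus \gM_i$; using semisimplicity of $\F$-linear representations of $I(K'/K)$ on $\gM_i/u\gM_i$ together with the prime-to-$p$ averaging trick applied to $u$-adically convergent series, one may further rescale the $m_i$ (by units in $\F[[u]]$ of the form $1 + O(u)$, so without disturbing the $a_i$) to arrange $\hat g(m_i) = h(g)^{c_i} m_i$ for some $c_i \in \Z/e(K'/K)$. The identity $\hat g \Phi_{\gM,i} = \Phi_{\gM,i} \varphi^*\hat g$ applied to $m_{i-1}$ then yields $h(g)^{pc_{i-1}} = h(g)^{r_i + c_i}$ for all $g \in I(K'/K)$, which is precisely the congruence $pc_{i-1} \equiv r_i + c_i \pmod{e(K'/K)}$. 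The residual part $\Gal(K'/K)/I(K'/K) \cong \Gal(k'/k)$ permutes the $e_i$ by a shift of $f$, and $\hat g$-equivariance for such $g$ forces $r_i, c_i, a_i$ to have period dividing $f$.

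Finally, for the isomorphism criterion, an isomorphism $\gM(r,a,c) \to \gM(s,b,d)$ must send $m_i$ to $\mu_i m'_i$ for units $\mu_i \in \F[[u]]^\times$. Equating powers of $u$ in the Frobenius relation gives $r_i = s_i$ immediately. Comparing $\hat g(\mu_i m'_i) = g(\mu_i) h(g)^{d_i} m'_i$ with $\mu_i h(g)^{c_i} m'_i$ for all $g \in I(K'/K)$, and using that $g(u) = h(g) u$, shows that $c_i = d_i$ (otherwise $\mu_i$ would be forced to be a single monomial of nonzero degree, contradicting its being a unit). With $c_i = d_i$ imposed, each $\mu_i$ must be a power series in $u^{e(K'/K)}$; the Frobenius compatibility forces the constant terms $\mu_i(0) \in \F^\times$ to satisfy a cyclic recursion whose solvability condition around the period-$f$ cycle is exactly $\prod a_i = \prod b_i$, and conversely any such choice of constants produces a valid isomorphism. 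The main obstacle is keeping track of all the index shifts and the periodicity condition simultaneously, particularly because the rescaling in the Frobenius normalization must not conflict with the diagonalization of the descent action; but the prime-to-$p$ order of $I(K'/K)$ guarantees these normalizations can be performed independently.
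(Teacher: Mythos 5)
Your overall strategy is the standard elementary one that the paper has in mind (the paper's own ``proof'' just cites analogous statements for Breuil modules), and the extraction of $r_i$, the congruence on $c_i$, the periodicity from the non-inertial descent data, and the isomorphism criterion are all essentially right. One step, however, is stated incorrectly and deserves a fix.

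You claim that after first normalising Frobenius to $\Phi_{\gM,i}(1\otimes m_{i-1}) = a_i u^{r_i} m_i$ with $a_i\in\F^\times$, one can then diagonalise the $I(K'/K)$-action by rescaling the $m_i$ ``by units in $\F[[u]]$ of the form $1 + O(u)$, so without disturbing the $a_i$.'' This is not correct as stated: rescaling $m_i \mapsto \mu_i m_i$ replaces the \emph{entire} Frobenius coefficient $a_i u^{r_i}$ by $\varphi(\mu_{i-1}) a_i \mu_i^{-1} u^{r_i}$, so taking $\mu_i \in 1 + u\F[[u]]$ preserves only the constant term $a_i$ while reintroducing a unit correction $(1 + O(u))$ that destroys the normalisation you have just achieved. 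Thus after the descent diagonalisation you are left with $\Phi_{\gM,i}(1 \otimes m_{i-1}) = \alpha_i' u^{r_i} m_i$ for a unit $\alpha_i'$ with constant term $a_i$ but generally $\alpha_i' \neq a_i$. Your closing remark that ``the prime-to-$p$ order of $I(K'/K)$ guarantees these normalisations can be performed independently'' glosses over this; it is not automatic.

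The cleanest repair is to invert the order: diagonalise the $I(K'/K)$-action \emph{first} (this is possible because $H^1(I(K'/K), 1 + u\F[[u]]) = 0$, as $1 + u\F[[u]]$ is pro-$p$ and $|I(K'/K)|$ is prime to $p$, while the character at $u=0$ is $h^{c_i}$ since $I(K'/K)$ acts trivially on $\F$). The commutation of descent data with $\Phi$, together with the congruence $pc_{i-1}\equiv r_i + c_i$, then forces the unit part of each Frobenius coefficient to lie in $\F[[v]]^\times$ where $v := u^{e(K'/K)}$. The remaining freedom is precisely rescaling by $\mu_i\in\F[[v]]^\times$, and inside $\F[[v]]^\times$ the Frobenius normalisation proceeds exactly as you describe: the constant parts $\mu_i(0)$ impose a cyclic condition $\prod a_i = \prod \alpha_i'(0)$, while the higher-order parts are solved uniquely by the contracting recursion. (Alternatively, keep your order but add a third rescaling by $\nu_i\in 1 + v\F[[v]]$, which commutes with both earlier normalisations.) With that repair in place, the remaining parts of the argument — the derivation of $c_i=d_i$ from $\mu_i$ being a unit, the reduction to $r_i=s_i$ from valuations, and the period-$f$ constraint on $\prod a_i$ coming from commutation with a Frobenius lift in $\Gal(K'/K)$ — go through as you indicate.
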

\begin{proof}
The proof is elementary; see e.g.\ \cite[Thm.~2.1,
Thm.~3.5]{SavittRaynaud} for proofs of analogous results.
\end{proof}

We will sometimes refer to the element $m = \sum_i m_i \in
\gM(r,a,c)$ as the standard generator of $\gM(r,a,c)$.

\begin{rem}
 When $p > 2$
many of the results in this section (such as the above) can
be obtained by translating \cite[Lem.\ 1.3, Cor.\
1.8]{DiamondSavitt} from the Breuil module context to the Breuil--Kisin module context.
We briefly recall the dictionary between these two categories
(\emph{cf.}\ \cite[\S 1.1.10]{kis04}). If $A$ is a finite local
$\Zp$-algebra, write $S_A = S \otimes_{\Zp} A$, where $S$ is Breuil's
ring. We regard $S_A$ as a $\gS_A$-algebra via
$u\mapsto u$, and we let $\varphi:\gS_A\to S_A$ be the composite of this map with
$\varphi$ on $\gS_A$. Then given a Breuil--Kisin module of height at most~$1$ with descent data $\gM$, 
we
set $\cM:=S_A\otimes_{\varphi,\gS_A}\gM$. We have a map $1\otimes\varphi_\gM:S_A\otimes_{\varphi,\gS_A}\gM\to S_A \otimes_{\gS_A}\gM$,
and we set \[\Fil^1\cM:=\{x\in\cM\ :\
(1\otimes \varphi_\gM)(x)\in\Fil^1S_A\otimes_{\gS_A}\gM\subset
S_A\otimes_{\gS_A}\gM\}\]and define $\varphi_1:\Fil^1\cM\to\cM$ as the
composite \[\Fil^1\cM\overset{1\otimes\varphi_\gM}{\longrightarrow}\Fil^1S_A\otimes_{\gS_A}\gM\overset{\varphi_1\otimes
  1}{\longrightarrow}S_A\otimes_{\varphi,\gS_A}\gM=\cM.\]Finally, we define $\hat{g}$ on $\cM$
via $\hat{g}(s\otimes m)=g(s)\otimes \hat{g}(m)$. One checks without difficulty
that this makes $\cM$ a strongly divisible module with descent data 
(\emph{cf.}\ the
proofs of~\cite[Proposition 1.1.11, Lemma 1.2.4]{kis04}).

  In the correspondence described above, the Breuil--Kisin module $\gM((r_i),(a_i),(c_i))$ corresponds to the Breuil module $\cM((e'-r_i),(a_i),(pc_{i-1}))$ 
of~\cite[Lem.\ 1.3]{DiamondSavitt}. 
\end{rem}

\begin{defn}
If $\gM = \gM(r,a,c)$ is a rank one Breuil--Kisin module as described in the 
preceding lemma, we set $\alpha_i(\gM)  :=   (p^{f'-1} r_{i-f'+1} + \cdots +
r_{i})/(p^{f'} - 1)$ (equivalently, $(p^{f-1} r_{i-f+1} +  \cdots 
+ r_{i})/(p^f-1)$). We may abbreviate $\alpha_i(\gM)$ simply as $\alpha_i$
when $\gM$ is clear from the context.

 It follows easily from the congruence $r_i 
\equiv pc_{i-1} - c_i \pmod{e(K'/K)}$ together with the hypothesis 
that $p^f-1 \mid e(K'/K)$  that $\alpha_i \in \Z$ for all $i$. Note
that the $\alpha_i$'s are the unique solution to the system of
equations $p \alpha_{i-1} - \alpha_i = r_i$ for all $i$. Note also
that $(p^f-1)(c_i-\alpha_i) \equiv 0 \pmod{e(K'/K)}$, so that
$\hchar^{c_i-\alpha_i}$ is a character with image in $k^{\times}$.
\end{defn}

\begin{lem}
  \label{lem: generic fibres of rank 1 Kisin modules} For any $i$ we have
  $T(\gM(r,a,c))=\left(\sigma_i\circ\hchar^{c_i-\alpha_{i}}\cdot\ur_{\prod_{i=0}^{f-1}a_i}\right)|_{G_{K_\infty}}$,
  where $\ur_\lambda$ is the unramified character of $G_K$ sending
  geometric Frobenius to $\lambda$. 
\end{lem}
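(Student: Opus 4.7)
The plan is to compute $T(\gM(r,a,c)) = \bigl(\cO_{\widehat{\cE^{\mathrm{nr}}}} \otimes_{\gS[1/u]} \gM(r,a,c)[1/u]\bigr)^{\varphi=1}$ by exhibiting an explicit $\varphi$-fixed generator and then reading off the Galois action. This is a direct (if fiddly) calculation; indeed, the analogous statement in the Breuil module framework is essentially \cite[Lem.~1.6]{DiamondSavitt}, and one could simply transport that result along the dictionary recalled in this section. We instead sketch a direct proof.

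First I would construct the $\varphi$-invariant vector. Writing a generator as $x = \sum_{i} \lambda_i m_i$ with $\lambda_i$ in the $e_i$-component of $\cO_{\widehat{\cE^{\mathrm{nr}}}}/p \otimes_{\F_p} \F$, the identity $\varphi(x) = x$, combined with $\Phi_i(1 \otimes m_{i-1}) = a_i u^{r_i} m_i$, yields the system
\[
\varphi(\lambda_{i-1}) \cdot a_i u^{r_i} = \lambda_i.
\]
Making the ansatz $\lambda_i = \mu_i u^{-\alpha_i}$ with $\mu_i$ a constant in the residue field, and using the defining relation $p \alpha_{i-1} - \alpha_i = r_i$, the powers of $u$ cancel and the system reduces to the scalar recursion $\mu_i = a_i \mu_{i-1}^p$. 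Iterating $f'$ times and invoking the period-$f$ periodicity of the $a_i$ gives an equation of the form $\mu_0^{p^{f'}-1} = $ an explicit unit involving $A := \prod_{j=0}^{f-1} a_j$, which has solutions in $\overline{\F}_p \subset \cO_{\widehat{\cE^{\mathrm{nr}}}}/p$. Any such choice determines the $\mu_i$, and hence $x$, uniquely.

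Next I would compute the action of $g \in G_{K_\infty}$ on $x$, which a priori must be multiplication by a scalar by Lemma~\ref{lem: Galois rep is a functor if A is actually finite local}. Three sources of action must be combined: (a) the descent data $\hat g(m_i) = h(g)^{c_i} m_i$, interpreted on the $i$-th component via $\sigma_i$; (b) the action on $u$ via $g(u) = h(g) u$ coming from $G_{K_\infty}/G_{K'_\infty} \cong \Gal(K'/K)$, which contributes $\sigma_i(h(g))^{-\alpha_i}$ to the factor $u^{-\alpha_i}$; and (c) the action on $\mu_i \in \overline{\F}_p$, which is trivial on $I_K$. On inertia, (a) and (b) combine on the $i$-th component to yield the scalar $\sigma_i(h(g))^{c_i-\alpha_i}$, and consistency of this value across different $i$ is exactly the congruence $pc_{i-1} \equiv c_i + r_i \pmod{e(K'/K)}$ together with $\sigma_{i+1}^p = \sigma_i$. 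This identifies the inertial character as $\sigma_i \circ \hchar^{c_i - \alpha_i}$. For a lift of geometric Frobenius, (a) and (b) are trivial in the totally ramified principal series case (the cuspidal case requires a short additional bookkeeping step to account for the unramified subextension $N/K$), and the recursion $\mu_i^{p^{f'}-1} \in \F^{\times}$ combined with the action of Frobenius on $\overline{\F}_p$ delivers the unramified twist $\mathrm{ur}_A$.

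The main obstacle is purely organizational: keeping straight the normalizations for Artin reciprocity (uniformizers versus geometric Frobenius), the embeddings $\sigma_i$, and the signs of the exponents of $\hchar$. A useful sanity check is that setting $r_i = c_i = 0$ for all $i$ (so $\alpha_i = 0$) recovers the unramified character $\mathrm{ur}_{\prod a_i}$, while setting $a_i = 1$ recovers the tame character $\sigma_i \circ \hchar^{c_i - \alpha_i}$ on inertia, consistent with Lemma~\ref{lem:cft} and the construction of the fundamental characters.
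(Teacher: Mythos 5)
Your approach is essentially the one the paper takes: exhibit the explicit $\varphi$-fixed vector $\sum_i \mu_i u^{-\alpha_i}\otimes m_i$ (the paper writes this as $\gamma\sum_i[\underline{\pi}']^{-\alpha_i}\otimes m_i$, with your $(\mu_i)$ being the components of $\gamma$) and read off the Galois action. The one structural difference is that the paper does not solve the recursion for $\gamma$ from scratch: it first specializes to $\gN=\gM(0,(a_i),0)$, a Breuil--Kisin module with trivial descent data, invokes \cite[Lem.~6.3]{MR3164985} to get $T(\gN)=\ur_A$ and the existence of $\gamma$, and then twists by the tame factor $\sum_i[\underline{\pi}']^{-\alpha_i}\otimes m_i$ to handle the general $\gM$. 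Your version is self-contained but re-proves that citation.

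One small but real subtlety you should tighten: the $\mu_i$ live in (the image of) $\Fpbar\otimes_{\Fp}\F$, not in $\Fpbar$, and $\varphi$ acts only on the first tensor factor. Writing the recursion as $\mu_i = a_i\mu_{i-1}^p$ conflates $\varphi(\mu_{i-1})$ with $\mu_{i-1}^p$, which agrees only when $\F=\Fp$. Iterating the correct recursion $\mu_i=a_i\varphi(\mu_{i-1})$ around the cycle gives $\mu_0 = A\,\varphi^{f}(\mu_0)$ with $A=\prod a_i$ (the powers $p^{f-j}$ disappear precisely because $\varphi$ fixes the $a_i\in 1\otimes\F$), and then applying geometric Frobenius, which acts as $\varphi^{-f}\otimes 1$ on the constants, yields $\phi(\mu_0)=A\mu_0$ directly. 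If instead one writes $\mu_0^{p^{f'}-1}\in\F^\times$ and treats $\phi$ as $x\mapsto x^{1/p^f}$ on a scalar, the bookkeeping produces $A^{1/p^f}$ rather than $A$ when $\F\supsetneq\Fp$, which is wrong. The cancellation you want is between the $\varphi$ in the recursion and the $\varphi^{-f}$ in the Galois action, not between exponents of $p$.

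The inertial part of your computation is correct, including the consistency check that $p(c_{i-1}-\alpha_{i-1})\equiv c_i-\alpha_i\pmod{e(K'/K)}$ follows from $pc_{i-1}\equiv c_i+r_i$ and $p\alpha_{i-1}-\alpha_i=r_i$. Your remark about needing to handle the unramified subextension $N/K$ in the cuspidal case is also appropriate; note, though, that even in the principal series case one must choose a Frobenius lift in $G_{K'_\infty}$ (possible since $K'_\infty/K_\infty$ is totally ramified there) so that (a) and (b) genuinely vanish.
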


\begin{proof}
 Set $\gN = \gM(0,(a_i),0)$, so that $\gN$ is effectively a Breuil--Kisin module without
 descent data. Then for $\gN$ this result follows from the second paragraph of the
 proof \cite[Lem.~6.3]{MR3164985}. (Note that the functor $T_{\gS}$ of
 \emph{loc.\ cit.} is dual to our functor $T$;\ \emph{cf}.~\cite[A\
 1.2.7]{MR1106901}. Note also that the fact that the base field is
 unramified in \emph{loc.\ cit.} does not change the calculation.) If
 $n = \sum n_i$ is the standard generator of $\gN$ as in Lemma~\ref{lem:rank one Kisin modules with descent data}, let
 $\gamma \in \Zp^{\un} \otimes_{\Zp} (k' \otimes_{\Fp} \F)$ be an element
 so that $\gamma  n \in
 (\cO_{\widehat{\cE^{\text{nr}}}}\otimes_{\gS[1/u]} \gN[1/u])^{\varphi
   = 1}$.

Now for $\gM$ as in the statement of the lemma it is straightforward to verify
that $$\gamma \sum_{i=0}^{f'-1} [\underline{\pi}']^{-\alpha_{i}} \otimes
m_i \in  (\cO_{\widehat{\cE^{\text{nr}}}}\otimes_{\gS[1/u]}
\gM[1/u])^{\varphi=1},$$ and the result follows.
\end{proof}

One immediately deduces the following.

\begin{cor}
  \label{cor: Kisin modules with the same generic fibre} Let $\gM=\gM(r,a,c)$ and
  $\gN=\gM(s,b,d)$ be rank one Breuil--Kisin modules with descent data as
  above.  We have $T(\gM)=T(\gN)$ if and only if $c_i - \alpha_i(\gM)
  \equiv d_i -  \alpha_i(\gN) \pmod{e(K'/K)}$ for some $i$ {\upshape(}hence for all $i${\upshape)} and $\prod_{i=0}^{f-1}a_i=\prod_{i=0}^{f-1}b_i$.
\end{cor}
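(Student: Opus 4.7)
The plan is to deduce the corollary directly from the preceding lemma by comparing the explicit formulas for the two tame characters. First I would apply Lemma \ref{lem: generic fibres of rank 1 Kisin modules} to both modules, yielding
\[T(\gM)=\bigl(\sigma_i\circ\hchar^{c_i-\alpha_i(\gM)}\cdot\ur_{\prod_{j=0}^{f-1}a_j}\bigr)\big|_{G_{K_\infty}},\qquad T(\gN)=\bigl(\sigma_i\circ\hchar^{d_i-\alpha_i(\gN)}\cdot\ur_{\prod_{j=0}^{f-1}b_j}\bigr)\big|_{G_{K_\infty}},\]
and then verify (using that $K_\infty/K$ is totally wildly ramified) that restriction to $G_{K_\infty}$ is injective on the group of tame characters of $G_K$ of the type appearing here. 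Since $K_\infty$ shares its residue field with $K$, the unramified factor $\ur_\lambda$ is detected by its restriction to $G_{K_\infty}$; and since every character factoring through $I(K'/K)$ (a tame quotient) is detected on the inertia of $G_{K_\infty}$ (which surjects onto $I_K^{\mathrm{tame}}$), the tame factor is likewise detected. So $T(\gM)=T(\gN)$ is equivalent to the equality of the full characters of $G_K$ on both sides.

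Next I would split this equality into its inertial and Frobenius parts. On Frobenius elements the two sides agree exactly when $\prod_{j=0}^{f-1}a_j=\prod_{j=0}^{f-1}b_j$. On $I_K$ the equality $\sigma_i\circ\hchar^{c_i-\alpha_i(\gM)}=\sigma_i\circ\hchar^{d_i-\alpha_i(\gN)}$ reduces, via the fact that $\hchar$ has order exactly $e(K'/K)$ on $I(K'/K)$ together with the injectivity of $\sigma_i$, to $c_i-\alpha_i(\gM)\equiv d_i-\alpha_i(\gN)\pmod{e(K'/K)}$.

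Finally, I need to explain why this congruence ``for some $i$'' is automatically equivalent to ``for all $i$''. The key observation is that the character $\sigma_i\circ\hchar^{c_i-\alpha_i(\gM)}$ is independent of $i$: from $pc_{i-1}-c_i\equiv r_i\pmod{e(K'/K)}$ and $p\alpha_{i-1}(\gM)-\alpha_i(\gM)=r_i$ we get $p(c_{i-1}-\alpha_{i-1}(\gM))\equiv c_i-\alpha_i(\gM)\pmod{e(K'/K)}$, and then the identity $\sigma_i(x)^p=\sigma_{i-1}(x)$ (which is exactly the normalization $\sigma_{i+1}^p=\sigma_i$ shifted by one) gives
\[\sigma_i\circ\hchar^{c_i-\alpha_i(\gM)}=\sigma_i\circ\hchar^{p(c_{i-1}-\alpha_{i-1}(\gM))}=\sigma_{i-1}\circ\hchar^{c_{i-1}-\alpha_{i-1}(\gM)}.\]
The same identity holds for $\gN$, so the characters agree at index $i$ iff they agree at index $i-1$, and by induction iff they agree for all $i$; equivalently, the congruence holds for some $i$ iff it holds for all $i$. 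There is no real obstacle here; the only subtlety worth checking carefully is the $i$-independence argument above, since it is what legitimizes the form in which the corollary is stated.
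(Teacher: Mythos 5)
Your proof is correct and takes essentially the same approach as the paper, which simply states that the corollary follows immediately from Lemma~\ref{lem: generic fibres of rank 1 Kisin modules}. You have carefully filled in the implicit details (the injectivity of restriction to $G_{K_\infty}$ on tame characters, and the $i$-independence of $\sigma_i\circ\hchar^{c_i-\alpha_i}$, which is needed for the lemma's formula to be well-formed in the first place), and both verifications are sound.
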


\begin{lem}
  \label{lem: maps between rank 1 Kisin modules}  In the notation of the
  previous Corollary, there is a nonzero map $\gM\to\gN$
  \emph{(}equivalently, $\dim_{\F} \Hom_{\K{\F}}(\gM,\gN)=1$\emph{)} if
  and only if $T(\gM)=T(\gN)$ and $\alpha_i(\gM) \ge\alpha_i(\gN)$ for each $i$.
\end{lem}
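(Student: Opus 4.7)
\medskip

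The plan is to analyze a morphism $f \colon \gM \to \gN$ component-by-component using the idempotents $e_i$. Since $\gM_i = \F[[u]] m_i$ and $\gN_i = \F[[u]] n_i$ are free of rank one, the morphism $f$ (which is $W(k') \otimes_{\Zp} \F$-linear) decomposes as $f_i(m_i) = x_i n_i$ for some $x_i \in \F[[u]]$. Compatibility with descent data $\hat{g}$ for $g \in I(K'/K)$ (where $\ghat$ fixes the idempotents and satisfies $g(u) = h(g) u$) forces the nonzero coefficients of $x_i$ to lie in degrees $\equiv c_i - d_i \pmod{e(K'/K)}$. In the cuspidal case we additionally pick up $g \in \Gal(K'/K) \smallsetminus I(K'/K)$, which permutes the idempotents by a shift of $f$; this forces the sequence $(x_i)$ to be periodic in $i$ of period dividing $f$.

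Next I would exploit the $\Phi$-compatibility, which reads
\[
b_i u^{s_i} \varphi(x_{i-1}) = a_i u^{r_i} x_i.
\]
Assume $f \neq 0$. The recursion and injectivity of $\varphi$ force every $x_i$ to be nonzero. Setting $v_i := \ord_u(x_i)$ we obtain $v_i - p v_{i-1} = s_i - r_i$. Using $r_i = p \alpha_{i-1}(\gM) - \alpha_i(\gM)$ and $s_i = p \alpha_{i-1}(\gN) - \alpha_i(\gN)$, the quantity $w_i := v_i - \bigl(\alpha_i(\gM) - \alpha_i(\gN)\bigr)$ satisfies $w_i = p w_{i-1}$. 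Iterating $f'$ times yields $(p^{f'} - 1) w_i = 0$, hence $w_i = 0$. Therefore $v_i = \alpha_i(\gM) - \alpha_i(\gN)$, which requires the inequalities $\alpha_i(\gM) \ge \alpha_i(\gN)$, and taking residues modulo $e(K'/K)$ gives $c_i - \alpha_i(\gM) \equiv d_i - \alpha_i(\gN)$, the congruence of Corollary~\ref{cor: Kisin modules with the same generic fibre}.

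Now I would observe that because $v_i$ attains its minimal consistent value, any higher-order term of $x_i$ must vanish: a hypothetical next term at degree $v_i + e(K'/K)$ on the right would be matched only by a term at degree $v_i + p \cdot e(K'/K)$ on the left, forcing it to be zero. Thus $x_i = \lambda_i u^{v_i}$ for some $\lambda_i \in \F^\times$, and the leading-coefficient equation becomes $\lambda_i = (b_i/a_i)\lambda_{i-1}$. Combining this recursion with the period-$f$ periodicity of $(x_i)$ (automatic in the principal series case and imposed by the descent data in the cuspidal case) gives $\prod_{j=0}^{f-1} b_j = \prod_{j=0}^{f-1} a_j$, which together with the congruence above yields $T(\gM) = T(\gN)$ via the previous Corollary.

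For the converse, assuming $T(\gM) = T(\gN)$ and $\alpha_i(\gM) \ge \alpha_i(\gN)$ for all $i$, I would set $v_i := \alpha_i(\gM) - \alpha_i(\gN) \ge 0$, pick any $\lambda_0 \in \F^\times$, and define $\lambda_i$ via the recursion $\lambda_i = (b_i/a_i)\lambda_{i-1}$. The hypothesis $\prod a_j = \prod b_j$ guarantees that $\lambda_i$ is periodic of period $f$ (compatible with descent data), and the congruence ensures $x_i := \lambda_i u^{v_i}$ has the correct residue class for compatibility with inertia. All verifications are direct, and since the construction depends linearly on the single parameter $\lambda_0 \in \F$, we conclude $\dim_\F \Hom_{\K{\F}}(\gM, \gN) = 1$. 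The main subtlety is the cuspidal case, where one must correctly track the action of Frobenius in $\Gal(K'/K)$ on the idempotents to get the periodicity of $(x_i)$; this is what distinguishes the condition $\prod a_j = \prod b_j$ from the weaker $(\prod a_j / \prod b_j)^{f'/f} = 1$ that comes from the recursion alone.
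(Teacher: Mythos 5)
Your proof is essentially correct and follows the same direct, component-by-component computation that the paper delegates to \cite[Lem.\ 1.6]{DiamondSavitt} (the paper itself gives no proof beyond this citation). The key ingredients --- the recursion $a_i u^{r_i} x_i = b_i u^{s_i}\varphi(x_{i-1})$, the valuation relation $v_i - pv_{i-1} = s_i - r_i$, the telescoping $w_i = pw_{i-1}$ yielding $v_i = \alpha_i(\gM) - \alpha_i(\gN)$, the leading-coefficient recursion, and the cuspidal periodicity giving $\prod a_j = \prod b_j$ rather than its square --- are all correct and are the right things to say.

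One step is stated too hastily: ruling out a single higher-order term of $x_i$ at degree $v_i + e(K'/K)$ does not by itself show $x_i$ is a monomial, since for instance terms at offsets divisible by $p$ (in units of $e(K'/K)$) are not excluded by that one comparison. To close this, let $S_i = \{m \geq 0 : x_i$ has a nonzero coefficient at degree $v_i + m\cdot e(K'/K)\}$; the equation $a_i u^{r_i} x_i = b_i u^{s_i}\varphi(x_{i-1})$ gives $S_i = p\,S_{i-1}$ as sets (using $\gcd(p, e(K'/K))=1$), whence $S_i = p^{f'} S_i$ by periodicity and thus $S_i = \{0\}$. Alternatively, for the dimension statement alone you can avoid the monomial analysis entirely: by Lemma~\ref{lem:rank one isomorphism over a field} a nonzero $f$ becomes an isomorphism after inverting $u$, and then any other $f'\in\Hom_{\K{\F}}(\gM,\gN)$ satisfies $f'[1/u]\circ(f[1/u])^{-1}\in\End_{\K{\F}}(\gN[1/u]) = \F$ by Lemma~\ref{lem:rank one endos}, so $f' \in \F\cdot f$.
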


\begin{proof}

  The proof is  essentially the same as that of \cite[Lem.\
  1.6]{DiamondSavitt}. (Indeed, when $p > 2$ this
  lemma can once again be proved by translating directly from
  \cite{DiamondSavitt} to the Breuil--Kisin module context.)
\end{proof}

 Using the material of Section~\ref{subsec:ext
  generalities}, 
one can compute $\Ext^1(\gM,\gN)$ for any pair of rank
one Breuil--Kisin modules $\gM,\gN$ of height at most one. We begin with the
following explicit description of the complex $C^{\bullet}(\gN)$ of  Section~\ref{subsec:ext
  generalities}.

\begin{defn}\label{notn:calh}
 We write  $\Czerofrac = \Czerofrac(\gM,\gN) \subset \F((u))^{\Z/f\Z}$ for 
the space of $f$-tuples $(\mu_i)$ such that each nonzero term of 
$\mu_i$ has degree congruent to $c_i - d_i \pmod{e(K'/K)}$, and set 
$\Czero = \Czerofrac \cap \F[[u]]^{\Z/f\Z}$. 

We further define $\Conefrac  = \Conefrac(\gM,\gN) \subset
\F((u))^{\Z/f\Z}$ to be
 the space of $f$-tuples $(h_i)$ such that each nonzero term of $h_i$
 has degree congruent to $r_i + c_i - d_i \pmod{e(K'/K)}$, and set
 $\Cone = \Conefrac \cap \F[[u]]^{\Z/f\Z}$.   There is a map $\mumap \colon \Czerofrac \to 
\Conefrac$ defined by
\[ \mumap(\mu_i) =  (-a_i u^{r_i} \mu_i + b_i \varphi(\mu_{i-1}) u^{s_i}) \] 
Evidently this restricts to a map
$\mumap \colon \Czero \to \Cone$.
\end{defn}

\begin{lemma}\label{lem:explicit-complex}
There is an isomorphism of complexes 
\[ [ \Czero \xrightarrow{\mumap} \Cone  ] \toisom C^{\bullet}(\gN)\]
in which $(\mu_i) \in \Czero$ is sent to the map $m_i \mapsto \mu_i n_i$
in $C^0(\gN)$, and $(h_i) \in \Cone$ is sent to the map $(1\otimes
m_{i-1}) \mapsto h_i n_i$ in $C^1(\gN)$.
\end{lemma}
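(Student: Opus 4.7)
The plan is a direct, computational verification that decomposes every morphism according to the idempotents $e_i$ and then imposes the $\gS_{\F}[\Gal(K'/K)]$-equivariance; no conceptual obstacle arises.

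First, I would identify $C^0(\gN)$ with $\Czero$. Any $\gS_{\F}$-linear map $\alpha\colon \gM \to \gN$ respects the idempotent decomposition, hence is determined by power series $\mu_i \in \F[[u]]$ with $\alpha(m_i) = \mu_i n_i$. Since $\hat g$ is $\F$-linear with $\hat g(u) = h(g) u$ and $\hat g(n_i) = h(g)^{d_i} n_i$, $\hat g(m_i) = h(g)^{c_i} m_i$, writing $\mu_i = \sum_k \mu_{i,k} u^k$ the condition $\alpha \circ \hat g = \hat g \circ \alpha$ becomes the pointwise identity $h(g)^{k+d_i} = h(g)^{c_i}$ for each $k$ with $\mu_{i,k} \ne 0$, i.e.\ $k \equiv c_i - d_i \pmod{e(K'/K)}$. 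Combining with equivariance under $\Gal(K'/K)/I(K'/K) \cong \Gal(k'/k)$, which cyclically permutes the idempotents $e_i$ and identifies the data at indices differing by~$f$ (thanks to the period-$f$ structure of $(r_i, a_i, c_i)$ recorded in Lemma~\ref{lem:rank one Kisin modules with descent data}), recovers the $f$-tuple description of $\Czero$. The same analysis identifies $C^1(\gN)$ with $\Cone$: a map $\beta\colon \varphi^*\gM \to \gN$ is determined by $h_i \in \F[[u]]$ with $\beta(1 \otimes m_{i-1}) = h_i n_i$, and the key identity $\hat g(1 \otimes m_{i-1}) = 1 \otimes h(g)^{c_{i-1}} m_{i-1} = h(g)^{p c_{i-1}}(1 \otimes m_{i-1})$ together with the congruence $p c_{i-1} \equiv r_i + c_i \pmod{e(K'/K)}$ yields the degree condition $k \equiv r_i + c_i - d_i \pmod{e(K'/K)}$ on $h_i$.

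Finally, I would check that the differential $\delta\colon C^0(\gN) \to C^1(\gN)$, $\alpha \mapsto \Phi_{\gN} \circ \varphi^*\alpha - \alpha \circ \Phi_{\gM}$, matches $\mumap$. Using $\Phi_{\gM}(1 \otimes m_{i-1}) = a_i u^{r_i} m_i$ and $\Phi_{\gN}(1 \otimes n_{i-1}) = b_i u^{s_i} n_i$, a direct calculation gives
\[
(\Phi_{\gN} \circ \varphi^*\alpha - \alpha \circ \Phi_{\gM})(1 \otimes m_{i-1}) = \bigl(b_i \varphi(\mu_{i-1}) u^{s_i} - a_i u^{r_i} \mu_i\bigr) n_i,
\]
which is precisely the $i$-th component of $\mumap(\mu)$. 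There is no real obstacle in the proof; the only mild subtlety is tracking the Galois-theoretic periodicities carefully so that the $f$-tuple parametrisation of $\Czero$ and $\Cone$ is correctly aligned with the decomposition of $\Hom_{\gS_{\F}[\Gal(K'/K)]}$ into its idempotent pieces.
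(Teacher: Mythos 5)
Your proof is correct and follows essentially the same route as the paper's: identify $C^0(\gN)$ and $C^1(\gN)$ by decomposing morphisms along the idempotents $e_i$, use $I(K'/K)$-equivariance to get the degree congruences and the action of a Frobenius lift to get $f$-periodicity, and then compute the differential directly. Your explicit verification of $\hat g(1\otimes m_{i-1}) = h(g)^{pc_{i-1}}(1\otimes m_{i-1})$ on $\varphi^*\gM$ (using $\varphi(h(g))=h(g)^p$) and of the congruence $pc_{i-1}\equiv r_i+c_i$ supplies detail that the paper relegates to a reference, but the argument is the same.
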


\begin{proof}
 Each element of $\Hom_{\gS_{\F}}(\gM,\gN)$ has the form $m_i \mapsto \mu_i
 n_i$ for some $f'$-tuple $(\mu_i)_{i \in \Z/f'\Z}$ of elements of $\F[[u]]$.  
The condition that this map 
is $\Gal(K'/K)$-equivariant 
 is easily seen to be
equivalent to the conditions that $(\mu_i)$ is periodic with period dividing $f$, and
that each nonzero term of $\mu_i$ has degree congruent to 
$c_{i}-d_{i} \pmod{e(K'/K)}$. (For the former consider the action
of a lift to $g \in \Gal(K'/K)$ satisfying $h(g)=1$ of a generator of $\Gal(k'/k)$, and for the
latter consider the action of $I(K'/K)$;\ \emph{cf}.\ the proof of
\cite[Lem.~1.5]{DiamondSavitt}.)   It follows that the map $\Czero
\to C^0(\gN)$ in the statement of the Lemma is an isomorphism. An
essentially identical argument shows that the given map $\Cone \to
C^1(\gN)$ is an isomorphism. 

To conclude, it suffices to observe that if $\alpha \in C^0(\gN)$ is given by $m_i
\mapsto \mu_i n_i$ with $(\mu_i)_i \in \Czero$ then
$\delta(\alpha) \in C^1(\gN)$ is the map
given by $$(1\otimes m_{i-1}) \mapsto (-a_i u^{r_i} \mu_i + b_i
\varphi(\mu_{i-1}) u^{s_i}) n_i,$$ which follows by a direct calculation.
\end{proof}

It follows from Corollary~\ref{cor:complex computes Hom and Ext} 
that $\Ext^1_{\K{\F}}(\gM,\gN) \cong \coker \mumap$.
If $h \in \Cone$, we write $\gP(h)$ for the element of
 $\Ext^1_{\K{\F}}(\gM,\gN)$ represented by $h$ under this isomorphism.

\begin{remark}
  \label{prop: extensions of rank one Kisin modules}Let $\gM=\gM(r,a,c)$ and
  $\gN=\gM(s,b,d)$ be rank one Breuil--Kisin modules with descent data as in
Lemma~{\em \ref{lem:rank one Kisin modules with descent data}}. It
follows from the proof of Lemma~\ref{lem: C computes Ext^1}, and in
particular the description of the map~\eqref{eqn:explicit 
  embedding}  found there, that the extension $\gP(h)$  is given by
the formulas 
  \begin{itemize}
  \item  $\gP_i=\F[[u]]\cdot m_i + \F[[u]]\cdot n_i$,
  \item $\Phi_{\gP,i}(1\otimes n_{i-1})=b_{i} u^{s_{i}}n_{i}$,
    $\Phi_{\gP,i}(1\otimes m_{i-1})=a_{i}u^{r_{i}}m_{i}+h_{i}
    n_{i}$.
  \item $\ghat(\sum_i m_i)=\sum_i h(g)^{c_i}m_i$,
    $\ghat(\sum_i n_i)=\sum_i h(g)^{d_i}  n_i$  for all $g\in\Gal(K'/K)$.
  \end{itemize}
From this description it is easy to see that the extension $\gP(h)$
has  height at most $1$ if and only if  each $h_i$ is divisible by $u^{r_i+s_i-e'}$ whenever $r_i+s_i-e'$ is non-negative.
\end{remark}

\begin{thm}\label{thm: extensions of rank one Kisin modules}
The dimension of $\Ext^1_{\K{\F}}(\gM,\gN)$ is given by the formula
\[\Delta+\sum_{i=0}^{f-1}\#\biggl\{j\in[0,r_i):j\equiv r_i+c_{i}-d_{i}\pmod{e(K'/K)}\biggr\} \]  
where $\Delta =  \dim_{\F} \Hom_{\K{\F}}(\gM,\gN)$ is $1$ if there is a nonzero map $\gM\to\gN$ and $0$
otherwise, while the subspace consisting of extensions of height at most $1$
has dimension
\[\Delta+\sum_{i=0}^{f-1}\#\biggl\{j\in[\max(0,r_i+s_i-e'),r_i):j\equiv r_i+c_{i}-d_{i}\pmod{e(K'/K)}\biggr\} .\] 
\end{thm}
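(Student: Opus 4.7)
By Corollary~\ref{cor:complex computes Hom and Ext} combined with the isomorphism of complexes in Lemma~\ref{lem:explicit-complex}, we have $\Hom_{\K{\F}}(\gM,\gN) \cong \ker(\mumap)$ and $\Ext^1_{\K{\F}}(\gM,\gN) \cong \coker(\mumap)$, where $\mumap : \Czero \to \Cone$ is given explicitly by $\mumap(\mu)_i = -a_i u^{r_i}\mu_i + b_i \varphi(\mu_{i-1}) u^{s_i}$. Since $\dim \ker(\mumap) = \Delta$ by Lemma~\ref{lem: maps between rank 1 Kisin modules}, the task reduces to computing $\dim \coker(\mumap)$; we will then address the height-at-most-one subspace separately.

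The idea is a truncation argument. Decompose $\mumap = D + N$, where $D(\mu)_i := -a_i u^{r_i}\mu_i$ and $N(\mu)_i := b_i \varphi(\mu_{i-1}) u^{s_i}$. The map $D$ is injective with image exactly $\Cone^{\text{high}} \subseteq \Cone$, the subspace of tuples $(h_i)$ with $v_u(h_i) \geq r_i$. Since $\varphi$ multiplies $u$-adic valuations by $p$, the perturbation $N$ is ``of higher order'' than $D$ on deep $u$-adic filtrations: for any sufficiently large multiple $M$ of $e(K'/K)$ (specifically, $(p-1)M > \max_i(r_i - s_i)$), one checks that $\mumap(u^M\Czero) \subseteq u^M\Cone^{\text{high}}$. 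Conversely, given $h \in u^M\Cone^{\text{high}}$, one iteratively solves $\mumap(\mu) = h$, starting from $\mu^{(0)}_i := -a_i^{-1} u^{-r_i} h_i \in u^M\Czero$ and repeatedly correcting the error $N(\mu^{(n)})$; the correction strictly raises valuation at each step, and the resulting Cauchy sequence converges in the $u$-adically complete module $\Czero$. Thus $\mumap(u^M\Czero) = u^M\Cone^{\text{high}}$, and a routine diagram chase shows that the induced map $\bar\mumap : \Czero/u^M\Czero \to \Cone/u^M\Cone^{\text{high}}$ between finite-dimensional $\F$-spaces has the same kernel and cokernel as $\mumap$.

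The dimensions of source and target of $\bar\mumap$ are then straightforward counts of arithmetic progressions:
\[\dim(\Czero/u^M\Czero) = \sum_{i=0}^{f-1} \#\{j \in [0,M) : j \equiv c_i - d_i \pmod{e(K'/K)}\},\]
\[\dim(\Cone/u^M\Cone^{\text{high}}) = \sum_{i=0}^{f-1} \#\{j \in [0, M + r_i) : j \equiv r_i + c_i - d_i \pmod{e(K'/K)}\}.\]
The difference, under the shift $j \mapsto j + r_i$, telescopes to $\sum_i \#\{j \in [0, r_i) : j \equiv r_i + c_i - d_i \pmod{e(K'/K)}\}$, and adding $\Delta$ yields the first formula.

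For the second formula, Remark~\ref{prop: extensions of rank one Kisin modules} identifies extensions of height at most one with the image of the subspace $\Cone^{\text{bt}} \subseteq \Cone$ of tuples $(h_i)$ with $u^{\max(0, r_i + s_i - e')} \mid h_i$. Since $r_i, s_i \in [0,e']$, one checks that $\mumap(\Czero) \subseteq \Cone^{\text{bt}}$, so the relevant dimension is $\dim(\Cone^{\text{bt}}/\mumap(\Czero))$. The same truncation argument applies with $\Cone$ replaced by $\Cone^{\text{bt}}$ (using that $u^M\Cone^{\text{high}} \subseteq \Cone^{\text{bt}}$), and the only change in the count is that the ``low-degree part'' now uses the interval $[\max(0, r_i + s_i - e'), r_i)$ instead of $[0, r_i)$, producing the second formula. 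The main technical hurdle throughout is the iterative construction establishing $\mumap(u^M\Czero) = u^M\Cone^{\text{high}}$; the remainder is elementary bookkeeping with arithmetic progressions.
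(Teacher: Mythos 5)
Your proof is correct and follows essentially the same route as the paper: reduce the computation of $\coker(\mumap)$ to a finite-dimensional truncation and count residue classes in intervals, then telescope. The paper's own proof outsources the truncation step to its general Lemma~\ref{lem:truncation argument used to prove f.g. of Ext Q version} (applied with $Q=A=\F$), whereas you re-derive the needed quasi-isomorphism inline by splitting $\mumap = D + N$ with $D$ the diagonal part and $N$ the $\varphi$-semilinear perturbation, and running a $u$-adic successive approximation. That is the same mechanism (the paper's lemma is proved by exactly this kind of contraction argument using the map $\Upsilon$), so the conceptual content is identical; your version has the minor advantage of making the specific bijection $D\colon\Czero\isoto\Cone^{\text{high}}$ explicit, and the minor cost of redoing work the paper has already packaged. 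One small omission: for the diagram chase to give $\ker(\mumap)\cong\ker(\bar\mumap)$ as well as the cokernel identification, you need $\mumap\vert_{u^M\Czero}$ to be injective, not merely surjective onto $u^M\Cone^{\text{high}}$; this follows immediately from your inequality $(p-1)M > \max_i(r_i-s_i)$ (if $\mu\ne 0$ has $\min_i v_u(\mu_i)=m\ge M$ and $\mumap(\mu)=0$, comparing valuations in $D(\mu)=-N(\mu)$ at an index realizing the minimum gives $(p-1)m\le r_i-s_i$, a contradiction), but you should say so. The height-at-most-one case is handled cleanly by replacing $\Cone$ with $\Cone^{\text{bt}}$ as you do; the paper is more terse here, pointing to \cite{DiamondSavitt}, so your explicit treatment of the second formula is a welcome addition.
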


\begin{proof}
  When $p > 2$, this result (for extensions of height at most $1$) can be obtained by translating
  \cite[Thm.~1.11]{DiamondSavitt} from Breuil modules to Breuil--Kisin
  modules. 
  We argue in the same spirit as \cite{DiamondSavitt} using  the generalities of Section~\ref{subsec:ext generalities}. 

Choose~$N$ as in
Lemma~\ref{lem:truncation argument used to prove f.g. of Ext Q
  version}(2). 
For brevity we
write $C^{\bullet}$ in lieu of $C^{\bullet}(\gN)$. We now use the
description of~$C^{\bullet}$ provided by
Lemma~\ref{lem:explicit-complex}. 
As we have noted, $C^0$ consists of the maps $m_i \mapsto \mu_i
n_i$ with $(\mu_i) \in \Czero$.
Since $(\varphi^*_{\gM})^{-1}(v^N C^1)$ contains precisely the maps $m_i
\mapsto \mu_i n_i$ in $C^0$ such that $v^{N} \mid u^{r_i} \mu_i$, we
compute that $\dim_{\F} C^0/\bigl((\varphi^*_{\gM})^{-1}(v^N C^1)\bigr)$
is the quantity
$$ Nf - \sum_{i=0}^{f-1} \#\biggl\{ j \in [e(K'/K) N-r_i, e(K'/K) N) : j \equiv c_i-d_i
  \pmod{e(K'/K)}\biggr\}.$$ We have
$\dim_{\F} C^1/v^NC^1 = Nf$, so
our formula for the dimension of $\Ext^1_{\K{\F}}(\gM,\gN)$ now follows
from Lemma~\ref{lem:truncation argument used to prove f.g. of Ext Q
  version}.
\end{proof}

\begin{remark}\label{rem:representatives-for-ext}
One can show exactly as in \cite{DiamondSavitt} that each element of $\Ext^1_{\K{\F}}(\gM,\gN)$ can be written uniquely in
the form $\gP(h)$ for $h \in \Cone$
with $\deg(h_i) < r_i$, except that when there exists a nonzero morphism
$\gM\to\gN$, the polynomials $h_i$ for $f \mid i$ may also have a term of degree
$\alpha_0(\gM)-\alpha_0(\gN)+r_0$ in common. Since we will not need
this fact we omit the proof.
\end{remark}

\subsection{Extensions of profile \texorpdfstring{$J$}{J}}
\label{sec:extensions-profile-J}

We now begin the work of showing, for each non-scalar tame type $\tau$,
that $\cC^{\tau,\BT,1}$ has $2^f$ irreducible
components, indexed by the subsets $J$ of~$\{0,1,\dots,f-1\}$. We will
also   describe the irreducible
components of~$\cZ^{\tau,1}$. 
The proof of this hinges on examining the extensions considered in
Theorem~\ref{thm: extensions of rank one Kisin modules},
and then applying the results of Subsection~\ref{subsec:universal families}.
We will show that 
most of these families of extensions  have 
positive codimension in $\cC^{\tau,\BT,1}$, and are thus negligible from the
point of view of determining irreducible components.  By a base change
argument, we will also be able to show that we can neglect the irreducible
Breuil--Kisin modules. The rest of Section~\ref{sec: extensions of rank one Kisin modules} is devoted to establishing the
necessary bounds on the dimension of the various families of
extensions, and to studying the map from $\cC^{\tau,\BT,1}$ to
$\cR^{\dd,1}$. 

We now introduce notation that we will use for the remainder of the
paper. 
We fix a
tame inertial type $\tau=\eta\oplus\eta'$ with coefficients in $\Qpbar$.
 We allow the case of scalar
types (that is, the case $\eta=\eta'$). 
Assume that the subfield $\F$ of $\Fpbar$ is large enough so that the reductions modulo $\m_{\Zp}$ of $\eta$ and
$\eta'$ (which by abuse of notation we continue to denote $\eta,\eta'$) have image in $\F$. 
We also fix a uniformiser $\pi$ of~$K$. 

\begin{remark}\label{rk:ordering}
We stress that  when we 
write $\tau=\eta\oplus\eta'$, we are implicitly ordering 
$\eta,\eta'$. Much of the notation in this section depends on 
distinguishing $\eta,\eta'$, as do some of the constructions later in
paper (in particular, those using the  
map to the Dieudonn\'e stack of Section~\ref{sec:dieudonne-stacks}). 
\end{remark}

As in Subsection~\ref{sec:dieudonne-stacks}, we make
the following  ``standard choice'' for the extension~$K'/K$: if $\tau$ is a tame
principal series type, we take $K'=K(\pi^{1/(p^f-1)})$, while
if~$\tau$ is a tame cuspidal type, we let $L$ be an unramified
quadratic extension of~$K$, and set $K'=L(\pi^{1/(p^{2f}-1)})$. In
either case $K'/K$ is a Galois extension and $\eta, \eta'$ both factor through
$I(K'/K)$. In the principal series
case, we have $e'=(p^f-1)e$, $f'=f$, and in the cuspidal case we have
$e'=(p^{2f}-1)e$, $f'=2f$.   Either way, we have $e(K'/K) =
p^{f'}-1$.

In either case, it follows from Lemma~\ref{lem:rank one Kisin modules
  with descent data} that a Breuil--Kisin module of rank one with descent data
from $K'$ to $K$ is described by the data of the quantities $r_i,a_i,c_i$ for $0\le i\le
f-1$, and similarly from Lemma~\ref{lem:explicit-complex} that extensions between two such Breuil--Kisin modules are
described by the $h_i$ for $0\le i\le
f-1$. This common description will enable us to treat the principal
series and cuspidal cases largely in parallel.

The character
$\hchar |_{I_K}$ of Section~\ref{subsec: Diamond--Savitt}  is identified via the Artin map $\cO_L^\times \to 
I_L^{\ab} = I_K^{\ab} $ with the reduction map 
$\cO_L^{\times} \to (k')^{\times}$. Thus for each $\sigma \in 
\Hom(k',\Fpbar)$ the map $\sigma \circ 
\hchar|_{I_L}$ is the fundamental character $\omega_{\sigma}$ defined 
in Section~\ref{subsec: notation}. 
Define  $k_i,k'_i\in \Z/(p^{f'}-1)\Z$ for all $i$ by the formulas
$\eta=\sigma_i\circ\hchar^{k_i}|_{I(K'/K)}$ and
$\eta'=\sigma_i\circ\hchar^{k'_i}|_{I(K'/K)}$. In particular we have
$k_i=p^ik_0$, $k'_i=p^ik'_0$ for all $i$.

\begin{defn} Let $\gM = \gM(r,a,c)$ and $\gN=\gM(s,b,d)$ be Breuil--Kisin
  modules of rank one with descent data. We say that the pair
  $(\gM,\gN)$ has \emph{type $\tau$} provided that for all $i$:
  \begin{itemize}
  \item the multisets $\{c_i,d_i\}$ and $\{k_i,k'_i\}$ are equal, and
  \item $r_i + s_i = e'$.
  \end{itemize}
  \end{defn}
  \begin{lemma} The following are equivalent.
    \begin{enumerate}
    \item   The pair $(\gM,\gN)$ has type~$\tau$. 
    \item  Some element of $\Ext^1_{\K{\F}}(\gM,\gN)$ of height at
      most one  satisfies the strong determinant condition and is of type~$\tau$. 
  \item Every element of $\Ext^1_{\K{\F}}(\gM,\gN)$ has height at most
    one, 
satisfies the strong determinant condition, and is of type~$\tau$. 
    \end{enumerate}
(Accordingly, we will sometimes refer to the condition that
$r_i+s_i=e'$ for all~$i$ as the determinant condition.) 
  \end{lemma}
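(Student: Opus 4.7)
I would prove the implications $(1)\Rightarrow(3)\Rightarrow(2)\Rightarrow(1)$. The implication $(3)\Rightarrow(2)$ is immediate, provided we exhibit at least one element of $\Ext^1_{\K{\F}}(\gM,\gN)$: the split extension $\gM\oplus\gN$, corresponding to $h=0$ in the notation of Remark~\ref{prop: extensions of rank one Kisin modules}, does the job. So the real content lies in the equivalence of $(1)$ with the conjunction of the three properties in $(3)$ (equivalently~$(2)$).

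For $(1)\Rightarrow(3)$, I would fix an arbitrary extension $\gP=\gP(h)$ and check each property separately, making use of the explicit model from Remark~\ref{prop: extensions of rank one Kisin modules}. The height condition recalled at the end of that remark demands precisely that $u^{r_i+s_i-e'}\mid h_i$, which is vacuous under the hypothesis $r_i+s_i=e'$; thus every extension has height at most one. For the type condition, note that since $|I(K'/K)|$ is prime to~$p$, the short exact sequence induced on $\gP_i/u\gP_i$ splits as a representation of $I(K'/K)$ into the characters $\hchar^{c_i}\oplus\hchar^{d_i}$; the hypothesis $\{c_i,d_i\}=\{k_i,k'_i\}$ identifies this with $\eta\oplus\eta'$ via $\sigma_i$, giving the required $I(K'/K)$-equivariant isomorphism with $\tau^\circ\otimes_\cO\F$. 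For the strong determinant condition, I would invoke \cegsBdeterminantconditionexplicit, which translates the strong determinant condition for a rank two Breuil--Kisin module of type~$\tau$ and height at most one into the explicit numerical constraint $r_i+s_i=e'$ for each~$i$ (given that the multiset condition on $\{c_i,d_i\}$ holds).

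For $(2)\Rightarrow(1)$, I would take any $\gP(h)$ satisfying the three properties and read off the two required equalities. The multiset equality $\{c_i,d_i\}=\{k_i,k'_i\}$ follows from the same splitting-of-$I(K'/K)$-representations argument used above, now applied in reverse to deduce the equality of characters from the isomorphism class $\eta\oplus\eta'$. The equality $r_i+s_i=e'$ follows again from \cegsBdeterminantconditionexplicit.

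The main (and essentially only) obstacle is the correct invocation of \cegsBdeterminantconditionexplicit; everything else is a direct computation with the explicit formulas for $\gM(r,a,c)$, $\gM(s,b,d)$ and their extensions $\gP(h)$ recorded in Lemma~\ref{lem:rank one Kisin modules with descent data} and Remark~\ref{prop: extensions of rank one Kisin modules}.
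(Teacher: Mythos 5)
Your overall route (showing $(1)\Leftrightarrow(3)$ and noting $(3)\Rightarrow(2)$ is trivial with the split extension as witness) is the same as the paper's, and your treatment of the height condition and the type condition is correct. However, there is a gap in how you handle the strong determinant condition. You claim that \cegsBdeterminantconditionexplicit\ directly ``translates the strong determinant condition \dots into the explicit numerical constraint $r_i+s_i=e'$,'' but that is not what the cited lemma does, and the paper's proof needs an additional input to bridge the gap. The strong determinant condition is a statement about the dimensions of the $\teta$-eigenpieces $(\im \Phi_{\gP,i}/E(u)\gP_i)_{\teta}$ for each character $\teta$ of $I(K'/K)$, and \cegsBdeterminantconditionexplicit\ identifies it with the condition that each such dimension equals $e$. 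The determinant condition $r_i+s_i=e'$ only controls the \emph{sum} of these dimensions over all $\teta$; to pass from the sum being $e'$ to each summand being $e$, the paper invokes \cegsBtypesareunmixed, which shows (because $\gP$ has unmixed type $\tau$) that these dimensions are \emph{independent} of $\teta$. Without this unmixedness input your argument does not close: a priori the sum $e'$ could be unevenly distributed across the $\teta$-pieces, in which case the strong determinant condition would fail even though $r_i+s_i=e'$. The converse direction $(2)\Rightarrow(1)$ is fine as you wrote it, since there only the ``last part'' of \cegsBdeterminantconditionexplicit\ is needed to read off $r_i+s_i=e'$ from the strong determinant condition.
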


  \begin{proof}
    Suppose first that the pair $(\gM,\gN)$ has type $\tau$. The last
    sentence of Remark~\ref{prop: extensions of rank one Kisin
      modules} shows that every element of $\Ext^1_{\K{\F}}(\gM,\gN)$
    has height at most one.  Let $\gP$ be such an element. The
    condition on the multisets $\{c_i,d_i\}$ guarantees that $\gP$ has
    type $\tau$ (an \emph{unmixed type} in the sense of \cite[Def.~3.3.2]{cegsB}). By \cegsBtypesareunmixed\
    we see that $\dim_{\F} (\im_{\gP,i}/E(u)\gP_i)_{\xi}$ is
    independent of the character $\xi : I(K'/K) \to \F^{\times}$. From the condition that $r_i+s_i=e'$ we
    know that the sum over all $\xi$ of these dimensions is equal to
    $e'$; since they are all equal, each is equal to $e$, and
    \cegsBdeterminantconditionexplicit\
 tells
    us that $\gP$ satisfies the
    strong determinant condition. This proves that (1) implies (3). 

Certainly (3)
    implies (2), so it remains to check that (2) implies (1). Suppose
    that  
$\gP \in \Ext^1_{\K{\F}}(\gM,\gN)$ has height at most one, 
satisfies the strong determinant condition, and has type $\tau$. The
condition that $\{c_i,d_i\}=\{k_i,k'_i\}$ follows from $\gP$ having
type $\tau$, and the condition that $r_i+s_i=e'$ follows from the last
part of  \cegsBdeterminantconditionexplicit.
  \end{proof}

\begin{df}
\label{df:extensions of profile $J$}
If $(\gM,\gN)$ is a pair of type $\tau$ (resp.\ $\gP$ is an extension 
of type~$\tau$), we define the {\em profile} of $(\gM,\gN)$ (resp.\ of $\gP$) to
be the subset $J := \{ i  \, | \, c_i = k_i\} \subseteq \Z/f'\Z$,
unless $\tau$ is scalar, in which case we define the profile to be the
subset~$\varnothing$. 
(Equivalently, $J$ is in all cases the complement in
$\Z/f'\Z$ of the set  $\{i \, | \, c_i = k'_i\}.$)

Observe that in the cuspidal case the equality $c_i = c_{i+f}$ means
that $i \in J$ if and only if $i+f \not\in J$, so that the set $J$ is
determined by its intersection with any $f$ consecutive integers
modulo $f' = 2f$.  

In the cuspidal case we will say that a subset $J \subseteq \Z/f'\Z$ is a
  profile
if it satisfies $i \in J$ if and only if $i+f\not\in J$; in the
principal series case, we may refer to any subset $J \subseteq \Z/f'\Z$
as a profile.

We define the {\em refined profile} of the pair $(\gM,\gN)$ (resp.\ of $\gP$) to consist of its profile $J$,
together with the $f$-tuple of invariants $r:= (r_i)_{i = 0}^{f-1}$.
If $(J,r)$ is a refined profile that arises from some pair (or extension)
of type $\tau$, then we refer to $(J,r)$ as a refined profile for $\tau$.

We say the pair $(i-1,i)$ is a \emph{transition} for $J$ if $i-1 \in
J$, $i \not\in J$ or vice-versa. (In the first case we sometimes say
that the pair $(i-1,i)$ is a transition out of $J$, and in the latter case a transition
into $J$.) Implicit in many of our arguments below
is the observation that in the cuspidal case $(i-1,i)$ is a transition
if and only if $(i+f-1,i+f)$ is a transition.
\end{df}

\subsubsection{An explicit description of refined profiles}
\label{subsubsec:explicitly refined}
The refined profiles for $\tau$ admit an explicit description.
If $\gP$ is of profile $J$, for some fixed $J \subseteq \Z/f'\Z$
then, since $c_i$, $d_i$ are fixed, we see that the $r_i$ and $s_i$
appearing in $\gP$ are determined
modulo $e(K'/K)=p^{f'}-1$. Furthermore, we see that $r_i+s_i\equiv
0\pmod{p^{f'}-1}$, so that these values are consistent with the 
determinant condition; conversely, if we make any choice of the~$r_i$
in the given residue class modulo $(p^{f'}-1)$, then the $s_i$ are
determined by the determinant condition, and the imposed values
are consistent with the descent data. There are of course only
finitely many choices for the~$r_i$, and so there are only finitely 
many possible refined profiles for $\tau$.

To make this precise, recall that we have the congruence
$$r_i \equiv
pc_{i-1} - c_i \pmod{p^{f'}-1}.$$ We will write $[n]$ for the
least non-negative residue class of $n$ modulo $e(K'/K) =
p^{f'}-1$.

If both $i-1$ and $i$ lie in $J$,
then we have $c_{i-1} = k_{i-1}$ and  $c_i = k_i$. The first of these
implies that $pc_{i-1} = k_i$, and therefore $r_i \equiv 0
\pmod{p^{f'}-1}$. The same conclusion holds if neither $i-1$ and $i$
lie in $J$. Therefore if $(i-1,i)$ is not a transition we may write 
\[ r_i =
  (p^{f'}-1)y_i \quad \text{ and } \quad s_i = (p^{f'}-1)(e-y_i).\]  
with $0 \le y_i \le e$.

Now suppose instead that $(i-1,i)$ is a transition. (In
particular the type $\tau$ is not scalar.)  This
time $pc_{i-1} = d_i$ (instead of $pc_{i-1} = c_i$), so that $r_i
\equiv d_i - c_i \pmod{p^{f'}-1}$. In this case we write
 \[ r_i =
  (p^{f'}-1)y_i - [c_i-d_i] \quad \text{ and } \quad s_i = (p^{f'}-1)(e+1-y_i) -
  [d_i-c_i]\]
with $1 \le y_i \le e$.

Conversely, for fixed profile $J$ one checks that each choice of integers $y_i$ in the ranges
described above gives rise to a refined profile for $\tau$.

If $(i-1,i)$ is not a transition 
and $(h_i) \in \Conefrac(\gM,\gN)$ then non-zero terms of $h_i$ have
degree congruent to $r_i + c_i - d_i \equiv c_i - d_i \pmod{p^{f'}-1}$.
If instead $(i-1,i)$ is a transition 
and $(h_i) \in \Conefrac(\gM,\gN)$ then non-zero terms
of $h_i$  have degree congruent to $r_i + c_i - d_i \equiv 0 \pmod{p^{f'}-1}$.
In either case, comparing with the preceding paragraphs we see that $\#\{ j \in 
  [0,r_i) : j \equiv  r_i + c_i - d_i \pmod{e(K'/K)}\}$ is exactly~$y_i$. 

By Theorem~\ref{thm: extensions of rank one Kisin modules}, we conclude
that for a fixed choice of the~$r_i$ 
the dimension of the
corresponding~$\Ext^1$ is $\Delta + \sum_{i=0}^{f-1} y_i$ (with
$\Delta$ as in the statement of \emph{loc.\ cit.}).
We say that the refined profile $\bigl(J, (r_i)_{i =0}^{f-1}\bigr)$
is \emph{maximal} if the $r_i$ are chosen to be maximal subject to the
above conditions, or equivalently if the $y_i$ are all chosen to be $e$; for each
profile~$J$, there is a unique maximal refined profile~$(J,r)$.

\subsubsection{The sets $\cP_{\tau}$}
\label{sec:sets-cp_tau}

To each tame type $\tau$ we now associate a set $\cP_{\tau}$, which
will be a subset of the set of profiles in $\Z/f'\Z$.  (In Appendix~\ref{sec: appendix on tame
  types} we will recall, following~\cite{MR2392355}, that the 
set $\cP_{\tau}$ parameterises the Jordan--H\"older factors of the
reduction mod~$p$ of 
$\sigma(\tau)$.)

We write $\eta (\eta')^{-1} =
\prod_{j=0}^{f'-1} (\sigma_j \circ \hchar)^{\gamma_j}$ for uniquely defined integers $0
\le \gamma_j \le p-1$ not all equal to $p-1$, so that
\begin{equation}\label{eq:k-gamma}
[k_i - k'_i] = \sum_{j=0}^{f'-1} p^{j} \gamma_{i-j} 
\end{equation}
with subscripts taken modulo $f'$.

If~$\tau$ is scalar then
we set $\cP_\tau=\{\varnothing\}$. Otherwise we let 
$\cP_{\tau}$ be the collection of profiles $J \subseteq \Z/f'\Z$ 
satisfying the conditions:
\begin{itemize}
\item if $i-1\in J$ and $i\notin J$ then $\gamma_{i}\ne p-1$, and
\item if $i-1\notin J$ and $i\in J$ then $\gamma_{i}\ne 0$.
\end{itemize}
When $\tau$ is a cuspidal type, so that $\eta' = \eta^q$, the integers 
$\gamma_j$ satisfy $\gamma_{i+f} = p-1-\gamma_i$ for all $i$; thus the
condition that if $(i-1,i)$ is a transition out of $J$ then $\gamma_i
\neq p-1$ translates exactly into the condition that if $(i+f-1,i+f)$ is a
transition into $J$ then $\gamma_{i+f} \neq 0$.

\subsubsection{Moduli stacks of extensions}\label{subsubsection: stacks of extensions}

We now apply the constructions of stacks and topological spaces of
Definitions~\ref{def:scheme-theoretic images}
and~\ref{df:constructible images} to the families of
extensions considered in Section~\ref{sec:extensions-profile-J}. 

\begin{df}\label{defn: M(j,r)}
If $(J,r)$ is a refined profile for $\tau$, then 
we let $\gM(J,r) := \gM(r,1,c)$ and let
$\gN(J,r) := \gM(s,1,d),$ where $c$, $d$, and $s$ are determined
from $J$, $r$, and $\tau$ according to the 
discussion of~(\ref{subsubsec:explicitly refined}); for instance we
take $c_i
= k_i$ when $i \in J$ and $c_i = k'_i$ when $i \not\in J$.
For the unique
maximal profile~$(J,r)$ refining~$J$, we write simply $\gM(J)$ and $\gN(J)$.
\end{df}

\begin{df}
If $(J,r)$ is a refined profile for $\tau$, then following
Definition~\ref{def:scheme-theoretic images}, we may construct the reduced
closed substack $\overline{\cC}\bigl(\gM(J,r),\gN(J,r)\bigr)$ of $\cC^{\tau,\BT,1}$,
as well as the reduced closed substack $\overline{\cZ}\bigl(\gM(J,r),
\gN(J,r)\bigr)$ of $\cZ^{\tau,1}.$
We introduce the notation $\overline{\cC}(J,r)$ and $\overline{\cZ}(J,r)$
for these two stacks, and note that (by definition) $\overline{\cZ}(J,r)$
is the scheme-theoretic image of $\overline{\cC}(J,r)$ under
the morphism $\cC^{\tau,\BT,1} \to \cZ^{\tau,1}$.
\end{df}

\begin{remark}\label{rem-all-pts} As noted in the final sentence of
  Definition~\ref{def:scheme-theoretic images}, Lemma~\ref{lem: scheme theoretic
    images coincide} shows that $\overline{\cC}(J,r)$ contains all
  extensions of refined profile $(J,r)$ over extensions of~$\F$, and not
  only those corresponding to a maximal ideal of $A^{\dist}$.  
  \end{remark}

\begin{thm}\label{thm: dimension of refined profiles} 
If $(J,r)$ is any refined profile for $\tau$,
then $\dim \overline{\cC}(J,r) \leq [K:\Qp],$ with equality if and only if $(J,r)$ is
maximal.
\end{thm}
\begin{proof}
  This follows from  Corollary~\ref{cor:
    dimension of families of extensions}, Theorem~\ref{thm:
    extensions of rank one Kisin modules}, and Proposition~\ref{prop:base-change for exts}.  (See also the discussion
  following Definition~\ref{df:extensions of profile $J$}, and note that over
  $\Spec \Adist$, we have $\Delta=0$ by definition.)
\end{proof}

\begin{df} If $J \subseteq \Z/f'\Z$ is a profile, and if $r$ is chosen so that
$(J,r)$ is a maximal refined profile for $\tau$,
then we write $\overline{\cC}(J)$ to denote the closed substack $\overline{\cC}(J,r)$
of $\cC^{\tau,\BT,1}$,
and $\overline{\cZ}(J)$ to denote the closed substack
$\overline{\cZ}(J,r)$ of $\cZ^{\tau,1}$.
Again, we note that by definition
$\overline{\cZ}(J)$ is the scheme-theoretic
image of~$\overline{\cC}(J)$ in~$\cZ^{\tau,1}$. 
\end{df}

We will see later that the~$\overline{\cC}(J)$ are precisely the
irreducible components of~$\cC^{\tau,\BT,1}$; in particular, their
finite type points can correspond to irreducible Galois
representations. While we do not need it in the sequel, we note the
following definition and result, describing the underlying topological
spaces of the loci of reducible Breuil--Kisin modules of fixed refined profile.
\begin{defn}
  For each refined type~$(J,r)$, we write~$|\cC(J,r)^\tau|$ for the
  constructible subset~$|\cC(\gM(J,r),\gN(J,r))|$
  of~$|\cC^{\tau,\BT,1}|$ of Definition~\ref{df:constructible images}
  (where $\gM(J,r)$, $\gN(J,r)$ are the Breuil--Kisin modules
  of Definition~\ref{defn: M(j,r)}). We
  write~$|\cZ(J,r)^\tau|$ for the image of~$|\cC(J,r)^\tau|$
  in~$|\cZ^{\tau,1}|$ (which is again a constructible
  subset). 
\end{defn}

\begin{lem}
  \label{lem: closed points of C(J,r)}The $\Fpbar$-points
  of~$|\cC(J,r)^\tau|$ are precisely the reducible Breuil--Kisin modules
  with $\Fpbar$-coefficients
  of type~$\tau$ and refined profile~$(J,r)$.
\end{lem}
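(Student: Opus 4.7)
The plan is to unwind the definitions and match $\Fpbar$-points of $|\cC(J,r)^\tau|$ on one side, and reducible Breuil--Kisin modules of refined shape $(J,r)$ on the other, via the classification of rank one Breuil--Kisin modules (Lemma~\ref{lem:rank one Kisin modules with descent data}). There are two containments to verify.

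First, suppose $\gP$ corresponds to an $\Fpbar$-point of $|\cC(J,r)^\tau|$. By definition (via Lemma~\ref{lem:ext images} and the description of $\xi$), this means $\gP$ lies in the image of $\xi \colon \Spec \Btwist \to \cC^{\dd,1}$, where $\Btwist$ parametrises extensions of $\gM(J,r)_{\Gm\times\Gm,x}$ by $\gN(J,r)_{\Gm\times\Gm,y}$. Thus the $\Fpbar$-point is given by some pair $(\xbar,\ybar) \in (\Fpbar^\times)^2$ together with an extension class in $\Ext^1_{\K{\Fpbar}}\bigl(\gM(J,r)_{\Fpbar,\xbar},\gN(J,r)_{\Fpbar,\ybar}\bigr)$. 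The corresponding $\gP$ is visibly reducible, and since $(\gM(J,r)_{\Fpbar,\xbar},\gN(J,r)_{\Fpbar,\ybar})$ has type $\tau$ and refined shape $(J,r)$ by construction, the lemma immediately preceding Definition~\ref{df:extensions of shape $J$} (or rather its cited precursors) shows that $\gP$ is of type $\tau$ with refined shape $(J,r)$.

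Conversely, suppose $\gP$ is a reducible Breuil--Kisin module over $\Fpbar$ of type $\tau$ and refined shape $(J,r)$. By definition of refined shape this means $\gP$ fits in a short exact sequence $0 \to \gN \to \gP \to \gM \to 0$ where the pair $(\gM,\gN)$ has type $\tau$ and refined shape $(J,r)$. By Lemma~\ref{lem:rank one Kisin modules with descent data}, the data defining $\gM$ (respectively $\gN$) is determined by the prescribed $r,c$ (respectively $s,d$) of the refined shape together with the scalar $\xbar := \prod_{i=0}^{f-1} a_i \in \Fpbar^\times$ (respectively $\ybar := \prod_{i=0}^{f-1} b_i \in \Fpbar^\times$). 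Hence there are canonical isomorphisms $\gM \cong \gM(J,r)_{\Fpbar,\xbar}$ and $\gN \cong \gN(J,r)_{\Fpbar,\ybar}$, and $\gP$ corresponds to a class in $\Ext^1_{\K{\Fpbar}}\bigl(\gM(J,r)_{\Fpbar,\xbar},\gN(J,r)_{\Fpbar,\ybar}\bigr) = T_\Fpbar$ (via the base-change isomorphism from Proposition~\ref{prop:base-change for exts}).

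The final step is to lift this class to an $\Fpbar$-point of $\Spec\Btwist$; since $V \to T$ is surjective, the induced map $V_\Fpbar \to T_\Fpbar$ is surjective, so our class lifts to an element of $V_\Fpbar$, which is exactly an $\Fpbar$-point of $\Spec\Btwist$ above $(\xbar,\ybar)$, and this point maps via $\xi$ to $\gP$. Thus $\gP \in |\cC(J,r)^\tau|$, completing the proof. No real obstacle arises here; the argument is essentially a bookkeeping exercise once Lemma~\ref{lem:rank one Kisin modules with descent data} and the functorial description of $\Btwist$ from Subsection~\ref{subsec:families of extensions} are in hand.
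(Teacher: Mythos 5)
Your proof is correct and takes essentially the same approach as the paper, which simply asserts the result is ``immediate from the definition''; your two containments are precisely the unwinding that remark refers to, with Lemma~\ref{lem:rank one Kisin modules with descent data} supplying the identification $\gM \cong \gM(J,r)_{\Fpbar,\xbar}$ via $\xbar = \prod_i a_i$, and Proposition~\ref{prop:base-change for exts} plus surjectivity of $V \to T$ giving the lift to an $\Fpbar$-point of $\Spec\Btwist$.
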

\begin{proof}
  This is immediate from the definition.
\end{proof}

\section{Components of Breuil--Kisin and Galois moduli stacks}
\label{sec:Components}

Now that we have constructed the morphisms $\overline{\cC}(J) \to \overline{\cZ}(J)$ for each $J$, we can begin our study of the components of the stacks $\cC^{\tau,\BT,1}$ and $\cZ^{\tau,1}$. The first step in Subsection~\ref{subsec:vertical-comps} is to determine precisely for which $J$ the scheme-theoretic image $\overline{\cZ}(J)$ has dimension smaller than $[K:\Qp]$, and hence is \emph{not} a component of $\cZ^{\tau,1}$. In Section~\ref{subsec:irreducible} we  study the irreducible locus in $\cC^{\tau,\BT,1}$ and prove that it lies in a closed substack of positive codimension. We are then ready to establish our main results in Subsections~\ref{subsec: irred components} and~\ref{subsec: map to
  Dieudonne stack}.

\subsection{\texorpdfstring{$\kExt^1$}{ker-Ext} and vertical
  components}\label{subsec:vertical-comps}
In this section we will establish some basic facts about $\kExt^1_{\K{\F}}(\gM,\gN)$,
and use these results to study the images of our
irreducible components in $\cZ^{\tau,1}$. 
 Let $\gM = \gM(r,a,c)$ and $\gN = \gM(s,b,c)$ be Breuil--Kisin 
modules as in Section~\ref{subsec:
  Diamond--Savitt}.

Recall from \eqref{eqn: computing kernel of Ext groups} that the
dimension of $\kExt_{\K{\F}}(\gM,\gN)$ is bounded above by the
dimension of $\Hom_{\K{\F}}(\gM,\gN[1/u]/\gN)$;\ more precisely, by
Lemma~\ref{lem: Galois rep is a functor if A is actually finite local}
we find in this setting that 
\numequation\label{eq:ker-ext-formula}\begin{split} \dim_{\F}  \kExt^1_{\K{\F}}(\gM,\gN) = \dim_{\F}
\Hom_{\K{A}}(\gM,\gN[1/u]/\gN)  \\- (\dim_{\F} \Hom_{\F[G_K]}(T(\gM),T(\gN)) -
\dim_{\F} \Hom_{\K{A}}(\gM,\gN)).
\end{split}
\end{equation}

A map $f : \gM \to \gN[1/u]/\gN$ has the form
$f(m_i) = \mu_i n_i$ for some $f'$-tuple of elements $\mu_i \in
\F((u))/\F[[u]]$. By the same argument as in the first paragraph of the proof of
Lemma~\ref{lem:explicit-complex}, such a map belongs to
$C^0(\gN[1/u]/\gN)$ (i.e., it is $\Gal(K'/K)$-equivariant) if and only
if the $\mu_i$ are periodic with
period dividing $f$, and each nonzero term of $\mu_i$ has degree
congruent to $c_i-d_i \pmod{e(K'/K)}$.  One computes that
$\delta(f)(1\otimes m_{i-1}) = (u^{s_i} \varphi(\mu_{i-1}) - u^{r_i}
\mu_i)n_i$ and so $f \in C^0(\gN[1/u]/\gN)$ lies in  $\Hom_{\K{\F}}(\gM,\gN[1/u]/\gN)$
precisely when
\numequation\label{eq:phi-commute-ker-ext} a_i
u^{r_i} \mu_i = b_i \varphi(\mu_{i-1}) u^{s_i}\end{equation} for all
$i$.

\begin{remark}\label{rem:explicit-ker-ext}
Let $f \in  \Hom_{\K{\F}}(\gM,\gN[1/u]/\gN)$ be given as above. Choose
any lifting $\tilde{\mu}_i$ of $\mu_i$ to $\F((u))$. Then (with
notation as in~Definition~\ref{notn:calh}) the tuple
$(\tilde{\mu}_i)$ is an element of $\Czerofrac$,  and we define $h_i =
\mumap(\tilde{\mu}_i)$. Then
$h_i$ lies in $\F[[u]]$ for all $i$, so that $(h_i) \in \Cone$, and a
comparison with the construction of~\eqref{eqn: computing kernel of Ext groups} and 
the proof of Lemma~\ref{lem:explicit-complex} shows that $f$ maps
to the extension class in $\kExt^1_{\K{\F}}(\gM,\gN)$ represented by $\gP(h)$.
\end{remark}

Recall that Lemma~\ref{lem: bound on torsion in kernel of Exts}
 implies that  
nonzero terms appearing in $\mu_i$ have degree at least $-\lfloor
e'/(p-1) \rfloor$. From this we obtain the following trivial bound on $\kExt$.

\begin{lemma}\label{cor:bounds-on-ker-ext}
 We have $\dim_{\F} \kExt^1_{\K{\F}}(\gM,\gN) \le \lceil e/(p-1)
 \rceil f$. 
\end{lemma}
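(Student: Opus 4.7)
The plan is to use the bound $\dim_{\F} \kExt^1_{\K{\F}}(\gM,\gN) \leq \dim_{\F} \Hom_{\K{\F}}(\gM,\gN[1/u]/\gN)$ coming from the exact sequence~\eqref{eqn: computing kernel of Ext groups} (which shows the natural map $\Hom_{\K{\F}}(\gM,\gN[1/u]/\gN) \to \kExt^1_{\K{\F}}(\gM,\gN)$ is surjective), and then to directly estimate the right-hand side by counting admissible ``pole patterns'' of tuples $(\mu_i)$.

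The classification of $\Hom_{\K{\F}}(\gM,\gN[1/u]/\gN)$ developed just above (an element corresponds to an $f'$-tuple $(\mu_i) \in (\F((u))/\F[[u]])^{f'}$ that is periodic with period dividing $f$, has each $\mu_i$ supported only in degrees congruent to $c_i - d_i \pmod{e(K'/K)}$, and satisfies the commutation relation~\eqref{eq:phi-commute-ker-ext}) already gives a sharp description of the space. For the upper bound we will discard the commutation relation and just count $f$-tuples $(\mu_0,\ldots,\mu_{f-1})$ satisfying the degree constraint, together with the torsion bound from Lemma~\ref{lem: bound on torsion in kernel of Exts}, which guarantees that every nonzero term of $\mu_i$ has degree at least $-\lfloor e'/(p-1) \rfloor$ (equivalently, the pole order of $\mu_i$ is at most $\lfloor e'/(p-1) \rfloor$).

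The core count: for fixed $i$, admissible degrees of $\mu_i$ lie in the interval $[-\lfloor e'/(p-1)\rfloor,-1]$ and must be congruent to $c_i-d_i \pmod{e(K'/K)}$. Since $e(K'/K) \mid e'$ and $e'/e(K'/K) = e$, the number of integers in an interval of length $\lfloor e'/(p-1) \rfloor$ lying in a fixed residue class modulo $e(K'/K)$ is at most
\[
\left\lceil \frac{\lfloor e'/(p-1)\rfloor}{e(K'/K)}\right\rceil \;\leq\; \left\lceil \frac{e}{p-1}\right\rceil.
\]
Hence each $\mu_i$ ranges over an $\F$-vector space of dimension at most $\lceil e/(p-1)\rceil$. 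By periodicity the tuple is determined by $\mu_0,\ldots,\mu_{f-1}$, so $\dim_\F \Hom_{\K{\F}}(\gM,\gN[1/u]/\gN) \leq f\lceil e/(p-1)\rceil$, and the claimed bound follows.

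There is essentially no obstacle here: once one combines the surjection from~\eqref{eqn: computing kernel of Ext groups} with the explicit description of $\Hom_{\K{\F}}(\gM,\gN[1/u]/\gN)$ and the torsion bound from Lemma~\ref{lem: bound on torsion in kernel of Exts}, the estimate is a routine counting argument, and the only mild care needed is in handling the relation $e' = e \cdot e(K'/K)$ when converting the pole bound into a count per residue class.
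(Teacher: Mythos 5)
Your proof is correct and follows essentially the same route as the paper's: both use the surjection $\Hom_{\K{\F}}(\gM,\gN[1/u]/\gN) \twoheadrightarrow \kExt^1_{\K{\F}}(\gM,\gN)$ from~\eqref{eqn: computing kernel of Ext groups}, the pole bound from Lemma~\ref{lem: bound on torsion in kernel of Exts}, the congruence constraint on the degrees of the $\mu_i$, and periodicity modulo $f$, to count at most $\lceil e/(p-1) \rceil$ nonzero terms per index. Your writeup is simply a more explicit version of the paper's terse argument, including the small arithmetic check that $\lceil \lfloor e'/(p-1)\rfloor / e(K'/K) \rceil \le \lceil e/(p-1)\rceil$, which the paper elides.
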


\begin{proof} The degrees of nonzero terms of $\mu_i$ all lie in a single
  congruence class modulo $e(K'/K)$, and are bounded below by
  $-e'/(p-1)$. Therefore
  there are at most $\lceil e/(p-1) \rceil$ nonzero terms, and since
  the $\mu_i$ are periodic with period dividing $f$ the lemma follows.
\end{proof}

\begin{remark}\label{rem:half}
It follows directly from Corollary~\ref{cor:bounds-on-ker-ext} that if
$p > 3$ and $e\neq 1$ then we have  $\dim_{\F}
\kExt^1_{\K{\F}}(\gM,\gN) \le [K:\Qp]/2$, for then $\lceil e/(p-1)
\rceil \le e/2$. Moreover these inequalities are strict if $e >
2$. 
\end{remark}

We will require a more precise computation of
$\kExt^1_{\K{\F}}(\gM,\gN)$ in the setting of
Section~\ref{sec:extensions-profile-J} where the pair $(\gM,\gN)$ has 
maximal refined profile $(J,r)$.  We now return to that  setting and its
notation.

Let $\tau$ be a tame type. We
will find the following notation to be helpful. We let $\gamma_i^* =
\gamma_i$ if $i-1 \not\in J$, and $\gamma_i^* = p-1-\gamma_i$
if $i-1 \in J$. (Here the integers $\gamma_i$ are as in
Section~\ref{sec:sets-cp_tau}. In the case of scalar types this means
that we have $\gamma^*_i = 0$ for all $i$.)
Since $p[k_{i-1}-k'_{i-1}] - [k_i - k'_i] = (p^{f'}-1)\gamma_i$, 
 an elementary but useful calculation
shows that
\numequation\label{eq:gammastar}
p[d_{i-1}-c_{i-1}] - [c_i-d_i] = \gamma_i^* (p^{f'}-1),
\end{equation}
when $(i-1,i)$ is a transition, and that in this case $\gamma_i^*
=0$ if and only if $[d_{i-1}-c_{i-1}] < p^{f'-1}$. Similarly, if
$\tau$ is not a scalar type and $(i-1,i)$ is not a transition then
\numequation\label{eq:gammastar-2}
p[d_{i-1}-c_{i-1}] + [c_i-d_i] = (\gamma_i^*+1) (p^{f'}-1).
\end{equation}

The main computational result of this section is the following.

\begin{prop}\label{prop:ker-ext-maximal}
Let $(J,r)$ be any maximal refined profile for $\tau$, and suppose that
the pair $(\gM,\gN)$ has refined profile $(J,r)$.    Then  
$\dim_{\F} \kExt^1_{\K{\F}} (\gM,\gN)$ is equal to 
\[\# \{ 0 \le i < f \, : \, \text{the pair } (i-1,i) \text{ is a
  transition and } \gamma_i^* = 0 \},\]
except that when $e=1$, $\prod_i a_i = \prod_i b_i$,  and the quantity displayed above is $f$,
 then 
the dimension of $\kExt^1_{\K{\F}} (\gM,\gN)$ is equal to $f-1$.
\end{prop}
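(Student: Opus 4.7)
The plan is to use~\eqref{eq:ker-ext-formula} combined with Lemma~\ref{lem: Galois rep is a functor if A is actually finite local} to rewrite
\begin{equation*}
  \dim_{\F} \kExt^1_{\K{\F}}(\gM,\gN) = \dim_{\F} \Hom_{\K{\F}}(\gM,\gN[1/u]/\gN) - \dim_{\F} \Hom_{\F[G_K]}(T(\gM),T(\gN)) + \dim_{\F} \Hom_{\K{\F}}(\gM,\gN),
\end{equation*}
and then to compute each term separately. I would parametrize an element of the first $\Hom$ group by an $f$-periodic tuple $(\mu_i)_{i\in\Z/f'\Z}$ with each $\mu_i\in u^{-1}\F[u^{-1}]$ having support in degrees $\ge -\lfloor e'/(p-1)\rfloor$ (using Lemma~\ref{lem: bound on torsion in kernel of Exts}) and $\equiv c_i - d_i \pmod{p^{f'}-1}$, subject to the commutation relation~\eqref{eq:phi-commute-ker-ext}. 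The bulk of the work will go into computing the dimension of this space.

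The heart of the argument will be to exploit the very concrete form of the maximal refinement: $r_i = e'$ and $s_i = 0$ at non-transitions, while $r_i = e' - [c_i - d_i]$ and $s_i = [c_i - d_i]$ at transitions. I would examine~\eqref{eq:phi-commute-ker-ext} coefficient by coefficient in negative degrees. At a non-transition $(i-1,i)$, the hypothesis $p > 2$ forces $r_i > \lfloor e'/(p-1) \rfloor$, so the LHS $a_i u^{r_i}\mu_i$ already lies in $\F[[u]]$; this collapses the equation to $\varphi(\mu_{i-1}) \in \F[[u]]$, forcing $\mu_{i-1} = 0$. At a transition, pairing $\mu_i^{(m+r_i)}$ with $\mu_{i-1}^{((m+s_i)/p)}$ for $m>0$ and invoking~\eqref{eq:gammastar} in the form $p[d_{i-1}-c_{i-1}] - [c_i-d_i] = \gamma_i^*(p^{f'}-1)$ should show that $\mu_{i-1}^{(k)}$ vanishes for every $k > [c_i-d_i]/p$. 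When $\gamma_i^* = 0$ the equality $[d_{i-1}-c_{i-1}] = [c_i-d_i]/p$ leaves exactly one unconstrained coefficient $\mu_{i-1}^{([d_{i-1}-c_{i-1}])}$ (its only occurrence in the pairing is at $m=0$, which gives no constraint), whereas $\gamma_i^* > 0$ kills $\mu_{i-1}$ entirely. Accounting for $f$-periodicity (and using that $\gamma_{i+f}^* = \gamma_i^*$ in the cuspidal case, so that the analysis is genuinely $f$-periodic), I expect to obtain $\dim_{\F} \Hom_{\K{\F}}(\gM,\gN[1/u]/\gN) = t^*$, where $t^*$ is the displayed count in the proposition.

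Finally, I will analyze the correction $\dim \Hom_{\F[G_K]}(T\gM,T\gN) - \dim \Hom_{\K{\F}}(\gM,\gN)$. The determinant relation $r_i + s_i = e'$ gives $\alpha_i(\gM) + \alpha_i(\gN) = \lfloor e'/(p-1)\rfloor$, and a direct computation using the maximal-refinement formulas yields $\alpha_i(\gN) = \Sigma_i$ and $\alpha_i(\gM) = \lfloor e'/(p-1)\rfloor - \Sigma_i$, where $\Sigma_i := \sum_{j:\,(i-j-1,i-j)\text{ trans}} p^j [c_{i-j}-d_{i-j}]/(p^{f'}-1)$. Since each $[c_j-d_j] < p^{f'}-1$, for $e \ge 2$ this gives $\Sigma_i < \tfrac{1}{2}\lfloor e'/(p-1)\rfloor$, so $\alpha_i(\gM) > \alpha_i(\gN)$ for all $i$, and Lemma~\ref{lem: maps between rank 1 Kisin modules} together with Corollary~\ref{cor: Kisin modules with the same generic fibre} then forces the two Homs to coincide, making the correction vanish. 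The main obstacle will be the $e=1$ case: here I would iterate the relation $p[d_{i-1}-c_{i-1}] = [c_i-d_i]$ at $\gamma^*_i = 0$ transitions around the cycle to show that the inertial parts of $T\gM$ and $T\gN$ coincide precisely when every transition satisfies $\gamma_i^* = 0$ (i.e.\ $t^* = f$), and that in this situation $\Sigma_i > \tfrac{1}{2}\lfloor e'/(p-1)\rfloor$ for some $i$, so that $\Hom(\gM,\gN) = 0$ while $\Hom_{\F[G_K]}(T\gM,T\gN) = 1$ (under the additional hypothesis $\prod a_i = \prod b_i$). This produces the correction of $1$ and the exceptional value $f-1$.
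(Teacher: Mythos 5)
Your overall strategy coincides with the paper's: both split the computation via~\eqref{eq:ker-ext-formula}, first computing $\dim\Hom_{\K{\F}}(\gM,\gN[1/u]/\gN)$ by a degree analysis of the tuples $(\mu_i)$ subject to~\eqref{eq:phi-commute-ker-ext}, then separately handling the correction term $\dim\Hom_{\F[G_K]}(T\gM,T\gN) - \dim\Hom_{\K{\F}}(\gM,\gN)$. Your part~1 is correct in substance (the paper uses the slightly sharper degree bound $\deg\mu_i\ge -e'+[c_i-d_i]$, which makes the left-hand side of the commutation relation vanish identically at transitions as well, avoiding the coefficient-by-coefficient pairing, but your version works). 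Your computation of $\alpha_i(\gN)=\Sigma_i$ and the clean disposal of the $e\geq 2$ case via $\Sigma_i<\tfrac12\cdot e'/(p-1)$ also agree with what one gets from the paper's formula~\eqref{eq:alpha-difference}.

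However, your treatment of the $e=1$ case has a genuine gap. You assert that (for $e=1$) the inertial parts of $T\gM$ and $T\gN$ coincide \emph{precisely when} every transition has $\gamma_i^*=0$, which you parenthetically identify with $t^*=f$. Neither the assertion nor the parenthetical identification is correct. The condition ``every transition has $\gamma_i^*=0$'' does not imply $t^*=f$ unless every pair $(i-1,i)$ is a transition; more seriously, the inertial parts can coincide while $t^*<f$. For a concrete counterexample take $p=3$, $e=1$, $f=f'=2$, a principal series type with $\gamma_0=\gamma_1=1$, and $J=\Z/f'\Z$. Then $t^*=0$ (no transitions at all), yet $\alpha_i(\gM)-\alpha_i(\gN)=4\equiv -[d_i-c_i]\pmod{p^{f'}-1}$, so by Corollary~\ref{cor: Kisin modules with the same generic fibre} the inertial parts do coincide. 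Because your argument relies on this false equivalence to conclude that the correction vanishes whenever $t^*<f$, it does not actually establish the non-exceptional $e=1$ case. What is needed (and what the paper supplies) is the inequality from~\eqref{eq:alpha-difference}: outside the exceptional configuration one has $\alpha_i(\gM)-\alpha_i(\gN) > -[d_i-c_i]$, and if $T\gM\cong T\gN$ the congruence $\alpha_i(\gM)-\alpha_i(\gN)\equiv -[d_i-c_i]\pmod{p^{f'}-1}$ together with $[d_i-c_i]<p^{f'}-1$ then forces $\alpha_i(\gM)-\alpha_i(\gN)>0$, whence $\Hom_{\K{\F}}(\gM,\gN)\neq 0$ by Lemma~\ref{lem: maps between rank 1 Kisin modules} and the correction is zero. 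You should replace the ``precisely when'' characterisation with this argument.
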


\begin{proof}
 The argument has two parts. First we show that $\dim_{\F}
 \Hom_{\K{\F}}(\gM,\gN[1/u]/\gN)$ is precisely the displayed quantity
 in the statement of the Proposition;\ then we check that
 $\dim_{\F} \Hom_{\F[G_K]} (T(\gM),T(\gN)) - \dim_{\F} \Hom_{\K{\F}} (\gM,\gN)$ is
 equal to $1$ in the exceptional case of the statement, and $0$
 otherwise. The result then follows from~\eqref{eq:ker-ext-formula}.

Let  $f : m_i \mapsto \mu_i n_i$ be an element of 
$C^0(\gN[1/u]/\gN)$. 
Since $u^{e'}$ kills $\mu_i$, and all nonzero terms of $\mu_i$ have
degree congruent to
$c_i-d_i \pmod{p^{f'}-1}$, certainly all nonzero terms of $\mu_i$ have
degree at least $-e' + [c_i-d_i]$. On the other hand since the profile
$(J,r)$ is maximal we have $r_i = e' - [c_i-d_i]$ when $(i-1,i)$ is a
transition and $r_i = e'$ otherwise. In either case $u^{r_i}$ kills
$\mu_i$, so that \eqref{eq:phi-commute-ker-ext} becomes simply the
condition that $u^{s_i}$ kills $\varphi(\mu_{i-1})$.

If $(i-1,i)$ is not a transition then $s_i=0$, and we conclude that
$\mu_{i-1}=0$. Suppose instead that $(i-1,i)$ is a transition, so
that $s_i = [c_i-d_i]$. Then all nonzero terms of $\mu_{i-1}$ have
degree at least $-s_i/p > -p^{f'-1} > -e(K'/K)$. Since those terms must have
degree congruent to $c_{i-1}-d_{i-1}  \pmod{p^{f'}-1}$, it follows
  that $\mu_{i-1}$ has at most one nonzero term (of degree
  $-[d_{i-1}-c_{i-1}]$), and this only if $[d_{i-1}-c_{i-1}] <
  p^{f'-1}$, or equivalently $\gamma_i^* = 0$ (as noted
  above). Conversely if $\gamma_i^*=0$ then 
\[ u^{s_i} \varphi(u^{-[d_{i-1}-c_{i-1}]}) = u^{[c_i-d_i] -
    p[d_{i-1}-c_{i-1}]} = u^{-\gamma_i^* (p^{f'}-1)}\] vanishes in
$\F((u))/\F[[u]]$. We conclude that $\mu_{i-1}$ may have a single nonzero term if
and only if $(i-1,i)$ is a transition and $\gamma_i^*=0$, and this
completes the first part of the argument.

Turn now to the second part. 
Looking at Corollary~\ref{cor: Kisin
  modules with the same generic fibre} and Lemma~\ref{lem: maps
  between rank 1 Kisin modules}, to compare
$\Hom_{\F[G_K]}(T(\gM),T(\gN))$ and $\Hom_{\K{\F}}(\gM,\gN)$ we need
to compute the quantities $\alpha_i(\gM)-\alpha_i(\gN)$. By definition
this quantity is equal
to 
\numequation\label{eq:alpha-difference-1}\frac{1}{p^{f'}-1} \sum_{j=1}^{f'}  p^{f'-j} \left( r_{i+j} - s_{i+j}
\right).\end{equation}
Suppose first that $\tau$ is non-scalar. When $(i+j-1,i+j)$ is a transition, we have $r_{i+j}-s_{i+j} = (e-1)(p^{f'}-1) +
[d_{i+j}-c_{i+j}] - [c_{i+j}-d_{i+j}]$, and otherwise we
have $r_{i+j}-s_{i+j} = e( p^{f'}-1)=  (e-1)(p^{f'}-1) + [d_{i+j}-c_{i+j}] +
[c_{i+j}-d_{i+j}]$. Substituting these expressions into
\eqref{eq:alpha-difference-1}, adding and subtracting $\frac{1}{p^{f'}-1}
p^{f'} [d_i-c_i]$, and regrouping gives
\[ -[d_i-c_i] + (e-1) \cdot \frac{p^{f'}-1}{p-1} + \frac{1}{p^{f'}-1}\sum_{j=1}^{f'}
p^{f'-j} \left( p[d_{i+j-1} - c_{i+j-1}] \mp [c_{i+j} - d_{i+j}]
\right),\]
where the sign is $-$ if $(i+j-1,i+j)$ is a transition and $+$ if
not. Applying the formulas~\eqref{eq:gammastar}
and~\eqref{eq:gammastar-2} we conclude that
\numequation\label{eq:alpha-difference}
\alpha_i(\gM)-\alpha_i(\gN) = -[d_i-c_i] +  (e-1) \cdot
\frac{p^{f'}-1}{p-1} + \sum_{j=1}^{f'} p^{f'-j} \gamma^*_{i+j} +
\sum_{j \in S_i} p^{f'-j}
\end{equation}
where the set $S_i$ consists of $1 \le j \le f$ such that $(i+j-1,i+j)$ is not a transition.
Finally, a moment's inspection shows that the same formula still holds
if $\tau$ is scalar (recalling that $J = \varnothing$ in that case).

Suppose that we are in the exceptional case of the proposition, so
that $e=1$, $\gamma_i^*=0$ for all $i$, and every pair $(i-1,i)$ is a
transition. The formula~\eqref{eq:alpha-difference} gives
$\alpha_i(\gM)-\alpha_i(\gN) = -[d_i-c_i]$. Since also $\prod_i a_i =
\prod_i b_i$  the conditions of
Corollary~\ref{cor: Kisin modules with the same generic fibre} are
satisfied, so that $T(\gM)=T(\gN)$;\ but on the other hand
$\alpha_i(\gM) < \alpha_i(\gN)$, so that by Lemma~\ref{lem: maps
  between rank 1 Kisin modules} there are no nonzero maps $\gM\to
\gN$, and $\dim_{\F} \Hom_{\F[G_K]} (T(\gM),T(\gN)) - \dim_{\F}
\Hom_{\K{\F}} (\gM,\gN) = 1.$

If instead we are not in the exceptional case of the
proposition, then either $\prod_i a_i \neq \prod_i b_i$, or else~\eqref{eq:alpha-difference}
gives $\alpha_i(\gM) - \alpha_i(\gN) > -[d_i-c_i]$ for all
$i$. Suppose that $T(\gM) \cong T(\gN)$. It follows from
Corollary~\ref{cor: Kisin modules with the same generic fibre} that
$\alpha_i(\gM) - \alpha_i(\gN) \equiv -[d_i-c_i]
\pmod{e(K'/K)}$. Combined with the previous inequality we deduce that
$\alpha_i(\gM) - \alpha_i(\gN)  > 0$, and Lemma~\ref{lem: maps between
  rank 1 Kisin modules} guarantees the existence of a nonzero map $\gM
\to \gN$. We deduce that in any event $\dim_{\F} \Hom_{\F[G_K]}
(T(\gM),T(\gN)) = \dim_{\F} \Hom_{\K{\F}} (\gM,\gN)$, completing the proof.
\end{proof}

\begin{cor}\label{cor:ker-ext-maximal-nonzero}
Let $(J,r)$ be any maximal refined profile for $\tau$, and suppose that
the pair $(\gM,\gN)$ has refined profile $(J,r)$.     If $J \in
\cP_{\tau}$ then $\dim_{\F} \kExt^1_{\K{\F}} (\gM,\gN) = 0$. Indeed
this is an if and only if except possibly when $K=\Qp$, the type
$\tau$ is cuspidal, and $T(\gM(J,r)) \cong T(\gN(J,r))$.
\end{cor}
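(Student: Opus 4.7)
The plan is to deduce both implications directly from the explicit dimension formula of Proposition~\ref{prop:ker-ext-maximal}, after translating the definition of $\cP_\tau$ into the language of the invariants $\gamma_i^*$.

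First I would unpack what it means for $J$ to lie in $\cP_\tau$ in terms of $\gamma^*$. At a transition into $J$ at position $i$ (so $i-1\notin J$ and $i\in J$), by definition $\gamma_i^*=\gamma_i$, and the condition defining $\cP_\tau$ reads $\gamma_i\neq 0$, i.e.\ $\gamma_i^*\neq 0$. At a transition out of $J$ one has $\gamma_i^*=p-1-\gamma_i$, and the condition $\gamma_i\neq p-1$ again reads $\gamma_i^*\neq 0$. Thus $J\in\cP_\tau$ if and only if $\gamma_i^*\neq 0$ at every transition $(i-1,i)$, for $i\in\Z/f'\Z$. In the principal series case $f'=f$ and this condition is exactly the vanishing of the count appearing in Proposition~\ref{prop:ker-ext-maximal}. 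In the cuspidal case one checks from $\gamma_{i+f}=p-1-\gamma_i$ and the constraint $i\in J \iff i+f\notin J$ that transitions and the values of $\gamma_i^*$ are periodic modulo $f$; so the count taken over $0\le i<f$ still detects the full condition.

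For the ``if'' direction, $J\in\cP_\tau$ forces the count to be $0$. In particular the count is not equal to $f$, so we are not in the exceptional case of Proposition~\ref{prop:ker-ext-maximal}, and we conclude directly that $\dim_\F\kExt^1_{\K{\F}}(\gM,\gN)=0$.

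For the ``only if'' direction, suppose $J\notin\cP_\tau$, so the count is $\geq 1$. If $\dim_\F\kExt^1_{\K{\F}}(\gM,\gN)=0$, then Proposition~\ref{prop:ker-ext-maximal} forces us into the exceptional case, where the dimension is $f-1$ and the count equals $f$. Hence $f=1$, the count equals $1$, $e=1$, and $\prod_i a_i=\prod_i b_i$. In the principal series case with $f=1$ the pair $(-1,0)=(0,0)$ in $\Z/1\Z$ is never a transition, so the count would be $0$; hence $\tau$ must be cuspidal. Then $[K:\Qp]=ef=1$, so $K=\Qp$. Finally, the formula~(\ref{eq:alpha-difference}) established in the proof of Proposition~\ref{prop:ker-ext-maximal} specialises in this exceptional situation to $\alpha_i(\gM)-\alpha_i(\gN)=-[d_i-c_i]$, so the residue condition of Corollary~\ref{cor: Kisin modules with the same generic fibre} is automatic, and combined with $\prod_i a_i=\prod_i b_i$ that corollary yields $T(\gM(J,r))\cong T(\gN(J,r))$. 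Thus the only way the ``only if'' direction can fail is under precisely the exception stated, which is the main bookkeeping point of the argument.
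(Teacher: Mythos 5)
Your proof is correct and follows essentially the same route as the paper's: both derive the statement directly from Proposition~\ref{prop:ker-ext-maximal} by translating the $\cP_\tau$-conditions into the language of $\gamma_i^*$, handling the exceptional case by forcing $f=1$ and ruling out principal series. The only (minor) difference is in the very last step, where you reuse the congruence $\alpha_i(\gM)-\alpha_i(\gN)=-[d_i-c_i]$ from~\eqref{eq:alpha-difference} rather than the paper's explicit determination of $c_i,d_i,r_i,s_i$ from~\eqref{eq:gammastar}; this is a modest streamlining, since the same deduction already appears inside the proof of Proposition~\ref{prop:ker-ext-maximal}.
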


\begin{proof}
  The first statement is immediate from Proposition~\ref{prop:ker-ext-maximal},
  comparing the definition of $\gamma_i^*$ with the defining
 condition on elements of $\cP_{\tau}$;\  in fact this gives an if and
 only if unless we are in the exceptional case in
 Proposition~\ref{prop:ker-ext-maximal} and $f-1=0$. In that case
 $e=f=1$, so $K=\Qp$. In the principal series case for $K=\Qp$ there
 can be no transitions, so the type is cuspidal. Then $\gamma_i^*=0$
 for $i=0,1$ and an elementary
 analysis of \eqref{eq:gammastar} shows that there exists  $x \in \Z/(p-1)\Z$
 such that $c_i = 1 +x(p+1)$, $d_i = p+x(p+1)$ for $i=0,1$. Then $r_i
 = p-1$ and $s_i = p(p-1)$, and Lemma~\ref{lem: generic fibres of rank
   1 Kisin modules} gives $T(\gM(J,r)) \cong T(\gN(J,r))$.
\end{proof}

Recall 
that
$\overline{\cZ}(J)$ is by definition the scheme-theoretic image of~$\overline{\cC}(J)$ in
$\cZ^{\tau,1}$. In the remainder of this section, we show that the $\overline{\cZ}(J)$
with $J\in\cP_{\tau}$ are pairwise distinct irreducible components
of~$\cZ^{\tau,1}$. In Section~\ref{subsec: irred components} below we
will show that they in fact exhaust the irreducible components of~$\cZ^{\tau,1}$.
\begin{thm}\label{thm: identifying the vertical components} 
$\overline{\cZ}(J)$ has dimension at most $[K:\Qp]$, with equality occurring if
and only if~$J\in\cP_{\tau}$. Consequently, the~$\overline{\cZ}(J)$ with
$J\in\cP_{\tau}$ are irreducible components of~$\cZ^{\tau,1}$.
\end{thm}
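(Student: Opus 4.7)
The strategy is to apply Corollary~\ref{cor: dimension of families of extensions} to the pair $(\gM(J),\gN(J))$ and then compute the rank of the module $U_{\Akfree}$. By Theorem~\ref{thm: dimension of refined shapes} we have $\dim\overline{\cC}(J)=[K:\Qp]$, which by the first assertion of Corollary~\ref{cor: dimension of families of extensions} equals the rank of $T_{\Adist}$. That corollary then tells us that $\dim\overline{\cZ}(J)=[K:\Qp]$ exactly when $\kExt^1_{\K{\kappa(\mathfrak m)}}\bigl(\gM(J)_{\kappa(\mathfrak m),\bar x},\gN(J)_{\kappa(\mathfrak m),\bar y}\bigr)=0$ for every maximal ideal $\mathfrak m$ of $\Akfree$, while $\dim\overline{\cZ}(J)<[K:\Qp]$ whenever this $\kExt^1$ is nonzero at every maximal ideal. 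By Lemma~\ref{lem:fibres of kExt}, uniform vanishing along $\Spec\Akfree$ is equivalent to the vanishing of the finitely generated projective $\Akfree$-module $U_{\Akfree}$, and since $U_{\Akfree}$ is locally free its vanishing (respectively, non-vanishing) can be tested on any single fibre.

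Next, I would compute such a fibre using Proposition~\ref{prop:ker-ext-maximal}. By construction $\gM(J)_{\Akfree,x}$ and $\gN(J)_{\Akfree,y}$ are unramified twists, so the factors $a_i$ and $b_i$ of Proposition~\ref{prop:ker-ext-maximal} become (at a closed point with residue images $\bar x,\bar y$) equal to $\bar x$ and $\bar y$ in component $0$ and equal to $1$ in the remaining components. Since $\Akfree$ is a nonempty open subscheme of $\Gm\times_\F\Gm$, we may choose a maximal ideal $\mathfrak m$ of $\Akfree$ with $\bar x\neq\bar y$, so that $\prod_i a_i\neq\prod_i b_i$ and the exceptional case in Proposition~\ref{prop:ker-ext-maximal} does not occur. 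At such a point
\[
\dim_{\kappa(\mathfrak m)}\kExt^1_{\K{\kappa(\mathfrak m)}}\bigl(\gM(J)_{\kappa(\mathfrak m),\bar x},\gN(J)_{\kappa(\mathfrak m),\bar y}\bigr)=\#\bigl\{0\le i<f:(i{-}1,i)\text{ is a transition and }\gamma_i^*=0\bigr\}.
\]
Comparing with the defining conditions of $\cP_\tau$ given in Subsection~\ref{sec:sets-cp_tau}, this count is zero precisely when $J\in\cP_\tau$. Combining this with the previous paragraph yields $\dim\overline{\cZ}(J)=[K:\Qp]$ if $J\in\cP_\tau$ and $\dim\overline{\cZ}(J)<[K:\Qp]$ otherwise.

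Finally, for the concluding assertion I would argue as follows. Each $\overline{\cZ}(J)$ is irreducible, since by Definition~\ref{def:scheme-theoretic images} it is the scheme-theoretic image of $\Spec\Bdist$, and $\Spec\Bdist$ is a vector bundle over the irreducible base $\Spec\Adist$ (an open subscheme of $\Gm\times_\F\Gm$). By Proposition~\ref{prop:Zproperties}, $\overline{\cZ}(J)$ is a closed substack of $\cZ^{\tau,1}$, which by Theorem~\ref{prop: dimensions of the Z stacks} is equidimensional of dimension $[K:\Qp]$. Hence for $J\in\cP_\tau$, the closed irreducible substack $\overline{\cZ}(J)$ has the same dimension as $\cZ^{\tau,1}$ and is therefore an irreducible component.

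The main obstacle in this argument is controlling the exceptional case of Proposition~\ref{prop:ker-ext-maximal}, in which the dimension formula drops by one when $\prod_i a_i=\prod_i b_i$; handling it is precisely what forces the choice of a fibre with $\bar x\neq\bar y$, and the argument succeeds because $\Akfree$ is open and dense in $\Gm\times_\F\Gm$ and hence meets the locus $\{\bar x\neq\bar y\}$.
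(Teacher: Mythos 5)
Your proof is correct and follows essentially the same route as the paper: both apply Corollary~\ref{cor: dimension of families of extensions} and Theorem~\ref{thm: dimension of refined shapes}, then reduce to computing $\kExt^1$ using Proposition~\ref{prop:ker-ext-maximal}. The only difference is how the exceptional case of Proposition~\ref{prop:ker-ext-maximal} is side-stepped: the paper invokes the packaged Corollary~\ref{cor:ker-ext-maximal-nonzero} and notes its exceptional case requires $T(\gM)\cong T(\gN)$, which is excluded on $\textrm{max-Spec}\,A^{\dist}$, whereas you work directly with the proposition and choose a fibre over a point with $\bar x\neq\bar y$ (which is possible because $\Spec A^{\kfree}$ is a dense open of $\Gm\times_\F\Gm$ and the diagonal is proper closed), so that $\prod a_i \neq \prod b_i$. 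The two devices are equivalent in effect; your phrasing is a bit more elementary since it bypasses the corollary, at the cost of having to make explicit the argument that a suitable off-diagonal fibre exists and that the rank of the locally free module $U_{\Akfree}$ over the irreducible base can be read off from a single fibre.
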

\begin{proof}The first part is immediate from Corollary~\ref{cor: dimension of
    families of extensions}, Proposition~\ref{prop:base-change for
    exts}, 
  Corollary~\ref{cor:ker-ext-maximal-nonzero} and Theorem~\ref{thm:
    dimension of refined profiles} (noting that the exceptional case of
  Corollary~\ref{cor:ker-ext-maximal-nonzero}  occurs away from
  $\textrm{max-Spec}\, 
  A^{\dist}$). Since~$\cZ^{\tau,1}$ is
  equidimensional of dimension~$[K:\Qp]$ by Theorem~\ref{prop:
    dimensions of the Z stacks}, and the~$\overline{\cZ}(J)$ are closed and
  irreducible by construction, the second part follows from the first
  together with~\cite[\href{https://stacks.math.columbia.edu/tag/0DS2}{Tag 0DS2}]{stacks-project}.
\end{proof}

We also note the following result.

\begin{prop}
\label{prop:C to Z mono}
If~$J\in\cP_{\tau}$, then there is a dense open substack $\cU$ of 
$\overline{\cC}(J)$ such that the canonical morphism $\overline{\cC}(J) 
\to \overline{\cZ}(J)$ restricts to an open immersion on $\cU$.
\end{prop}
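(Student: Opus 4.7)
The plan is to produce a dense open substack $V$ of $[X / \Gm \times_\F \Gm]$ (in the notation of Subsection~\ref{subsec:universal families}, applied to the maximal refined shape $(J,r)$ refining $J$) on which both natural morphisms $\psi_1 \colon [X / \Gm \times_\F \Gm] \to \overline{\cC}(J)$ and $\psi_2 \colon [X / \Gm \times_\F \Gm] \to \overline{\cZ}(J)$ restrict to open immersions, and then take $\cU$ to be the image of $V$ in $\overline{\cC}(J)$ under $\psi_1$.

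First I would check that the hypotheses of Proposition~\ref{prop: construction of family monomorphing to C and R}(2) are satisfied. Since $J \in \cP_\tau$, the vanishing $\kExt^1_{\K{\kappa(\m)}}(\gM_{\kappa(\m),\bar x},\gN_{\kappa(\m),\bar y}) = 0$ holds at every maximal ideal $\m$ of $A^{\kfree}$ by Corollary~\ref{cor:ker-ext-maximal-nonzero} (applied to each unramified twist, which shares the refined shape $(J,r)$ with the chosen pair $(\gM(J), \gN(J))$). Consequently both $\psi_1$ and $\psi_2$ are finite-type monomorphisms (by parts~(1) and~(2) of the proposition respectively), and their images are dense: $X$ is a non-empty Zariski open in the irreducible $\Spec B^{\dist}$, and $\overline{\cC}(J)$, $\overline{\cZ}(J)$ are by construction the scheme-theoretic images of $\Spec B^{\dist}$ in $\cC^{\dd,1}$ and $\cZ^{\dd,1}$ (cf.\ Lemma~\ref{lem: scheme theoretic images coincide} and Remark~\ref{rem:Zariski density}).

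Next, both $\overline{\cC}(J)$ and $\overline{\cZ}(J)$ are integral Noetherian algebraic stacks (reduced by Remark~\ref{rem: C(M,N) is reduced}, irreducible by construction) of the same dimension $[K{:}\Qp]$ as the source $[X / \Gm \times_\F \Gm]$ (by Theorem~\ref{cor: Kisin moduli consequences of local models}(2), Theorem~\ref{thm: dimension of refined shapes}, Theorem~\ref{thm: identifying the vertical components}, and Corollary~\ref{cor: dimension of families of extensions}). Applying generic flatness smooth-locally (\cite[\href{http://stacks.math.columbia.edu/tag/0529}{Tag 0529}]{stacks-project}) to each $\psi_i$, I obtain dense open substacks $\cW_i$ of the targets such that each restriction $\psi_i^{-1}(\cW_i) \to \cW_i$ is flat; since a flat monomorphism locally of finite presentation is an open immersion (\cite[\href{http://stacks.math.columbia.edu/tag/025G}{Tag 025G}]{stacks-project}), each such restriction is in fact an open immersion.

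Finally, I would set $V := \psi_1^{-1}(\cW_1) \cap \psi_2^{-1}(\cW_2)$, a non-empty (hence dense) open substack of the irreducible $[X / \Gm \times_\F \Gm]$, and $\cU := \psi_1(V) \subset \cW_1 \subset \overline{\cC}(J)$; this $\cU$ is open in $\overline{\cC}(J)$, dense by irreducibility of the target, and isomorphic to $V$ via the restriction of $\psi_1$. Under this identification, the canonical morphism $\cU \to \overline{\cZ}(J)$ coincides with $\psi_2|_V \colon V \to \cW_2 \hookrightarrow \overline{\cZ}(J)$, which is the composition of an open immersion and an open inclusion, and so is itself an open immersion. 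The main point requiring some care is the stack-theoretic version of generic flatness together with Tag~025G; both follow by standard smooth-descent arguments given the reduced, irreducible structure of source and target, but should be stated cleanly before invoking them.
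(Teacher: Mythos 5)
Your argument is correct and takes essentially the same route the paper intends: the key ingredients are parts~(1) and~(2) of Proposition~\ref{prop: construction of family monomorphing to C and R} together with the vanishing of $\kExt^1$ from Corollary~\ref{cor:ker-ext-maximal-nonzero}. The former supplies the two finite type monomorphisms $[X/\Gm\times_\F\Gm] \hookrightarrow \overline{\cC}(J)$ and $[X/\Gm\times_\F\Gm] \hookrightarrow \overline{\cZ}(J)$ (the latter once the $\kExt^1$-vanishing hypothesis of part~(2) is checked, which you do correctly by observing that the refined shape is preserved by unramified twist), and generic flatness combined with the fact that a flat, finite-presentation monomorphism is an open immersion is the standard way to upgrade a scheme-theoretically dominant monomorphism between integral Noetherian stacks of equal dimension to an open immersion on a dense open. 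One small point worth making explicit at the outset: $X$ is non-empty, since $\kExt^1 = 0$ forces $\Spec C^{\kfree}$ to be the zero section of a vector bundle of positive rank $[K:\Qp]$, so $X = \Spec B^{\kfree} \setminus \Spec C^{\kfree}$ is the complement of the zero section and hence non-empty; this is needed for Lemma~\ref{lem: scheme theoretic images coincide} to apply to $\xi_X$ and for the density claims.
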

\begin{proof}
This follows from 
Proposition~\ref{prop: construction of family monomorphing to C and R}
and Corollary~\ref{cor:ker-ext-maximal-nonzero}.
\end{proof}

For later use, we note the following computation. Recall that we write
$\gN(J) = \gN(J,r)$ for the maximal profile $(J,r)$ refining $J$, and that~$\tau=\eta\oplus\eta'$.

\begin{prop}\label{prop:char-calculation}
For each profile $J$ we have 
\[ T(\gN(J)) \cong \eta \cdot \left(\prod_{i=0}^{f'-1} (\sigma_i \circ
  \hchar)^{t_i}\right)^{-1} |_{G_{K_{\infty}}} \] 
where 
\[ t_i =
\begin{cases}
  \gamma_i + \delta_{J^c}(i) & \text{if } i-1 \in J \\
  0 & \text{if } i-1\not\in J.
\end{cases}\]
Here $\delta_{J^c}$ is the characteristic function of the complement
of $J$ in $\Z/f'\Z$, and we are abusing notation by writing $\eta$ for
the function
$\sigma_i \circ \hchar^{k_i}$, which agrees with $\eta$ on $I_K$.

In particular the map $J \mapsto T(\gN(J))$ is injective on $\cP_{\tau}$.
\end{prop}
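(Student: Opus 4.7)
The proof will be a direct computation using Lemma~\ref{lem: generic fibres of rank 1 Kisin modules}, followed by a combinatorial argument for injectivity. Writing $\gN(J) = \gM(s,1,d)$ and observing that the unramified twist in Lemma~\ref{lem: generic fibres of rank 1 Kisin modules} is trivial since $\prod_i b_i = 1$, I obtain
\[ T(\gN(J)) \cong \left(\sigma_0 \circ \hchar^{d_0 - \alpha_0(\gN(J))}\right)\big|_{G_{K_\infty}}. \]
Using $\eta = \sigma_0 \circ \hchar^{k_0}$ on inertia and the identity $\prod_{i=0}^{f'-1}(\sigma_i \circ \hchar)^{t_i} = \sigma_0 \circ \hchar^{\sum_i p^i t_i}$ (which comes from $\sigma_i \circ \hchar = \sigma_0 \circ \hchar^{p^i}$), the claimed formula reduces to the congruence
\[ \alpha_0(\gN(J)) \equiv d_0 - k_0 + \sum_{i=0}^{f'-1} p^i t_i \pmod{p^{f'}-1}. \]

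To verify this congruence, I will expand $(p^{f'}-1)\alpha_0(\gN(J)) = \sum_{j=1}^{f'} p^{f'-j} s_{j}$ using the explicit values of the $s_j$ in the maximal refined shape from Subsection~\ref{subsubsec:explicitly refined}, namely $s_j = 0$ unless $(j-1,j)$ is a transition, in which case $s_j = (p^{f'}-1) - [d_j - c_j]$. Modulo $p^{f'}-1$, the $(p^{f'}-1)$ contributions drop out, leaving the expression $-\sum p^{f'-j}[d_j-c_j]$ summed over transitions. Expanding $[d_j-c_j]$ in base $p$ using~\eqref{eq:k-gamma}, I will organize the contributions by ``runs'' (maximal blocks of consecutive indices lying in $J$ or in $J^c$): the terms from a transition out of $J$ at index $j$ will collapse to $p^j(\gamma_j + 1) = p^j t_j$, the terms from a transition into $J$ and from within a run telescope, and the boundary correction from the run containing index $0$ will account for $d_0 - k_0$. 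The main obstacle will be executing this bookkeeping cleanly, especially in the cuspidal case where $f' = 2f$ and the symmetry $i \in J \iff i+f \notin J$ must be handled carefully.

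For the injectivity of $J \mapsto T(\gN(J))$ on $\cP_\tau$, the key observation is that for $J \in \cP_\tau$ every $t_i$ lies in $\{0,\ldots,p-1\}$: when $i-1 \in J$ and $i \notin J$, one has $t_i = \gamma_i + 1$, and the defining condition $\gamma_i \ne p-1$ of $\cP_\tau$ gives $t_i \le p-1$; the remaining cases give $t_i = 0$ or $t_i = \gamma_i$. If $T(\gN(J)) \cong T(\gN(J'))$ then $\sum_i p^i t_i(J) \equiv \sum_i p^i t_i(J') \pmod{p^{f'}-1}$, so by uniqueness of base-$p$ expansions the tuples $(t_i(J))$ and $(t_i(J'))$ must coincide, the only other possible collision being the all-zero and all-$(p-1)$ tuples; this would force $J = \Z/f'\Z$ and $\gamma_i = p-1$ for every $i$, contradicting the normalisation that not all $\gamma_i$ equal $p-1$ (in the cuspidal case $J = \Z/f'\Z$ is already excluded by the shape condition). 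Finally, I will recover $J$ from the tuple $(t_i)$: indices with $t_i \ne 0$ determine both membership of $i-1$ in $J$ (else $t_i$ would vanish) and of $i$ in $J$ (via $t_i = \gamma_i$ versus $t_i = \gamma_i + 1$); any ambiguity at indices with $t_i = 0$ is resolved inductively, since if $i - 1 \in J \setminus J'$ then $t_i = 0$ forces $\gamma_i = 0$ and $i \in J$, while the $\cP_\tau$ condition for $J'$ at the transition-into-$J'$ case forces $i \notin J'$, propagating the disagreement forward around the cycle $\Z/f'\Z$ and eventually yielding $\gamma_j = 0$ for all $j$, contradicting non-scalarity of $\tau$.
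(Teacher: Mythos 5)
Your overall plan (reduce via Lemma~\ref{lem: generic fibres of rank 1 Kisin modules} to a congruence for $\alpha_0(\gN(J))$ modulo $p^{f'}-1$, then prove injectivity by reading off $J$ from the digits $t_i$) has the same skeleton as the paper's argument, and your injectivity argument at the end is essentially the paper's. But the computational core of your proposal has two genuine problems, and the route you plan to take is substantially harder than the one in the paper.

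First, the exponent identity you use is wrong: since $\sigma_{i+1}^p = \sigma_i$ one gets $\sigma_i = \sigma_0 \circ (x \mapsto x^{p^{f'-i}})$, hence $\sigma_i \circ \hchar = \sigma_0 \circ \hchar^{p^{f'-i}}$, \emph{not} $\sigma_0 \circ \hchar^{p^i}$. Consequently the target congruence should read $\alpha_0(\gN(J)) \equiv d_0 - k_0 + \sum_{i} p^{f'-i} t_i \pmod{p^{f'}-1}$. You then partly contradict yourself later, since the ``collapse'' you describe produces $p^j t_j$ rather than $p^i t_i$.

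Second, and more seriously, the step ``Modulo $p^{f'}-1$, the $(p^{f'}-1)$ contributions drop out, leaving $-\sum p^{f'-j}[d_j-c_j]$'' is mathematically incoherent. The identity you start from is $(p^{f'}-1)\alpha_0 = \sum_j p^{f'-j}s_j$, and reducing this modulo $p^{f'}-1$ gives only $0 \equiv 0$; it says nothing about $\alpha_0 \bmod (p^{f'}-1)$. To extract that residue you must actually carry out the division: substituting $s_j = (p^{f'}-1)-[d_j-c_j]$ at transitions and $s_j = 0$ otherwise gives
\[
\alpha_0(\gN(J)) \;=\; \sum_{\text{trans}} p^{f'-j} \;-\; \frac{1}{p^{f'}-1}\sum_{\text{trans}} p^{f'-j}[d_j - c_j],
\]
so the terms with a $(p^{f'}-1)$ factor contribute the nonzero quantity $\sum_{\text{trans}}p^{f'-j}$ to $\alpha_0$, and the residual piece is an explicit integer divided by $p^{f'}-1$ whose reduction you still need to compute. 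The telescoping/``runs'' bookkeeping you gesture at would have to handle this division, and you have not carried it out. The paper sidesteps the division entirely: it already knows $\alpha_0(\gM(J)) - \alpha_0(\gN(J))$ (from the proof of Proposition~\ref{prop:ker-ext-maximal}, formula~\eqref{eq:alpha-difference}), and the determinant condition $r_j + s_j = e' = e(p^{f'}-1)$ gives $\alpha_0(\gM(J)) + \alpha_0(\gN(J)) = e(p^{f'}-1)/(p-1)$ cleanly, so $2\alpha_0(\gN(J))$ is obtained by subtraction with no awkward $1/(p^{f'}-1)$ factor. You should adopt that route (or else properly execute the base-$p$ expansion of $[d_j-c_j]$ and the division, which is considerably more delicate than you've indicated).
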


\begin{remark}\label{rk:cuspidal-char-niveau-1}
In the cuspidal case it is not \emph{a priori} clear that the formula in Proposition~\ref{prop:char-calculation} gives
a character of $G_{K_{\infty}}$ (rather than a character only when
restricted to $G_{L_{\infty}}$), but this is an elementary (if
somewhat painful) calculation using the
definition of the $\gamma_i$'s and the relation $\gamma_i +
\gamma_{i+f} = p-1$.
\end{remark}

\begin{proof}
 We begin by explaining how the final statement follows from the rest
 of the Proposition. First observe that if $J \in \cP_\tau$ then  $0 \le t_i \le p-1$ for all
 $i$. Indeed the only possibility for a contradiction would be if
 $\gamma_i = p-1$ and $i \not\in J$, but then the definition of
 $\cP_\tau$ requires that we cannot have $i-1 \in J$. Next, note that
 we never have $t_i = p-1$ for all $i$. Indeed, this would require $J
 = \Z/f'\Z$ and $\gamma_i=p-1$ for all~$i$, but by definition the
 $\gamma_i$ are not all equal to $p-1$.

The observations in the previous paragraph imply that (for $J \in \cP_\tau$) the character
$T(\gN(J))$ uniquely determines the integers $t_i$, and so it remains
to show that the integers $t_i$ determine the set $J$. If $t_i = 0$
for all $i$, then either $J = \varnothing$ or $J = \Z/f'\Z$ (for
otherwise there is a transition out of $J$, and $\delta_{J^c}(i) \neq
0$ for some $i-1 \in J$). But if $J = \Z/f'\Z$ then $\gamma_i = 0$ for
all $i$ and $\tau$ is scalar;\ but for scalar types we have $\Z/f'\Z
\not\in \cP_\tau$, a contradiction. Thus $t_i =0$ for all $i$ implies $J =
\varnothing$. 

For the rest of this part of the argument, we may therefore suppose $t_i \neq 0$
for some $i$, which forces $i-1 \in J$. The entire set $J$ will then
be determined by recursion if we can show that knowledge of $t_i$ along with
whether or not $i \in J$, determines whether or not $i-1 \in J$. Given
the defining formula for $t_i$, the only possible ambiguity is if $t_i
= 0$ and $\gamma_i +
\delta_{J^c}(i) = 0$, so that $\gamma_i = 0$ and $i \in J$. But the definition of $\cP_{\tau}$ requires $i-1
\in J$ in this case. This completes the proof.

We now turn to proving the formula for $T(\gN(J))$. We will use
Lemma~\ref{lem: generic fibres of rank 1 Kisin modules} applied at
$i=0$, for which we have to compute $\alpha_0 - d_0$ writing $\alpha_0
= \alpha_0(\gN)$. Recall
that we have already computed $\alpha_0(\gM(J)) - \alpha_0(\gN(J))$ in
the proof of Proposition~\ref{prop:ker-ext-maximal}. Since
$\alpha_0(\gM(J)) + \alpha_0(\gN(J)) = e(p^{f'}-1)/(p-1)$, taking the
difference between these formulas gives 
\[ 2 \alpha_0 =  [d_0 - c_0] - \sum_{j=1}^{f'} p^{f'-j} \gamma_j^* +
  \sum_{j \in S_0^c} p^{f'-j} \]
where $S_0^c$ consists of those $1\le j \le f$ such that $(j-1,j)$ is
a transition. Subtract $2[d_0]$ from both sides, and add the expression
$-[k_0-k'_0] + \sum_{j=1}^{f'} p^{f'-j} \gamma_j$ (which vanishes by definition) to the
right-hand side. Note that $[d_0 - c_0] - [k_0-k'_0] - 2[d_0]$ is
equal to $-2[k_0]$ if $0 \not\in J$, and to
$e(K'/K)-2[k_0-k_0']-2[k_0']$ if $0 \in J$.
Since $\gamma_j -
\gamma_j^* = 2\gamma_j - (p-1)$ if $j-1 \in J$ and is $0$ otherwise,
the preceding expression rearranges to give (after dividing by $2$)
\[ \alpha_0 - [d_0]  = -\kappa_0 + \sum_{j-1\in J} p^{f'-j}
  \gamma_j  + \sum_{j-1\in J, j\not\in J} p^{f'-j} = -\kappa_0 +
  \sum_{j=1}^{f'}  p^{f'-j} t_j\]
where $\kappa_0 = [k_0]$ if $0 \not\in J$ and $\kappa_0 = [k_0 - k'_0]
+[k'_0]$ if $0 \in J$. Since in either case $\kappa_0 \equiv k_0
\pmod{e(K'/K)}$ the result now follows from Lemma~\ref{lem: generic fibres of rank 1 Kisin modules}.
\end{proof}

\begin{defn}Let $\rbar:G_K\to\GL_2(\F')$ be representation. Then we
  say that a Breuil--Kisin module~$\gM$ with $\F'$-coefficients is a
  \emph{Breuil--Kisin model of~$\rbar$ of type~$\tau$} if~$\gM$ is an
  $\F'$-point of~$\cC^{\tau,\BT,1}$, and
  $T_{\F'}(\gM)\cong\rbar|_{G_{K_\infty}}$.
\end{defn}

Recall that for each continuous representation $\rbar:G_K\to\GL_2(\Fpbar)$, there
is an associated (nonempty) set of Serre weights~$W(\rbar)$ whose
precise definition is recalled in Appendix~\ref{sec: appendix on
  tame types}.

\begin{thm}
  \label{thm: unique serre weight}The~$\overline{\cZ}(J)$, with $J\in\cP_\tau$, are pairwise distinct closed
  substacks of $\cZ^{\tau,1}$. For each $J\in\cP_\tau$, there is a dense set of finite type
  points of $\overline{\cZ}(J)$ with the property that the corresponding Galois
  representations have $\sigmabar(\tau)_J$ as a Serre weight, and which
  furthermore admit a unique Breuil--Kisin model of type~$\tau$.
\end{thm}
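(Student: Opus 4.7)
The plan is to combine Proposition~\ref{prop:C to Z mono} with the character computation in Proposition~\ref{prop:char-calculation} and the appendix material relating shapes to Serre weights. By Proposition~\ref{prop:C to Z mono}, for each $J\in\cP_\tau$ there is a dense open $\cU_J\subseteq \overline{\cC}(J)$ such that $\cU_J\to\overline{\cZ}(J)$ is an open immersion. Shrinking $\cU_J$ (while preserving density), we may arrange that every finite type point of $\cU_J$ corresponds to an extension $\gE$ of $\gM(J)_{\F',x}$ by $\gN(J)_{\F',y}$ for some finite extension $\F'/\F$ and units $x,y$, so that the associated $G_{K_\infty}$-representation $T(\gE)$ is a (generically non-split) extension of $T(\gM(J)_{\F',x})$ by $T(\gN(J)_{\F',y})$. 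By Remark~\ref{rem:ht-1-extend} each such Breuil--Kisin model extends canonically to give a $G_K$-representation $\rbar$, and by Lemma~\ref{lem: restricting to K_infty doesn't lose information about rbar} the representation $\rbar$ is determined by its restriction to $G_{K_\infty}$ (together with its potentially Barsotti--Tate lift of type $\tau$).

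First I would handle the pairwise distinctness and Serre weight assertions simultaneously. Proposition~\ref{prop:char-calculation} exhibits $T(\gN(J))$ as an explicit tame character depending on $J$ in a way that is injective on $\cP_\tau$, and the same statement applied to unramified twists shows that on the dense open $\cU_J$ the generic subcharacter of $\rbar|_{I_K}$ is the $I_K$-character determined by $J$. Since this character determines $J$, the closed substacks $\overline{\cZ}(J)\subseteq \cZ^{\tau,1}$ are pairwise distinct. The Serre weight statement then reduces to a purely combinatorial unwinding: the explicit form of the subcharacter $T(\gN(J))$, combined with the recipe in Appendix~\ref{sec: appendix on tame types} (and the weight-part-of-Serre's-conjecture dictionary recorded there in Lemma~\ref{lem: list of things we need to know about Serre weights}), identifies $\sigmabar(\tau)_J$ as an element of $W(\rbar)$ for every $\rbar$ arising from a point of $\cU_J$. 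Compatibility with our normalization conventions was spelled out in the discussion after Lemma~\ref{lem:cft} and in the ``remark on normalisations'', so it only remains to match highest weights.

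For the uniqueness of the Breuil--Kisin model, the key input is Corollary~\ref{cor:ker-ext-maximal-nonzero}: for $J\in\cP_\tau$, we have $\kExt^1_{\K{\F}}(\gM(J),\gN(J))=0$ on the dense open $\Spec A^{\kfree}$ (away from the exceptional locus of that corollary). Hence the universal extension $\widetilde{\gE}$ over $X$ has the property that no two distinct points of $X$ give rise to isomorphic étale $\varphi$-modules (by Proposition~\ref{prop: construction of family monomorphing to C and R}(2), the induced morphism $[X/\Gm\times_\F\Gm]\to \cR^{\dd,1}$ is a monomorphism). Combining this with the fact that distinct $J,J'\in\cP_\tau$ produce distinct subcharacters, one sees that over a dense open substack of $\overline{\cZ}(J)$ the fibre of $\cC^{\tau,\BT,1}\to\cZ^{\tau,1}$ consists of a single point lying in $\cU_J$: a second Breuil--Kisin model of type $\tau$ would either force an equality of subcharacters coming from another shape $J'\neq J$ (impossible by Proposition~\ref{prop:char-calculation}) or would lie in $\overline{\cC}(J)\setminus\cU_J$, which has strictly smaller dimension and hence misses the generic finite type points of $\overline{\cZ}(J)$.

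The main obstacle I anticipate is carrying out the combinatorial identification of $\sigmabar(\tau)_J$ with the Serre weight predicted by $\rbar$ at the generic point of $\cU_J$: this depends on tracking the shift by the half-sum of positive roots alluded to in the introduction, and on reconciling the dual/non-dual conventions of~\cite{geekisin,gls13,emertongeesavitt} flagged in the ``remark on normalisations'' above. Once the dictionary is nailed down, the pairwise distinctness and the uniqueness of the Breuil--Kisin model follow formally from Proposition~\ref{prop:char-calculation} and Corollary~\ref{cor:ker-ext-maximal-nonzero} via the monomorphism statements in Proposition~\ref{prop: construction of family monomorphing to C and R}.
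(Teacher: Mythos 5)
Your proposal follows the same overall strategy as the paper: Proposition~\ref{prop:char-calculation} for pairwise distinctness, Corollary~\ref{cor:ker-ext-maximal-nonzero} together with Proposition~\ref{prop: construction of family monomorphing to C and R} for uniqueness of the Breuil--Kisin model, and a dense open locus of reducible extensions inside $\overline{\cZ}(J)$. However, two of your three claims have gaps.

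For the uniqueness of the Breuil--Kisin model, your dichotomy is not exhaustive. You say a second model ``would either force an equality of subcharacters coming from another shape $J'\neq J$ (impossible by Proposition~\ref{prop:char-calculation}) or would lie in $\overline{\cC}(J)\setminus\cU_J$.'' But Proposition~\ref{prop:char-calculation} is injective only on $\cP_\tau$, so it does not exclude a second model of some shape $J'\notin\cP_\tau$, nor a non-maximal refined shape $(J',r')$; and a model of a genuinely different shape lies in $\overline{\cC}(J')$, not in $\overline{\cC}(J)\setminus\cU_J$. What is actually needed (and what the paper does) is a dimension argument: one intersects $\overline{\cZ}(J)$ with $\overline{\cZ}(J',r')$ for every refined shape $(J',r')\neq(J,r)$ (and with the image of $\cC_{\irred}$), each of which is a proper closed substack by Theorem~\ref{thm: dimension of refined shapes}. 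After removing these, every finite type point has all its Breuil--Kisin models of type $\tau$ of shape exactly $(J,r)$, and only then does Corollary~\ref{cor:ker-ext-maximal-nonzero} finish the job by showing that the map on $\Ext^1$ classes is injective for that fixed shape.

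For the Serre weight assertion, you explicitly punt on the ``combinatorial unwinding,'' but this is the bulk of the argument and cannot be waved away: one has to compute $T(\gN(J))|_{I_K}$ explicitly (using Proposition~\ref{prop:char-calculation} and the central character of $\sigmabar(\tau)_J$) in order to recognize $\rbar|_{I_K}$ in the normal form required by Lemma~\ref{lem: explicit Serre weights with our normalisations}, and the cuspidal case requires a separate check involving the norm map. More importantly, you omit the non-tr\`es-ramifi\'ee hypothesis in Lemma~\ref{lem: explicit Serre weights with our normalisations}: one needs Lemma~\ref{lem: list of things we need to know about Serre weights}(2) and the nonemptiness of the fibre of $\cC^{\tau,\BT,1}\to\cR^{\dd,1}$ over $\rbar$ to rule out the tr\`es ramifi\'ee case before the lemma can be applied. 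Without these two steps, the claim that $\sigmabar(\tau)_J\in W(\rbar)$ is unsupported.
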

\begin{proof}
Recall from Definition~\ref{def:scheme-theoretic images}
that $\overline{\cZ}(J)$ is defined to be the scheme-theoretic image
of a morphism $\Spec \Bdist \to \cZ^{\dd,1}.$  As in the proof of
Lemma~\ref{lem:ext images}, since the source
and target of this morphism are of finite presentation over $\F$, 
its image is a dense constructible subset of its scheme-theoretic image, 
and so contains a dense open subset, which we may interpret as 
a dense open substack $\cU$ of $\overline{\cZ}(J).$
From the definition of $\Bdist,$
the finite type points of~$\cU$ 
correspond to
reducible Galois representations admitting a model of type~$\tau$
and refined profile~$(J,r)$, for which~$(J,r)$ is maximal. 

That the~$\overline{\cZ}(J)$ are pairwise distinct is immediate  
from the above and~Proposition~\ref{prop:char-calculation}.  
 Combining this observation with Theorem~\ref{thm: dimension of refined profiles}, 
we see that by deleting the intersections of~$\overline{\cZ}(J)$ with
the~$\overline{\cZ}(J',r')$ for all refined profiles~$(J',r')\ne (J,r)$,
we obtain a dense open substack~$\cU'$ whose finite type points have
the property that every Breuil--Kisin model of type~$\tau$ of the
corresponding Galois representation has profile~$(J,r)$. The unicity of such a Breuil--Kisin model then follows 
from Corollary~\ref{cor:ker-ext-maximal-nonzero}.

It remains to show that every such Galois representation $\rbar$ has $\sigmabar(\tau)_J$ as a
Serre weight. 
Suppose first that $\tau$ is a principal series type. We claim that (writing $\sigmabar(\tau)_J =
\sigmabar_{\vec{t},\vec{s}} \otimes (\eta' \circ \det)$ as
in Appendix~\ref{sec: appendix on tame
  types}) we
have \[T(\mathfrak{N}(J))|_{I_K}=\eta'|_{I_K} \prod_{i=0}^{f-1}\omega_{\sigma_i}^{s_i+t_i}.\]To see this, note that by
Proposition~\ref{prop:char-calculation} it is enough to show that
$\eta|_{I_K}=\eta'|_{I_K} \prod_{i=0}^{f-1}\omega_{\sigma_i}^{s_i+2t_i}$, which
follows by comparing the central characters of $\sigmabar(\tau)_J$
and $\sigmabar(\tau)$ (or from a direct computation with the
quantities $s_i,t_i$).

Since~$\det\rbar|_{I_K}=\eta\eta'\varepsilonbar^{-1}$, we
have \[\rbar|_{I_K}\cong \eta'|_{I_K} \otimes \begin{pmatrix}
      \prod_{i=0}^{f-1}\omega_{\sigma_i}^{s_i+t_i} &*\\ 0 & \varepsilonbar^{-1}\prod_{i=0}^{f-1}\omega_{\sigma_i}^{t_i}
    \end{pmatrix}.\] The result then
follows from Lemma~\ref{lem: explicit Serre weights with our
  normalisations}, using Lemma~\ref{lem: list of things we need to
  know about Serre weights}(2) 
and the fact that the fibre of the morphism $\cC^{\tau,\BT,1} \to \cR^{\dd,1}$
above $\rbar$ is nonempty to see that $\rbar$ is not tr\`es ramifi\'ee.

The argument in the cuspidal case proceeds analogously, noting
that if the character $\theta$ (as in Appendix~\ref{sec: appendix on tame
  types}) corresponds to $\widetilde{\theta}$ under local class field theory
then $\widetilde{\theta} |_{I_K} = \eta'
\prod_{i=0}^{f'-1} \omega_{\sigma'_i}^{t_i}$, and that from central
characters we have 
$\eta\eta' = (\widetilde{\theta} |_{I_K})^2 
\prod_{i=0}^{f-1} \omega_{\sigma_i}^{s_i}$.
\end{proof}

\begin{rem}
  \label{rem: unique Serre weight isn't proved here but could be}With
  more work, we could use the results of~\cite{gls13} and our
  results on dimensions of families of extensions to strengthen
  Theorem~\ref{thm: unique serre weight}, showing that there is a
  dense set of finite type points of~$\cZbar(J)$ with the property that the corresponding Galois
  representations have $\sigmabar(\tau)_J$ as their \emph{unique} non-Steinberg Serre
  weight. In fact, we will prove this as part of our work on
  the geometric Breuil--M\'ezard conjecture in \cite{cegsA} 
  (which uses Theorem~\ref{thm: unique serre
    weight} as an input). 
\end{rem}
\subsection{Irreducible Galois representations} 
\label{subsec:irreducible}
We now show
that the points of $\cC^{\tau,\BT,1}$ which are irreducible (that
is, cannot be written as an extension of rank one Breuil--Kisin modules) lie
in a closed substack of positive codimension. This, in essence, amounts to bounding the dimensions of the Kisin varieties (for Breuil--Kisin modules of height at most $1$ with descent data of type $\tau$) corresponding to irreducible representations. We begin with the
following useful observation.

\begin{lem}
  \label{lem: closed points of irred}The rank two Breuil--Kisin modules with
  descent data  and $\Fpbar$-coefficients which are irreducible (that
  is, which cannot be written as an extension of rank~$1$ Breuil--Kisin
  modules with descent data)  are
  precisely those whose corresponding \'etale $\varphi$-modules are
  irreducible, or equivalently whose corresponding
  $G_K$-representations are irreducible.
\end{lem}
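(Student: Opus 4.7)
The plan is to split the claimed equivalence into two separate steps: first, that reducibility as a rank two Breuil--Kisin module with descent data is equivalent to reducibility of the associated \'etale $\varphi$-module, and second, that reducibility of the associated \'etale $\varphi$-module with descent data corresponds to reducibility of the associated $G_K$-representation.

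For the second step, Lemma~\ref{lem: Galois rep is a functor if A is actually finite local} provides an exact equivalence of categories between \'etale $\varphi$-modules with descent data and $\Fpbar$-coefficients and continuous finite $\Fpbar$-representations of $G_{K_\infty}$, so reducibility of $M$ as such a $\varphi$-module is equivalent to reducibility of $T(M)$ as $G_{K_\infty}$-representation. Combining this with the canonical extension of the $G_{K_\infty}$-action to a $G_K$-action available for height at most one modules (Remark~\ref{rem:ht-1-extend}), together with the observation from the proof of Lemma~\ref{lem: restricting to K_infty doesn't lose information about rbar} that $K_\infty/K$ is totally wildly ramified (so that any irreducible $\Fpbar$-representation of $G_K$, being induced from a tame character, stays irreducible on restriction to $G_{K_\infty}$), completes the second step.

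For the first step, the forward direction is trivial because $\gM \mapsto \gM[1/u]$ is an exact functor. For the converse, given a rank one \'etale $\varphi$-submodule with descent data $N \subset M$, my approach is to define $\gN := N \cap \gM$ inside $M$. This is automatically a $\gS_{\Fpbar}$-submodule of $\gM$ stable under $\varphi_{\gM}$ and the descent data. Because $\gS_{\Fpbar}$ decomposes as a product of copies of $\Fpbar[[u]]$ (each a DVR with uniformiser $u$), and because $\gM/\gN$ embeds into $M/N$ (which is $u$-torsion free), one sees component by component that $\gM/\gN$ is free of rank one and hence $\gN$ is free of rank one as well. The induced $\Phi_\gN$ is an isomorphism after inverting $u$ (since it is so for $\Phi_M$), and in particular after inverting $E(u)$.

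The main remaining point (and the one requiring care) is to verify that $\gN$ has the same height bound as $\gM$: if $E(u)^h \gM \subseteq \Phi_\gM(\varphi^*\gM)$, then given $n \in \gN$ with $E(u)^h n = \Phi_\gM(x)$ for some $x \in \varphi^*\gM$, the image $\bar{x}$ in $\varphi^*(\gM/\gN)$ satisfies $\Phi_{\gM/\gN}(\bar{x}) = 0$. Since $\varphi^*(\gM/\gN)$ is $u$-torsion free and $\Phi_{\gM/\gN}$ is injective (being an isomorphism after inverting~$E(u)$, which on our coefficients is equivalent to inverting $u$), we conclude $\bar{x} = 0$ and hence $x \in \varphi^*\gN$. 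Thus $\gN$ is a rank one Breuil--Kisin module with descent data of height at most $h$, and $\gM$ is an extension of rank one such modules, as required.
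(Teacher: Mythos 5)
Your proof is correct and follows the same chain of equivalences as the paper's own, passing through reducibility of $M = \gM[1/u]$, of $T(M)$ as a $G_{K_\infty}$-representation, and of $\rhobar$ as a $G_K$-representation. The only difference is that where the paper defers the equivalence between reducibility of $\gM$ and of $\gM[1/u]$ to \cite[Lem.\ 5.5]{MR3164985}, you supply the saturation argument $\gN := N \cap \gM$ directly, and your verifications (freeness of $\gN$ and $\gM/\gN$ via the $u$-torsion-freeness of $M/N$, injectivity of $\Phi_{\gM/\gN}$, and the height bound on $\gN$) are all sound.
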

\begin{proof}Let~$\gM$ be a Breuil--Kisin module with descent data
  corresponding to a finite type point of~$\cC^{\tau,\BT,1}_{\dd}$,
  let~$M=\gM[1/u]$, and let~$\rhobar$ be the $G_K$-representation
  corresponding to~$M$. As noted in the proof of Lemma~\ref{lem: restricting to K_infty doesn't lose information about
    rbar}, $\rhobar$ is reducible if and only
  if~$\rhobar|_{G_{K_\infty}}$ is reducible, and by Lemma~\ref{lem:
    Galois rep is a functor if A is actually finite local}, this is
  equivalent to~$M$ being reducible. That this is in turn
  equivalent to~$\gM$ being reducible may be proved in 
  the same way as~\cite[Lem.\ 5.5]{MR3164985}. 
\end{proof}

Recall that~$L/K$ denotes the unramified quadratic extension; then the
irreducible representations~$\rhobar:G_K\to\GL_2(\Fpbar)$ are all
induced from characters of~$G_L$. Bearing in mind Lemma~\ref{lem:
  closed points of irred}, this means that we can study
irreducible Breuil--Kisin modules via a consideration of base-change of Breuil--Kisin modules
from $K$ to~$L$, and our previous
study of reducible Breuil--Kisin modules.
Since this will require us to consider Breuil--Kisin modules (and moduli
stacks thereof) over both $K$ and $L$, we will have to introduce
additional notation in order to indicate over which of the two fields
we might be working.   We do this simply by adding a subscript `$K$'
or `$L$' to our current notation.  We will also
omit other decorations which are being held fixed
throughout the present discussion. Thus we write $\cC_{K}^{\tau}$
to denote the moduli stack that was previously denoted $\cC^{\tau,\BT,1}$,
and $\cC_{L}^{\tau_{|L}}$ to denote the corresponding
moduli stack for Breuil--Kisin modules over $L$, with the type taken
to be the restriction $\tau_{|L}$ of $\tau$ from $K$ to $L$.
(Note that whether $\tau$ is principal series or cuspidal,
the restriction $\tau_{| L}$ is principal series.)

As usual we fix
a uniformiser~$\pi$ of $K$, which we also take to be our fixed
uniformiser of $L$.
Also, throughout
this section we take $K' =
L(\pi^{1/(p^{2f}-1)})$, so that $K'/L$ is the standard choice of
extension for $\tau$ and $\pi$ regarded as a type and uniformiser for~$L$. 

Write $\phi : W(k')[[u]] \to W(k')[[u]]$ for the automorphism fixing $u$ and acting on $W(k')$ via
the automorphism induced by the non-trivial automorphism of $k'/k$.
If~$\gP$ is a Breuil--Kisin module with descent data from~$K'$ to~$L$, then we
let
$\gP^{(f)}$ be the Breuil--Kisin module given by the pullback of $\gP$ along $\phi$,
% ~$W(k')[[u]]\otimes_{\Gal(k'/k),W(k')[[u]]}\gP$,
%where the pullback is given by the non-trivial automorphism
%of~$k'/k$, 
and such that the descent data on $\gP^{(f)}$ is given by $\ghat(s
\otimes m) = \ghat(s) \otimes \hat{g}^{p^f}(m)$ for $s\in
W(k')[[u]]$ and $m \in \gP$.
In particular, we have $\gM(r,a,c)^{(f)} =
\gM(r',a',c')$ where $r'_i = r_{i+f}$, $a'_i=a_{i+f}$, 
and $c'_i=c_{i+f}$.

We let $\sigma$ denote the non-trivial automorphism of $L$ over $K$,
and write $G :=  \Gal(L/K)= \langle \sigma \rangle$, a cyclic group
of order two.
There is an action $\alpha$ of $G$ on 
$\cC_{L}$
defined via $\alpha_{\sigma}: \gP \mapsto \gP^{(f)}$. 
More precisely, this induces an action
of $G:= \langle \sigma \rangle$ on 
$\cC_{L}^{\tau_{|L}}$
in the strict\footnote{From a $2$-categorical perspective,
	it is natural to relax the notion
of group action on a stack so as to allow natural transformations,
rather than literal equalities, when relating multiplication
in the group to the compositions of the corresponding 
equivalences of categories arising in the definition of an action. 
An action
in which actual equalities hold is then called {\em strict}.  Since
our action is strict, we are spared from having to consider
the various $2$-categorical aspects of the situation that would
otherwise arise.}
sense that
$$\alpha_{\sigma} \circ \alpha_{\sigma} =
\id_{\cC_L^{\tau_{|L}}}.$$

We now define the fixed point stack for this action.

\begin{df}
	\label{def:fixed points}
	We let the fixed point stack
$(\cC_{L}^{\tau_{|L}})^G$ denote the stack
whose $A$-valued points consist of an $A$-valued point $\gM$
of $\cC_L^{\tau_{|L}}$, together with an isomorphism
$\imath: \gM \iso \gM^{(f)}$ which satisfies the cocycle condition
that the composite
$$\gM \buildrel \imath \over \longrightarrow \gM^{(f)} 
\buildrel \imath^{(f)} \over \longrightarrow (\gM^{(f)})^{(f)} = \gM$$
is equal to the identity morphism $\id_{\gM}$.
\end{df}

We now give another description of $(\cC_L^{\tau_{|L}})^G$, in terms 
of various fibre products, which is technically useful. 
This alternate description involves two steps.  In the first step,
we define fixed points of the automorphism $\alpha_{\sigma}$,
without imposing the additional condition that the fixed point data
be compatible with the relation $\sigma^2~=~1$ in~$G$.  Namely, we define
$$
(\cC_{L}^{\tau_{|L}})^{\alpha_{\sigma}}
:= 
\cC_{L}^{\tau_{|L}}
	\underset
{\cC_{L}^{\tau_{|L}}
	\times 
\cC_{L}^{\tau_{|L}}
}
{\times}
\cC_{L}^{\tau_{|L}}
	$$where the first morphism $\cC_{L}^{\tau_{|L}}\to\cC_{L}^{\tau_{|L}}
	\times 
\cC_{L}^{\tau_{|L}}$ is the diagonal, and the second is $\id\times\alpha_\sigma$.
	Working through the definitions,
	one finds
	that an $A$-valued point of $(\cC_L^{\tau_{|L}})^{\alpha_{\sigma}}$
	consists of a pair $(\gM,\gM')$ of objects of $\cC_L^{\tau_{|L}}$
	over $A$, equipped with isomorphisms $\alpha: \gM \iso \gM'$
	and $\beta: \gM \iso (\gM')^{(f)}$.  The morphism 
	$$(\gM,\gM',\alpha,\beta) \mapsto (\gM, \imath),$$
	where $\imath := (\alpha^{-1})^{(f)} \circ \beta: \gM \to \gM^{(f)}$,
	induces an isomorphism between $(\cC_L^{\tau_{|L}})^{\alpha_{\sigma}}$
		and the stack classifying points $\gM$ of $\cC_L^{\tau_{|L}}$
		equipped with an isomorphism 
		$\imath: \gM \to \gM^{(f)}$.
		(However, no cocycle condition has been imposed on $\imath$.)

	Let $I_{\cC_L^{\tau_{|L}}}$ denote the inertia stack
	of $\cC_L^{\tau_{|L}}.$
We define a morphism
$$(\cC_L^{\tau_{|L}})^{\alpha_{\sigma}} \to I_{\cC_L^{\tau_{|L}}}$$
via $$(\gM,\imath) \mapsto (\gM, \imath^{(f)}\circ \imath),$$
where, as in Definition~\ref{def:fixed points},
we regard the composite $\imath^{(f)}\circ \imath$ as an automorphism
of $\gM$ via the identity $(\gM^{(f)})^{(f)} = \gM.$
Of course, we also have the identity
	section $e: \cC_L^{\tau_{|L}} \to I_{\cC_L^{\tau_{|L}}}$.
	We now define
	$$(\cC_L^{\tau_{|L}})^G :=
	(\cC_L^{\tau_{|L}})^{\alpha_{\sigma}}
       	\underset{I_{\cC_L^{\tau_{|L}}}}{\times} 
	\cC_L^{\tau_{|L}}.
	$$
	If we use the description of $(\cC_L^{\tau|_{L}})^{\alpha_{\sigma}}$
	as classifying
	pairs $(\gM,\imath),$ then (just unwinding definitions)
	this fibre product classifies tuples $(\gM,\imath,\gM',\alpha)$,
	where $\alpha$ is an isomorphism $\gM \iso \gM'$ which furthermore
	identifies $\imath^{(f)}\circ \imath$
	with $\id_{\gM'}$.  Forgetting $\gM'$ and $\alpha$ then induces
	an isomorphism between~$(\cC_L^{\tau|_L})^G$, as defined
	via the above fibre product, and the stack defined in 
	Definition~\ref{def:fixed points}.

To compare this fixed point stack to $\cC^\tau_K$, we make the
following observations. Given a Breuil--Kisin module with descent
data from~$K'$ to~$K$, we obtain a Breuil--Kisin module with descent data
from~$K'$ to~$L$ via the obvious forgetful map. Conversely, given a
Breuil--Kisin module~$\gP$ with  descent data
from~$K'$ to~$L$, the additional data required to enrich this to a
Breuil--Kisin module with descent data from~$K'$ to~$K$ can be described as follows as follows:  let $\theta \in \Gal(K'/K)$ denote the unique element which fixes
$\pi^{1/(p^{2f}-1)}$ and acts nontrivially on $L$. Then to enrich
the descent data on $\gP$ to descent data from $K'$ to $K$, it is
necessary and 
sufficient to give an additive map $\hat\theta : \gP \to \gP$ satisfying
$\hat\theta(sm) = \theta(s)\hat\theta(m)$ for all $s \in \gS_{\F}$ and
$m \in \gP$, and such that $\hat\theta
\hat g \hat \theta = \hat g^{p^f}$ for all $g \in \Gal(K'/L)$.

In turn, the data of the additive map $\hat\theta:\gP\to\gP$ is
equivalent to giving the data of the map
$\theta(\hat\theta):\gP\to\gP^{(f)}$ obtained by
composing~$\hat\theta$ with the Frobenius on~$L/K$. The defining
properties of $\hat\theta$ are equivalent to asking that
this map is an isomorphism of Breuil--Kisin modules with descent data
satisfying the cocycle condition of Definition~\ref{def:fixed points};
accordingly, we have a natural morphism  $\cC_K^{\tau} \to
	(\cC_L^{\tau_{|L}})^G$, and a restriction morphism
        \numequation\label{eqn: restriction morphism}\cC_K^{\tau} \to
	\cC_L^{\tau_{|L}}. \end{equation}

The following simple lemma summarises the basic facts about
base-change in the situation we are considering.

\begin{lemma}\label{lem: fixed point stack iso}
	There is an isomorphism $\cC_K^{\tau} \iso
	(\cC_L^{\tau_{|L}})^G$.
\end{lemma}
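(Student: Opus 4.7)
The construction of the equivalence is essentially laid out in the paragraphs just preceding the lemma, so the plan is to make that construction precise as a pair of quasi-inverse functors between fibre categories and verify that they really are quasi-inverse.

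In one direction, starting with an object $\gM$ of $\cC_K^\tau(A)$, I would first forget the descent data of $\hat{g}$ for $g \in \Gal(K'/L) \subset \Gal(K'/K)$ to produce an underlying object of $\cC_L^{\tau_{|L}}(A)$; then, as in the discussion before the lemma, I would set $\imath := \theta(\hat{\theta}) : \gM \to \gM^{(f)}$, using the additive map $\hat{\theta}$ supplied by the original $K$-descent data. The fact that this $\imath$ is a morphism of Breuil--Kisin modules with descent data from $K'$ to $L$ is the translation of the identity $\hat{\theta}\,\hat{g}\,\hat{\theta}^{-1}=\hat{g}^{p^f}$ for $g \in \Gal(K'/L)$ together with the commutation of $\hat{\theta}$ with $\varphi_\gM$. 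The cocycle condition $\imath^{(f)}\circ \imath = \id_\gM$ of Definition~\ref{def:fixed points} is just a restatement of the fact that $\theta^2 = 1$ in $\Gal(L/K)$, i.e.\ that $\hat{\theta}^2$ recovers the identity on $\gM$ via the given identification $(\gM^{(f)})^{(f)} = \gM$.

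In the other direction, starting with a pair $(\gM,\imath)$ defining an object of $(\cC_L^{\tau_{|L}})^G(A)$, I would define an additive map $\hat{\theta} : \gM \to \gM$ by unwinding the construction of $\imath = \theta(\hat{\theta})$ in reverse: namely, $\hat{\theta}$ is obtained from $\imath$ by composition with the canonical identification given by $\theta$ on coefficients. Then I would check: (i) $\hat{\theta}(sm) = \theta(s)\hat{\theta}(m)$ for $s \in \gS_A$, which is the semilinearity built into the definition of $\imath$ as a map to the twist $\gM^{(f)}$; (ii) $\hat{\theta}$ commutes with $\varphi_\gM$, which is encoded in $\imath$ being a morphism in $\K{A}$ rather than merely a $\gS_A$-module map; (iii) the conjugation relation $\hat{\theta}\,\hat{g}\,\hat{\theta} = \hat{g}^{p^f}$ for $g \in \Gal(K'/L)$, which is the compatibility of $\imath$ with the $\Gal(K'/L)$-descent data on $\gM$ and on $\gM^{(f)}$; and (iv) $\hat{\theta}^2 = \id$, which is the cocycle condition imposed on $\imath$. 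Together with the existing $\hat{g}$ for $g \in \Gal(K'/L)$, the map $\hat{\theta}$ then defines descent data from $K'$ to $K$, since every element of $\Gal(K'/K)$ can be written uniquely as $\theta^a g$ with $a\in\{0,1\}$ and $g\in\Gal(K'/L)$. One still has to check that the type condition for the enhanced object matches $\tau$ rather than merely $\tau_{|L}$: this reduces to the statement that $\hat{\theta}$ acts on $\gM_i/u\gM_i$ in the way prescribed by $\tau$, which follows from the shape of $\imath$ modulo $u$ together with the fact that $\tau$ is determined by $\tau_{|L}$ plus the action of $\theta$.

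Finally, I would verify that these two constructions are functorial (a morphism in $\cC_K^\tau$ is precisely a morphism in $\cC_L^{\tau_{|L}}$ that commutes with the respective $\imath$'s, by the uniqueness of the $\theta^a g$ decomposition) and mutually inverse (both composites are literally the identity on nose, since $\imath$ and $\hat{\theta}$ determine each other). The main potential obstacle is bookkeeping rather than substance: one must be careful with the precise identification $(\gM^{(f)})^{(f)} = \gM$ used in the cocycle condition and with the distinction between the semilinear map $\hat{\theta}$ and its $\gS_A$-linear incarnation $\imath$, and one must verify the type condition is preserved under the equivalence. Once these compatibilities are checked, the lemma follows, and in fact the resulting equivalence is strict (not merely a $2$-equivalence) by inspection of the constructions, which is consistent with the strictness of the $G$-action $\alpha$ noted above.
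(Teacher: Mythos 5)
Your proposal is correct and takes essentially the same route as the paper: the published proof simply says the lemma follows from the preceding discussion, which sets up exactly the correspondence you spell out between the additive map $\hat\theta$ enriching the $K'/L$ descent data and the isomorphism $\imath = \theta(\hat\theta)\colon \gM \to \gM^{(f)}$ satisfying the cocycle condition. Your verifications (i)--(iv) and the functoriality check are a faithful unwinding of that discussion, and your observation that the type condition is preserved is correct, though in fact automatic since $I(K'/K) = I(K'/L)$ (as $L/K$ is unramified), so the condition on $\gM_i/u\gM_i$ is literally the same in both categories.
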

\begin{proof}
 This follows immediately from the preceding discussion.	
\end{proof}

\begin{rem}
  \label{rem: the R version of the fixed point stack}In the proof of
  Theorem~\ref{thm: irreducible Kisin modules can be ignored} we
  will make use of the following analogue of Lemma~\ref{lem: fixed
    point stack iso} for \'etale $\varphi$-modules. Write~$\cR_K$,
  $\cR_L$ for the moduli stacks of Definition~\ref{defn: R^dd}, i.e.\
  for the moduli stacks of rank~$2$ \'etale $\varphi$-modules with
  descent data respectively to~$K$ or to~$L$. Then we have an action
  of~$G$ on~$\cR_L$ defined
  via~$M\mapsto M^{(f)}$, with $M^{(f)}$ defined analogously to $\gP^{(f)}$, and we
  define the fixed point stack~$(\cR_L)^G$ exactly as in
  Definition~\ref{def:fixed points}: namely an $A$-valued point
  of~$(\cR_L)^G$ consists of an $A$-valued point~$M$ of $\cR_L$,
  together with an isomorphism $\iota:M\isoto M^{(f)}$ satisfying the
  cocycle condition. The preceding discussion goes through in this
  setting, and shows that there is an isomorphism
  $\cR_K\isoto (\cR_L)^G$.

We also note that 
the morphisms $\cC_K^\tau \to \cC_L^{\tau_{|L}}$ and
$\cC_K^\tau \to \cR_K$ 
induce a monomorphism
\numequation\label{eqn:C into R mono base change}  \cC_K^{\tau} \hookrightarrow \cC_L^{\tau_{|L}} \times_{\cR_L}
\cR_K\end{equation}
One way to see this is to rewrite this morphism (using the previous discussion) 
as a morphism
$$(\cC_L^{\tau_{|L}})^G \to \cC_L^{\tau_{|L}} \times_{\cR_L} (\cR_L)^G,$$
and note that the descent data via $G$ on an object classified by
the source of this morphism is determined by the induced descent data on its
image in $(\cR_L)^G$.
\end{rem}

We now use the Lemma~\ref{lem: fixed point stack iso} to study the locus of finite type points
of $\cC_K^{\tau}$ which correspond to irreducible Breuil--Kisin modules. 
Any irreducible Breuil--Kisin module over $K$ becomes reducible when restricted to $L$,
and so may be described as an extension
$$0 \to \gN \to \gP \to \gM \to 0,$$
where $\gM$ and $\gN$ are 
Breuil--Kisin modules of rank one with descent data from $K'$ to $L$,
and $\gP$ is 
additionally
equipped with an isomorphism $\gP \cong \gP^{(f)}$,
satisfying the cocycle condition of Definition~\ref{def:fixed
  points}. 

Note that the characters
$T(\gM)$, $T(\gN)$ of $G_{L_\infty}$ are distinct and cannot be
extended to characters of $G_K$. Indeed, this condition is plainly
necessary for an extension~$\gP$ to arise as the base change of an irreducible
Breuil--Kisin module 
(see the
proof of  Lemma~\ref{lem: restricting to K_infty doesn't lose information about
    rbar}). 
Conversely, if $T(\gM)$, $T(\gN)$ of $G_{L_\infty}$ are distinct and cannot be
extended to characters of $G_K$, then  for any $\gP \in \Ext^1_{\K{\F}}(\gM,\gN)$
whose descent data can be enriched to give descent data from $K'$ to $K$, this enrichment is necessarily irreducible. 
In  particular, the existence of such a~$\gP$ implies that 
the descent data
on $\gM$ and $\gN$ cannot be enriched to give descent data from~$K'$ to~$K$. 

We additionally have the following observation. 

\begin{lemma}\label{lem:nonempty-then-map}
If $\gM,\gN$ are such that there is an extension \[0 \to \gN \to \gP
  \to \gM \to 0\] whose descent data can be enriched to give an irreducible
Breuil--Kisin module over~$K$, then there exists a
nonzero map $\gN \to \gM^{(f)}$.
\end{lemma}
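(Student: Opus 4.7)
The plan is to exploit the enrichment of descent data provided by Lemma~\ref{lem: fixed point stack iso}. By that result, giving an enrichment of $\gP$ to a Breuil--Kisin module with descent data from~$K'$ to~$K$ is equivalent to giving an isomorphism $\iota: \gP \isoto \gP^{(f)}$ satisfying the cocycle condition $\iota^{(f)} \circ \iota = \id_{\gP}$. Applying the functor $(f)$ to the short exact sequence $0 \to \gN \to \gP \to \gM \to 0$ (which is compatible with $(f)$, since $(f)$ is exact) yields a short exact sequence $0 \to \gN^{(f)} \to \gP^{(f)} \to \gM^{(f)} \to 0$, and I would consider the composite
\[
\psi: \gN \hookrightarrow \gP \stackrel{\iota}{\longrightarrow} \gP^{(f)} \twoheadrightarrow \gM^{(f)}.
\]

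If $\psi \neq 0$ then $\psi$ is the desired nonzero map $\gN \to \gM^{(f)}$, and there is nothing further to do. The content of the lemma is in ruling out the case $\psi = 0$, which I would do by deriving a contradiction from the hypothesis (recorded in the discussion just before the lemma) that $T(\gM)$ and $T(\gN)$ do not extend to characters of $G_K$. If $\psi = 0$, then by exactness $\iota$ carries $\gN$ into $\gN^{(f)}$, inducing morphisms $\iota_\gN: \gN \to \gN^{(f)}$ and $\iota_\gM: \gM \to \gM^{(f)}$ fitting into a morphism of short exact sequences with $\iota$ an isomorphism. A snake lemma argument then shows that $\iota_\gN$ and $\iota_\gM$ are themselves isomorphisms.

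The cocycle condition $\iota^{(f)} \circ \iota = \id_{\gP}$ restricts to give the analogous cocycle conditions on $\iota_\gN$ and $\iota_\gM$; by Lemma~\ref{lem: fixed point stack iso} (in its evident rank one version) this enriches the descent data on $\gN$ and $\gM$ from~$K'/L$ to descent data from~$K'/K$. In particular $\gN$ and $\gM$ become objects of (the rank one analogue of) $\cC_K^{\dd}$, and so by Remark~\ref{rem:ht-1-extend} their \'etale realisations $T(\gN)$, $T(\gM)$ extend to characters of $G_K$, contradicting the standing hypothesis. This contradiction forces $\psi \neq 0$, completing the proof.

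The only potentially subtle step is the verification that $\iota_\gN$ (equivalently $\iota_\gM$) is an isomorphism once one knows $\iota$ preserves the filtration --- but this is a routine snake lemma application, since the cokernel of $\iota_\gN$ injects into the cokernel of $\iota$, which is zero, and the kernel of $\iota_\gM$ is a quotient of the kernel of $\iota$, which is also zero. There is no real obstacle; the lemma is essentially a direct consequence of the presentation of $\cC_K^\tau$ as a fixed point stack established in Lemma~\ref{lem: fixed point stack iso}.
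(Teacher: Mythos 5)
Your proof takes the same route as the paper: recast the enrichment as an isomorphism $\iota\colon \gP \isoto \gP^{(f)}$ (the paper works equivalently with the semilinear $\hat\theta$), form the composite $\gN \to \gP \to \gP^{(f)} \to \gM^{(f)}$, and derive a contradiction with the hypotheses recorded just before the lemma if this composite vanishes.

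The one slip is in the snake lemma step at the end. From a morphism of short exact sequences with middle vertical arrow $\iota$ an isomorphism, the snake lemma gives $\ker\iota_\gN = 0$ and $\coker\iota_\gM = 0$, together with an isomorphism $\ker\iota_\gM \cong \coker\iota_\gN$; it does \emph{not} give that $\coker\iota_\gN$ injects into $\coker\iota$, nor that $\ker\iota_\gM$ is a quotient of $\ker\iota$, because the connecting map runs from $\ker\iota_\gM$ to $\coker\iota_\gN$. To conclude that $\iota_\gN$ and $\iota_\gM$ are isomorphisms one still needs a further observation --- e.g.\ that a surjective $\gS_\F$-linear map between free rank-one $\gS_\F$-modules is automatically an isomorphism, which upgrades $\iota_\gM$ and hence $\iota_\gN$ --- or, more cleanly and bypassing the snake lemma altogether, one can note that the restricted cocycle identity $\iota_\gN^{(f)}\circ\iota_\gN = \id_\gN$ becomes $\iota_\gN\circ\iota_\gN^{(f)}=\id_{\gN^{(f)}}$ upon applying $(f)$, so $\iota_\gN^{(f)}$ is a two-sided inverse of $\iota_\gN$. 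Either fix is immediate, and the argument as a whole is sound.
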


\begin{proof}
The composition $\gN
\to \gP \xrightarrow{\hat\theta}  \gP \to \gM$, in which first and
last arrows are the natural inclusions and projections, must be
nonzero (or else $\hat\theta$ would give descent data on $\gN$ from
$K'$ to $K$). It is not itself a map of Breuil--Kisin modules, because $\hat\theta$
is semilinear, but is a map of Breuil--Kisin modules when viewed as a map $\gN \to \gM^{(f)}$.
\end{proof}

We now consider (for our fixed~$\gM$, $\gN$, and working over~$L$
rather than over~$K$) the scheme $\Spec B^{\dist}$ as in
Subsection~\ref{subsec:universal families}. Following
Lemma~\ref{lem:nonempty-then-map}, we assume that there exists a
nonzero map $\gN \to \gM^{(f)}$.  The observations made above show
that we are in the strict case, and thus that
$\Spec A^{\dist} = \Gm\times \Gm$ and that furthermore we may (and do)
set $V = T$.  We consider the fibre product with the restriction morphism~\eqref{eqn: restriction morphism}
$$Y(\gM,\gN):=\Spec B^{\dist} \times_{\cC_L^{\tau_{|L}}}\cC_K^{\tau}.$$

Let $\Gm \hookrightarrow \Gm\times\Gm$ be the diagonal closed immersion,
and let $(\Spec B^{\dist})_{|\Gm}$ denote the pull-back of $\Spec B^{\dist}$
along this closed immersion.
By Lemma~\ref{lem:nonempty-then-map}, the projection $Y(\gM,\gN) \to \Spec B^{\dist}$
factors through
$(\Spec B^{\dist})_{|\Gm},$
and combining this with Lemma~\ref{lem: fixed point stack iso} we see
that  $Y(\gM,\gN)$ may also be described as the fibre product
$$(\Spec B^{\dist})_{|\Gm} \times_{\cC_L^{\tau_{|L}}} (\cC_L^{\tau_{|L}})^G.$$

Recalling the warning of Remark~\ref{rem: potential confusion of two lots of Gm times
  Gm}, Proposition~\ref{prop: construction of family monomorphing to C
  and R} 
now shows that there is a monomorphism
$$[ (\Spec B^{\dist})_{|\Gm} / \Gm\times\Gm] \hookrightarrow \cC_L^{\tau_{|L}},$$
and thus, by
Lemma~\ref{lem: morphism from quotient stack is a monomorphism},
that there is an isomorphism
$$
(\Spec B^{\dist})_{|\Gm}
\times_{\cC_L^{\tau_{|L}}}
(\Spec B^{\dist})_{|\Gm}
\iso
(\Spec B^{\dist})_{|\Gm} \times \Gm\times \Gm.$$
(An inspection of the proof of Proposition~\ref{prop: construction of family monomorphing to C and R} shows that in fact
this result is more-or-less proved directly,
as the key step in proving the proposition.)
An elementary manipulation with fibre products then shows that there
is an isomorphism
$$Y(\gM,\gN) \times_{(\cC_L^{\tau_{|L}})^G} Y(\gM,\gN)
\iso Y(\gM,\gN)\times \Gm\times\Gm,$$
and thus, by another application of 
Lemma~\ref{lem: morphism from quotient stack is a monomorphism},
we find that there is a monomorphism
\numequation\label{eqn: mono from Y}[Y(\gM,\gN)/\Gm\times\Gm] \hookrightarrow (\cC_L^{\tau_{|L}})^G.\end{equation}

We define $\cC_{\irred}$ to be the union over all such pairs $(\gM,\gN)$ of
the scheme-theoretic images of the various projections
$Y(\gM,\gN) \to (\cC_L^{\tau_{|L}})^G$.  Note that this
image 
depends on $(\gM,\gN)$ up to simultaneous 
unramified twists of $\gM$ and $\gN$, and there are only
finitely many such pairs $(\gM,\gN)$ up to such unramified twist. By
definition, $\cC_{\irred}$ is a closed substack of
$\cC^{\tau}_K$ 
which contains every finite
type point of $\cC^{\tau}_K$ corresponding to an irreducible Breuil--Kisin
module.

The following is the main result of this section.

\begin{thm}
  \label{thm: irreducible Kisin modules can be ignored} The
  closed substack  $\cC_{\irred}$ of $\cC_K^{\tau}=\cC^{\tau,\BT,1}$, which contains
 every finite type point of $\cC^{\tau}_K$ corresponding
 to an irreducible Breuil--Kisin module,
  has dimension strictly less than $[K:\Qp]$.
\end{thm}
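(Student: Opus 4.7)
The plan is to reduce to bounding $\dim [Y(\gM,\gN)/\Gm\times\Gm]$ for each pair in the finite list defining $\cC_{\irred}$. By Lemma~\ref{lem:nonempty-then-map} combined with Corollary~\ref{cor: Kisin modules with the same generic fibre} and Lemma~\ref{lem: maps between rank 1 Kisin modules}, the nonzero morphism $\gN \to \gM^{(f)}$ forces $\gN$ to be isomorphic to $\gM^{(f)}$ up to unramified twist, so there are only finitely many such pairs up to simultaneous unramified twist. Combined with the monomorphism~\eqref{eqn: mono from Y}, it is enough to establish $\dim[Y(\gM,\gN)/\Gm\times\Gm] < [K:\Qp]$ for each pair.

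The main observation is that every finite-type point of $Y(\gM,\gN)$ corresponds to a Breuil--Kisin module $\gF$ over $K$ whose restriction $\gF_L$ is an extension of $\gM_x$ by $\gN_x$. Because the characters $T(\gM_x), T(\gN_x)$ of $G_{L_\infty}$ do not extend to $G_K$ (by the definition of $\Spec A^{\dist}$ in this setting), every such $\gF$ must itself be irreducible: a reducible $\gF$ would decompose via rank-one $K$-submodules giving rise to characters that extend, a contradiction. But then $T(\gF)$ is irreducible on $G_K$ while reducible on $G_L$, hence induced from a character of $G_L$, so $T(\gF)|_{G_{L_\infty}}$ is semisimple. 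Via the equivalence of Lemma~\ref{lem: Galois rep is a functor if A is actually finite local}, this means $\gF_L[1/u]$ is split as an étale $\varphi$-module, so the extension class of $\gF_L$ lies in the subspace $\kExt^1_{\K{\F}}(\gM,\gN)$ of Definition~\ref{def:kext}.

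The projection $Y(\gM,\gN) \to (\Spec B^{\dist})_{|\Gm}$ has discrete (at most $0$-dimensional) fibres as an algebraic space --- indeed, descending a given $\gE$ from $L$ to $K$ amounts to a $\Z/2\Z$-Galois datum $\hat\theta$, and the set of such data is discrete modulo the automorphism groups involved. By the previous paragraph its image lies over the closed subscheme of $(\Spec B^{\dist})_{|\Gm}$ cut out by $\kExt^1 \subseteq T_{A^{\dist}}$, which has dimension $1+\dim_\F \kExt^1_{\K{\F}}(\gM,\gN)$. Therefore $\dim Y(\gM,\gN) \le 1+\dim_\F\kExt^1_{\K{\F}}(\gM,\gN)$, and after quotienting by the free generic action of $\Gm\times\Gm$ we obtain $\dim[Y(\gM,\gN)/\Gm\times\Gm] \le \dim_\F\kExt^1_{\K{\F}}(\gM,\gN)-1$.

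It remains to show $\dim_\F\kExt^1_{\K{\F}}(\gM,\gN) \le [K:\Qp]$, which together with the preceding inequality gives the desired strict bound. Because $\gN \cong \gM^{(f)}$ up to unramified twist, the determinant condition $r_i + s_i = e'$ together with $s_i = r_{i+f}$ forces the associated shape $J \subseteq \Z/f_L\Z = \Z/2f\Z$ to satisfy $J = J + f$. This symmetry combined with Corollary~\ref{cor:bounds-on-ker-ext} (applied over $L$, for which $e_L = e$ and $f_L = 2f$) already gives $\dim_\F\kExt^1 \le \lceil e/(p-1)\rceil\cdot 2f \le ef = [K:\Qp]$ whenever $p \ge 5$ or $e \ge 2$. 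The principal obstacle is the remaining cases, where one uses the explicit computation in Proposition~\ref{prop:ker-ext-maximal}: the symmetry $J = J + f$ pairs the transition contributions symmetrically, and a direct argument exploiting the irreducibility hypothesis (in particular that $T(\gM), T(\gN)$ do not extend to $G_K$, which excludes the troublesome scalar-like limits) produces the required strict inequality.
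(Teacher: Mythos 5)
Your plan shares some structure with the paper's argument (reduce to finitely many pairs $(\gM,\gN)$, observe that the extension class lies in $\kExt^1$, and bound dimensions), but there are several concrete gaps. First, the claim that the nonzero morphism $\gN\to\gM^{(f)}$ ``forces $\gN$ to be isomorphic to $\gM^{(f)}$ up to unramified twist'' is false: by Lemma~\ref{lem: maps between rank 1 Kisin modules} a nonzero map of rank-one Breuil--Kisin modules exists whenever $T(\gN)=T(\gM^{(f)})$ and $\alpha_i(\gN)\geq\alpha_i(\gM^{(f)})$, and it need not be an isomorphism. The paper instead records $\prod a_i=\prod b_i$ and the symmetry $x_i=x_{i+f}$ for $x_i := \alpha_i(\gN)-\alpha_{i+f}(\gM)$, which is genuinely weaker. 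This invalidates your deduction ``$s_i=r_{i+f}$, hence $J=J+f$'' in the final paragraph. Second, your assertion that the fibres of $Y(\gM,\gN)\to(\Spec B^{\dist})_{|\Gm}$ are discrete is not justified and is not true in general. The descent data $\hat\theta$ on a fixed extension $\gP$ are parametrised, in the paper's notation, by a pair $(\mu,\xi)$; while $\mu$ is pinned down by the extension class $h=\mumap(\mu)$ (using $\Hom_{\K{\F}}(\gM[1/u],\gN[1/u])=0$ in the strict case), the scalar $\xi\in\Gm$ is only constrained via the relations $\mu_{i+f}\equiv\xi^2(u^{2x_i}\mu_i)^{-1}\pmod{1}$ at indices with $x_i>0$. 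In particular when $x_i=0$ for all $i$ (which \emph{is} the case $\gN\cong\gM^{(f)}$ you claimed always occurs) the parameter $\xi$ is entirely free and the fibre over the split extension is a full $\Gm$. This extra dimension is exactly what accounts for the $+1$ in the paper's bound $\dim Y(\gM,\gN)_1\leq[K:\Qp]$ relative to a naive count.

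Third, the arithmetic in your penultimate paragraph is off: you need $\lceil e/(p-1)\rceil\cdot 2f\leq ef$, i.e.\ $2\lceil e/(p-1)\rceil\leq e$, which holds for $p\geq 5$ \emph{and} $e\geq 2$ (and for a few sporadic $(p,e)$), not ``$p\geq 5$ or $e\geq 2$''; it fails for all $p$ when $e=1$, and for $p=3$ with $e$ odd $\geq 3$. Note that this bound on $\dim_{\F}\kExt^1$ is intrinsically weaker than what the paper proves, because the paper bounds $\dim\Hzero'$, which has only $f$ (not $2f$) free indices thanks to the constraint $x_i=x_{i+f}$; your proposal discards that gain by applying Lemma~\ref{cor:bounds-on-ker-ext} over $L$ without using the $\Z/2\Z$-symmetry. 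Finally, your last sentence acknowledges that the difficult remaining cases (in particular $e=1$, the only genuinely delicate case in the paper) are not addressed; the paper's resolution there involves showing that if $\dim\Hzero'$ were as large as $f+1$, then $\Hzero'\cong\Gm^{f+1}$, every pair $(i-1,i)$ is a transition, and all $\gamma_i^*=0$, placing one in the exceptional case of Proposition~\ref{prop:ker-ext-maximal} and forcing $T(\gM)\cong T(\gN)$, a contradiction. No argument of comparable specificity is proposed here, so the proof is incomplete.
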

\begin{proof}
  As noted above, there are only finitely many pairs $(\gM,\gN)$ up to
  unramified twist, so it is enough to show that for each of them, the
  scheme-theoretic image of the monomorphism~\eqref{eqn: mono from Y}
  has dimension less than $[K:\Q_p]$.

By \cite[\href{https://stacks.math.columbia.edu/tag/0DS6}{Tag 0DS6}]{stacks-project},
to prove the present theorem,
it then suffices to show that 
$\dim Y(\gM,\gN) \leq [K:\Q_p] + 1$
(since $\dim \Gm\times\Gm = 2$).
To establish this, it suffices to show, for each point
$x \in \Gm(\F'),$ where $\F'$ is a finite extension of~$\F$, 
that the dimension of the fibre $Y(\gM,\gN)_x$ is bounded by
$[K:\Q_p]$. After relabelling, as we may, the field $\F'$ as $\F$ and
the Breuil--Kisin modules $\gM_x$ and $\gN_x$ as $\gM$ and $\gN$, we may
suppose that in fact $\F'=\F$ and~$x=1$.

Manipulating
the fibre product appearing in the definition of~$Y(\gM,\gN)$, we find
that
\numequation
\label{eqn:Y fibre blahblah}
Y(\gM,\gN)_1 = 
\Ext^1_{\K{\F}}(\gM,\gN) \times_{\cC_L^{\tau_{|_N}}} \cC_K^{\tau}, 
\end{equation}
where the fibre product is taken with respect to the
morphism $\Ext^1_{\K{\F}}(\gM,\gN) \to \cC_L^{\tau}$ that 
associates the corresponding
rank two extension to an extension
of rank one Breuil--Kisin modules,
and the restriction
morphism~\eqref{eqn: restriction morphism}.

In order to bound the dimension of~$Y(\gM,\gN)_1$, it will be easier
to first embed it into another,
larger, fibre product, which we now introduce. Namely, the
monomorphism~\eqref{eqn:C into R mono base change} 
induces a monomorphism
$$Y(\gM,\gN)_1 \hookrightarrow Y'(\gM,\gN)_1 := 
\Ext^1_{\K{\F}}(\gM,\gN) \times_{\cR_L} \cR_K.$$
Any finite type point of this fibre product lies over a fixed isomorphism
class of finite type points
of $\cR_K$ (corresponding to some fixed irreducible Galois
representation); we let $P$ be a choice of such a point.  The
restriction of $P$ then lies in a fixed isomorphism class of finite
type points of $\cR_L$ (namely, the isomorphism
class of the direct sum
$\gM[1/u]\oplus \gN[1/u] \cong \gM[1/u] \oplus \gM^{(f)}[1/u]$).
Thus the projection $Y'(\gM,\gN)_1 \to \cR_K$ factors through
the residual gerbe of $P$, while the morphism $Y'(\gM,\gN)_1
\to \cR_L$ factors through the residual gerbe 
of 
$\gM[1/u]\oplus \gN[1/u] \cong \gM[1/u] \oplus \gM^{(f)}[1/u]$.
Since $P$ corresponds via
Lemma~\ref{lem: Galois rep is a functor if A is actually finite local}
to an irreducible Galois representation,
we find that $\Aut(P) = \Gm$.
Since 
$\gM[1/u]\oplus \gN[1/u] $  corresponds  via
Lemma~\ref{lem: Galois rep is a functor if A is actually finite local}
to the direct sum of two non-isomorphic Galois characters, we find
that $\Aut(\gM[1/u]\oplus \gN[1/u] ) = \Gm \times \Gm$.  

Thus we obtain monomorphisms
\nummultline
\label{eqn:Y fibre}
Y(\gM,\gN)_1 \hookrightarrow 
Y'(\gM,\gN)_1
\\
\hookrightarrow 
\Ext^1_{\K{\F}}(\gM,\gN)
\times_{[\Spec F'//\Gm\times \Gm]} [\Spec F'//\Gm]
\cong
\Ext^1_{\K{\F}}(\gM,\gN)
\times \Gm.
\end{multline}
In Proposition~\ref{prop:irred-bound} we obtain a description of the image of $Y(\gM,\gN)_1$
under this monomorphism which allows us to bound its dimension
by~$[K:\Qp]$, as required.
\end{proof}

We now prove the bound on the dimension of~$Y(\gM,\gN)_1$ 
that we used in the proof of Theorem~\ref{thm: irreducible Kisin
  modules can be ignored}. Before establishing this bound, we make some further remarks.
To begin with, we remind the reader 
that we are working with Breuil--Kisin modules, \'etale $\varphi$-modules, etc.,
over $L$ rather than $K$, so that e.g.\
the structure parameters of $\gM, \gN$ are periodic modulo $f' = 2f$
(not modulo $f$), and the pair $(\gM,\gN)$ has type $\tau|_L$.
We will readily apply various pieces of notation that were
introduced above in the
context of the field $K$, adapted in the obvious manner to the context
of the field $L$.
(This applies in particular to the notation $\Conefrac$, $\Czerofrac$, etc.\
introduced in Definition~\ref{notn:calh}.)

We write
$m, n$ for the standard generators of $\gM$ and
$\gN$. 
The existence of the nonzero map $\gN \to \gM^{(f)}$ implies that
$\alpha_i(\gN) \ge \alpha_{i+f}(\gM)$ for all $i$, and also that
$\prod_i a_i = \prod_i b_i$. Thanks to the latter we will lose no
generality by assuming that $a_i = b_i =1 $ for all $i$. Let
$\tilde m$ be the standard generator for $\gM^{(f)}$.  The map
$\gN \to \gM^{(f)}$ will (up to a scalar) have the form
$n_i \mapsto u^{x_i} \tilde m_{i}$ for integers $x_i$ satisfying
$px_{i-1}-x_i = s_i - r_{i+f}$ for all $i$; thus
$x_i = \alpha_{i}(\gN) - \alpha_{i+f}(\gM)$ for all $i$. Since the
characters $T(\gM)$ and $T(\gN)$ are conjugate we must have
$x_i \equiv d_i - c_{i+f} \pmod{p^{f'}-1}$ for all $i$ (\emph{cf}.\
Lemma~\ref{lem: generic fibres of rank 1 Kisin modules}).  Moreover,
the strong determinant condition $s_i + r_i = e'$ for all $i$ implies
that $x_i = x_{i+f}$.

We stress that we make no claims about the
optimality of the following result; we merely prove ``just what we
need'' for our applications. Indeed the estimates of
\cite[Thm.~1.1]{MR2562792} and \cite[Thm.~1]{CarusoKisinVar} suggest that improvement should
be possible by the methods of those papers.

\begin{prop}\label{prop:irred-bound} 
We have  $\dim 
Y(\gM,\gN)_1 \le [K:\Qp]$. 

\end{prop}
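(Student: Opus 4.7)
The plan is to describe $Y(\gM,\gN)_1$ concretely in terms of the classifying data of its points and then count dimensions directly. A point of $Y(\gM,\gN)_1$ corresponds to a pair $(h,\iota)$ where $h \in \Ext^1_{\K{\F}}(\gM,\gN)$ parametrises the rank-two extension $\gP_h$ as in Remark~\ref{prop: extensions of rank one Kisin modules}, and $\iota \colon \gP_h \to \gP_h^{(f)}$ is an isomorphism of Breuil--Kisin modules with descent data from $K'/L$, satisfying the cocycle condition $\iota^{(f)} \circ \iota = \id$ of Definition~\ref{def:fixed points}; this data is considered modulo the action of $\Aut(\gP_h)$.

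I would write $\iota$ at each index $i$ as a matrix $A_i = \begin{pmatrix} a_i & b_i \\ c_i & d_i \end{pmatrix}$ in the bases of Remark~\ref{prop: extensions of rank one Kisin modules}, translating the commutation with $\Phi$ into four equations coupling $h_i$, $h_{i+f}$, and the entries of $A_i$ and $A_{i-1}$. The corner equation $u^{s_i} c_i = \varphi(c_{i-1}) u^{r_{i+f}}$, together with the hypothesis that a nonzero map $\gN \to \gM^{(f)}$ exists (Lemma~\ref{lem:nonempty-then-map}) and Lemma~\ref{lem: maps between rank 1 Kisin modules}, forces $c_i = \lambda_i u^{x_i}$ with $\lambda_i = \varphi(\lambda_{i-1})$. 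The whole sequence $(c_i)$ is therefore determined by $\lambda_0 \in \F^\times$, which is precisely the $\Gm$ factor in the monomorphism~\eqref{eqn:Y fibre}. The three remaining equations then constitute recursions for $a_i, b_i, d_i$ whose solvability in $\F[[u]]$, together with the invertibility of each $A_i$ over $\F[[u]]$ (so that $\iota$ really is an isomorphism of Breuil--Kisin modules), cuts out a linear subspace of $\Ext^1_{\K{\F}}(\gM,\gN)$.

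To conclude, I would bound the dimension of that subspace by $[K:\Qp]-1$, so that together with the $\Gm$ factor we obtain $\dim Y(\gM,\gN)_1 \le [K:\Qp]$. The underlying heuristic is that the cocycle condition pairs the $2f$ ``indices worth'' of data in $\Ext^1_{\K{\F}}(\gM,\gN)$ into $f$ independent pieces, effectively halving its dimension; after quotienting by the $\F^\times$-ambiguity from $\Aut(\gP_h)$ one arrives at the claimed bound. The main obstacle is the careful index-by-index bookkeeping needed to verify that the divisibility conditions coming from the three remaining equations do yield genuinely independent linear constraints on $h$; this is analogous in spirit to the $\kExt^1$ computation in Proposition~\ref{prop:ker-ext-maximal}, and uses crucially the relations $r_i + s_i = e'$ and $x_{i+f} = x_i$, together with the fact that $T(\gN)$ and $T(\gM)$ are distinct $\sigma$-conjugate characters (so that there are no ``diagonal'' Breuil--Kisin isomorphisms $\gN \to \gN^{(f)}$ or $\gM \to \gM^{(f)}$, forcing the decisive information to flow through the off-diagonal entries).
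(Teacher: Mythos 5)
The approach is in the same spirit as the paper's (write $\iota$ as a matrix, extract constraints from $\varphi$-equivariance and the descent relation), but the proposal glosses over the two steps that carry all the weight, and one of them it gets qualitatively wrong.

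First, the constraint cut out by the cocycle condition is not linear. In the paper's computation, normalizing $\lambda$ so that the lower-left entry is $\nu_i = u^{x_i}\mu_i$ and the diagonal is $\xi\in\F^\times$, the condition that $\hat\theta$ preserve $\gP$ becomes: the $\nu_i$ are integral, and for each $i$ with $x_i>0$ one must have $\nu_i\nu_{i+f}\equiv\xi^2\pmod{u^{x_i}}$. This is a quadratic (torsor-type) relation between the principal parts of $\mu_i$ and $\mu_{i+f}$; the resulting parameter space $\Hzero'$ is a product of copies of $\Gm$ and $\Ga$, not a linear subspace of $\Ext^1$, and your ``linear subspace'' description cannot account for the unit conditions $\val_i\mu = -x_i$ which are essential to the dimension count. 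Your heuristic of ``halving the dimension'' is really saying the involution pairs $\mu_i$ with $\mu_{i+f}$ and determines one from the other, which is correct, but this only follows from the nonlinear relation above, not from independent linear constraints on $h$.

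Second, and more seriously, you conclude ``I would bound the dimension of that subspace by $[K:\Qp]-1$, so together with the $\Gm$ factor we obtain $\dim Y(\gM,\gN)_1\le[K:\Qp]$,'' but when $e=1$ this bound is false in general for the set you are parametrizing. The explicit computation gives $\dim\Hzero' = 1+\sum_{i=0}^{f-1}\lceil x_i/(p^{f'}-1)\rceil\le 1+\lceil e/(p-1)\rceil f$, which for $e=1$ evaluates to $1+f=[K:\Qp]+1$, one more than what you need. The proposition is still true, but only because the intersection with $\Hzero\times\GG_{m,\xi}$ (i.e.\ the condition that the extension class land in $\Ext^1$ and not just formally) is a proper subvariety in the critical configuration. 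Establishing this requires the extra argument at the end of the paper's proof: one shows that if equality held then every $(i-1,i)$ would be a transition with $\gamma_i^*=0$ and the refined shape would be maximal, so by the exceptional case of Proposition~\ref{prop:ker-ext-maximal} one would have $T(\gM)\cong T(\gN)$, contradicting the standing hypothesis that $T(\gM)$ and $T(\gN)$ are distinct. Your proposal contains no analogue of this step, and without it the argument does not close in the unramified case, which is precisely where the naive dimension count is tight.

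Finally, a smaller point: you attribute the $\Gm$ factor in~\eqref{eqn:Y fibre} to the scalar $\lambda_0$ produced by the corner recursion, but after normalizing the off-diagonal map $\gN\to\gM^{(f)}$ to be $n_i\mapsto u^{x_i}\tilde m_i$ that scalar is fixed; the genuine $\Gm$ is the residual automorphism $\xi$ of the quotient, coming from $\Aut(P)=\Gm$ for the irreducible \'etale $\varphi$-module $P$. This is a bookkeeping slip rather than a gap, but it suggests the normalization has not been carried out carefully enough to see where the unit really lives.
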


\begin{remark}
  Since the image of $Y(\gM,\gN)_1$ in $\Ext^1_{\K{\F}}(\gM,\gN)$ lies
  in $\kExt^1_{\K{\F}}(\gM,\gN)$ with fibres that can be seen to  have dimension at most one,
  many cases of Proposition~\ref{prop:irred-bound} will  already follow from
  Remark~\ref{rem:half} (applied with $L$ in place of~$K$).
\end{remark}

\begin{proof}[Proof of Proposition~\ref{prop:irred-bound}]

Let $\gP = \gP(h)$ be an element of $\Ext^1_{\K{\F}}(\gM,\gN)$ whose
descent data can be enriched to give descent data from $K'$ to $K$, and
let $\tgP$ be such an enrichment.
By Lemma~\ref{lem:nonempty-then-map} (and the discussion preceding
that lemma) 
the \'etale $\varphi$-module $\gP[\frac 1u]$ is
isomorphic to $\gM[\frac 1u] \oplus \gM^{(f)}[\frac 1u]$. All
extensions of the $G_{L_{\infty}}$-representation $T(\gM[\frac 1u] \oplus
\gM^{(f)}[\frac 1u])$ to a representation of $G_{K_{\infty}}$ are
  isomorphic (and given by the induction of $T(\gM[\frac 1u])$ to~$G_{K_\infty}$), 
  so the same is true of the \'etale $\varphi$-modules with
descent data from $K'$ to $K$ that enrich the descent data on
$\gM[\frac 1u] \oplus \gM^{(f)}[\frac 1u]$. One such enrichment, which
we denote $P$, has $\hat\theta$ that interchanges $m$ and
$\tilde m$. Thus $\tgP[\frac 1u]$ is isomorphic to $P$.

As in the proof of Lemma~\ref{lem:nonempty-then-map},  the hypothesis that $T(\gM) \not\cong
T(\gN)$ implies that any non-zero map (equivalently, isomorphism) of
\'etale $\varphi$-modules with descent data $\lambda : \tgP[\frac 1u] \to P$ takes the submodule $\gN[\frac
1u]$ to $\gM^{(f)}[\frac 1u]$. We may scale the map $\lambda$ so that
it  restricts to the map $n_i \to u^{x_i} \tilde m_i$ on $\gN$.  
Then there is an element $\xi \in \F^\times$ so that
$\lambda$ induces multiplication by $\xi$ on the common quotients $\gM[\frac 1u]$.
That is,  the map $\lambda$ may be assumed to have the form
\numequation\label{eq:lambdamap}
\begin{pmatrix}
n_{i} \\ m_{i}
\end{pmatrix} \mapsto
\begin{pmatrix}
u^{x_i} & 0 \\ \nu_i & \xi   
\end{pmatrix}
\begin{pmatrix}
\tilde m_{i} \\ m_{i}
\end{pmatrix}
\end{equation}
for some $(\nu_i) \in \F((u))^{f'}$. The condition that the map
$\lambda$ commutes with the descent data from $K'$ to $L$ is seen to be
equivalent to the condition that nonzero terms in $\nu_i$ have degree
congruent to $c_i -d_i + x_i \pmod{p^{f'}-1}$; or equivalently, if we
define $\mu_i := \nu_i u^{-x_i}$ for all $i$, that the tuple $\mu = (\mu_i)$
is an element of the set $\Czerofrac = \Czerofrac(\gM,\gN)$
of Definition~\ref{notn:calh}.

The condition that $\lambda$ commutes with $\varphi$ can be checked to give
\begin{equation*}
   \varphi \begin{pmatrix}
n_{i-1} \\ m_{i-1}
\end{pmatrix}
=  \begin{pmatrix}
    u^{s_i} & 0 \\ \varphi(\nu_{i-1}) u^{r_{i+f}-x_i} - \nu_i u^{r_i-x_i}
    & u^{r_i} 
  \end{pmatrix}\begin{pmatrix}
n_{i} \\ m_{i}
\end{pmatrix}.
\end{equation*}
The extension $\gP$ is of the form $\gP(h)$, for some $h \in \Cone$
as in Definition~\ref{notn:calh}.
The lower-left entry of the first matrix on the right-hand side of the
above equation must then be $h_i$. Since $r_{i+f}-x_i = s_i - px_{i-1}$,
the resulting condition can be rewritten  as
\[ h_i= \varphi(\mu_{i-1}) u^{s_i} - \mu_i  u^{r_i},\]
or equivalently that $h = \mumap(\mu)$. Comparing with
Remark~\ref{rem:explicit-ker-ext}, we recover  the fact that the
extension class of $\gP$ is an element
of~$\kExt^1_{\K{\F}}(\gM,\gN)$, and the tuple  $\mu$ determines an 
element of the space $\Hzero$ defined as follows.

\begin{defn}\label{defn:calw} The map $\mumap \colon \Czerofrac \to \Conefrac$ induces a map
  $\Czerofrac/\Czero \to \Conefrac/\mumap(\Czero)$, which we also
  denote
  $\mumap$. We let  $\Hzero \subset \Czerofrac/\Czero$
  denote the subspace consisting of elements $\mu$ such that
$\mumap(\mu) \in \Cone/\mumap(\Czero)$. 
\end{defn}

By the discussion following 
Lemma~\ref{lem:explicit-complex}, an element $\mu \in \Hzero$ determines an 
extension $\gP(\mumap(\mu))$. Indeed,
Remark~\ref{rem:explicit-ker-ext} and the proof of \eqref{eqn:
  computing kernel of Ext groups} taken together show that there is a natural isomorphism,
in the style of Lemma~\ref{lem:explicit-complex}, between the morphism
$\mumap : \Hzero \to \Cone/\mumap(\Czero)$ and the connection map
$\Hom_{\K{\F}}(\gM,\gN[1/u]/\gN) \to \Ext^1_{\K{\F}}(\gM,\gN)$, with
$\im\mumap$ corresponding to $\kExt^1_{\K{\F}}(\gM,\gN)$.

Conversely, let $h$ be an element of $\mumap(\Czerofrac) \cap \Cone$,
and set $\nu_i = u^{x_i} \mu_i$. The condition that there is a Breuil--Kisin module
$\tgP$ with descent data from $K'$ to $K$ and $\xi \in \F^{\times}$ such that $\lambda : \tgP[\frac1u]
\to P$ defined as above is an isomorphism is precisely the condition that 
the map $\hat\theta$ on $P$ pulls back via $\lambda$ to a map that
preserves $\gP$. One computes that this pullback is
\begin{equation*}
  \hat\theta \begin{pmatrix}
n_{i} \\ m_{i}
\end{pmatrix}
= \xi^{-1} \begin{pmatrix}
     -\nu_{i+f}  & u^{x_i} \\
 (\xi^2-\nu_i \nu_{i+f}) u^{-x_i} & \nu_i
   \end{pmatrix}
  \begin{pmatrix}
  n_{i+f} \\ m_{i+f}
  \end{pmatrix}
\end{equation*}
recalling that $x_i =x_{i+f}$. 

We deduce that $\hat\theta$ preserves $\gP$
precisely when the $\nu_i$ are
integral and $\nu_i \nu_{i+f} \equiv \xi^2 \pmod{u^{x_i}}$ for
all~$i$.  For~$i$ with $x_i=0$
the latter condition is automatic given the former, which is
equivalent to the condition that $\mu_i$ and $\mu_{i+f}$ are both
integral. If instead $x_i > 0$, then we have the nontrivial
condition $\nu_{i+f} \equiv \xi^2 \nu_{i}^{-1} \pmod{u^{x_i}}$; in other
words that $\mu_i, \mu_{i+f}$ have $u$-adic valuation exactly $-x_i$,
and their principal parts determine one another via the equation
$\mu_{i+f}  \equiv \xi^2 (u^{2x_i} \mu_i )^{-1}
\pmod{1}$. 

 Let
$\mathbf{G}_{m,\xi}$ be the multiplicative group with parameter
$\xi$. We now (using the notation of Definition~\ref{defn:calw}) define  $\Hzero' \subset \Czerofrac/\Czero \times
\mathbf{G}_{m,\xi}$ to be the subvariety 
consisting of the pairs
$(\mu,\xi)$ with exactly the preceding properties; that is, we
regard~$\Czerofrac/\Czero$ as an Ind-affine space in the obvious way, and
define~$\Hzero'$ to be the pairs $(\mu,\xi)$ satisfying 
\begin{itemize}
\item if $x_i=0$ then $\val_i \mu = \val_{i+f} \mu =\infty$,
	and
\item if $x_i >0$ then $\val_i \mu = \val_{i+f} \mu = -x_i$ and $\mu_{i+f}
  \equiv \xi^2  (u^{2x_i} \mu_i)^{-1} \pmod{u^0}$
\end{itemize} 
where we write $\val_i \mu$ for the $u$-adic valuation of $\mu_i$, putting $\val_i \mu = \infty$ when $\mu_i$ is integral.

Putting all this together with~\eqref{eqn:Y fibre blahblah}, we find that the map 
\[ \Hzero' \cap (\Hzero \times \mathbf{G}_{m,\xi}) \to
  Y(\gM,\gN)_1 \] 
sending $(\mu,\xi)$ to the pair $(\gP,\tgP)$ is a well-defined
surjection, 
where $\gP =
\gP(\mumap(\mu))$, $\tgP$ is the enrichment of $\gP$ to a Breuil--Kisin
module with descent data from $K'$ to $K$ in which $\hat\theta$ is
pulled back to  $\gP$ from $P$ via the map $\lambda$ as in
\eqref{eq:lambdamap}. 
(Note that~$Y(\gM,\gN)_1$ is reduced and
of finite type, for
example by~\eqref{eqn:Y fibre}, so the surjectivity can be checked on
$\Fpbar$-points.)
In particular $\dim Y(\gM,\gN)_1 \le \dim \Hzero'.$

Note that $\Hzero'$ will be empty if for some $i$ we have $x_i > 0$ but
$x_i + c_i-d_i \not\equiv 0 \pmod{p^{f'}-1}$ (so that $\nu_i$ cannot
be a $u$-adic unit). 
Otherwise, the dimension of $\Hzero'$ is easily computed to be
$D =  1+\sum_{i=0}^{f-1} \lceil x_i/(p^{f'}-1) \rceil$ (indeed if~$d$ is the number of nonzero
$x_i$'s, then $\Hzero' \cong \Gm^{d+1} \times \Ga^{D-d-1}$), 
 and since
$x_i \le e'/(p-1)$ we find that $\Hzero'$ has dimension at most  $1 + \lceil e/(p-1)  \rceil f$.
This establishes  the bound  $\dim 
Y(\gM,\gN)_1 \le 1 + \lceil e/(p-1) \rceil f$. 

Since $p > 2$ this bound already establishes the theorem when $e
> 1$. 
 If instead $e=1$ the above bound gives  $\dim Y(\gM,\gN) \le [K:\Qp]
 + 1$. Suppose for the sake of
 contradiction that equality holds. This is only possible if $\Hzero'
 \cong \Gm^{f+1}$, $\Hzero' \subset \Hzero \times \mathbf{G}_{m,\xi}$,
 and $x_i = [d_i - c_i] > 0$ for all
 $i$. 
 Define $\mu^{(i)}
 \in \Czerofrac$ to be the element such that $\mu_{i} =
 u^{-[d_{i}-c_{i}]}$, and $\mu_j = 0$ for $j \neq i$. Let $\F''/\F$ be
 any finite extension such that $\#\F'' > 3$.  For each nonzero $z \in \F''$
 define $\mu_z = \sum_{j \neq i,i+f} \mu^{(i)}  + z \mu^{(i)} + z^{-1}
\mu^{(i+f)}$, so that  $(\mu_z, 1)$ is an element of  $\Hzero'(\F'')$.
Since  $\Hzero' \subset \Hzero \times \mathbb{G}_{m,\xi}$ and $\Hzero$ is
 linear, the differences between the $\mu_z$ for varying $z$ lie in
 $\Hzero(\F'')$, and (e.g.\ by considering $\mu_1 - \mu_{-1}$ and $\mu_1 -
 \mu_{z}$ for any $z \in \F''$ with $z\neq z^{-1}$) we deduce that each
 $\mu^{(i)}$ lies in $\Hzero$. In particular 
each 
$\mumap(\mu^{(i)})$ lies in $\Cone$.

If $(i-1,i)$ were not a transition then (since $e=1$) we would have
either $r_i =0 $ or $s_i = 0$. The former would contradict
$\mumap(\mu^{(i)}) \in \Cone$ (since the $i$th component of
$\mumap(\mu^{(i)})$ would be $u^{-[d_i-c_i]}$, of negative degree),
and similarly the latter would contradict $\mumap(\mu^{(i-1)}) \in
\Cone$. Thus $(i-1,i)$ is a transition for all $i$. In fact the same
observations show more precisely that $r_i \ge  x_i = [d_i-c_i]$ and $s_i
\ge p x_{i-1} = p [d_{i-1}-c_{i-1}]$.  Summing these inequalities and subtracting
$e'$ we obtain $0 \ge p [d_{i-1}-c_{i-1}] - [c_i-d_i]$, and comparing
with  \eqref{eq:gammastar} 
shows that we must also have $\gamma_i^*=0$ for
all $i$. Since $e=1$ and $(i-1,i)$ is a transition for all $i$ the refined profile of the pair $(\gM,\gN)$ is
automatically maximal;\ but then we are in the exceptional case of
Proposition~\ref{prop:ker-ext-maximal}, 
which (recalling the proof of that Proposition) implies that $T(\gM) \cong T(\gN)$. 
This is the desired contradiction.
\end{proof}

\subsection{Irreducible components}\label{subsec: irred components}
We can now use our results on families of extensions of characters to
classify the irreducible components of the stacks~$\cC^{\tau,\BT,1}$
and~$\cZ^{\tau,1}$. In the article \cite{cegsA} 
we will combine
these results with results coming from Taylor--Wiles patching (in
particular the results of~\cite{geekisin,emertongeerefinedBM}) 
to describe the
closed points of each irreducible component of~$\cZ^{\tau,1}$ in terms
of the weight part of Serre's conjecture.
\begin{cor}
  \label{cor: the C(J) are the components}Each irreducible component
  of~$\cC^{\tau,\BT,1}$ is of the form~$\overline{\cC}(J)$ for some~$J$;
  conversely, each~$\overline{\cC}(J)$ is an irreducible component of~$\cC^{\tau,\BT,1}$. 
\end{cor}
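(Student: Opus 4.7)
The plan is to exhibit $|\cC^{\tau,\BT,1}|$ as a finite union of closed subsets, of which only the $\overline{\cC}(J)$ contribute pieces of top dimension $[K:\Qp]$. First I would observe that every $\Fpbar$-point of $\cC^{\tau,\BT,1}$ corresponds to a rank two Breuil--Kisin module $\gP$ with descent data, and by Lemma~\ref{lem: closed points of irred} such a $\gP$ is either irreducible---in which case it is an $\Fpbar$-point of the closed substack $\cC_{\irred}$ of Theorem~\ref{thm: irreducible Kisin modules can be ignored}---or else fits in an extension $0 \to \gN \to \gP \to \gM \to 0$ of rank one Breuil--Kisin modules with descent data. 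In the latter case the pair $(\gM,\gN)$ has a well-defined refined shape $(J,r)$ for $\tau$, so by Lemma~\ref{lem: closed points of C(J,r)} the point $\gP$ lies in $|\overline{\cC}(J,r)|$. Since there are only finitely many refined shapes for $\tau$ (cf.~\ref{subsubsec:explicitly refined}), this shows that
\[
|\cC^{\tau,\BT,1}| \;=\; |\cC_{\irred}| \;\cup\; \bigcup_{(J,r)} |\overline{\cC}(J,r)|,
\]
the union ranging over all refined shapes for $\tau$.

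With this covering in hand, the result will follow from a dimension count. By Theorem~\ref{cor: Kisin moduli consequences of local models}, $\cC^{\tau,\BT,1}$ is equidimensional of dimension $[K:\Qp]$, so every irreducible component has this dimension. Theorem~\ref{thm: irreducible Kisin modules can be ignored} gives $\dim \cC_{\irred} < [K:\Qp]$, while Theorem~\ref{thm: dimension of refined shapes} shows $\dim \overline{\cC}(J,r) = [K:\Qp]$ if and only if $(J,r)$ is maximal, in which case $\overline{\cC}(J,r) = \overline{\cC}(J)$. Any irreducible component, being an irreducible closed subset of the finite union displayed above, must be contained in one of its constituents, and the dimension bounds then force this constituent to be some $\overline{\cC}(J)$ with $J$ a shape.

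For the converse, $\overline{\cC}(J)$ is a closed substack of $\cC^{\tau,\BT,1}$ which is irreducible (as the scheme-theoretic image of the irreducible scheme $\Spec B^{\dist}$) and of dimension $[K:\Qp]$. It is therefore contained in some irreducible component, which by the previous paragraph equals some $\overline{\cC}(J')$ of the same dimension $[K:\Qp]$. By the equidimensionality of $\cC^{\tau,\BT,1}$, an irreducible closed subset of top dimension cannot be properly contained in another such, so $\overline{\cC}(J)$ is itself an irreducible component.

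The main obstacle in this argument has essentially already been cleared: it is Theorem~\ref{thm: irreducible Kisin modules can be ignored}, whose proof required the base-change analysis through the fixed-point stack $(\cC_L^{\tau_{|L}})^G$ together with the delicate dimension estimate of Proposition~\ref{prop:irred-bound}. Given that input, together with the dimension formula of Theorem~\ref{thm: dimension of refined shapes} and the equidimensionality from Theorem~\ref{cor: Kisin moduli consequences of local models}, the corollary reduces to the clean assembly of standard topological and dimension-theoretic facts sketched above.
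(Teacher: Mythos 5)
Your proof is correct and takes essentially the same route as the paper: you construct the same finite closed covering of $|\cC^{\tau,\BT,1}|$ by $|\cC_{\irred}|$ together with the $|\overline{\cC}(J,r)|$ over all refined shapes, invoke Theorem~\ref{cor: Kisin moduli consequences of local models} for equidimensionality, Theorem~\ref{thm: irreducible Kisin modules can be ignored} and Theorem~\ref{thm: dimension of refined shapes} to isolate the top-dimensional constituents, and conclude by the usual topological argument. The only cosmetic difference is that you handle the converse direction by embedding $\overline{\cC}(J)$ in some component and then forcing equality, whereas the paper dispenses with it directly by citing the dimension comparison result of \cite[\href{https://stacks.math.columbia.edu/tag/0DS2}{Tag 0DS2}]{stacks-project}; these are the same argument in slightly different packaging.
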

\begin{rem}
  \label{rem: haven't yet proved the C(J) are distinct}Note that at
  this point we have not established that different sets~$J$ give
  distinct irreducible components~$\overline{\cC}(J)$; we will prove this in Section~\ref{subsec: map to
  Dieudonne stack} below by a consideration of Dieudonn\'e
modules. 
\end{rem}
\begin{proof}[Proof of Corollary~{\ref{cor: the C(J) are the
    components}}]By~Theorem~\ref{cor: Kisin moduli consequences of local models}(2), $\cC^{\tau,\BT,1}$ is
  equidimensional of dimension~$[K:\Qp]$. By construction, the~$\overline{\cC}(J)$ are irreducible
  substacks of~$\cC^{\tau,\BT,1}$, and by Theorem~\ref{thm:
    dimension of refined profiles} they also have dimension~$[K:\Qp]$, so  they are in fact
  irreducible components by~\cite[\href{https://stacks.math.columbia.edu/tag/0DS2}{Tag 0DS2}]{stacks-project}. 
  
  By Theorem~\ref{thm: irreducible Kisin
    modules can be ignored} and Theorem~\ref{thm: dimension of refined
    profiles}, we see that there is a closed substack
  $\cC_{\mathrm{small}}$ of~$\cC^{\tau,\BT,1}$ of dimension strictly
  less than~$[K:\Qp]$, with the property that every finite type point
  of~$\cC^{\tau,\BT,1}$ is a point of at least one of the~$\overline{\cC}(J)$
  or of~$\cC_{\mathrm{small}}$ (or both). Indeed, every extension of
  refined profile~$(J,r)$ lies
  on~$\overline{\cC}(J,r)$, by Remark~\ref{rem-all-pts}, so we can
  take~$\cC_{\mathrm{small}}$ to be the union of the
  stack~$\cC_{\mathrm{irred}}$ of Theorem~\ref{thm: irreducible Kisin
    modules can be ignored} and the stacks~$\overline{\cC}(J,r)$ for
  non-maximal profiles~$(J,r)$. 
  Since  $\cC^{\tau,\BT,1}$ is
  equidimensional of dimension~$[K:\Qp]$, it follows 
  that the~$\overline{\cC}(J)$ exhaust
  the irreducible components of~$\cC^{\tau,\BT,1}$, as required. 
\end{proof}

We now deduce a classification of the
irreducible components of $\cZ^{\tau,1}$. In the paper \cite{cegsA}
we will give 
a considerable refinement of
this, giving a precise description of the finite type points of the
irreducible components in terms of the weight part of Serre's conjecture.
  \begin{cor}
    \label{cor: components of Z are exactly the Z(J)}The irreducible
    components of~$\cZ^{\tau,1}$ are precisely the~$\overline{\cZ}(J)$
    for~$J\in\cP_\tau$, and if $J\ne J'$ then~$\overline{\cZ}(J)\ne\overline{\cZ}(J')$.
  \end{cor}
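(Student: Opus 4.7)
The plan is to deduce this corollary essentially formally from what has been established, using the proper scheme-theoretically dominant morphism $\cC^{\tau,\BT,1} \to \cZ^{\tau,1}$ from Proposition~\ref{prop:Zproperties}(2) together with the classification of components of $\cC^{\tau,\BT,1}$ in Corollary~\ref{cor: the C(J) are the components}.

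First I would observe that the morphism $\pi: \cC^{\tau,\BT,1} \to \cZ^{\tau,1}$ is surjective on underlying topological spaces: properness (hence universal closedness) implies that the image $|\pi|(|\cC^{\tau,\BT,1}|)$ is a closed substack of $|\cZ^{\tau,1}|$, and scheme-theoretic dominance forces this closed substack to be all of $|\cZ^{\tau,1}|$. Next I would argue, by a standard topological argument, that every irreducible component of $\cZ^{\tau,1}$ arises as the closure of the image of some irreducible component of $\cC^{\tau,\BT,1}$: given an irreducible component $W$ of $|\cZ^{\tau,1}|$, surjectivity gives $W = |\pi|(|\pi|^{-1}(W))$, and the closed subset $|\pi|^{-1}(W)$ is a finite union of irreducible closed subsets, each contained in some irreducible component of $|\cC^{\tau,\BT,1}|$; since $W$ is irreducible and equals the union of the (closed) images of these components, it must coincide with the image of at least one of them.

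By Corollary~\ref{cor: the C(J) are the components} the irreducible components of $\cC^{\tau,\BT,1}$ are precisely the $\overline{\cC}(J)$ as $J$ ranges over all shapes, and by definition the scheme-theoretic image of $\overline{\cC}(J)$ under $\pi$ is $\overline{\cZ}(J)$. Hence every irreducible component of $\cZ^{\tau,1}$ is of the form $\overline{\cZ}(J)$ for some shape~$J$. Now Theorem~\ref{prop: dimensions of the Z stacks} asserts that $\cZ^{\tau,1}$ is equidimensional of dimension $[K:\Qp]$, so every irreducible component has dimension exactly $[K:\Qp]$; by Theorem~\ref{thm: identifying the vertical components}, this dimension equality forces $J \in \cP_\tau$. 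Conversely, the same theorem already shows that each $\overline{\cZ}(J)$ with $J \in \cP_\tau$ is itself an irreducible component.

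Finally, the distinctness assertion $\overline{\cZ}(J) \neq \overline{\cZ}(J')$ for $J \neq J'$ in $\cP_\tau$ has already been established in Theorem~\ref{thm: unique serre weight} (via Proposition~\ref{prop:char-calculation}, which shows that the generic Galois characters on the dense open substacks distinguish the different $J$'s). There is no genuine obstacle here; the only subtlety worth spelling out carefully is the passage from surjectivity of $\pi$ to the statement that every irreducible component of the target is the image of an irreducible component of the source, which is a routine point-set argument on $|\cC^{\tau,\BT,1}|$ and $|\cZ^{\tau,1}|$ that reduces to the scheme case by choosing smooth presentations.
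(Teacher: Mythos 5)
Your proof is correct and follows essentially the same route as the paper: both argue that scheme-theoretic dominance of $\cC^{\tau,\BT,1}\to\cZ^{\tau,1}$ together with Corollary~\ref{cor: the C(J) are the components} forces every irreducible component of $\cZ^{\tau,1}$ to be some $\overline{\cZ}(J)$, then use Theorem~\ref{thm: identifying the vertical components} (via the dimension count and equidimensionality from Theorem~\ref{prop: dimensions of the Z stacks}) to rule out $J\notin\cP_\tau$, and invoke Theorem~\ref{thm: unique serre weight} for distinctness. The only difference is that you spell out the point-set topology behind ``each irreducible component is dominated by some $\overline{\cC}(J)$'', which the paper states without elaboration.
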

  \begin{proof}
    By Theorem~\ref{thm: identifying the vertical components}, if
    $J\in\cP_\tau$ then~$\overline{\cZ}(J)$ is an irreducible component of
    ~$\cZ^{\tau,1}$. 
    Furthermore, these~$\overline{\cZ}(J)$ are pairwise
    distinct by Theorem~\ref{thm: unique serre weight}.

    Since the morphism
    $\cC^{\tau,\BT,1}\to\cZ^{\tau,1}$ is scheme-theoretically
    dominant, it follows from Corollary~\ref{cor: the C(J) are the
      components} that each irreducible component of $\cZ^{\tau,1}$
    is dominated by some~$\overline{\cC}(J)$.  Applying Theorem~\ref{thm:
      identifying the vertical components} again, we see that if
    $J\notin\cP_\tau$ then~$\overline{\cC}(J)$ does not dominate an irreducible
    component, as required.
  \end{proof}

\subsection{Dieudonn\'e modules and the morphism to the gauge stack}\label{subsec: map to
  Dieudonne stack} 
We now study the images of the irreducible components  $\overline{\cC}(J)$
in the gauge stack $\cG_\eta$; 
this amounts to computing
the Dieudonn\'e modules and Galois
representations associated to the extensions of Breuil--Kisin modules that we
considered in Section~\ref{sec: extensions of rank one Kisin
  modules}. 
Suppose throughout this subsection that $\tau$ is a non-scalar type,
and that $(J,r)$ is a maximal refined profile. 
Recall that in the cuspidal case this entails that $i \in J$ if and
only if $i + f \not\in J$.

\begin{lemma}
	\label{lem:Dieudonne modules}
Let $\gP \in \Ext^1_{\K{\F}}(\gM,\gN)$ be an extension of type $\tau$
and refined profile $(J,r)$. Then for $i \in \Z/f'\Z$ we have $F=0$ on $D(\gP)_{\eta,i-1}$ if $i\in
J$, while $V=0$ on $D(\gP)_{\eta,i}$ if
$i\notin J$.
\end{lemma}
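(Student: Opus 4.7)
The plan is to unpack the explicit structure of $\gP = \gP(h)$ recorded in Remark~\ref{prop: extensions of rank one Kisin modules}, and to read off the values of $F$ and $V$ on $D(\gP) = \gP/u\gP$ by a direct reduction modulo~$u$. First I will identify $D(\gP)_{\eta,i}$: since $\tau$ is non-scalar, $\eta \neq \eta'$, so each two-dimensional $D(\gP)_i = \F \bar m_i \oplus \F \bar n_i$ splits into $\eta$- and $\eta'$-components under the $I(K'/K)$-action. Because this action sends $\bar m_i \mapsto \sigma_i(\hchar(g))^{c_i}\bar m_i$ and $\bar n_i \mapsto \sigma_i(\hchar(g))^{d_i}\bar n_i$, the line $D(\gP)_{\eta,i}$ is generated by $\bar m_i$ when $i \in J$ (so that $c_i = k_i$) and by $\bar n_i$ when $i \notin J$ (so that $d_i = k_i$).

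Next I will verify the claim about $F$, which on $D(\gP)_{i-1}$ is induced by $\varphi_\gP$: explicitly $F(\bar m_{i-1}) = \overline{a_i u^{r_i}}\cdot \bar m_i + h_i(0)\bar n_i$ and $F(\bar n_{i-1}) = \overline{b_i u^{s_i}}\cdot\bar n_i$, where $h_i(0)$ denotes the constant term of $h_i$. Suppose $i \in J$. If additionally $i-1 \in J$, then $(i-1,i)$ is not a transition, and by maximality of the refined shape (see Subsection~\ref{subsubsec:explicitly refined}) we have $r_i = e' \geq 1$; hence $F(\bar m_{i-1}) = h_i(0)\bar n_i$, which lies in $D(\gP)_{\eta',i}$ since $\bar n_i$ does. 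If instead $i-1 \notin J$, then $(i-1,i)$ is a transition and, again by maximality, $s_i = (p^{f'}-1) - [d_i - c_i] \geq 1$, so that $F(\bar n_{i-1}) = 0$. In either sub-case, $F$ kills the generator of $D(\gP)_{\eta,i-1}$.

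Finally, I will compute $V$ on $D(\gP)_{\eta,i}$ when $i \notin J$, in which case $D(\gP)_{\eta,i} = \F\bar n_i$. Solving for $\mathfrak{V}(n_i) \in (\varphi^*\gP)_i = \gS\otimes_{\varphi,\gS}\gP_{i-1}$ from the factorisation $\Phi_\gP \circ \mathfrak{V} = E(u)\cdot\id = u^{e'}\cdot\id$ (using that $p=0$ in $\gS_\F$ so that $E(u) = u^{e'}$ there), and invoking the determinant relation $r_i+s_i=e'$, one finds $\mathfrak{V}(n_i) = b_i^{-1}u^{r_i}\otimes n_{i-1}$. The key observation is that $r_i \geq 1$ in both remaining sub-cases: if $i-1 \notin J$ (not a transition) then by maximality $r_i = e'$, while if $i-1 \in J$ (a transition) then $r_i = e' - [c_i-d_i] \geq e' - (p^{f'}-2) \geq 1$. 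Thus $\mathfrak{V}(n_i)$ lies in $u\cdot \varphi^*\gP$, whence $V(\bar n_i) = 0$.

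There is no substantive obstacle to this argument; the proof reduces to a direct calculation, and the only care required is in the case analysis and in tracking the precise values of $r_i$ and $s_i$ afforded by the maximality of the refined shape.
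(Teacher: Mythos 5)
Your proof follows essentially the same plan as the paper's: identify the generator of $D(\gP)_{\eta,i}$ from the descent data (as $\bar m_i$ if $i \in J$, $\bar n_i$ otherwise), compute $F$ and $V$ modulo $u$ from the explicit formulas for $\Phi_{\gP}$ and $\mathfrak{V}$ given in Remark~\ref{prop: extensions of rank one Kisin modules}, and invoke the maximality of the refined shape to get $r_i, s_i \ge 1$ in the appropriate sub-cases. Your $V$-computation and the transition sub-case of the $F$-computation match the paper's argument directly.

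The one place where your argument needs tightening is the non-transition sub-case of the $F$-computation, $i-1 \in J$ and $i \in J$. You correctly reduce to $F(\bar m_{i-1}) = h_i(0)\,\bar n_i$, observe that this element lies in $D(\gP)_{\eta',i}$, and then assert ``$F$ kills the generator.'' As written this inference is incomplete: a nonzero multiple of $\bar n_i$ is a nonzero element of $D(\gP)$. The missing link is that $F$ carries $D_{\eta,i-1}$ into $D_{\eta,i}$ (because $\varphi_{\gP}$ commutes with the $I(K'/K)$-action, a fact implicit in the construction of Section~\ref{sec:dieudonne-stacks}), so that $F(\bar m_{i-1}) \in D_{\eta,i} \cap D_{\eta',i} = 0$, forcing $h_i(0)=0$. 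The paper instead establishes $h_i(0)=0$ directly: since $h_i \in \Cone$, every nonzero term of $h_i$ has degree $\equiv r_i + c_i - d_i \equiv c_i - d_i \pmod{p^{f'}-1}$ (using $r_i = e'$), and $c_i \ne d_i$ because $\tau$ is non-scalar, so $h_i$ has no constant term. Either route is valid, but you should make one of the two observations explicit rather than leaving it as an unstated step.
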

\begin{proof}
Recall that $D(\gP) = \gP/u\gP$. Let $w_i$ be the image of $m_i$ in
$D(\gP)$ if $i \in J$, and let $w_i$ be the image of $n_i$ in $D(\gP)$
if $i \not\in J$.  It follows easily from the
definitions that $D(\gP)_{\eta,i}$ is generated over~$\F$ by $w_i$.

Recall that the actions of $F,V$ on $D(\gP)$ are as specified in
Definition~\ref{def: Dieudonne module formulas}. In particular $F$ is
induced by $\varphi$, while $V$ is $\czero^{-1} \mathfrak{V}$ mod $u$ where $\mathfrak{V}$ is the
unique map on $\gP$ satisfying $\mathfrak{V} \circ \varphi =
E(u)$, and $\czero = E(0)$.  For the Breuil--Kisin module $\gP$, we have
\[\varphi(n_{i-1}) = b_i u^{s_i} n_i,\qquad \varphi(m_{i-1}) = a_i u^{r_i}
m_i + h_i n_i,\] and so
 one checks (using that $E(u) = u^{e'}$ in $\F$)
that 
$$\mathfrak{V}(m_i) = a_i^{-1} u^{s_i} m_{i-1} - a_{i}^{-1} b_i^{-1}
h_i n_{i-1} , \qquad \mathfrak{V}(n_i) = b_i^{-1} u^{r_i} n_{i-1}.$$

From Definition~\ref{df:extensions of profile $J$} and the
discussion immediately following it, we recall that if $(i-1,i)$ is not a transition
then $r_i = e'$,
$s_i=0$, and $h_i$ is divisible by $u$ (the latter because nonzero
terms of $h_i$ have degrees congruent to $r_i+c_i-d_i
\pmod{p^{f'}-1}$, and $c_i \not\equiv d_i$ since $\tau$ is non-scalar).
On the other hand if $(i-1,i)$ is a transition, then $r_i , s_i >0$,
and nonzero terms of $h_i$ have degrees divisible by
$p^{f'}-1$; in that case we write $h_i^0$ for the constant coefficient
of $h_i$, and we remark that $h_i^0$ does not vanish identically on $\Ext^1_{\K{\F}}(\gM,\gN)$.

Suppose, for instance, that $i-1 \in J$ and $i \in J$. Then
$w_{i-1}$ and $w_i$ are the images in $D(\gP)$ of $m_{i-1}$ and
$m_{i}$.  From the above formulas we see that $u^{r_i} = u^{e'}$ and
$h_i$ are both divisible by $u$, while on the other hand $u^{s_i} = 1$. We
deduce that $F(w_{i-1}) = 0$ and $V(w_i) = \czero^{-1} a_i^{-1}
w_{i-1}$.  Computing along similar lines,  it is easy to check the following four
cases.

\begin{enumerate}
\item $i-1\in J,i\in J$. Then  $F(w_{i-1}) = 0$ and $V(w_i) = \czero^{-1} a_i^{-1}
w_{i-1}$.

\item $i-1\notin J,i\notin J$. Then $F(w_{i-1})=b_{i}w_{i}$, $V(w_{i})=0$.
\item\label{item: interesting case} $i-1\in J$, $i\notin J$. Then $F(w_{i-1})=h_{i}^0w_{i}$, $V(w_{i})=0$.
\item $i-1\notin J$, $i\in J$. Then $F(w_{i-1})=0$,
  $V(w_{i})=-\czero^{-1} a_{i}^{-1}b_{i}^{-1}h_{i}^0w_{i-1}$.
\end{enumerate}
In particular, if $i\in J$ then $F(w_i)=0$, while if
$i\notin J$ then $V(w_{i+1})=0$. 
\end{proof}

Since $\cC^{\tau,\BT}$ is flat over $\cO$ by Theorem~\ref{cor: Kisin
  moduli
  consequences of local models}, 
it follows from Lemma~\ref{lem: maps to gauge stack as Cartier
    divisors} that the natural morphism $\cC^{\tau,\BT} \to \cG_{\eta}$ 
is determined by an $f$-tuple of effective Cartier divisors $\{\cD_j\}_{0 \le j < f}$
lying in the special fibre $\cC^{\tau,\BT,1}$. 
Concretely, 
$\cD_j$ is the zero locus of~$X_j$, which is the zero locus
of~$F:D_{\eta,j}\to D_{\eta,j+1}$. 
The zero locus of $Y_j$ (which is the zero locus of~$V:D_{\eta,j+1}\to
D_{\eta,j}$) is another
Cartier divisor $\cD_j'$. 
Since $\cC^{\tau,\BT,1}$ is reduced,
we conclude that each of $\cD_j$ and $\cD_j'$ is simply a union of irreducible components
of  $\cC^{\tau,\BT,1}$, each component appearing precisely once in
precisely one of either $\cD_j$ or $\cD_j'$.

\begin{prop}
\label{prop:Dieudonne divisors}
$\cD_j$ is equal to the union of the irreducible components~$\overline{\cC}(J)$ of
$\cC^{\tau,\BT,1}$ for those $J$ that contain
$j+1$. 
\end{prop}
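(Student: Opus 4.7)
The plan is to analyze each $\overline{\cC}(J)$ by computing the corresponding section $X_j$ at a generic/dense point of the family parametrising it, using Lemma~\ref{lem:Dieudonne modules} and the explicit formulas recorded in its proof.

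First I would reduce the proposition to a pair of pointwise statements about the components. By Theorem~\ref{cor: Kisin moduli consequences of local models}(3), $\cC^{\tau,\BT}$ is flat over $\cO$, so Lemma~\ref{lem: maps to gauge stack as Cartier divisors} applies: the morphism $\cC^{\tau,\BT} \to \cG_\eta$ is encoded by an $f$-tuple of Cartier divisors in the special fibre, with $\cD_j$ being the pullback of the zero locus of $X_j$. Since $\cC^{\tau,\BT,1}$ is reduced and equidimensional of dimension $[K:\Qp]$ by Theorem~\ref{cor: Kisin moduli consequences of local models}(2), each $\cD_j$ is a union (with multiplicity one) of irreducible components, and by Corollary~\ref{cor: the C(J) are the components} these components are precisely the $\overline{\cC}(J)$. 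Thus it suffices to prove the equivalence $\overline{\cC}(J)\subseteq \cD_j \iff j+1 \in J$.

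For the forward direction ($j+1\in J$ implies $\overline{\cC}(J)\subseteq \cD_j$): recall that $\overline{\cC}(J)=\overline{\cC}(J,r)$ is the scheme-theoretic image of the family $\Spec B^{\dist}\to\cC^{\tau,\BT,1}$ associated to $(\gM(J),\gN(J))$, so by Remark~\ref{rem-all-pts} its $\overline{\F}$-points are all extensions of type $\tau$ with refined shape $(J,r)$. Taking $i=j+1$ in Lemma~\ref{lem:Dieudonne modules} shows that $F$ vanishes on $D(\gP)_{\eta,j}$ for every such $\gP$; equivalently, the pullback of $X_j$ vanishes at every finite type point of the dense image of $\Spec B^{\dist}$ inside $\overline{\cC}(J)$. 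Since $\overline{\cC}(J)$ is reduced, this vanishing propagates to the whole scheme-theoretic image, giving $\overline{\cC}(J)\subseteq \cD_j$.

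For the reverse direction ($j+1\notin J$ implies $\overline{\cC}(J)\not\subseteq \cD_j$): I exhibit a finite type point of $\overline{\cC}(J)$ at which $F:D_{\eta,j}\to D_{\eta,j+1}$ is nonzero. Inspecting the case analysis in the proof of Lemma~\ref{lem:Dieudonne modules}, when $j+1\notin J$ we are in case~(2) ($F(w_j)=b_{j+1}w_{j+1}$, with $b_{j+1}\in\F^\times$) if $j\notin J$, and in case~\eqref{item: interesting case} ($F(w_j)=h_{j+1}^0 w_{j+1}$) if $j\in J$; the formulas for the extensions $\gP(h)$ in Remark~\ref{prop: extensions of rank one Kisin modules} show that the coefficient $h_{j+1}^0$ is one of the free parameters of the universal family $\Spec B^{\dist}$ and so does not vanish identically on it. Hence there is a dense open subset of $\overline{\cC}(J)$ on which $F$ is nonzero on $D_{\eta,j}$, i.e.\ on which $X_j\ne 0$, proving $\overline{\cC}(J)\not\subseteq \cD_j$. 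Combined with the previous step this gives the proposition.

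The main thing to get right is not any single step, which is essentially a bookkeeping exercise with the formulas from Lemma~\ref{lem:Dieudonne modules}, but rather the compatibility of the two descriptions of $\cD_j$: as a Cartier divisor pulled back along $\cC^{\tau,\BT}\to\cG_\eta$, and as the locus where $F=0$ on $D_{\eta,j}$. This identification is exactly the content of the construction of $\cG_\eta$ in Subsection~\ref{sec:dieudonne-stacks} (together with Lemma~\ref{lem: maps to gauge stack as Cartier divisors}), so once that is invoked the proof becomes immediate from Lemma~\ref{lem:Dieudonne modules} and the non-vanishing of $h_{j+1}^0$ as a function on the family of extensions.
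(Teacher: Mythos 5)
Your proof is correct and follows essentially the same approach as the paper: both arguments apply Lemma~\ref{lem:Dieudonne modules} (and the case analysis in its proof) to determine on which components $X_j$ vanishes identically, with the key observation being that $h_{j+1}^0$ does not vanish identically on $\overline{\cC}(J)$. You simply spell out the reduction to the pointwise criterion and the two directions of the equivalence more explicitly than the paper does.
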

\begin{proof}
Lemma~\ref{lem:Dieudonne modules} shows
that if $j+1\in J$, then $X_j=0$, while
if $j+1\notin J$, then $Y_j=0$. In the latter case, by an inspection
of case~\eqref{item: interesting case} of the proof of Lemma~\ref{lem:Dieudonne modules}, we have
$X_j=0$ if and only if  $j\in J$ 
and
$h_{j+1}^0=0$. Since~$h_{j+1}^0$ does not vanish identically on an
irreducible component, we see that the irreducible components on which $X_j$
vanishes identically are precisely those for which $j+1\in J$, as
claimed. 
\end{proof}

\begin{thm}
  \label{thm: components of C}The algebraic stack~$\cC^{\tau,\BT,1}$
  has precisely $2^f$ irreducible components, namely the irreducible substacks~$\overline{\cC}(J)$. 
\end{thm}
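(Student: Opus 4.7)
The plan is straightforward: by Corollary~\ref{cor: the C(J) are the components}, every irreducible component of $\cC^{\tau,\BT,1}$ is of the form $\overline{\cC}(J)$ for some shape $J$, and conversely each $\overline{\cC}(J)$ is an irreducible component. All that remains, as noted in Remark~\ref{rem: haven't yet proved the C(J) are distinct}, is to show that distinct shapes~$J$ yield distinct components, and then to count the shapes.

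To distinguish the components, I would use Proposition~\ref{prop:Dieudonne divisors}, which describes the effective Cartier divisors $\cD_0,\ldots,\cD_{f-1}$ on $\cC^{\tau,\BT,1}$ arising from the morphism $\cC^{\tau,\BT} \to \cG_{\eta}$: for each $0 \le j < f$, the divisor $\cD_j$ is the union of those components $\overline{\cC}(J)$ with $j+1 \in J$. In particular, given an irreducible component $\mathcal{Z}$ of $\cC^{\tau,\BT,1}$, the set
\[
S(\mathcal{Z}) := \{\, j+1 \bmod f \;:\; 0 \le j < f,\ \mathcal{Z} \subseteq \cD_j \,\}
\]
is an invariant of $\mathcal{Z}$. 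Applied to $\mathcal{Z}=\overline{\cC}(J)$, Proposition~\ref{prop:Dieudonne divisors} shows that $S(\overline{\cC}(J)) = J \cap \{1,2,\ldots,f\} \pmod f$. In the principal series case, $J \subseteq \Z/f\Z$, so this recovers $J$ directly; in the cuspidal case, $J \subseteq \Z/2f\Z$ is determined by its intersection with any $f$ consecutive residues (because $i \in J \iff i+f \notin J$), so again $J$ is recovered. Hence $\overline{\cC}(J) = \overline{\cC}(J')$ forces $J = J'$.

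Finally, one counts the shapes. In the principal series case these are arbitrary subsets of $\Z/f\Z$, giving $2^f$ possibilities. In the cuspidal case they are subsets of $\Z/2f\Z$ satisfying $i \in J \iff i+f \notin J$; such a subset is determined freely by its intersection with $\{0,1,\ldots,f-1\}$, again giving $2^f$. Combined with Corollary~\ref{cor: the C(J) are the components} and the injectivity just established, this yields precisely $2^f$ irreducible components $\overline{\cC}(J)$, as desired.

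No step here is particularly hard: all the substantive work (dimension bounds, the Dieudonné module computation, reducedness of $\cC^{\tau,\BT,1}$, and the Cartier divisor interpretation of the gauge stack) has already been carried out. The only mild subtlety is keeping track of the distinction between the principal series and cuspidal cases when recovering $J$ from the divisors $\cD_j$, since the Dieudonné divisors are indexed by $j \in \{0,\ldots,f-1\}$ while shapes in the cuspidal case live in $\Z/2f\Z$; this is resolved by the symmetry $i \in J \iff i+f \notin J$.
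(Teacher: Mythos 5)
Your proposal is correct and follows exactly the same route as the paper's proof: invoke Corollary~\ref{cor: the C(J) are the components} to reduce to showing the $\overline{\cC}(J)$ are pairwise distinct, then read off $J$ from the Dieudonn\'e divisors via Proposition~\ref{prop:Dieudonne divisors}. The paper states this in a single sentence; you have simply filled in the (correct) details of how the divisors $\cD_j$ recover $J$, including the mild bookkeeping in the cuspidal case, and the count of $2^f$ shapes.
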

\begin{proof}
By Corollary~\ref{cor: the C(J) are the components}, we need only show
that if~$J\ne J'$   then $\overline{\cC}(J)\ne\cC(J')$; but this is immediate
from Proposition~\ref{prop:Dieudonne divisors}.
\end{proof}

\renewcommand{\theequation}{\Alph{section}.\arabic{subsection}} 
\appendix
\section{Serre weights and tame types}\label{sec: appendix on tame
  types} 
We begin by recalling some results from~\cite{MR2392355} on the Jordan--H\"older factors
of the reductions modulo $p$ of lattices in principal series
and cuspidal representations of $\GL_2(k)$,
following~\cite[\S3]{emertongeesavitt} (but with slightly different
normalisations than those of \emph{loc.\ cit.}).

Let $\tau$ be a tame inertial type. Recall from Section~\ref{subsec:
  notation} that we associate a representation~$\sigma(\tau)$ of
$\GL_2(\cO_K)$ to~$\tau$ as follows: if $\tau
\simeq \eta \oplus \eta'$ is a tame principal series type, then we set
$\sigma(\tau) := \Ind_I^{\GL_2(\cO_K)} \eta'\otimes \eta$, while 
if $\tau=\eta\oplus\eta^q$ is a tame cuspidal type, then $\sigma(\tau)$ is the
inflation to $\GL_2(\cO_K)$ of the cuspidal representation of $\GL_2(k)$
denoted by~$\Theta(\eta)$ in~\cite{MR2392355}. (Here we have
identified~$\eta,\eta'$ with their composites with~$\Art_K$.)

 Write $\sigmabar(\tau)$ for the
semisimplification of the reduction modulo~$p$ of (a
$\GL_2(\cO_K)$-stable $\cO$-lattice in) $\sigma(\tau)$. The action
of~$\GL_2(\cO_K)$ on~$\sigmabar(\tau)$ factors through~$\GL_2(k)$, so
the Jordan--H\"older factors~$\JH(\sigmabar(\tau))$ of~$\sigmabar(\tau)$ are Serre weights.
By the results of~\cite{MR2392355}, these Jordan--H\"older factors of
$\sigmabar(\tau)$ are pairwise non-isomorphic, and are parametrised
by a certain  set $\cP_\tau$ 
that we now recall.

Suppose first that $\tau=\eta\oplus\eta'$ is a tame principal series 
type. Set $f'=f$ in this case.
We define $0 \le \gamma_i \le p-1$ (for $i \in \Z/f\Z$) 
to be the unique integers not all equal to $p-1$ such that 
$\eta (\eta')^{-1} = \prod_{i=0}^{f-1}
\omega_{\sigma_i}^{\gamma_i}$. If instead $\tau = \eta \oplus \eta'$
is a cuspidal type, set $f'=2f$. We define $0 \le \gamma_i \le p-1$ (for $i \in \Z/f'\Z$) 
to be the unique integers such that 
$\eta (\eta')^{-1} = \prod_{i=0}^{f'-1}
\omega_{\sigma'_i}^{\gamma_i}$. Here the $\sigma'_i$ are the embeddings
$l \to \F$, where $l$ is the quadratic extension of $k$, 
$\sigma'_0$ is a fixed choice of embedding extending $\sigma_0$, and
$(\sigma'_{i+1})^p = \sigma'_i$ for all $i$.

If~$\tau$ is scalar then
we set $\cP_\tau=\{\varnothing\}$. 
Otherwise we have $\eta\ne\eta'$, and 
we let
$\cP_{\tau}$ be the collection of subsets $J \subset \Z/f'\Z$ 
satisfying the conditions:
\begin{itemize}
\item if $i-1\in J$ and $i\notin J$ then $\gamma_{i}\ne p-1$, and
\item if $i-1\notin J$ and $i\in J$ then $\gamma_{i}\ne 0$
\end{itemize}
and, in the cuspidal case, satisfying the further condition that $i
\in J$ if and only if $i+f \not\in J$.

The Jordan--H\"older factors of $\sigmabar(\tau)$ are by definition
Serre weights, and are
parametrised by $\cP_{\tau}$ as follows (see~\cite[\S3.2, 3.3]{emertongeesavitt}). For any $J\subseteq \Z/f'\Z$, we let $\delta_J$ denote
the characteristic function of $J$, and if $J \in \cP_{\tau}$
we define $s_{J,i}$ by
\[s_{J,i}=\begin{cases} p-1-\gamma_{i}-\delta_{J^c}(i)&\text{if }i-1 \in J \\
  \gamma_{i}-\delta_J(i)&\text{if }i-1\notin J, \end{cases}\]
and we set $t_{J,i}=\gamma_{i}+\delta_{J^c}(i)$ if $i-1\in J$ and $0$
otherwise.  Write $\vec{s}$ for the tuple~$(s_{J,i})$,
suppressing the $J$ from the notation for readability,
and similarly write $\vec{t}$ for the tuple~$(t_{J,i})$.
Recall that the Serre weight $\sigmabar_{\vec{t},\vec{s}}$ is defined in \eqref{eq:serreweight}.

In the principal series case we let
$\sigmabar(\tau)_J
:=\sigmabar_{\vec{t},\vec{s}}\otimes\eta'\circ\det$ for each $J \in \cP_\tau$;
the $\sigmabar(\tau)_J$ are precisely the Jordan--H\"older factors of
$\sigmabar(\tau)$. 

In the cuspidal case, one checks that $s_{J,i} = s_{J,i+f}$ for all
$i$, and also that  the character $\eta' \cdot
\prod_{i=0}^{f'-1} (\sigma'_i)^{t_{J,i}} : l^{\times} \to
\F^{\times}$ factors as $\theta \circ N_{l/k}$ where $N_{l/k}$
is the norm map. We let $\sigmabar(\tau)_J
:=\sigmabar_{0,\vec{s}}\otimes\theta \circ\det$ for each $J \in \cP_\tau$;
the $\sigmabar(\tau)_J$ are precisely the Jordan--H\"older factors of
$\sigmabar(\tau)$. 

\begin{aremark}\label{arem: wtf were we thinking in EGS}
  The parameterisations above are easily deduced from those given in
  \cite[\S3.2, 3.3]{emertongeesavitt} for the Jordan--H\"older factors
  of the representations $\Ind_I^{\GL_2(\cO_K)} \eta'\otimes \eta$
  and~$\Theta(\eta)$. (Note that there is a minor mistake
  in~\cite[\S3.1]{emertongeesavitt}: since the conventions
  of~\cite{emertongeesavitt} regarding the inertial Langlands
  correspondence agree with those of~\cite{geekisin}, the explicit
  identification of~$\sigma(\tau)$ with a principal series or cuspidal
  type in ~\cite[\S3.1]{emertongeesavitt} is missing a dual. The
  explicit parameterisation we are using here is of course independent
  of this issue.

  This mistake has the unfortunate effect that various
  explicit formulae in~\cite[\S7]{emertongeesavitt} need to be
  modified in a more or less obvious fashion; note that since
  ~$\sigma(\tau)$ is self dual up to twist, all formulae can be fixed
  by making twists and/or exchanging~$\eta$ and~$\eta'$. In
  particular, the definition of the strongly divisible module
  before~\cite[Rem.\ 7.3.2]{emertongeesavitt} is incorrect as written,
  and can be fixed by either reversing the roles of~$\eta,\eta'$ or
  changing the definition of the quantity~$c^{(j)}$ defined there.)
\end{aremark}

\begin{aremark}
In the cuspidal case, write $\eta$ in the form 
$(\sigma'_0)^{(q+1)b+1+c}$ where $0\le b\le q-2$, $0\le c\le 
q-1$. 
Set $t'_{J,i} = t_{J,i+f}$ for integers $1\le i \le f$. Then one can check
that $\sigmabar(\tau)_J = \sigmabar_{\vec{t}',\vec{s}} \otimes
(\sigma_0^{(q+1)b + \delta_J(0)} \circ \det).$
\end{aremark}

We now recall some facts about the set of Serre weights~$W(\rbar)$
associated to a representation $\rbar:G_K\to\GL_2(\Fpbar)$. 

\begin{adefn}\label{def:de rham of type}
 We say that a crystalline representation $r : G_K \to
  \GL_2(\Qpbar)$ has \emph{type $\sigmabar_{\vec{t},\vec{s}}$}
  provided that for each embedding $\sigma_j : k \into \F$ there is an
  embedding $\widetilde{\sigma}_j : K \into \Qpbar$ lifting $\sigma_j$
  such that the $\widetilde{\sigma}_j$-labeled Hodge--Tate weights
  of~$r$ are $\{-s_j-t_j,1-t_j\}$, and the remaining $(e-1)f$ pairs of  Hodge--Tate weights
  of~$r$ are all $\{0,1\}$. \emph{(}In particular the representations of
   type $\sigmabar_{\vec{0},\vec{0}}$ \emph{(}the trivial weight\emph{)} are the same as those of Hodge type $0$.\emph{)}
\end{adefn}

\begin{adefn}\label{def:serre weights}
Given a representation $\rbar:G_K\to\GL_2(\Fpbar)$ we define $W(\rbar)$
to be the set of Serre weights $\sigmabar$ such that $\rbar$ has a
crystalline lift of type $\sigmabar$.
\end{adefn}

There are several definitions of the set $W(\rbar)$ in the literature,
which by the papers~\cite{blggu2,geekisin,gls13} are known to be
equivalent (up to normalisation).  While the preceding definition 
is perhaps the most compact, it is the description of $W(\rbar)$
via the Breuil--M\'ezard conjecture that appears to be the most
amenable to generalisation;\ see 
\cite{MR3871496} for much more discussion.

 Recall that~$\rbar$ is~\emph{tr\`es ramifi\'ee} if it is a
    twist of an extension of the trivial character by the mod~$p$
    cyclotomic character, and if furthermore the splitting field of its projective
    image is \emph{not} of the form
    $K(\alpha_1^{1/p},\dots,\alpha_s^{1/p})$ for some
    $\alpha_1,\dots,\alpha_s\in\cO_K^\times$. 

\begin{alemma}\label{lem: list of things we need to know about Serre weights}\leavevmode
  \begin{enumerate}
  \item If ~$\tau$ is a tame type, then $\rbar$ has a potentially
    Barsotti--Tate lift of type~$\tau$ if and only
    if $W(\rbar)\cap\JH(\sigmabar(\tau))\ne 0$. 
  \item The following conditions are equivalent:
    \begin{enumerate}
    \item $\rbar$ admits a potentially Barsotti--Tate lift of some tame type.
    \item $W(\rbar)$ contains a non-Steinberg Serre weight.
    \item $\rbar$ is not tr\`es    ramifi\'ee.
    \end{enumerate}
  \end{enumerate}
\end{alemma}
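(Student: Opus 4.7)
My plan for part~(1) is to identify both sides of the equivalence with conditions on the existence of appropriate crystalline lifts, via the Serre weight formalism. On the one hand, by Definition~\ref{def:serre weights}, the set $W(\rbar)$ is defined in terms of crystalline lifts of $\rbar$ whose Hodge--Tate weights (with respect to a preferred embedding per factor) are of the form $\{-s_j-t_j, 1-t_j\}$. On the other hand, using the explicit parameterization of $\JH(\sigmabar(\tau))$ in terms of $\cP_\tau$ recalled above, a Serre weight $\sigmabar(\tau)_J$ has associated $(\vec t_J,\vec s_J)$ whose numerology precisely matches the possible Hodge--Tate weight patterns of a potentially crystalline lift after base change to $K'$ and restriction of scalars. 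The equivalence between the two conditions is then essentially the main result of \cite{gls13}, which identifies these two parameterizations of Serre weights, combined with the Breuil--M\'ezard/inertial local Langlands compatibility proved in \cite{geekisin} (and translated into our normalizations via the discussion in Remark~\ref{arem: wtf were we thinking in EGS} and the comparison at the end of Section~\ref{subsec: notation}).

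For part~(2) I plan to prove the equivalences in the cyclic order (c)$\Rightarrow$(b)$\Rightarrow$(a)$\Rightarrow$(c). The implication (c)$\Rightarrow$(b) is a direct computation with the explicit description of $W(\rbar)$: in the irreducible case $W(\rbar)$ always contains non-Steinberg weights, while in the reducible case one uses the explicit description of $W(\rbar)$ from \cite{bdj,gee061,gls13} to observe that exactly the tr\`es ramifi\'ee representations are those for which $W(\rbar)$ is reduced to its Steinberg member. For (b)$\Rightarrow$(a), given a non-Steinberg $\sigmabar\in W(\rbar)$, I would construct a tame type $\tau$ with $\sigmabar\in\JH(\sigmabar(\tau))$ by reverse-engineering the recipe $J\mapsto \sigmabar(\tau)_J$ of the appendix (choosing $J=\varnothing$ so that $s_{\varnothing,i}=\gamma_i$ and the $\gamma_i$ recover the exponents of $\sigmabar$ up to an appropriate twist), and then apply part~(1). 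For (a)$\Rightarrow$(c), the contrapositive says that tr\`es ramifi\'ee representations admit no potentially Barsotti--Tate lift of any tame type; this is a classical obstruction in integral $p$-adic Hodge theory, essentially because such a lift, after restriction to $G_{K'}$ and the use of Fontaine--Laffaille or Breuil module theory, would force the mod $p$ extension class to be peu ramifi\'ee.

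The main obstacle I expect is not any single deep step but rather the careful bookkeeping of normalizations: the conventions in \cite{geekisin,gls13,emertongeesavitt} differ from one another and from those of the present paper on the signs of Hodge--Tate weights, on whether $\sigma(\tau)$ or $\sigma(\tau^\vee)$ appears, and on the recipe for $\cP_\tau$, so the bulk of the work is in verifying that the parameterizations match up so that the statement of part~(1) holds on the nose with our chosen conventions. Once this bookkeeping is handled, both parts become essentially formal consequences of the cited results.
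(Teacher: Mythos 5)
The paper's own ``proof'' of this lemma is a one-line citation to \cite[Lem.~A.4]{cegsB}; the statement is established once in the companion paper and simply referenced here. Your proposal instead sketches a direct argument from the primary literature, which is essentially the content that \cite[Lem.~A.4]{cegsB} must itself rest on. Your outline is sound: part~(1) does reduce to the Breuil--M\'ezard / inertial-local-Langlands compatibility established in \cite{gls13} and \cite{geekisin} (once the dual and sign normalisations are reconciled, as you flag), and for part~(2) the cyclic chain (c)$\Rightarrow$(b)$\Rightarrow$(a)$\Rightarrow$(c) works. For (b)$\Rightarrow$(a), your plan of taking $J = \varnothing$ is correct but only produces a \emph{principal series} type --- in the cuspidal case $\varnothing \notin \cP_\tau$, since the condition ($i\in J$ iff $i+f\notin J$) fails --- but this suffices: any non-Steinberg $\sigmabar_{\vec t,\vec s}$ can be written as $\sigmabar(\tau)_\varnothing$ for a principal series $\tau$ with $\gamma_i = s_i$ and an appropriate twist by $\eta'$. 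One small imprecision: in (a)$\Rightarrow$(c) the appeal to Fontaine--Laffaille theory is not available when $e(K/\Qp)>1$; the Breuil/Kisin module argument you also mention is what is needed in general. As for what each approach buys: the paper's citation keeps the normalisation bookkeeping in exactly one place, whereas your approach would make the appendix self-contained at the cost of redoing that bookkeeping here --- which you rightly identify as the principal burden.
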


\begin{proof}
  This is  \cegsBtresramlemma.
\end{proof}

\begin{alem}
  \label{lem: explicit Serre weights with our normalisations} 
  Suppose that~$\sigmabar_{\vec{t},\vec{s}}$ is a non-Steinberg
  Serre weight. Suppose that~$\rbar:G_K\to\GL_2(\Fpbar)$ is a reducible
  representation satisfying \[\rbar|_{I_K}\cong
    \begin{pmatrix}
      \prod_{i=0}^{f-1}\omega_{\sigma_i}^{s_i+t_i} &*\\ 0 & \varepsilonbar^{-1}\prod_{i=0}^{f-1}\omega_{\sigma_i}^{t_i}
    \end{pmatrix}
    ,\] and that $\rbar$ is not tr\`es ramifi\'ee.
Then $\sigmabar_{\vec{t},\vec{s}}\in W(\rbar)$.
  \end{alem}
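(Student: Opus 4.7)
The plan is to construct explicitly a crystalline lift $r : G_K \to \GL_2(\Qpbar)$ of $\rbar$ of type $\sigmabar_{\vec{t},\vec{s}}$, which by Definition~\ref{def:serre weights} establishes $\sigmabar_{\vec{t},\vec{s}} \in W(\rbar)$. For each $j \in \{0,\dots,f-1\}$ I fix a lift $\widetilde{\sigma}_j : K \hookrightarrow \Qpbar$ of $\sigma_j$.

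First I would construct crystalline characters $\chi_1, \chi_2 : G_K \to \Qpbar^\times$ lifting the diagonal entries of $\rbar$ as follows: $\chi_1$ has $\widetilde{\sigma}_j$-labelled Hodge--Tate weight $-s_j-t_j$ and Hodge--Tate weight $0$ at each of the other $e-1$ embeddings of $K$ above $\sigma_j$, while $\chi_2$ has $\widetilde{\sigma}_j$-labelled weight $1-t_j$ and weight $1$ at the other embeddings above $\sigma_j$. Such characters exist by standard facts about crystalline characters of $G_K$ (via Lubin--Tate theory one can realise any tuple of integer Hodge--Tate weights with any prescribed tame inertial action), and I can further multiply each $\chi_i$ by an appropriate unramified character to match the actual diagonal characters of (a reduction of) $\rbar$ on all of $G_K$.

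If $\rbar$ is semisimple then $\chi_1 \oplus \chi_2$ is itself a crystalline lift of the required type and we are done. Otherwise, $\rbar$ is the reduction of a nonzero extension class $\bar c \in H^1(G_K, \bar\chi_1 \bar\chi_2^{-1})$, and I must lift $\bar c$ to a class in the Bloch--Kato Selmer group $H^1_f(G_K, \chi_1 \chi_2^{-1})$, whose elements classify crystalline extensions of $\chi_2$ by $\chi_1$. A direct computation gives that $\chi_1\chi_2^{-1}$ has Hodge--Tate weight $-s_j-1$ at $\widetilde{\sigma}_j$ and weight $-1$ at the other embeddings above $\sigma_j$; standard Bloch--Kato calculations (comparing dimensions of $H^1_f$ and $H^1$) then show that the reduction map $H^1_f(G_K, \chi_1\chi_2^{-1}) \to H^1(G_K, \bar\chi_1\bar\chi_2^{-1})$ is surjective except in the unique exceptional case where $\bar\chi_1\bar\chi_2^{-1}|_{I_K} = \varepsilonbar|_{I_K}$, which here happens precisely when $s_j = 0$ for all $j$.

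The main obstacle is handling this exceptional case. In it the weight $\sigmabar_{\vec{t},\vec{0}}$ is a twist of the trivial Serre weight (necessarily non-Steinberg, as required by hypothesis), and in this situation the image of the reduction map $H^1_f \to H^1$ is exactly the subspace of \emph{peu ramifi\'ee} classes coming from $\cO_K^\times/(\cO_K^\times)^p$ via Kummer theory. Unwinding the definition of \emph{tr\`es ramifi\'ee} recalled just before Lemma~\ref{lem: list of things we need to know about Serre weights}, one sees that a twist of an extension of the trivial character by $\varepsilonbar$ is tr\`es ramifi\'ee precisely when the corresponding class in $H^1(G_K,\varepsilonbar)$ is not peu ramifi\'ee; hence the assumption that $\rbar$ is not tr\`es ramifi\'ee gives exactly that $\bar c$ (after the unramified and $\chi_2^{-1}$ twists which reduce to the extension of $1$ by $\varepsilonbar$) is peu ramifi\'ee, supplying the required preimage in $H^1_f$. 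The resulting crystalline extension of $\chi_2$ by $\chi_1$ is the desired lift of $\rbar$ of type $\sigmabar_{\vec{t},\vec{s}}$.
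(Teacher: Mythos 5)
Your proposal is correct and is essentially the approach taken in the paper: both arguments lift the diagonal characters crystallinely with prescribed Hodge--Tate weights, use the equality $H^1_f = H^1$ in the generic case, and invoke the hypothesis that $\rbar$ is not tr\`es ramifi\'ee in the exceptional case (which, as you note, forces $s_j = 0$ for all $j$ since the weight is non-Steinberg). The only cosmetic difference is in the treatment of the exceptional case, where the paper deduces a Barsotti--Tate lift directly from the (equivalent) finite flatness of the twist $\rbar\otimes\prod_i\omega_{\sigma_i}^{-t_i}$, whereas you phrase the same fact via the identification of the image of $H^1_f$ with the peu-ramifi\'ee classes.
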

  \begin{proof} 
 Write $\rbar$ as an extension of characters $\overline{\chi}$ by $\overline{\chi}'$.
 It is straightforward from the classification of crystalline
 characters as in \cite[Lem.\ 5.1.6]{MR3871496} that there exist 
 crystalline lifts $\chi,\chi'$ of
 $\overline{\chi},\overline{\chi}'$ so that $\chi,\chi'$ have Hodge--Tate
 weights $1-t_j$ and $-s_j-t_j$ respectively at one embedding lifting each $\sigma_j$ and
 Hodge--Tate weights $1$ and $0$ respectively at the others. In the
 case that $\rbar$ is not the twist of an extension of
 $\varepsilonbar^{-1}$ by $1$ the result follows because the corresponding
    $H^1_f(G_K,\chi'\otimes\chi^{-1})$ agrees with the
    full~$H^1(G_K,\chi'\otimes\chi^{-1})$ (as a consequence of the
    usual dimension formulas for~$H^1_f$, \cite[Prop.\
    1.24]{nekovar}).  

If $\rbar$ is twist
of an extension of $\varepsilonbar^{-1}$ by $1$, the assumption that 
 $\sigmabar_{\vec{t},\vec{s}}$ is non-Steinberg implies $s_j = 0$
    for all $j$. The hypothesis that
$\rbar$ is not tr\`es ramifi\'ee guarantees that 
    $\rbar\otimes\prod_{i=0}^{f-1}\omega_{\sigma_i}^{-t_i} $ is finite
    flat, so has a Barsotti--Tate lift, and we deduce  that
    $\sigmabar_{\vec{t},\vec{0}} \in W(\rbar)$. 
\end{proof}

\section*{Conflicts of interest} None.

\section*{Financial support} The first author was supported in part by NSF grant DMS-1501064, 
  by a Royal Society University Research Fellowship,
  and by ERC Starting Grant 804176. The second author was supported in part by the
  NSF grants DMS-1303450, DMS-1601871,
  DMS-1902307, DMS-1952705, and DMS-2201242. 
The third author was 
  supported in part by a Leverhulme Prize, EPSRC grant EP/L025485/1, Marie Curie Career
  Integration Grant 303605, 
  ERC Starting Grant 306326, and a Royal Society Wolfson Research
  Merit Award. The fourth author was supported in part by NSF CAREER
  grant DMS-1564367 and  NSF grants 
  DMS-1702161 and DMS-1952566.

\bibliographystyle{amsalpha-custom}
\bibliography{dieudonnelattices}
\end{document}